\documentclass[reqno,11pt]{amsart}

\usepackage{bbm}
\usepackage{amsmath}
\usepackage{amssymb}
\usepackage[colorlinks=true]{hyperref}

\hypersetup{urlcolor=red, citecolor=blue}
\makeatletter
\@namedef{subjclassname@2020}{%
	\textup{2020} Mathematics Subject Classification}
\makeatother

\newtheorem{thm}{Theorem}[section]
\newtheorem{lem}[thm]{Lemma}
\newtheorem{prop}[thm]{Proposition}
\newtheorem{cor}[thm]{Corollary}

\theoremstyle{definition}
\newtheorem{defn}[thm]{Definition}
\newtheorem{notation}[thm]{Notation}

\theoremstyle{remark}
\newtheorem{rem}[thm]{Remark}

\theoremstyle{plain}
\newtheorem{thmx}{Theorem}

\newtheorem{corx}[thmx]{Corollary}

\newtheorem*{thmstar}{Theorem}

\def \N {\mathbb N}
\def \R {\mathbb R}
\def \Z {\mathbb Z}
\def \m {\mathcal M}
\def \w {\mathbb{W}}
\def \cA {\mathcal A}

\def \cQ {\mathcal Q}
\def \cP {\mathcal P}
\def \cV {\mathcal V}
\def \lip {{\rm Lip}}
\newcommand{\ind}{\mathbbm{1}}
\newcommand{\Mmax}{\m_{\max}}
\newcommand{\lex}{\mathrm{lex}}
\newcommand{\bfa}{\mathbf{a}}
\newcommand{\bfb}{\mathbf{b}}

\numberwithin{equation}{section}

\begin{document}

\title[Beta-shifts without TPO]{Beta-shifts without typical periodic
optimization}

\author[W. Huang, O. Jenkinson, L. Xu and Y. Zhang]{Wen Huang, Oliver
Jenkinson, Leiye Xu and Yiwei Zhang}

\address[W. Huang]{School of Mathematical Sciences, University of Science
and Technology of China, Hefei, Anhui 230026, P.R. China}
\email{wenh@mail.ustc.edu.cn}

\address[O.~Jenkinson]{School of Mathematical Sciences, Queen Mary
University of London, Mile End Road, London, E1 4NS, UK}
\email{o.jenkinson@qmul.ac.uk}

\address[L. Xu]{School of Mathematical Sciences, University of Science and
Technology of China, Hefei, Anhui 230026, P.R. China}
\email{leoasa@mail.ustc.edu.cn}

\address[Y.~Zhang]{School of Mathematical Sciences and Big Data, Anhui
University of Science and Technology, Huainan, Anhui 232001, P.R. China}
\email{zhangyw@aust.edu.cn}

\date{\today}
\subjclass[2020]{37B10 (primary); 37A44, 37E05 (secondary)}
\keywords{Ergodic optimization, maximizing measure, typical periodic
optimization, beta-shift, beta-transformation, Sturmian sequence,
coboundary, specification, packing dimension, Hausdorff dimension, zero topological entropy, Douady--Hubbard tuning}

\begin{abstract}
We show that beta-shifts whose lexicographically largest point is a
Sturmian sequence do not have typical periodic optimization (TPO): on each
such beta-shift there is a non-empty open set of Lipschitz functions whose
unique maximizing measure is the Sturmian measure, which is not supported on
a periodic orbit.  These are the first known examples of beta-shifts without
TPO, and, since they have the specification property, the first known
examples of shift spaces with specification but without TPO.  The
corresponding beta-transformations do not have TPO in any space of H\"older
functions on the interval.  The main ingredient in the proofs is a rigidity
property of Sturmian subshifts: modulo constants and Lipschitz coboundaries,
the space of Lipschitz functions on such a subshift is one-dimensional.

We also show that the set of $\beta>1$ for which the beta-shift or the
beta-transformation does not have TPO is dense in $(1,\infty)$, but has zero
packing dimension.  The density is proved by a tuning construction,
analogous to Douady--Hubbard tuning in the Mandelbrot set, which produces
non-Sturmian subshifts with the same rigidity property.  Finally, we show
that every beta-shift has typical zero-entropy optimization: for an open
dense set of Lipschitz functions, the maximizing measure is unique and its
support has zero topological entropy.
\end{abstract}

\maketitle

\setcounter{tocdepth}{1}
\tableofcontents

\newpage

\section*{Introduction}

For dynamical systems with some hyperbolicity, and an abundance of periodic
orbits, it has long been conjectured that maximizing measures are
\emph{typically periodic}.  This \emph{typical periodic optimization (TPO)}
phenomenon was observed in numerical experiments in the 1990s, notably by
Hunt \& Ott \cite{HO96,HO96b}, and has since been proved rigorously in
various forms \cite{BZ16,Bousch2000,Bousch2001,CLT01,GSZ25,QS12} (see
\cite{Boc18,Jen06,Jen19} for surveys of the wider area of ergodic optimization).
Much of the rigorous work was motivated by the
conjecture of Yuan \& Hunt \cite{YH99} that, for uniformly hyperbolic
systems, all members of an open dense set of Lipschitz functions have a unique maximizing
measure, supported on a periodic orbit.  The Yuan--Hunt conjecture was
proved by Contreras \cite{Con16} for open distance-expanding maps, and for
general uniformly hyperbolic systems by Huang, Lian, Ma, Xu \& Zhang
\cite{HLMXZ}.
In his 2018 ICM lecture, Bochi \cite{bochi_slides} formulated, after Hunt \&
Ott, the more general conjecture that for chaotic systems and typical
regular functions, the maximizing measure has low complexity, for example
support of zero topological entropy or, more strongly, support on a periodic
orbit.

In \cite{HJXZ} we showed that, in the setting of symbolic dynamics, the
results of \cite{Con16,HLMXZ}
(in particular their application to shifts of finite type) can be
used as a basis for establishing TPO for a much wider class of shift spaces.  It
was also shown there, via an example called the \emph{magic Morse shift},
that TPO can fail, even for shift spaces whose periodic orbit measures are
weak$^*$ dense in the space of all invariant measures.
The magic Morse shift is synchronized, but does not have the \emph{specification} property (see Remark
\ref{rem:magic-morse}), a
condition introduced by Bowen \cite{bowen} which is often regarded as a
hallmark of hyperbolicity; this left open the question of whether all shift
spaces with specification have TPO.

In this paper we show that they do not.  Unlike the \textit{in vitro} example of the magic Morse shift, which
was constructed for the express purpose of exhibiting the failure of TPO,
the shift spaces considered here are classical: they are the
\emph{Sturmian beta-shifts}, that is, the beta-shifts
\cite{Renyi57,Par60,Bertrand86,blanchard} whose lexicographically largest
point is a Sturmian sequence, all of which have the specification property.

A further motivation was the study of TPO for beta-transformations
$T_\beta(t)=\beta t\pmod 1$ on $[0,1]$, whose symbolic models are the
beta-shifts, by Hao, Huang, Jenkinson \& Li \cite{HHJL}.  They showed that
$T_\beta$ has TPO in all H\"older spaces $C^{0,\gamma}([0,1])$ for Lebesgue
almost every $\beta>1$, and for a residual set of $\beta$
\cite[Thms.~D, E]{HHJL}; the remaining parameters are among the so-called \emph{emergent} ones.
These include the simple beta-numbers, for which $T_\beta$ fails to have
TPO only because the functions in some non-empty open set have no maximizing
measure \cite[Rem.~5.21]{HHJL}, while the closely related upper
beta-transformation $U_\beta$ does have TPO \cite[Thm.~I]{HHJL}.  The question of whether
$U_\beta$ has TPO for all emergent $\beta$ was left open \cite[\S 1.3]{HHJL}.

The parameters $\beta$ whose beta-shift is a Sturmian beta-shift are
emergent, and for these we show
that TPO fails in both settings, so that this question has a negative answer
(see Section \ref{sec:statement} for precise statements):

\begin{thmstar}
Let $\beta>1$ be such that the lexicographically largest point of the
beta-shift of $\beta$ is a Sturmian sequence.  Then the beta-shift does not
have TPO for any of its standard metrics, and neither $T_\beta$ nor $U_\beta$
has TPO in any H\"older space $C^{0,\gamma}([0,1])$, $0<\gamma\le1$.  In each
case there is a non-empty open set of functions whose unique maximizing
measure is the Sturmian measure.
\end{thmstar}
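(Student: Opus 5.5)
We will exhibit an explicit Lipschitz function $f$ on the beta-shift $X_\beta$ and show that $f$ has the Sturmian measure $\mu$ as its unique maximizing measure, robustly under small Lipschitz perturbations. Write $\mu$ for the Sturmian measure supported on the Sturmian subshift $S=\overline{\{\sigma^n \mathbf{s}\}}$, where $\mathbf{s}$ is the lexicographically largest point.

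The key ingredient is the rigidity property announced in the abstract. For the Sturmian subshift $S$, every Lipschitz $g$ on $S$ can be written as
\[ g = c + a\,\phi + h\circ\sigma - h, \]
with $c,a\in\R$, $h$ Lipschitz on $S$, and $\phi$ a fixed reference function. We take $\phi$ to be, for instance, the indicator of the cylinder $[1]$ restricted to $S$. I would prove this by first passing to the circle rotation model of $S$: the coding map to $\mathbb{T}$ intertwines $\sigma$ with rotation by the irrational slope $\alpha$. A Lipschitz function on $S$ whose integral against $\mu$ vanishes has summable variations along the nested cylinder structure. Then I would solve the cohomological equation via the Sturmian return-word (continued fraction) hierarchy, using the bounded-discrepancy property of Sturmian sequences to control the partial sums $\sum_{k<n} g\circ\sigma^k$ uniformly. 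I expect this step, establishing the uniform bound on these sums and the resulting Lipschitz regularity of $h$, to be the main obstacle.

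Granting rigidity, the construction goes as follows. Choose $f_0=-d(\cdot,S)$, or a suitable variant. Its maximum value $0$ is attained exactly on $S$, and $S$ is uniquely ergodic, so $\mu$ is the unique maximizing measure. We must show this persists for all $f$ in a neighbourhood. For a perturbation $g$ with $\|g\|_{\lip}<\varepsilon$, apply rigidity to $g|_S=c+a\phi+(h\circ\sigma-h)$. Extend $h$ to a Lipschitz function $H$ on $X_\beta$ with comparable constant, and replace $f_0+g$ by the cohomologous function $f_0+g-(H\circ\sigma-H)$. On $S$ this equals $c+a\phi$.

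The remaining task is to control $a\phi$. Here we use the special position of $S$ in $X_\beta$: since $\mathbf{s}$ is lexicographically maximal, invariant measures near $S$ in $X_\beta$ have frequency of $1$s at most that of $\mu$. This follows from the standard fact that the Sturmian measure maximizes the frequency of $1$ over the beta-shift, with strict inequality off $S$. So I would take $f_0=\phi-\delta\, d(\cdot,S)$, or even $f_0=\phi$ plus a penalty, so that $\mu$ strictly maximizes with a quantitative gap: $\int f_0\,d\nu\le\int f_0\,d\mu-\kappa\,\int d(\cdot,S)\,d\nu$. Small Lipschitz perturbations, after the coboundary normalization, change the integrals by at most $O(\varepsilon)\int d(\cdot,S)\,d\nu$ away from $\mu$, plus an $a\phi$ term that is absorbed into $\phi$'s coefficient. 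This gives unique maximization by $\mu$ on an open set.

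For $T_\beta$ and $U_\beta$, transfer the result via the coding map. $\gamma$-Hölder functions on $[0,1]$ pull back to Lipschitz functions for the metric $d_{\beta^{-\gamma}}$ on $X_\beta$, and the construction above works for each standard metric. The points of discontinuity of $T_\beta$ do not meet the Sturmian orbit, so the maximizing measure on the interval is the pushforward of $\mu$.
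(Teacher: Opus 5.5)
Your symbolic stability step is the paper's argument. Granted rigidity with bounds $|a|\le A|g|_{\lip}$ and $|h|_{\lip}\le B|g|_{\lip}$, the McShane extension, the residual vanishing on $X_\alpha$ (hence bounded by a multiple of $d_\theta(\cdot,X_\alpha)$), and the absorption of the $a\Phi$ term via $\int\Phi\,d\nu\le\alpha$ are exactly Lemma~\ref{lem:rigid-stability} applied to $F=\Phi-d_\theta(\cdot,X_\alpha)$. The digit term is essential, not a variant: $-d_\theta(\cdot,X_\alpha)$ alone is not robust, because periodic orbits of first-return loops $a_0\cdots a_{n-2}0$ shadow $X_\alpha$ with digit frequency just below $\alpha$, and the perturbation $-\varepsilon\Phi$ then beats $\mu_\alpha$.

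The first genuine gap is rigidity itself, and the route you sketch cannot give Lipschitz regularity. Uniformly bounded Birkhoff sums yield, by Gottschalk--Hedlund, only a \emph{continuous} transfer function. Indeed $\Phi-\alpha$ has bounded Birkhoff sums on $X_\alpha$, yet it is not a Lipschitz, or even H\"older, coboundary there (Proposition~\ref{prop:rigidity}, Corollary~\ref{cor:livsic}). The real task is to find $a$ and to show that $g-\int g\,d\mu_\alpha-a(\Phi-\alpha)$ has a Lipschitz transfer function with the stated bounds; your sketch has no mechanism for this.

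The paper's mechanism has three parts.
\begin{itemize}
\item Since $X_\alpha$ has exactly $n+1$ words of length $n$, refining cylinders from length $n$ to $n+1$ splits a single cylinder $C_n$. This gives $f-\int f\,d\mu_\alpha=\sum_nd_n(\ind_{C_n}-\mu_\alpha(C_n))$ with $|d_n|\le L\theta^n$.
\item On the circle, the centred indicator of the arc $I_v$ has an explicit transfer function $G_v$. Pulled back to $X_\alpha$, $G_v$ splits into a function $V_v$ of the first $k-1$ coordinates (a signed sum of the $S_r(\alpha-\Phi)$, $r<k$), plus an integer multiple $B_v$, $|B_v|\le k$, of the circle coordinate. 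The circle coordinate is itself a non-Lipschitz transfer function for $\alpha-\Phi$, which is where the multiple of $\Phi-\alpha$ comes from.
\item Hence $\ind_C-\mu_\alpha(C)=-B_v(\Phi-\alpha)+V_v\circ\sigma-V_v$, so $a=-\sum_nd_nB_n$ and $h=\sum_nd_nV_n$. The polynomially growing terms are beaten by $\theta^n$ once $h$ is rewritten as $\sum_{i\ge0}b_i(\alpha-\Phi)\circ\sigma^i$ with $|b_i|=O(\theta^i)$.
\end{itemize}

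The second gap is the passage to $T_\beta$ and $U_\beta$. The map $g\mapsto g\circ h_\beta$ is continuous from $C^{0,\gamma}(I)$ to $\lip(Y_\alpha,d_{\beta^{-\gamma}})$. However, its image lies in the closed subspace $\{f:f(10^\infty)=f(0\bfa)\}$, since both points map to $1/\beta$, and this subspace has empty interior. So the preimage of a symbolic open set can be empty, and for $F$ it is: $0\bfa\in X_\alpha$ gives $F(0\bfa)=0$ while $F(10^\infty)\ge1-\theta$. Every pullback is therefore at seminorm distance at least $1-\theta>C_\theta^{-1}$ from $F$.

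One has to build the function on $I$ directly. The paper replaces $\Phi$ by the cohomologous pullback $(\beta-1)h_\beta$, using $\Phi=(\beta-1)h_\beta+h_\beta-h_\beta\circ\sigma$. It replaces the penalty by $\operatorname{dist}(h_\beta(\cdot),\Lambda_\alpha)^\gamma$, which does not dominate $d_\theta(\cdot,X_\alpha)$ pointwise (it vanishes at $10^\infty\notin X_\alpha$). The missing idea is Lemma~\ref{lem:key}: the domination does hold after integrating against any invariant measure. The reason is that $0^N$ does not occur in $X_\alpha$, so a discrepancy invisible to $h_\beta$ forces a block $0^{N+1}$. After that block the orbit lies within $\beta^{-N-1}$ of $0$, at distance at least $\beta^{-N}-\beta^{-N-1}$ from $\Lambda_\alpha\subseteq[\beta^{-N},1]$. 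This yields the gap inequality for $\widehat F\circ h_\beta$, where $\widehat F(t)=(\beta-1)t-\operatorname{dist}(t,\Lambda_\alpha)^\gamma$, and hence an open set in $C^{0,\gamma}(I)$.

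Your remark on discontinuities does not touch this issue; indeed the discontinuity $1/\beta=h_\beta(0\bfa)$ lies in $\Lambda_\alpha$. What is actually used is that $T_\beta$ and $U_\beta$ have the same invariant and maximizing measures, because $\beta$ is not a simple beta-number.
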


The proof is based on a rigidity property of Sturmian subshifts: modulo
constants and Lipschitz coboundaries, the space of Lipschitz functions on a
Sturmian subshift is one-dimensional, spanned by the class of the digit
function $x\mapsto x_0$ (Theorem \ref{thm:D}).  As the Sturmian measure
maximizes the integral of the digit function among invariant measures on the
beta-shift, this rigidity makes it the unique maximizing measure for all
small Lipschitz perturbations of the digit function minus the distance to
the Sturmian subshift.  For the interval maps we also use an inequality
comparing distances in the beta-shift and in $[0,1]$ (Lemma \ref{lem:key}),
which yields a transfer principle between the two settings (Theorem~\ref{thm:F}).

The above theorem applies to uncountably many parameters $\beta$, one for
each irrational rotation number.  Our second main result concerns all
$\beta>1$:

\begin{thmstar}
The set of parameters $\beta>1$ for which the beta-shift does not
have TPO is dense in $(1,\infty)$, and has packing dimension zero.
For all $\beta>1$, the
beta-shift has typical zero-entropy optimization: some open dense set of
Lipschitz functions consists of functions whose maximizing measure is
unique, and has support of zero topological entropy.
\end{thmstar}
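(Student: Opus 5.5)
The statement has three independent parts: density of the non-TPO parameters, zero packing dimension of that set, and typical zero-entropy optimization for every $\beta$. I will treat them in turn, taking as given the Sturmian result above, the rigidity Theorem \ref{thm:D}, and the TPO criterion of \cite{HJXZ}.

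\emph{Density.} Let $\Sigma_\beta$ denote the beta-shift and $a(\beta)$ its lexicographically largest point (the quasi-greedy expansion of $1$). The map $\beta\mapsto a(\beta)$ is strictly increasing and is a homeomorphism onto the set of admissible (self-bracketing) sequences. So it suffices to find, inside any lexicographic cylinder of admissible sequences, some $a(\beta)$ whose beta-shift lacks TPO. I will use a tuning operator. Given a periodic admissible word $w$ (a Lyndon-type word with $\sigma$-orbit of $w^\infty$ lexicographically bounded by $w^\infty$), substitute the two Sturmian letters by suitable words built from $w$, in the Douady--Hubbard fashion. Applying this to a Sturmian sequence $s$ gives a tuned sequence $w*s$ that is again the largest point of some beta-shift, and it lies arbitrarily close to $w^\infty$ once the renormalisation depth is large.

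The key step is rigidity for the tuned subshift $X_{w*s}$, i.e.\ its orbit closure. Since it is a finite-to-one, constant-length substitution image of a Sturmian subshift, a Lipschitz function on $X_{w*s}$ can be induced to a Lipschitz function on the Sturmian subshift, using the return to the $w$-block structure. Applying Theorem \ref{thm:D} there and lifting the coboundary back should show that Lipschitz functions modulo constants and Lipschitz coboundaries again form a one-dimensional space, spanned by the class of a function maximized uniquely by the tuned measure. The open-set argument then copies the Sturmian case: take that function minus a small multiple of the distance to $X_{w*s}$, and apply small Lipschitz perturbations. Finally, the parameters with $a(\beta)$ periodic are dense, and the tuned points accumulate on them, which gives density. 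I expect the main obstacle to be establishing that the lifted coboundary is Lipschitz and that the maximizing property survives tuning.

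\emph{Packing dimension zero.} By \cite{HHJL} and the TPO criterion of \cite{HJXZ}, TPO can fail only when the orbit of $a(\beta)$ under the shift has closure of zero entropy and is not approached well by periodic orbits (the emergent case). I would show that the non-TPO set is contained in $\{\beta : h_{\mathrm{top}}(\overline{\{\sigma^n a(\beta)\}})=0\}$. For each $\epsilon>0$, cover this set by $\beta$ whose prefixes of length $n$ lie in a family of at most $e^{\epsilon n}$ words, for all large $n$. Since $\beta\mapsto a(\beta)$ is locally Hölder-inverse with exponent controlled by $\log\beta$, the upper box dimension of each piece is $O(\epsilon)$. Taking a countable union over the threshold $n_0$ gives packing dimension at most $O(\epsilon)$, hence zero.

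\emph{Zero-entropy optimization.} For $\beta$ where TPO holds, there is nothing to prove. Otherwise, approximate $\Sigma_\beta$ from inside by SFTs $\Sigma_{\beta_k}$ with $\beta_k\uparrow\beta$. For $f$ in a dense set, Contreras' theorem on each SFT, combined with the argument of \cite{HJXZ}, yields either a periodic maximizing measure that is stable under perturbation, or maximizing measures supported on the subshift generated by $a(\beta)$, which has zero entropy in the emergent case. Uniqueness on an open dense set then follows by the standard upper-semicontinuity argument for the set of maximizing measures, restricted to the open condition that the maximizing set is contained in a small neighbourhood of a zero-entropy compact invariant set.
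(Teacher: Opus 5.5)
Your density strategy matches the paper's tuning construction. The packing-dimension and zero-entropy parts, however, both rest on a claim that \cite{HHJL} and \cite{HJXZ} do not provide: that if $S_\beta$ fails TPO, then the orbit closure $\Omega_\beta$ of $d_\beta(1)$ has zero topological entropy.

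\textbf{What the references actually give.} Combined with Lemma \ref{lem:markov-boundary}, they give only the following. For emergent $\beta$ that is not a beta-number, $\Omega_\beta$ is minimal and is the only obstruction to TPO. When TPO fails, $\Omega_\beta$ carries a measure that is maximizing on a non-empty open set. Even the step ``non-TPO implies emergent'' is new for the beta-shift itself: it is Theorem \ref{thm:nonemergent}, proved via the bounded sub-action, Morris's subordination theorem, and the SFTs $S_{\beta'}$ with $\beta'<\beta$ simple. Emergence alone gives no entropy bound. The lexicographically largest point of any infinite minimal subshift of positive entropy is $d_\beta(1)$ for an emergent $\beta$ whose $\Omega_\beta$ is that subshift. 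So your phrase ``which has zero entropy in the emergent case'' is false as stated.

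\textbf{The missing idea.} This is Theorem \ref{thm:robust}, a beta-shift substitute for the Yuan--Hunt theorem: a measure that is maximizing for all functions in an open subset of $\lip(S_\beta,d_\theta)$ has at most $\exp(C\sqrt{n\log n})$ words of length $n$ in its support. The paper proves it from the one-sided inequality of Lemma \ref{lem:subaction}(iii), namely that a word which can precede $y$ can precede every lexicographically smaller point.
\begin{itemize}
\item Perturb by $\eta_k\ind_{[v]}$ with $|v|=k$.
\item The support then splits into exponentially-in-$k$ many pieces on which all the sub-actions are nearly constant.
\item Length-$n$ words in the same class therefore have identical length-$k$ subword counts.
\item Taking $k\approx\sqrt{n\log n}$ finishes the proof.
\end{itemize}
Applying this also needs unique ergodicity of $\Omega_\beta$; otherwise $S_\beta$ is a specimen shift and has TPO.

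\textbf{Your covering count.} This needs repair too. Your family of $e^{\epsilon n}$ words depends on $\beta$, so it does not bound the number of distinct prefixes across different $\beta$. The paper fixes $n$ and uses that the prefix of length $jn$ is a concatenation of $j$ words from $\w_n(\Omega_\beta)$, and $\w_n(\Omega_\beta)$ is one of at most $2^{B^n}$ sets.

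\textbf{Uniqueness in the zero-entropy statement.} Upper semicontinuity of $f\mapsto\Mmax(f)$ gives neither uniqueness nor a zero-entropy support. The paper works inside the open dense set of \cite[Thm.~9.13]{HJXZ}. There it applies the Baire category theorem to the countably many closed sets $\{f:\mu\in\Mmax(f)\}$, with $\mu$ ranging over periodic orbit measures and the unique measure on $\Omega_\beta$. It then notes that on the interior of each such set, $\mu$ is the unique maximizing measure.

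\textbf{The obstacle you flag in the density part is genuine.} After inducing to $S_pf\circ\zeta_w$ on $X_\alpha$ with metric $d_{\theta^p}$, the quotient is spanned by $\ind_{[w]}$. But $\ind_{[w]}$ is not maximized by $\mu_{w,\alpha}$ on $Y_{w,\alpha}$ (Remark \ref{rem:penalty}), so subtracting a small multiple of the distance fails. The paper instead uses $\ind_{[w]}-3\theta^{-(N_\alpha+1)p}d_\theta(\cdot,X_{w,\alpha})$.
\begin{itemize}
\item Each position whose window of length $(N_\alpha+1)p+1$ does not occur in $X_{w,\alpha}$ contributes at most $-2$.
\item Along good runs, the blocks $w=\zeta_w(1)$ and $w^-=\zeta_w(0)$ spell a word $b$ with $b0^\infty\in Y_\alpha$, so Proposition \ref{prop:Phimax} applies.
\end{itemize}
The specific choice of $w^-$, lowering the last non-zero digit of $w$, is what makes $\zeta_w$ order-preserving and forces $w$ to occur only at block boundaries (Lemma \ref{lem:tuning}).
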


So for beta-shifts, Bochi's conjecture holds for all $\beta>1$ in its
zero-entropy form, but fails for a dense set of $\beta$ in its periodic
form.  By Theorems \ref{thm:dimension}, \ref{thm:TZO} and \ref{thm:density},
both parts of the above theorem also hold for $U_\beta$, with H\"older
functions on $[0,1]$ in place of Lipschitz functions on the beta-shift, and
they hold for $T_\beta$ except that the second fails if $\beta$ is a simple
beta-number; the first part sharpens the almost-everywhere result
of \cite[Thm.~E]{HHJL}.  This reflects a close relation between the two
settings: for all $\beta>1$, the beta-shift has TPO if and only if $U_\beta$
does, and if and only if $T_\beta$ does unless $\beta$ is a simple
beta-number (Corollary \ref{cor:G}).

The packing dimension and zero-entropy statements of the second theorem are
based on a version for beta-shifts of a theorem of Yuan \& Hunt
\cite{YH99}, according to which, in the formulation of Bousch
\cite{Bousch2008}, for uniformly hyperbolic systems an invariant measure
which is maximizing for all Lipschitz functions in a non-empty open set must
be a periodic orbit measure.  On a beta-shift such a measure need not be
periodic, as the Sturmian examples show, but its support does have zero
topological entropy; indeed, for some $C>0$, at most $\exp(C\sqrt{n\log n})$
words of length $n$ occur in it (Theorem \ref{thm:robust}).  If the
beta-shift does not have TPO, then the orbit closure of its
lexicographically largest point is the support of such a measure, so has
zero topological entropy, which happens only for a set of parameters $\beta$
of packing dimension zero; moreover typical functions have a unique
maximizing measure, which is either periodic or supported by this orbit
closure.  This accords with the work of Hunt \& Ott \cite{HO96,HO96b}, who
argued that, in the exceptional cases where no periodic orbit is optimal,
the optimal orbits have limit sets of zero topological entropy.

The density of the parameters without TPO is proved by a tuning
construction, a version for beta-shifts of Douady--Hubbard tuning of external
angles of the Mandelbrot set \cite{Dou86,DH85}.  Replacing the digits of the
lexicographically largest point of a Sturmian beta-shift by two suitable
words of equal length gives the lexicographically largest point of what we
call a \emph{tuned Sturmian beta-shift}.  The failure of TPO, and the
rigidity property behind it, extend to these beta-shifts (Theorems
\ref{thm:tuned-rigidity} and \ref{thm:tuned}), and their parameters
accumulate at all simple beta-numbers.
If the two words have length at least $2$, then the orbit closure of the
lexicographically largest point is not topologically conjugate to a
Sturmian subshift (Proposition \ref{prop:tuned-not-sturmian}), so the
rigidity property is not restricted to Sturmian subshifts.

\section{Statement of results}\label{sec:statement}

In this section we fix notation, state our results, and outline their
proofs.

\subsection{Notation, and typical periodic optimization}

Let $\N_0:=\{0,1,2,\ldots\}$, for $m\in\N$ let
$\Sigma_m:=\{0,1,\ldots,m\}^{\N_0}$ be equipped with the product
topology, and write $\Sigma:=\Sigma_1=\{0,1\}^{\N_0}$.  Let $\sigma$ denote the
shift map $(\sigma x)_i:=x_{i+1}$ on each $\Sigma_m$,
let $\le_\lex$ denote the lexicographic order on sequences of non-negative
integers, and write $\max_\lex S$ for
the lexicographically largest element of a set $S$ of such sequences, when
this exists.  
A \emph{subshift} is a closed set $X\subseteq\Sigma_m$ with
$\sigma(X)=X$, for some $m\in\N$, and
$\m(X,\sigma)$ denotes the set of $\sigma$-invariant
Borel probability measures on $X$, equipped with the weak$^*$ topology. 
Let $C(X)$ denote the set of continuous real-valued functions on $X$, equipped with the norm
$\|f\|_{C(X)}:=\max_{x\in X}|f(x)|$.
A
measure $\mu\in\m(X,\sigma)$ is \emph{maximizing} for $f\in C(X)$ if
$\int f\,d\mu=\max\{\int f\,d\nu:\nu\in\m(X,\sigma)\}$, and $\Mmax(f)$
denotes the set of maximizing measures for $f$.

In this paper, shift spaces will always be equipped with one of the standard metrics
\[
  d_\theta(x,y):=\theta^{\min\{i\,:\,x_i\ne y_i\}}\quad(x\ne y),
  \qquad d_\theta(x,x):=0 ,
\]
for some $\theta\in(0,1)$. 
We write $\lip(X)=\lip(X,d_\theta)$ for the Banach space of
Lipschitz functions $f:X\to\R$, with Lipschitz seminorm
$|f|_{\lip(X)}:=\sup_{x\ne y}|f(x)-f(y)|/d_\theta(x,y)$ and norm
$\|f\|_{\lip(X)}:=\|f\|_{C(X)}+|f|_{\lip(X)}$.
The associated
\emph{Wasserstein distance}
(see e.g.~\cite{villani,villani2})
between Borel probability measures $\mu,\nu$ on
$X$ is
\begin{equation}\label{eq:wass}
  W_\theta(\mu,\nu):=\sup\Bigl\{\int f\,d\mu-\int f\,d\nu:
  |f|_{\lip(X, d_\theta)}\le1\Bigr\} .
\end{equation}
Since $d_\theta^{\,\gamma}=d_{\theta^\gamma}$ for $\theta\in(0,1)$ and
$\gamma>0$, the $\gamma$-H\"older functions with respect to $d_\theta$ are
precisely the elements of $\lip(X,d_{\theta^\gamma})$, so a statement proved
for all $\theta\in(0,1)$ is a statement about all H\"older classes.  Further
notation, including Birkhoff sums $S_r$, cylinder sets $[v]_X$ and the sets
$\w_k(X)$ of words of length $k$ occurring in $X$, is collected in Section
\ref{sec:prelim}.

Let $T:Z\to Z$ be a map of a set $Z$.  A function $g:Z\to\R$ is a
\emph{coboundary} for $T$ if
\[
  g=u\circ T-u
\]
for some $u:Z\to\R$, and any such $u$ is called a \emph{transfer function}
for $g$.  If $Z$ is a compact metric space we call $g$ a \emph{continuous
coboundary} if some transfer function for $g$ is continuous, and a
\emph{Lipschitz coboundary} if some transfer function for $g$ is Lipschitz.
A continuous coboundary
$g$ for $T$ integrates to zero against all $T$-invariant Borel
probability measures, so $f$ and $f+g$ have the same maximizing measures.

We now recall the notion of typical periodic optimization.

\begin{defn}\label{def:TPO}
Let $X$ be a subshift equipped with a metric $d_\theta$.  A function
$f\in\lip(X,d_\theta)$ has the \emph{periodic optimization property} if
$\Mmax(f)=\{\mu\}$ for some periodic orbit measure $\mu$, and $\cP(X)$
will denote the set of such $f$.  The subshift $X$ has \emph{typical periodic
optimization}, or \emph{TPO}, if $\cP(X)$ is dense in $\lip(X,d_\theta)$;
equivalently, by \cite[Cor.~6.6]{HJXZ}, if there is an open dense subset of
$\lip(X,d_\theta)$ all members of which have the periodic optimization
property.
To make the metric explicit, we also say that $X$ has TPO with
respect to $\lip(X,d_\theta)$.
\end{defn}

\subsection{Sturmian beta-shifts}

We begin by recalling the maximal Sturmian sequence and the Sturmian
subshift of a given rotation number (for background on Sturmian sequences
see e.g.~\cite{MH40}, \cite[Ch.~2]{Lothaire},
\cite[Ch.~6]{Fogg}).

\begin{defn}
Let $\alpha\in(0,1)$ be irrational.  The \emph{maximal Sturmian sequence of
rotation number $\alpha$} is $\bfa=\bfa(\alpha):=(a_i)_{i\ge0}\in\Sigma$,
where
\begin{equation}\label{eq:a}
  a_i:=\lceil(i+1)\alpha\rceil-\lceil i\alpha\rceil ,
\end{equation}
so that $a_i=1$ if and only if the interval $[i\alpha,(i+1)\alpha)$ contains
an integer.  The \emph{Sturmian subshift of rotation number $\alpha$} is
defined to be the orbit closure $X_\alpha:=\overline{\{\sigma^n\bfa:n\ge0\}}$.
\end{defn}

The Sturmian subshift $X_\alpha$ is minimal, non-periodic\footnote{In some contexts it is convenient to allow $\alpha$ to be rational, in which case the corresponding Sturmian subshifts are periodic orbits, see e.g.~\cite{Bousch2000, BM02, bullettsentenac}. In the present setting, 
if the largest point of a beta-shift is periodic then the beta-shift is a shift of finite type
(see e.g.~\cite{blanchard}), and in this case TPO does hold, by \cite{Con16}. For that reason, in this paper the definition of Sturmian only allows irrational rotation number, as in e.g.~\cite{Lothaire, Fogg}.} and uniquely ergodic; we write
$\mu_\alpha$ for its unique invariant measure, the \emph{Sturmian measure of
rotation number $\alpha$}.  The sequence $\bfa$ is the largest element of
$X_\alpha$ in the lexicographic order, and satisfies
$\sigma^j\bfa<_\lex\bfa$ for all $j\ge1$, so by Parry's criterion
\cite{Par60} there is a unique
$\beta\in(1,2)$ with $\bfa=d_\beta(1)$,
where $d_\beta(1)$ denotes the greedy expansion of $1$ in base $\beta$,
i.e.~the lexicographically largest sequence $(t_i)_{i\ge0}$ of non-negative
integers
with $\sum_{i\ge0}t_i\beta^{-i-1}=1$ (see \cite{Renyi57,Par60}, and
e.g.~\cite{blanchard} and \cite[Ch.~7]{Lothaire}).
The
associated beta-shift is
\begin{equation}\label{eq:Yalpha}
  Y_\alpha:=\{x\in\Sigma:\sigma^jx\le_\lex\bfa\ \text{ for all }j\ge0\},
\end{equation}
which we call the \emph{Sturmian beta-shift of rotation number $\alpha$}.
A sequence is called \emph{Sturmian} if it belongs to $X_\alpha$ for some
irrational $\alpha\in(0,1)$.  The Sturmian beta-shifts are precisely the
beta-shifts whose lexicographically largest point is a Sturmian sequence: if
this point $x$ lies in $X_\alpha$ then, as $\sigma^jx\le_\lex x$ for all
$j\ge0$, all points of the orbit closure $X_\alpha$ of $x$ are
lexicographically at most $x$, so that $x=\bfa$.
The Sturmian beta-shift $Y_\alpha$ is the smallest beta-shift containing
$X_\alpha$, and Chi \& Kwon \cite{ChiKwon} showed that the corresponding
parameter $\beta$ is transcendental.

Since $\bfa$ is not eventually periodic ($X_\alpha$ being minimal and
infinite), $Y_\alpha$ is not sofic \cite{Bertrand86,blanchard}, and in
particular not of finite type.  On the other hand, $Y_\alpha$ has the
\emph{specification property}: there exists $N\in\N$ such that for all words $u,v$
occurring in $Y_\alpha$ there is a word $w$ of length $N$ such that $uwv$ occurs
in $Y_\alpha$ (see Lemma \ref{lem:spec}).
The properties of $X_\alpha$ and $\bfa$ recalled in this subsection, and the
existence and uniqueness of $\beta$, are established in Lemma
\ref{lem:sturmian}.

Our results for Sturmian beta-shifts involve the following functions.

\begin{defn}\label{def:functions}
Let $\alpha\in(0,1)$ be irrational. Let $X_\alpha$
be the
Sturmian subshift,
and $Y_\alpha$ the Sturmian beta-shift, of rotation number $\alpha$,
and let $\theta\in(0,1)$.  Define\footnote{These functions are defined on the whole of $\Sigma$ because the
Birkhoff sums of $\Phi$ are used on $\Sigma$ in Section \ref{sec:cob};
elsewhere we only consider their restrictions to $Y_\alpha$ and to $X_\alpha$,
which we denote by the same symbols.}
$\Phi,\Psi,F:\Sigma\to\R$ by
\begin{equation}\label{eq:functions}
  \Phi(x):=x_0,\qquad  \Psi(x)=\Psi_\theta(x):=d_\theta(x,X_\alpha),\qquad
  F:=\Phi-\Psi .
\end{equation}
We call $\Phi$ the \emph{digit function}, so that $\int\Phi\,d\nu$ is the
frequency of the digit $1$ under $\nu$; we call $\Psi$ the \emph{distance
function}, which vanishes precisely
on $X_\alpha$; and we call their difference $F$ the
\emph{digit-distance difference}.
\end{defn}

We note the following bounds on the Lipschitz seminorms of these functions.

\begin{rem}
All three functions $\Phi$, $\Psi$ and $F$ are Lipschitz: if $\ell$ is the
smallest index at which $x$ and $y$ differ, so that
$d_\theta(x,y)=\theta^{\ell}$, then $|\Phi(x)-\Phi(y)|=|x_0-y_0|$ equals $1$
when $\ell=0$ and $0$ otherwise, while $\Psi$ is a distance function and
$F=\Phi-\Psi$; so
\begin{equation}\label{eq:FLip}
  |\Phi|_{\lip(\Sigma)}=1,\qquad
  |\Psi|_{\lip(\Sigma)}\le1,\qquad
  |F|_{\lip(\Sigma)}\le2 .
\end{equation}
The same bounds hold for the restrictions of these functions to $Y_\alpha$,
with $|\Phi|_{\lip(Y_\alpha)}=1$ since $0^\infty$ and $\bfa$ both lie in
$Y_\alpha$.
\end{rem}

\subsection{Results for Sturmian beta-shifts}

Our main result for Sturmian beta-shifts is the following.

\begin{thmx}[\textbf{Failure of TPO}]\label{thm:A}
Let $\alpha\in(0,1)$ be irrational and let $\theta\in(0,1)$.  Then the
Sturmian beta-shift $Y_\alpha$, equipped with the metric $d_\theta$, does
not have typical periodic optimization.
More precisely, the digit-distance difference function $F$ belongs to $\lip(Y_\alpha,d_\theta)$, and with
$C_\theta:=1+2(1-\theta)^{-3}$ the set
\[
  U:=\bigl\{f\in\lip(Y_\alpha,d_\theta):
  |f-F|_{\lip(Y_\alpha)}<C_\theta^{-1}\bigr\}
\]
is a non-empty open subset of $\lip(Y_\alpha,d_\theta)$, and all $f\in U$
satisfy $\Mmax(f)=\{\mu_\alpha\}$, where the Sturmian measure $\mu_\alpha$ is not
a periodic orbit measure.
\end{thmx}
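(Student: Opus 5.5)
The plan is to reduce the statement to two ingredients: an optimality property of $\mu_\alpha$ for $\Phi$ on $Y_\alpha$ with a quantitative margin, and the rigidity theorem for Lipschitz functions on $X_\alpha$ (Theorem \ref{thm:D}). First, $F\in\lip(Y_\alpha,d_\theta)$ by \eqref{eq:FLip}, and $U$ is open and non-empty since it contains $F$ and is defined by a strict inequality on a continuous seminorm. Also, $\mu_\alpha$ is not periodic because $X_\alpha$ is minimal and infinite. So the real content is the claim $\Mmax(f)=\{\mu_\alpha\}$ for all $f\in U$.

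\textbf{Step 1.} I would write $f=F+g$ with $|g|_{\lip(Y_\alpha)}<C_\theta^{-1}$ and apply Theorem \ref{thm:D} to $g|_{X_\alpha}$. This gives
\[
g|_{X_\alpha}=c\,\Phi+\kappa+u\circ\sigma-u
\]
on $X_\alpha$, with $u$ Lipschitz. I expect the theorem, or its proof, to come with norm control of the form $|c|\le C'|g|_{\lip}$ and $|u|_{\lip}\le C''|g|_{\lip}$, and these constants should combine into $C_\theta$. The next step is to extend $u$ from $X_\alpha$ to $Y_\alpha$ with the same Lipschitz constant, by McShane extension, which works in any metric space. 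Replacing $f$ by $f-(u\circ\sigma-u)-\kappa$ changes neither the set of maximizing measures nor, up to constants, the problem. The modified function is $(1+c)\Phi-\Psi+h$, where $h$ vanishes on $X_\alpha$ and
\[
|h|_{\lip}\le|g|_{\lip}+(1+\theta^{-1})|u|_{\lip}
\]
which is small.

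\textbf{Step 2.} I would show that $\mu_\alpha$ is the unique maximizer of $(1+c)\Phi-\Psi+h$. Since $h=0$ on $X_\alpha$, we have $|h(x)|\le|h|_{\lip}\,\Psi(x)$. So it suffices to prove
\[
(1+c)\int\Phi\,d\nu-\Bigl(1-|h|_{\lip}\Bigr)\int\Psi\,d\nu\le(1+c)\int\Phi\,d\mu_\alpha
\]
with equality only for $\nu=\mu_\alpha$. The key lemma needed is a quantitative maximality of the digit frequency: for every $\nu\in\m(Y_\alpha,\sigma)$,
\[
\int\Phi\,d\nu-\alpha\le K_\theta\int\Psi\,d\nu,
\]
with $K_\theta$ of order $(1-\theta)^{-3}$. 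I would prove this combinatorially. Along an orbit segment in $Y_\alpha$, every maximal block lying within distance $\theta^k$ of $X_\alpha$ is a factor of a Sturmian word, and so has balanced digit count: at most $\lceil n\alpha\rceil$ ones in length $n$. At an exit, the constraint $\sigma^jx\le_\lex\bfa$ should prevent an excess of ones. So the excess of ones in a Birkhoff sum $S_n\Phi-n\alpha$ is bounded by a sum over the times spent away from $X_\alpha$, each weighted by a geometric factor. Summing the geometric series, where a triple sum arises from lengths, depths and positions, gives $(1-\theta)^{-3}$. Once this is established, the hypothesis $|c|+|h|_{\lip}$ small relative to $C_\theta^{-1}$ gives the displayed inequality. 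Strictness follows because equality forces $\int\Psi\,d\nu=0$, so $\nu$ is supported on $X_\alpha$, and unique ergodicity then gives $\nu=\mu_\alpha$.

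\textbf{Main obstacle.} The hardest step is the quantitative inequality $\int\Phi\,d\nu-\alpha\le K_\theta\int\Psi\,d\nu$. It needs a careful analysis of how a $Y_\alpha$-orbit can deviate from $X_\alpha$. The approach I would try first is to use the lexicographic constraint against $\bfa$ together with the balance property of Sturmian words. This should show that whenever a point is at distance about $\theta^k$ from $X_\alpha$, the following window of length about $k$ contains no more ones than the Sturmian count plus $O(1)$, and then to sum these deficits by a telescoping argument along the orbit.
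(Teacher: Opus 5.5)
Your Step 1 is sound. It is a direct version of the paper's argument: the paper applies the decomposition of Theorem \ref{thm:D} to arbitrary $1$-Lipschitz test functions to obtain the Wasserstein gap of Theorem \ref{thm:C}, and then invokes Lemma \ref{lem:stability}.

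\textbf{The gap is in Step 2.} The key lemma you identify is miscalibrated and, as stated, does not close the argument. The coefficient of $\Psi$ in $F$ is exactly $1$. So from $\int\Phi\,d\nu-\alpha\le K_\theta\int\Psi\,d\nu$, your displayed inequality follows only if $(1+c)K_\theta\le 1-|h|_{\lip}$. Already for $g=0$ (so $c=0$, $h=0$) this needs $K_\theta\le1$. A bound with $K_\theta$ of order $(1-\theta)^{-3}$ is compatible with $\int F\,d\nu>\int F\,d\mu_\alpha$, so it proves nothing here.

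What is needed, and true, is the sharp statement with $K_\theta=0$: $\int\Phi\,d\nu\le\alpha$ for all $\nu\in\m(Y_\alpha,\sigma)$ (Proposition \ref{prop:Phimax}). Then
\[
(1+c)\Bigl(\int\Phi\,d\nu-\alpha\Bigr)\le0\le\bigl(1-|h|_{\lip}\bigr)\int\Psi\,d\nu
\]
as soon as $|c|<1$ and $|h|_{\lip}<1$. Equality then forces $\int\Psi\,d\nu=0$, hence $\nu=\mu_\alpha$ by unique ergodicity.

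\textbf{How to prove the sharp bound.} It does not come from weighting excursions away from $X_\alpha$; exits never produce an excess of ones. The clean proof uses the graph presentation $G_\alpha$. Every $x\in Y_\alpha$ splits as a concatenation of first return loops $a_0\cdots a_{n-2}0$ with $a_{n-1}=1$, followed by a prefix $a_{[0,k)}$ of $\bfa$. Each loop contains $\lceil(n-1)\alpha\rceil=\lfloor n\alpha\rfloor<n\alpha$ ones, and the final prefix contains $\lceil k\alpha\rceil\le k\alpha+1$ ones. Hence $S_n\Phi\le n\alpha+1$ on $Y_\alpha$, and integrating gives $\int\Phi\,d\nu\le\alpha$. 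Your heuristic about balanced blocks near $X_\alpha$ and ``the constraint preventing an excess at an exit'' is not a proof of this, and it aims at a weaker, useless inequality.

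\textbf{Where $C_\theta$ comes from.} The factor $(1-\theta)^{-3}$ in $C_\theta$ does not come from a maximality lemma; it comes from Theorem \ref{thm:D}. You also dropped the term $|c|\,|\Phi|_{\lip}$ from your bound on $|h|_{\lip}$. With it, \eqref{eq:c-bound} and \eqref{eq:u-bound} give
\[
|h|_{\lip}\le\bigl(1+(1-\theta)^{-2}+(1+\theta)(1-\theta)^{-3}\bigr)|g|_{\lip}=C_\theta\,|g|_{\lip}<1,
\]
and $|c|\le(1-\theta)^{-2}|g|_{\lip}<(1-\theta)^{-2}C_\theta^{-1}<1$. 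These are exactly the two conditions needed above, so the corrected argument works with the stated $C_\theta$.
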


As Theorem \ref{thm:A} holds for all $\theta\in(0,1)$, typical periodic
optimization fails for all Sturmian beta-shifts in all H\"older classes;
moreover the radius $C_\theta^{-1}$ of the seminorm ball $U$ does not depend on
$\alpha$.  Analogous results hold for Sturmian sequences over other two-letter alphabets
(see Remark
\ref{rem:alphabets}), and for the tuned Sturmian beta-shifts introduced in
Section \ref{sec:density} (see Theorems \ref{thm:tuned-rigidity} and
\ref{thm:tuned}).

Sturmian measures are familiar as maximizing measures for the doubling map
\cite{Bousch2000}, but on the full shift $\Sigma$, the set of $f\in\lip(\Sigma,d_\theta)$ for which $\mu_\alpha$ is a
maximizing measure has empty interior
(by the Yuan--Hunt theorem
recalled in Section \ref{subsec:parameters}, since $\mu_\alpha$ is not 
periodic); the robustness of $\mu_\alpha$ in Theorem
\ref{thm:A} is therefore a consequence of the constraint defining $Y_\alpha$.

Since Sturmian beta-shifts have the specification property, Theorem
\ref{thm:A} implies:

\begin{corx}[\textbf{Specification does not imply TPO}]\label{cor:B}
There exist shift spaces with the specification property which do not have
typical periodic optimization.
\end{corx}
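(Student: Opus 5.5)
The plan is to deduce Corollary \ref{cor:B} directly from Theorem \ref{thm:A}, using the specification property of Sturmian beta-shifts. Fix any irrational $\alpha\in(0,1)$ and any $\theta\in(0,1)$, and take the Sturmian beta-shift $Y_\alpha$ of \eqref{eq:Yalpha} with the metric $d_\theta$. This $Y_\alpha$ is a subshift of $\Sigma$: it is closed, being an intersection of the closed sets $\{x:\sigma^jx\le_\lex\bfa\}$, and it is shift-invariant. Since the lexicographic order coincides with the order of $\Sigma$ under the map $x\mapsto\sum x_i2^{-i-1}$ away from the countable set of dyadic ambiguities, closedness can be checked directly: a limit of points with every shift $\le_\lex\bfa$ still has every shift $\le_\lex\bfa$, because the set $\{y:y\le_\lex\bfa\}$ is closed in the product topology.

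The first step is specification, which I take from Lemma \ref{lem:spec} as stated earlier. If I had to justify it, I would argue as follows. The sequence $\bfa$ is Sturmian, hence recurrent and not of the form that forces arbitrarily long runs of small prefixes, and it begins with $1$ followed by a bounded run of $0$s. By the standard criterion for beta-shifts (Bertrand / Schmeling), $Y_\alpha$ has specification exactly when the lengths of the runs of zeros in $\bfa=d_\beta(1)$ are bounded. For a Sturmian sequence with $\alpha<1/2$ the blocks of zeros have length $\lfloor1/\alpha\rfloor-1$ or $\lfloor 1/\alpha\rfloor$, which is bounded. For $\alpha>1/2$ the zeros are isolated. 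In either case a gap word of the form $0^N$ works: for any occurring $u,v$, the word $u0^Nv$ occurs, since inserting enough zeros prevents any shift from exceeding $\bfa$.

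The second step invokes Theorem \ref{thm:A}. It gives a non-empty open set $U\subseteq\lip(Y_\alpha,d_\theta)$ on which $\Mmax(f)=\{\mu_\alpha\}$, where $\mu_\alpha$ is not a periodic orbit measure. Hence $U\cap\cP(Y_\alpha)=\emptyset$, so $\cP(Y_\alpha)$ is not dense, and by Definition \ref{def:TPO} $Y_\alpha$ does not have TPO.

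The only potential obstacle is the specification property. Since it is available as Lemma \ref{lem:spec}, the corollary is an immediate combination of that lemma with Theorem \ref{thm:A}.
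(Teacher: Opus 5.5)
Your proposal is correct and matches the paper's proof: the corollary follows by combining the specification property of $Y_\alpha$ from Lemma~\ref{lem:spec} with the failure of TPO from Theorem~\ref{thm:A}. Your side remarks on the closedness of $Y_\alpha$ (including the comment about the dyadic map) and your sketch re-deriving specification are unnecessary, since the paper already establishes both facts.
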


Theorem \ref{thm:A} is a consequence of the following result, by a
stability criterion for maximizing measures (Lemma \ref{lem:stability}).

\begin{thmx}[\textbf{Stability of the Sturmian maximizing
measure}]\label{thm:C}
Let $\alpha\in(0,1)$ be irrational, let $\theta\in(0,1)$, let $Y_\alpha$ be
the Sturmian beta-shift of rotation number $\alpha$ equipped with the metric
$d_\theta$, with Sturmian measure $\mu_\alpha$ and Wasserstein distance
$W_\theta$,
and let $F\in\lip(Y_\alpha,d_\theta)$ be
the digit-distance difference function.
 Then, with
$C_\theta:=1+2(1-\theta)^{-3}$,
\[
  \int F\,d\mu_\alpha-\int F\,d\nu\ \ge\ C_\theta^{-1}\,
  W_\theta(\nu,\mu_\alpha)\qquad\text{for all }\nu\in\m(Y_\alpha,\sigma).
\]
\end{thmx}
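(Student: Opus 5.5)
The plan is to combine two ingredients: the fact that $\mu_\alpha$ maximizes $\Phi$ on $Y_\alpha$, with a quantitative deficit for measures not supported near $X_\alpha$, and the rigidity of Lipschitz functions on $X_\alpha$ modulo constants and coboundaries (Theorem~\ref{thm:D}). The point of departure is the dual form of $W_\theta$. Given $\nu\in\m(Y_\alpha,\sigma)$ and any $g$ with $|g|_{\lip(Y_\alpha)}\le1$, we must show
\[
  \int g\,d\nu-\int g\,d\mu_\alpha\le C_\theta\Bigl(\int F\,d\mu_\alpha-\int F\,d\nu\Bigr)=C_\theta\Bigl(\alpha-\int\Phi\,d\nu+\int\Psi\,d\nu\Bigr),
\]
using $\int\Phi\,d\mu_\alpha=\alpha$ and $\Psi=0$ on $X_\alpha$.

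First, restrict $g$ to $X_\alpha$ and apply the rigidity theorem. This gives $g|_{X_\alpha}=c\,\Phi+k+u\circ\sigma-u$ on $X_\alpha$, with explicit bounds $|c|\le c_0(\theta)$ and $|u|_{\lip(X_\alpha)}\le c_1(\theta)$ in terms of $|g|_{\lip}$. I expect the rigidity theorem to be stated with such bounds, built from Birkhoff sums of $\Phi$ (the source of the factors $(1-\theta)^{-1}$). Next, extend $u$ to $Y_\alpha$ as a Lipschitz function with the same constant, by McShane extension, and set $h:=g-c\Phi-k-u\circ\sigma+u$ on $Y_\alpha$. Then $h$ vanishes on $X_\alpha$ and has Lipschitz constant at most $1+|c|+(1+\theta^{-1})c_1$; this needs care, since the Lipschitz constant of $u\circ\sigma$ carries a factor $\theta^{-1}$, so one may prefer to choose $u$ so that this is controlled. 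A function vanishing on $X_\alpha$ satisfies $|h|\le|h|_{\lip}\Psi$, so $\int h\,d\nu-\int h\,d\mu_\alpha\le |h|_{\lip}\int\Psi\,d\nu$.

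It remains to handle the term $c(\int\Phi\,d\nu-\alpha)$. For $c\ge0$ it is at most $0$, since $\mu_\alpha$ maximizes $\Phi$ on $Y_\alpha$; this should be an earlier lemma, following from $\sigma^jx\le_\lex\bfa$ via a standard Sturmian balance/comparison argument. For $c<0$ we need $\alpha-\int\Phi\,d\nu\le C\bigl(\alpha-\int\Phi\,d\nu+\int\Psi\,d\nu\bigr)$, which is trivial when $\alpha-\int\Phi\,d\nu\ge0$. So in every case the difference is bounded by $|c|\bigl(\alpha-\int\Phi\,d\nu\bigr)+|h|_{\lip}\int\Psi\,d\nu$, and both terms are at most the deficit of $F$ up to constants. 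Collecting the constants should give $C_\theta=1+2(1-\theta)^{-3}$.

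The main obstacle is the quantitative rigidity step: obtaining a transfer function $u$ on $X_\alpha$ with $|u|_{\lip}\lesssim(1-\theta)^{-2}$ or so, and $|c|\le1$. The first thing I would try is to define $u$ via Birkhoff sums of $g-c\Phi$ along the orbit of $\bfa$, exploiting the balance property: Birkhoff sums of $\Phi-\alpha$ along Sturmian words are bounded by $1$. I would then use the fact that two points of $X_\alpha$ agreeing on $n$ symbols have orbit segments whose cumulative discrepancy is a geometric sum, producing the $(1-\theta)^{-k}$ factors.
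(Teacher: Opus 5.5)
Your argument is correct and is essentially the paper's proof: the paper packages exactly this McShane-extension and residual argument as Lemma~\ref{lem:rigid-stability}, and feeds it Proposition~\ref{prop:Phimax} and Theorem~\ref{thm:D}. The quantitative rigidity you flag as the main obstacle is already supplied by \eqref{eq:c-bound} and \eqref{eq:u-bound}, namely $|c|\le(1-\theta)^{-2}$ and $|u|_{\lip(X_\alpha)}\le\theta(1-\theta)^{-3}$ when $|g|_{\lip}\le1$. Its factor $\theta$ cancels the $\theta^{-1}$ coming from $u\circ\sigma$, so your bound $1+|c|+(1+\theta^{-1})|u|_{\lip}$ is exactly $1+2(1-\theta)^{-3}=C_\theta$. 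The bound $|c|\le1$ you aim for is not needed, and it fails in general: for $g=\theta^{k-1}\ind_{[v]_{X_\alpha}}$ with $|B_v|=k$, Corollary~\ref{cor:cf}(i) gives $|c|=k\theta^{k-1}$.
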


The proof of Theorem \ref{thm:C} uses the fact that the digit function $\Phi$ is
maximized by $\mu_\alpha$
(see Section \ref{sec:maximizing} for a quantitative form of this result),
together with the
following rigidity property of Sturmian subshifts.

\begin{thmx}[\textbf{Lipschitz cohomological rigidity for Sturmian
subshifts}]\label{thm:D}
Let $\alpha\in(0,1)$ be irrational, let $\theta\in(0,1)$, let $X_\alpha$ be
the Sturmian subshift of rotation number $\alpha$ equipped with the metric
$d_\theta$, with Sturmian measure $\mu_\alpha$, 
and let $\Phi$ be the digit function.
Then for all
$f\in\lip(X_\alpha,d_\theta)$ there is a unique constant $c_f\in\R$, and a
function $\tilde f\in\lip(X_\alpha,d_\theta)$ which is unique up to an additive
constant, such that
\begin{equation}\label{eq:cohomology}
  f-\int f\,d\mu_\alpha=c_f\,(\Phi-\alpha)+\tilde f\circ\sigma-\tilde f .
\end{equation}
Moreover,
\begin{equation}\label{eq:c-bound}
  |c_f|\le\frac{|f|_{\lip(X_\alpha)}}{(1-\theta)^{2}}
\end{equation}
and
\begin{equation}\label{eq:u-bound}
  |\tilde f|_{\lip(X_\alpha)}
  \le\frac{\theta\,|f|_{\lip(X_\alpha)}}{(1-\theta)^{3}} .
\end{equation}

Consequently the quotient of $\lip(X_\alpha,d_\theta)$ by the subspace spanned
by the constants and the Lipschitz coboundaries is one-dimensional, and is
spanned by the equivalence class of the digit function $\Phi$.  The functional
$f\mapsto c_f$ is the unique linear functional on $\lip(X_\alpha,d_\theta)$
which vanishes on the constants and on the Lipschitz coboundaries and
satisfies $c_\Phi=1$.
\end{thmx}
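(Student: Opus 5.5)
The plan is to decompose $f$ scale by scale using the word structure of $X_\alpha$, and to treat each scale with explicit cylinder-indicator coboundaries. Fix a point in each cylinder $[v]_{X_\alpha}$ and let $f_n$ be the function depending only on $x_0\dots x_n$ that equals $f$ at the chosen point of $[x_0\dots x_n]_{X_\alpha}$. Then $f=f_0+\sum_{n\ge1}g_n$ with $g_n:=f_n-f_{n-1}$, and $\|g_n\|_{C(X_\alpha)}\le\theta^n|f|_{\lip(X_\alpha)}$.

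The key combinatorial input is that $X_\alpha$ has complexity $n+1$ (Lemma~\ref{lem:sturmian}). Hence for each $n$ there is exactly one right-special word $w_n$ of length $n$, and $g_n$ vanishes outside $[w_n0]\cup[w_n1]$. We can also choose the sampled points so that $g_n$ vanishes on one of the two extensions. Then $g_n=\varepsilon_n\bigl(\ind_{[w_n1]}-\mu_\alpha[w_n1]/\mu_\alpha[w_n]\cdot\ind_{[w_n]}\bigr)$ plus a function of shorter words, with $|\varepsilon_n|\le\theta^n|f|_{\lip}$.

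Next I would show that for each $n$ there is a constant $\kappa_n$ with $|\kappa_n|\le n+1$ such that $\ind_{[w_n1]}-\mu_\alpha[w_n1]=\kappa_n(\Phi-\alpha)+u_n\circ\sigma-u_n$. Here $u_n$ is a signed sum of at most $O(n)$ indicators of cylinders of length at most $n+1$. To build it, use the identity $\ind_{[v]}\circ\sigma=\sum_a\ind_{[av]}$ to move between words, relying on the fact that in a Sturmian subshift all but one word of each length are left-uniquely extendable. Concretely, one strips letters from the left of $w_n1$ down to a single letter, and each non-unique left extension costs one term. Since the right-special words are reversals of prefixes of the characteristic word, the special words are nested and the bookkeeping is uniform in $n$. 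We then put $c_f:=\sum_n\varepsilon_n\kappa_n$, which is bounded by $\sum_n(n+1)\theta^n|f|_{\lip}=(1-\theta)^{-2}|f|_{\lip}$, matching \eqref{eq:c-bound}. We also put $\tilde f:=\sum_n\varepsilon_n u_n$ plus the level-$0$ term.

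The main obstacle is the Lipschitz bound \eqref{eq:u-bound}. A single indicator of a cylinder of length $k$ has Lipschitz seminorm $\theta^{-k}$, so the naive estimate $\sum_n\theta^n\cdot n\,\theta^{-n}$ diverges. Instead I would estimate $|\tilde f(x)-\tilde f(y)|$ directly for $x,y$ agreeing on exactly $\ell$ coordinates. Every term $\varepsilon_nu_n$ built from cylinders of length at most $\ell$ takes the same value at $x$ and $y$. Among the remaining terms, a cylinder of length $k>\ell$ appears in $u_n$ only for $n\ge k-1$. One must check that at each length $k$ only $O(1)$ of the cylinders in $u_n$ separate $x$ from $y$; I expect this to follow from the left-unique-extension structure. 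This yields a bound of the form $\sum_{n>\ell}\theta^n\cdot O(n-\ell)\lesssim\theta^{\ell+1}(1-\theta)^{-3}|f|_{\lip}$. Organising the $u_n$ so that this counting works is the step I expect to need the most care.

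Uniqueness comes from the coding map $\pi:X_\alpha\to[0,1)$, which is continuous on $X_\alpha$ and satisfies $\Phi-\alpha=\pi-\pi\circ\sigma$. Thus $\Phi-\alpha$ is a continuous coboundary, and its continuous transfer functions are $-\pi+$const by minimality. $\pi$ is not Lipschitz, because $n$-cylinders have $\pi$-length of order $1/n$ and not $O(\theta^n)$. So $\Phi-\alpha$ is not a Lipschitz coboundary, which gives uniqueness of $c_f$. Uniqueness of $\tilde f$ up to constants follows from minimality. The remaining statements then follow: the quotient is one-dimensional and spanned by $\Phi$, and $f\mapsto c_f$ is the stated functional, since it vanishes on constants and Lipschitz coboundaries by uniqueness and $c_\Phi=1$.
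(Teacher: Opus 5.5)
Your architecture is the paper's. A consistent choice of sample points makes each increment a multiple of a single cylinder indicator (complexity $n+1$). Each centred cylinder indicator is split into a multiple of $\Phi-\alpha$ plus a coboundary. Uniqueness follows because the continuous transfer function of $\Phi-\alpha$ is not Lipschitz. That function is the paper's $\tau$; it must be $[0,1]$-valued with value $1$ at $\bfa$, since the compact $X_\alpha$ cannot map continuously onto $[0,1)$.

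The gap is in the two steps that carry the content. First, the identity $\ind_{[w_n1]}-\mu_\alpha([w_n1])=\kappa_n(\Phi-\alpha)+u_n\circ\sigma-u_n$ with $|\kappa_n|\le n+1$ is never constructed, and left-stripping cannot construct it. Whenever the current suffix $s$ is left-special, the relation $\ind_{[s]}\circ\sigma=\ind_{[0s]}+\ind_{[1s]}$ only replaces $\ind_{[as]}$ by $\ind_{[s]}\circ\sigma-\ind_{[\bar a s]}$. The sibling $\ind_{[\bar a s]}$ has the same length, and the same relation reduces it straight back to $\ind_{[as]}$. So the ``one term'' per non-unique extension is never eliminated. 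A dimension count on the Rauzy graph (edges $\w_{n+1}(X_\alpha)$, vertices $\w_n(X_\alpha)$) shows that some $\kappa_n$ exists. However, bounding $\kappa_n$ and getting a usable $u_n$ needs global information. The missing idea is the circle coding:
\begin{itemize}
\item $[v]_{X_\alpha}$ is coded by an arc $I_v=[c_i,c_j)$ with $c_r=\{-r\alpha\}$;
\item $\ind_{[p,q)}-(q-p)=\{t-q\}-\{t-p\}$ telescopes to $\ind_{I_v}-|I_v|=G_v\circ R_\alpha-G_v$, with $G_v(t)=\pm\sum_{\min(i,j)\le r<\max(i,j)}\{t+r\alpha\}$;
\item since $S_r(\alpha-\Phi)(\bfb(t))=\{t+r\alpha\}-\{t\}$, this pulls back to $\kappa=i-j$ (so $|\kappa|\le n+1$) and $u=V_v=\sum_r\gamma_{v,r}S_r(\alpha-\Phi)$, a signed sum of consecutive Birkhoff sums.
\end{itemize}

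Second, the Lipschitz bound is left as an expectation, and the counting you propose cannot supply it. Suppose $x,y$ agree on exactly $\ell$ symbols. At each length $m>\ell$, at most two distinct cylinders (those containing $x$ or $y$) separate them, whatever $u_n$ is. So the question is entirely about the coefficients with which these cylinders occur, i.e.\ about the oscillation of $u_n$. That function is canonical, equal to $V_n$ up to a constant by minimality, and counting cylinders does not control its oscillation. In the natural form of $V_n$, the coefficient of $(\alpha-\Phi)\circ\sigma^h$ is $\sum_{r>h}\gamma_{n,r}$, of size up to $n-h$, so the estimate must come from structure or cancellation. The paper uses the explicit Birkhoff-sum form: it regroups $\tilde f=-\sum_h b_h(\Phi-\alpha)\circ\sigma^h$ with $|b_h|\le L\theta^{h+1}(1-\theta)^{-2}$. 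Since the $h$th term depends only on $x_h$, this gives $|\tilde f(x)-\tilde f(y)|\le\sum_{h\ge\ell}|b_h|$.

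Alternatively, $V_n(x)-V_n(y)=-\sum_r\gamma_{n,r}\bigl(S_r\Phi(x)-S_r\Phi(y)\bigr)$. Balance (Lemma \ref{lem:sturmian}(iv)) makes each difference at most $1$ in absolute value, and zero for $r\le\ell$. This proves your expected $|u_n(x)-u_n(y)|\le n-\ell$ with constant $1$. Then $\sum_{n>\ell}\theta^n(n-\ell)=\theta^{\ell+1}(1-\theta)^{-2}$, which even improves \eqref{eq:u-bound}. As it stands, your argument establishes neither that $\tilde f$ is Lipschitz nor the constant in \eqref{eq:u-bound}.
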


The one-dimensionality in Theorem \ref{thm:D} is not specific to Sturmian
subshifts: Theorem \ref{thm:tuned-rigidity} establishes it for the tuned
Sturmian subshifts of Section \ref{sec:density}, which in general are not
topologically conjugate to Sturmian subshifts (Proposition \ref{prop:tuned-not-sturmian}).

\begin{rem}\label{rem:uniqueness}
The uniqueness of $c_f$ in Theorem \ref{thm:D} is equivalent, as is readily
checked, to the fact that $\Phi-\alpha$ is not a Lipschitz coboundary on
$X_\alpha$.  It is, however, a continuous coboundary there: by Proposition
\ref{prop:rigidity}, the function $\tau:=\sup_{r\ge0}S_r(\Phi-\alpha)$ is
continuous, takes values in $[0,1]$, and satisfies
$\Phi-\alpha=\tau-\tau\circ\sigma$, but is not H\"older continuous, since for
all $k\in\N$ it maps one of the $k+1$ 
length-$k$ cylinder sets in
$X_\alpha$, which have $d_\theta$-diameter at most $\theta^k$, onto an
interval of length at least $(k+1)^{-1}$.  On a hyperbolic system this could
not happen, since by Liv\v{s}ic's theorem \cite{Liv71,Livsic} a H\"older
function which is a continuous coboundary is a H\"older coboundary.
Corollary \ref{cor:livsic} describes how this fails on $X_\alpha$: if
$f\in\lip(X_\alpha,d_\theta)$ satisfies $\int f\,d\mu_\alpha=0$, then the
continuous transfer functions for $f$ are the functions $\tilde f-c_f\tau+c$,
$c\in\R$, so the failure of Liv\v{s}ic regularity is confined to the single
direction $\R\tau$, along which the transfer function has coefficient $-c_f$.
\end{rem}

\subsection{Sturmian beta-transformations, and a transfer
principle}\label{subsec:transformations}

For $\beta>1$, the \emph{beta-transformation} $T_\beta$ and the \emph{upper
beta-transformation} $U_\beta$ of the interval $I:=[0,1]$ are defined by
$T_\beta(t):=\beta t-\lfloor\beta t\rfloor$ for $t\in I$, and $U_\beta(0):=0$,
$U_\beta(t):=\beta t-\lceil\beta t\rceil+1$ for $t\in(0,1]$ (see
e.g.~\cite{HHJL}).  For $\gamma\in(0,1]$ we write $C^{0,\gamma}(I)$ for
the Banach space of $\gamma$-H\"older functions $g:I\to\R$, with seminorm
$|g|_\gamma:=\sup_{s\ne t}|g(s)-g(t)|/|s-t|^\gamma$ and norm
$\|g\|_\gamma:=\|g\|_{C(I)}+|g|_\gamma$.  For a Borel map $T:I\to I$ and
$g\in C(I)$, we write $\m(I,T)$ for the set of $T$-invariant Borel
probability measures on $I$, and $\Mmax(T,g)$ for the set of those
$\mu\in\m(I,T)$ with $\int g\,d\mu=\sup\{\int g\,d\nu:\nu\in\m(I,T)\}$; as
$T$ need not be continuous, this set may be empty.  The notions of Definition
\ref{def:TPO} are adapted to this setting as in \cite{HHJL}: $(T,g)$ has the
periodic optimization property if $\Mmax(T,g)$ consists of a single periodic
orbit measure, and $T$ has TPO with respect to $C^{0,\gamma}(I)$ if some open
dense subset of $C^{0,\gamma}(I)$ consists of functions $g$ for which $(T,g)$
has the periodic optimization property.  When $\gamma$ is clear from the
context, we simply say that $T$ has TPO.

Recall that $\beta>1$ is a \emph{beta-number} if $d_\beta(1)$ is eventually
periodic, and a \emph{simple beta-number} if $d_\beta(1)$ is eventually
zero.  
For $\beta>1$ we write $d^*_\beta(1)$ for the quasi-greedy expansion
of $1$ in base $\beta$, i.e.~the lexicographically largest sequence
$(t_i)_{i\ge0}$ of non-negative integers with infinitely many non-zero
terms such that
$\sum_{i\ge0}t_i\beta^{-i-1}=1$ (see e.g.~\cite{Par60} and
\cite[Ch.~7]{Lothaire}); it equals $d_\beta(1)$ unless $\beta$ is a
simple beta-number, in which case it is periodic (cf.~Remark
\ref{rem:general}), and by \cite[Prop.~3.9(i)]{HHJL} it is the upper
$\beta$-expansion of $1$ in the sense of \cite[Def.~3.2]{HHJL}, with
first term $\lceil\beta\rceil-1$.
Let
\[
  S_\beta:=\{x\in\Sigma_{\lceil\beta\rceil-1}:\sigma^jx\le_\lex d^*_\beta(1)\ \text{ for all }j\ge0\}
\]
be the beta-shift of $\beta$, so that $S_\beta=Y_\alpha$ when
$d_\beta(1)=\bfa$, and define $h_\beta:\Sigma_{\lceil\beta\rceil-1}\to\R$ by
$h_\beta(x):=\sum_{i\ge0}x_i\beta^{-i-1}$.

It is proved in \cite{HHJL} that $U_\beta$ has TPO with respect to
$C^{0,\gamma}(I)$, for all $\gamma\in(0,1]$, whenever $\beta>1$ is a
beta-number, and that both $T_\beta$ and $U_\beta$ have TPO whenever $\beta$ is
\emph{non-emergent} 
\cite[Thms.~I, J]{HHJL}, where, by \cite[Prop.~5.3]{HHJL}, $\beta>1$ is
emergent if and only if the orbit closure of $d^*_\beta(1)$ under the shift
is minimal.
For emergent $\beta$ they showed that
the only possible obstruction to TPO is the \emph{critical set}
$\mathrm{Crit}^\gamma(\beta)$, consisting of those functions for which the
$U_\beta$-orbit of $1$ is maximizing \cite[Def.~5.8, Thm.~5.22]{HHJL},
leaving open the question of TPO for emergent $\beta$ which are not
beta-numbers.  The Sturmian parameters are of this kind: if
$d_\beta(1)=\bfa$ is the maximal Sturmian sequence of irrational rotation
number $\alpha$, then $d^*_\beta(1)=\bfa$, its orbit closure $X_\alpha$ is
minimal, and $\bfa$ is not eventually periodic, so $\beta$ is emergent and
not a beta-number.  The emergent parameter of \cite[Ex.~5.5]{HHJL}, for
which $d_\beta(1)$ is the digit $1$ followed by the Fibonacci word, is the
case $\alpha=(3-\sqrt5)/2$.

Our main result for Sturmian beta-transformations is the following.

\begin{thmx}[\textbf{Failure of TPO for Sturmian
beta-transformations}]\label{thm:E}
Let $\alpha\in(0,1)$ be irrational, let $\beta\in(1,2)$ be such that
$d_\beta(1)$ is the maximal Sturmian sequence of rotation number $\alpha$,
and let $\gamma\in(0,1]$.  Then neither $T_\beta$ nor $U_\beta$ has typical
periodic optimization with respect to $C^{0,\gamma}(I)$.  More precisely,
let $\Lambda_\alpha$ be the closure of the $U_\beta$-orbit of $1$, and define
$\widehat F\in C^{0,\gamma}(I)$ by
\[
  \widehat F(t):=(\beta-1)t-\operatorname{dist}(t,\Lambda_\alpha)^\gamma .
\]
Then there is exactly one $\widehat\mu_\alpha\in\m(I,U_\beta)$ with
$\widehat\mu_\alpha(\Lambda_\alpha)=1$, namely
$\widehat\mu_\alpha=\mu_\alpha\circ h_\beta^{-1}$, it is not a periodic orbit
measure,
and there exists $\delta>0$ such that all $\psi\in C^{0,\gamma}(I)$ with
$|\psi-\widehat F|_\gamma<\delta$ satisfy
\[
  \Mmax(U_\beta,\psi)=\Mmax(T_\beta,\psi)=\{\widehat\mu_\alpha\} .
\]
\end{thmx}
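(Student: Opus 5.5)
We will deduce Theorem \ref{thm:E} from Theorem \ref{thm:C} by transferring through the coding map $h_\beta$, using the comparison inequality between distances in $Y_\alpha$ and in $I$ (Lemma \ref{lem:key}) announced in the introduction. Write $\pi:=h_\beta|_{Y_\alpha}$. The first step is to check that $\pi$ semiconjugates $\sigma$ on $Y_\alpha$ to $U_\beta$ on a set carrying all invariant measures, with $U_\beta$-invariant measures lifting to $\sigma$-invariant measures. Since $d^*_\beta(1)=\bfa$, the upper $\beta$-expansion of $1$ is $\bfa$, so $\pi(\sigma^n\bfa)=U_\beta^n(1)$. Hence $\Lambda_\alpha=\pi(X_\alpha)$, and $\widehat\mu_\alpha=\mu_\alpha\circ\pi^{-1}$ is the unique invariant measure on $\Lambda_\alpha$. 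Here we use that $\pi$ is injective off a countable set of points with two codings, none carrying an invariant measure supported on $\Lambda_\alpha$ other than via $X_\alpha$; since $X_\alpha$ is minimal and infinite, $\widehat\mu_\alpha$ is non-atomic and hence not periodic. We also note $\widehat\Phi(t):=(\beta-1)t$, since $\Phi(x)=x_0=\beta\pi(x)-\pi(\sigma x)$. It follows that $\Phi\circ$ and $(\beta-1)\pi$ differ by the coboundary $\pi\circ\sigma-\pi$, so $\int\widehat F\circ\pi\,d\nu$ agrees with $\int(\Phi-\widehat\Psi\circ\pi)\,d\nu$ for every invariant $\nu$.

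The second step compares $\widehat\Psi(t)=\operatorname{dist}(t,\Lambda_\alpha)^\gamma$ with $\Psi_\theta$ for a suitable $\theta$ (say $\theta=\beta^{-\gamma}$). Using Lemma \ref{lem:key}, we aim for two bounds. The first is $|\pi(x)-\pi(y)|^\gamma\le C\,d_\theta(x,y)$, which is immediate from the geometric series. This shows that any $\psi\in C^{0,\gamma}(I)$ pulls back to $\psi\circ\pi\in\lip(Y_\alpha,d_\theta)$ with $|\psi\circ\pi|_{\lip}\le C|\psi|_\gamma$. The second is a reverse-type bound $\operatorname{dist}(\pi(x),\Lambda_\alpha)^\gamma\ge c\,d_\theta(x,X_\alpha)$, which is what Lemma \ref{lem:key} should supply. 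With these, $\widehat F\circ\pi\le\Phi-c\Psi+(\text{coboundary})$ on $Y_\alpha$, with equality on $X_\alpha$. Rerunning the argument of Theorem \ref{thm:C} with $\Psi$ replaced by $c\Psi$ then gives
\[
\int\widehat F\circ\pi\,d\mu_\alpha-\int\widehat F\circ\pi\,d\nu\ \ge\ c'\,W_\theta(\nu,\mu_\alpha)
\]
for all $\nu\in\m(Y_\alpha,\sigma)$. This rerun uses Theorem \ref{thm:D} and the quantitative maximality of $\Phi$. By the stability criterion Lemma \ref{lem:stability}, every $\psi$ with $|\psi-\widehat F|_\gamma<\delta:=c'/C$ then has $\mu_\alpha$ as its unique maximizing measure among the lifts.

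The third step pushes this down to $U_\beta$ and $T_\beta$. Every $U_\beta$-invariant measure is $\nu\circ\pi^{-1}$ for some $\nu\in\m(Y_\alpha,\sigma)$, so $\Mmax(U_\beta,\psi)=\{\widehat\mu_\alpha\}$ follows. For $T_\beta$, the maps $T_\beta$ and $U_\beta$ differ only on the countable set of discontinuities, that is, preimages of $0$. A $T_\beta$-invariant measure either coincides with a $U_\beta$-invariant one or gives mass to the orbit of $0$, i.e.\ it has an atom at the fixed point $0$. The measure $\delta_0$ is also $U_\beta$-invariant, so the supremum over $\m(I,T_\beta)$ equals that over $\m(I,U_\beta)$. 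Moreover, $\widehat\mu_\alpha$ is $T_\beta$-invariant because it gives no mass to the countable set where the maps differ, so $\Mmax(T_\beta,\psi)=\{\widehat\mu_\alpha\}$ as well. The main obstacle is the reverse inequality in the second step. Points of $I$ close to $\Lambda_\alpha$ may have codings far from $X_\alpha$ in $d_\theta$, because of the discontinuities of $\pi^{-1}$, so the constant $c$ must be extracted carefully from Lemma \ref{lem:key}. If only a weaker comparison is available, we would use the fact that the argument of Theorem \ref{thm:C} needs $\Psi$-control only up to a multiplicative constant.
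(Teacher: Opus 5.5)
Your overall architecture matches the paper's: pull back through $h_\beta$ with $\theta=\beta^{-\gamma}$, use $\Phi=(\beta-1)h_\beta+h_\beta-h_\beta\circ\sigma$, compare with Theorem~\ref{thm:C}, and finish with the Wasserstein perturbation argument and the correspondence of invariant measures.

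The gap is the ``reverse-type bound'' in your second step. The pointwise inequality $\operatorname{dist}(h_\beta(x),\Lambda_\alpha)^\gamma\ge c\,d_\theta(x,X_\alpha)$ is false for every $c>0$. Take $x=10^\infty$. Then $x\notin X_\alpha$ (otherwise $0^\infty=\sigma x\in X_\alpha$), so $\Psi(x)>0$. But $h_\beta(10^\infty)=1/\beta=h_\beta(0\bfa)$ with $0\bfa\in X_\alpha$, so the left-hand side vanishes. Lemma~\ref{lem:key} does not assert such a bound. Neither a careful choice of $c$ nor your fallback of controlling $\Psi$ up to a multiplicative constant can produce it, because the failure is qualitative: it is caused by the points where $h_\beta$ is not injective. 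So your derived inequality $\widehat F\circ h_\beta\le\Phi-c\Psi+(\text{coboundary})$ is unsupported as stated.

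What is true, and what Lemma~\ref{lem:key} proves, is a bound on average over invariant measures: $\int\Psi\,d\nu\le C_K\int\rho\circ h_\beta\,d\nu$ for all $\nu\in\m(Y_\alpha,\sigma)$, where $\rho:=\operatorname{dist}(\cdot,\Lambda_\alpha)^\gamma$. The missing idea is to charge the bad points to a later iterate.

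\begin{itemize}
\item Fix $N$ such that $0^N$ does not occur in $\bfa$. Pick $z\in X_\alpha$ with $h_\beta(z)$ nearest to $h_\beta(x)$, and let $\ell$ be the first index with $x_\ell\ne z_\ell$.
\item Either $|h_\beta(x)-h_\beta(z)|\ge\beta^{-\ell-N-2}$, or $x_\ell>z_\ell$ and $x_{[\ell+1,\ell+N+2)}=0^{N+1}$.
\item In the second case $h_\beta(\sigma^{\ell+1}x)\le\beta^{-N-1}$, which lies at distance at least $\beta^{-N}-\beta^{-N-1}$ from $\Lambda_\alpha\subseteq[\beta^{-N},1]$.
\item Together these give $\Psi(x)\le\beta^{\gamma(N+2)}\rho(h_\beta(x))+A_K\sum_{k\ge1}\theta^{k-1}\rho(h_\beta(\sigma^kx))$, which integrates to the averaged bound.
\end{itemize}

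Every later step of your argument involves only integrals against invariant measures, so this suffices. Theorem~\ref{thm:C} can then be used directly rather than rerun: $\int\widehat F\,d\widehat\mu_\alpha-\int\widehat F\,d(\nu\circ h_\beta^{-1})=(\alpha-\int\Phi\,d\nu)+\int\rho\circ h_\beta\,d\nu\ge C_K^{-1}\bigl(\int F\,d\mu_\alpha-\int F\,d\nu\bigr)\ge(C_KC_\theta)^{-1}W_\theta(\nu,\mu_\alpha)$. Your pointwise formulation can be salvaged only by adding to the coboundary a multiple of the extra transfer function $\sum_{k\ge1}\theta^{k-1}S_k(\rho\circ h_\beta)$.

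The averaged bound also gives uniqueness of the invariant measure on $\Lambda_\alpha$ at once, since $\int\rho\circ h_\beta\,d\nu'=0$ forces $\nu'(X_\alpha)=1$. Your double-coding argument can also be completed: the exceptional codings are eventually $0^\infty$ or eventually $\bfa$, hence non-periodic, and so carry no invariant mass.

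Your treatment of $T_\beta$ reaches the right conclusion, but the clean reason is this: $T_\beta^{-1}\{0\}=\{0,1/\beta\}$ forces $\nu(\{1/\beta\})=0$ for every $T_\beta$-invariant $\nu$, so every such $\nu$ is already $U_\beta$-invariant.
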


Simple beta-numbers are emergent \cite[Cor.~5.4]{HHJL}, so by
\cite[Thm.~I]{HHJL} and Theorem \ref{thm:E}, both TPO and non-TPO occur for
$U_\beta$ among emergent parameters.

The proof of Theorem \ref{thm:E} (see Section \ref{sec:interval}) leads to
a general transfer principle between beta-shifts and beta-transformations.
If $\beta>1$ is not a simple beta-number, $\gamma\in(0,1]$ and
$\theta:=\beta^{-\gamma}$, then $g\mapsto g\circ h_\beta$ is an injective
bounded linear map from $C^{0,\gamma}(I)$ to $\lip(S_\beta,d_\theta)$, and
the maximizing measures of $g$ for $U_\beta$ are the images under $h_\beta$
of the maximizing measures of $g\circ h_\beta$ for $\sigma:S_\beta\to
S_\beta$ (see Section \ref{subsec:coding}).  However, the image of this map
has empty interior (see Section \ref{subsec:transfer}), so neither TPO for
$S_\beta$ with respect to $\lip(S_\beta,d_\theta)$, nor TPO for $U_\beta$
with respect to $C^{0,\gamma}(I)$, immediately implies the other. Nevertheless the
following holds.

\begin{thmx}\label{thm:F}
Suppose $\beta>1$ is not a simple beta-number. Let $\gamma\in(0,1]$ and
$\theta:=\beta^{-\gamma}$.  Let $K\subseteq S_\beta$ be a non-empty closed
set with $\sigma(K)\subseteq K$, such that for some $N\in\N$ the word $0^N$
does not occur in any element of $K$, and let $\Lambda:=h_\beta(K)$.  Then the
following are equivalent:
\begin{enumerate}
\item[(a)] some non-empty open $\cV\subseteq\lip(S_\beta,d_\theta)$ is such
that each $f\in\cV$ has a maximizing measure $\mu$ with $\mu(K)=1$;
\item[(b)] some non-empty open $\cV'\subseteq C^{0,\gamma}(I)$ is such that
each $g\in\cV'$ has some $\nu\in\Mmax(U_\beta,g)$ with $\nu(\Lambda)=1$.
\end{enumerate}
Moreover, $\cV$ and $\cV'$ can then be chosen such that all maximizing
measures of all $f\in\cV$ give full measure to $K$, and all members of
$\Mmax(U_\beta,g)=\Mmax(T_\beta,g)$, $g\in\cV'$, give full measure to
$\Lambda$.  If $K$ contains no periodic orbit and (a) or (b) holds, then
$S_\beta$ does not have TPO with respect to $\lip(S_\beta,d_\theta)$, and
neither $U_\beta$ nor $T_\beta$ has TPO with respect to $C^{0,\gamma}(I)$.
\end{thmx}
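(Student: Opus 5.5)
The proof goes through the coding map $h_\beta:S_\beta\to I$. Since $\beta$ is not a simple beta-number, $h_\beta$ is injective on $S_\beta$, intertwines $\sigma$ with $U_\beta$, and $g\mapsto g\circ h_\beta$ is a bounded injection $C^{0,\gamma}(I)\to\lip(S_\beta,d_\theta)$, since $|h_\beta(x)-h_\beta(y)|\le C\beta^{-\ell}$ when $x,y$ first differ at index $\ell$. Invariant measures then correspond, and $\Mmax(U_\beta,g)=h_\beta\bigl(\Mmax(\sigma,g\circ h_\beta)\bigr)$, as stated in Section \ref{subsec:coding}. The direction (b)$\Rightarrow$(a) is the harder one, since the image of $g\mapsto g\circ h_\beta$ has empty interior. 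The key input is a reverse inequality (Lemma \ref{lem:key}): because $0^N$ does not occur in $K$, for all $x\in S_\beta$ one should have
\[
d_\theta\bigl(x,K\bigr)\le C\,\operatorname{dist}\bigl(h_\beta(x),\Lambda\bigr)^\gamma .
\]
Heuristically, if $h_\beta(x)$ is within $\beta^{-n}$ of some $h_\beta(k)$ with $k\in K$, the absence of long zero blocks in $k$ forces $x$ and $k$ to agree on roughly $n-O(N)$ initial digits. The non-injectivity at carries is controlled because $k$ cannot end in long runs of zeros or of maximal digits.

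The plan is to pass through a penalty-function characterization. By the standard stability criterion (Lemma \ref{lem:stability}), together with compactness arguments as in \cite{HJXZ}, condition (a) is equivalent to the following: there exist $f_0\in\lip(S_\beta)$ and $\varepsilon>0$ such that, setting $f_1:=f_0-d_\theta(\cdot,K)$, every $f$ with $|f-f_1|_{\lip}<\varepsilon$ has all its maximizing measures supported on $K$. The penalty forces this, because any measure not supported on $K$ loses an amount comparable to its distance from $K$.

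For (a)$\Rightarrow$(b), take such an $f_1$ and try to realise it by interval functions. This is the main obstacle, because a general $f_0$ is not of the form $g\circ h_\beta$. I would first show that $f_0$ can be replaced on $K$ by something interval-realizable. Restricted to $K$, the map $h_\beta$ is bi-Lipschitz-type with respect to $d_\theta$ and $|\cdot|^\gamma$ by the key inequality, so $f_0\circ h_\beta^{-1}$ is $\gamma$-Hölder on $\Lambda$. I would extend it by McShane's formula to some $g_0\in C^{0,\gamma}(I)$ and set
\[
\hat g:=g_0-M\operatorname{dist}(\cdot,\Lambda)^\gamma
\]
with $M$ large. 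Then $\hat g\circ h_\beta$ agrees with $f_0$ on $K$ and is at most $f_0-(M/C)\,d_\theta(\cdot,K)+O(d_\theta(\cdot,K))$ off $K$. For $g$ close to $\hat g$ in $|\cdot|_\gamma$, the pulled-back function $g\circ h_\beta$ is then close to $\hat g\circ h_\beta$ in $\lip$. The penalty argument (measures charging the complement of $K$ lose a definite amount relative to measures on $K$) then gives that every maximizing measure of $g$ for $U_\beta$ lives on $\Lambda$. Existence of maximizing measures for $U_\beta$ follows by pulling back to the compact system $S_\beta$.

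For (b)$\Rightarrow$(a), I would run the same scheme in reverse. From (b) and the stability lemma on the interval side, one gets an interval penalty function. Pulling it back gives some $f_1=\hat g\circ h_\beta$ for which, by the key inequality, every maximizing measure lies on $K$, and this holds robustly under Lipschitz perturbations. Robustness follows because the penalty $\operatorname{dist}(h_\beta x,\Lambda)^\gamma\ge C^{-1}d_\theta(x,K)$ dominates any small-Lipschitz perturbation, by the usual lemma that a perturbation $\phi$ with $|\phi|_{\lip}<\varepsilon$ changes integrals by at most $\varepsilon W_\theta$.

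Next I would check that $\Mmax(T_\beta,g)=\Mmax(U_\beta,g)$ for $g\in\cV'$. The two maps agree off the countable preimage set of discontinuities, and measures on $\Lambda$ avoid the points where they differ, since $0^N$ does not occur in $K$ so orbits in $\Lambda$ do not hit $0$ or its preimages. Any $T_\beta$-invariant measure not charging those points is $U_\beta$-invariant, and the suprema coincide.

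The final claim then follows directly. If $K$ has no periodic orbit, every $f\in\cV$ (respectively $g\in\cV'$) has all its maximizing measures on $K$ (respectively $\Lambda$). Such a measure is never a single periodic orbit measure, so $\cP(S_\beta)$ misses the open set $\cV$, and likewise for $U_\beta$ and $T_\beta$. Hence none of the three systems has TPO.
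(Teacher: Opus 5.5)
Your overall plan for (a)$\Rightarrow$(b), a H\"older extension from $\Lambda$ plus an interval penalty $M\operatorname{dist}(\cdot,\Lambda)^\gamma$, is the paper's. However, there are two genuine gaps.

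\emph{The key inequality is false as stated.} The bound you rely on, $d_\theta(x,K)\le C\operatorname{dist}(h_\beta(x),\Lambda)^\gamma$ for all $x\in S_\beta$, does not hold, and neither does your claim that $h_\beta$ is injective on $S_\beta$: $h_\beta(10^\infty)=1/\beta=h_\beta(0\,d_\beta(1))$. In the Sturmian case $K=X_\alpha$, which satisfies the hypotheses, $0\bfa\in X_\alpha$ but $10^\infty\notin X_\alpha$. So at $x=10^\infty$ the right-hand side vanishes while the left does not. Your heuristic breaks down for two reasons. First, it is $x$, not the point of $K$, that may carry a long block of zeros right after the first disagreement. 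Second, a tail of a point of $K$ may be $d_\beta(1)$ itself, which is exactly what produces the carry. Lemma \ref{lem:key} proves two weaker statements: the two-point bound $d_\theta(x,y)\le\beta^{\gamma(N+1)}|h_\beta(x)-h_\beta(y)|^\gamma$ only for $x,y\in K$, and the averaged bound \eqref{eq:key} for invariant measures. The averaged bound rests on the following observation. Let $z\in K$ minimise $|h_\beta(x)-h_\beta(z)|$ and let $\ell$ be the first index with $x_\ell\ne z_\ell$. In the bad case, $x_\ell>z_\ell$ and $x$ continues with $0^{N+1}$, so $h_\beta(\sigma^{\ell+1}x)\le\beta^{-N-1}$ lies at distance at least $\beta^{-N}-\beta^{-N-1}$ from $\Lambda\subseteq[\beta^{-N},1]$. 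The penalty is therefore collected later along the orbit, giving $\Psi_K(x)\le\beta^{\gamma(N+2)}\rho_\Lambda(h_\beta(x))+A_K\sum_{k\ge1}\theta^{k-1}\rho_\Lambda(h_\beta(\sigma^kx))$, which integrates to \eqref{eq:key}. Your (a)$\Rightarrow$(b) argument only ever integrates against invariant measures, so it goes through once you substitute \eqref{eq:key} for the pointwise bound. The same averaged bound is also what justifies identifying ``$H_\beta(\nu)(\Lambda)=1$'' with ``$\nu(K)=1$''. You need that identification to translate between (a) and (b) at all, since $h_\beta^{-1}(\Lambda)$ is in general strictly larger than $K$.

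\emph{The direction (b)$\Rightarrow$(a) is missing its key idea.} Pulling back $\varphi-M\operatorname{dist}(\cdot,\Lambda)^\gamma$ with $\varphi\in\cV'$ gives one symbolic function whose maximizing measures all live on $K$. Your robustness justification does not work, though. A small Lipschitz perturbation $\phi$ also changes the function on $K$, where the penalty vanishes. Condition (b) gives neither uniqueness nor a Wasserstein gap among measures on $\Lambda$. So the term $\int\phi\,d\nu-\int\phi\,d\mu\le\varepsilon W_\theta(\nu,\mu)$ cannot be absorbed: for $\nu$ near $K$ but far from the reference $\mu$, the penalty is small while $W_\theta(\nu,\mu)$ is not. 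You must instead realise the perturbed function on $K$ as the pull-back of a member of $\cV'$, that is, apply the extension trick to the perturbation itself.

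The paper does this as follows. Take $f:=\varphi\circ h_\beta-\Psi_K$, and for $g$ with $\|g-f\|_\lip$ small set $u:=g-f$. The two-point bound on $K$, McShane's theorem and truncation give $\bar u\in C^{0,\gamma}(I)$ with $\bar u\circ h_\beta=u$ on $K$ and $\|\bar u\|_\gamma\le\beta^{\gamma(N+1)}\|u\|_\lip$. Then $\psi:=\varphi+\bar u\in\cV'$, so $\psi\circ h_\beta$ has a maximizing measure on $K$. Comparing with a nearest point of $K$ gives $|u-\bar u\circ h_\beta|\le(1+\beta^{\gamma(N+1)})|u|_\lip\Psi_K$. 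Hence $g\le\psi\circ h_\beta-\tfrac12\Psi_K$, with $g=\psi\circ h_\beta$ on $K$, and every maximizing measure of $g$ gives full measure to $K$. Your interval penalty would also work here if combined with \eqref{eq:key}; the point is the extension of $u$.

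A smaller point concerns $\Mmax(T_\beta,g)=\Mmax(U_\beta,g)$. Your reason is off: for example $1/\beta=h_\beta(0\bfa)\in\Lambda$ is a $T_\beta$-preimage of $0$. What is true is that no invariant measure charges the points $c/\beta$, and the equality is already part of (C2).
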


We shall apply Theorem \ref{thm:F} with $K$ equal to the Markov boundary
of $S_\beta$, a notion introduced by Thomsen \cite{Thomsen06}.  For a
subshift $X$ and $w\in\w(X):=\bigcup_{n\ge0}\w_n(X)$, let
$F_X(w):=\{v\in\w(X):wv\in\w(X)\}$ denote the \emph{follower set} of $w$.
The \emph{Markov boundary} $\partial_MX$ of $X$ is the set of those
$x\in X$ such that, for every word $w$ occurring in $x$, the set
$\{F_X(uw):uw\in\w(X)\}$ is infinite (see \cite[Def.~8.7]{HJXZ}).  Roughly
speaking, along points of $\partial_MX$ the admissible continuations of a
word are never determined by a bounded portion of what precedes it.  The
Markov boundary is a closed shift-invariant subset of $X$, which is empty
if and only if $X$ is sofic \cite[Lem.~2.1, Thm.~2.3]{Thomsen06}.
Moreover, the only possible obstruction to TPO for $X$ is located in
$\partial_MX$: an open dense subset of $\lip(X)$ 
consists of functions
which either have the periodic optimization property, or have a maximizing
measure supported by $\partial_MX$ \cite[Thm.~9.13]{HJXZ}, so that $X$ has TPO whenever
$\partial_MX$ has TPO \cite[Thm.~1.3]{HJXZ}.

A beta-shift $S_\beta$ is sofic if and only if $\beta$ is a beta-number
\cite{Bertrand86,blanchard}, and otherwise $\partial_MS_\beta$ is the
$\omega$-limit set of $d^*_\beta(1)$, by Lemma \ref{lem:markov-boundary},
a version of a theorem of Thomsen \cite[Thm.~2.8]{Thomsen05} on
two-sided beta-shifts which are not of finite type.\footnote{Thomsen's
theorem identifies the natural extension of the $\omega$-limit set of
$d^*_\beta(1)$ with the set of points in which no synchronizing word occurs
(the \emph{derived shift space}), a word $w\in\w(X)$ being
\emph{synchronizing} if $F_X(uw)=F_X(w)$ whenever $uw\in\w(X)$.  This set
contains the Markov boundary, and for sofic beta-shifts which are not of
finite type it is a periodic orbit \cite[Prop.~2.6]{Thomsen05}, whereas the
Markov boundary is empty.}  Applying Theorem \ref{thm:F} with
$K=\partial_MS_\beta$, and combining with Theorem \ref{thm:nonemergent}
below, leads to Corollary \ref{cor:G}, which shows that TPO for the
beta-shift $S_\beta$ is equivalent to TPO for $U_\beta$, and, unless
$\beta$ is a simple beta-number, to TPO for $T_\beta$.

\subsection{TPO for general parameters}\label{subsec:general}

By \cite[Thm.~J]{HHJL}, $T_\beta$ and $U_\beta$ have TPO with respect to
$C^{0,\gamma}(I)$, for all $\gamma\in(0,1]$, whenever $\beta>1$ is
non-emergent, and the set of emergent parameters has zero Lebesgue measure
and is meagre \cite[Cor.~5.7]{HHJL}.  The analogous statement holds for
beta-shifts:

\begin{thmx}[\textbf{TPO for non-emergent beta-shifts}]\label{thm:nonemergent}
Let $\beta>1$ be non-emergent.  Then the beta-shift $S_\beta$ has TPO
with respect to $\lip(S_\beta,d_\theta)$, for all $\theta\in(0,1)$.
\end{thmx}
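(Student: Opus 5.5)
The plan is to reduce to the Markov boundary and use the result of \cite{HJXZ} that TPO for $\partial_M X$ implies TPO for $X$ \cite[Thm.~1.3]{HJXZ}, together with \cite[Thm.~9.13]{HJXZ}. We split according to whether $\beta$ is a beta-number.

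\emph{Case 1: $\beta$ is a beta-number.} Then $S_\beta$ is sofic \cite{Bertrand86,blanchard}, so $\partial_M S_\beta=\emptyset$ \cite[Thm.~2.3]{Thomsen06}. By \cite[Thm.~9.13]{HJXZ}, an open dense set of Lipschitz functions consists of functions that either have the periodic optimization property or have a maximizing measure supported on $\partial_M S_\beta$. The second alternative is impossible here, so TPO holds. (Alternatively, since the empty set vacuously has TPO, \cite[Thm.~1.3]{HJXZ} applies directly.)

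\emph{Case 2: $\beta$ is not a beta-number.} By Lemma \ref{lem:markov-boundary}, $\partial_M S_\beta$ equals the $\omega$-limit set $\Omega$ of $d^*_\beta(1)$. Since $\beta$ is non-emergent, the orbit closure $\overline{\{\sigma^n d^*_\beta(1)\}}$ is not minimal \cite[Prop.~5.3]{HHJL}. The key step is to show that $\Omega$ is itself a subshift with TPO, for example a subshift of finite type or a sofic/synchronized shift, so that \cite{Con16} or the results of \cite{HJXZ} apply. My first attempt would go through the structure of $\Omega$. Every point $y\in\Omega$ satisfies $\sigma^j y\le_\lex d^*_\beta(1)$, so $\Omega\subseteq S_\beta$. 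Non-minimality of the orbit closure, combined with the fact that $d^*_\beta(1)$ is not eventually periodic, should give that $\Omega$ contains a point $z$ with $\max_\lex\Omega=z<_\lex d^*_\beta(1)$ strictly. Then $\Omega$ is contained in the beta-shift $S_{\beta'}$ with $d^*_{\beta'}(1)=z$, with $\beta'<\beta$.

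The obstacle is that $\Omega$ need not be a beta-shift, so Case 1 cannot simply be quoted. I would therefore argue differently. Take $f$ in a dense set, and use \cite[Thm.~9.13]{HJXZ} to assume that either $f$ has the periodic optimization property or some maximizing measure $\mu$ has $\mu(\Omega)=1$. In the second case, I would perturb $f$ so as to produce a periodic orbit in $S_\beta$ that beats $\mu$. The idea is that points of $\Omega$ lie strictly below $d^*_\beta(1)$ in a uniform sense: there is a prefix length $n$ such that no point of $\Omega$ begins with the length-$n$ prefix of $d^*_\beta(1)$. A neighbourhood of $\Omega$ therefore lies in the SFT $S^{(n)}:=\{x:\sigma^j x$ avoids words exceeding that prefix$\}\subseteq S_\beta$. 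By \cite{Con16}, TPO holds on $S^{(n)}$, and we can apply the Yuan--Hunt/Contreras perturbation there. Adding a small Lipschitz multiple of $-d_\theta(\cdot,S^{(n)})$ and a Contreras perturbation yields a unique periodic maximizing measure, which lies in $S^{(n)}$ and is also maximizing on $S_\beta$.

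To justify this, I would show that a maximizing measure for the penalized function on $S_\beta$ must be supported in $S^{(n)}$; this would follow from a stability estimate of the type in Lemma \ref{lem:stability}. The main difficulty I expect is establishing the uniform gap (the integer $n$) from non-emergence. This requires showing that $\Omega$ does not accumulate on $d^*_\beta(1)$ itself: if it did, then $d^*_\beta(1)\in\Omega$ and the orbit closure equals $\Omega$. I would then need to deduce minimality, or else handle the case where $\Omega\ni d^*_\beta(1)$ but $\Omega$ is not minimal, by an inductive argument on a smaller minimal piece that is again periodic or non-emergent.
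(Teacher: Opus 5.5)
There is a genuine gap in Case~2. Your Case~1 is fine: a non-emergent beta-number is non-simple, so $S_\beta$ is sofic, $\partial_MS_\beta=\emptyset$, and \cite[Thm.~9.13]{HJXZ} gives TPO.

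The problem in Case~2 is the ``uniform gap'', i.e.\ that no point of $\Omega=\partial_MS_\beta$ begins with a long prefix of $d^*_\beta(1)$. This is false for most non-emergent $\beta$. Non-emergence only says that the orbit closure of $d^*_\beta(1)$ is not minimal; it does not stop $d^*_\beta(1)$ from being recurrent. For Lebesgue-almost every $\beta$ the orbit of $d_\beta(1)$ is dense in $S_\beta$ \cite[Thm.~E]{Sch97}, so $\Omega=\partial_MS_\beta=S_\beta\ni d^*_\beta(1)$. In that case:
\begin{itemize}
\item your claim $\max_{\lex}\Omega<_{\lex}d^*_\beta(1)$ fails;
\item no SFT $S^{(n)}\subsetneq S_\beta$ contains $\Omega$;
\item the dichotomy of \cite[Thm.~9.13]{HJXZ} is vacuous, since every function trivially has a maximizing measure carried by $\partial_MS_\beta=S_\beta$.
\end{itemize}
So reducing to the Markov boundary gains nothing for typical $\beta$. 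Your fallback, ``an inductive argument on a smaller minimal piece'', points the right way but skips the real difficulty. A maximizing measure $\mu$ carried by $\Omega$ has no reason to be carried by a minimal set, and you give no mechanism for replacing it by one that is.

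The missing idea is subordination.
\begin{itemize}
\item The sub-action of Lemma~\ref{lem:subaction} is bounded. This comes from the beta-shift structure: if $vb0^\infty\in S_\beta$ with $b\ge1$ then $(v(b-1))^\infty\in S_\beta$, combined with the one-sided lexicographic Lipschitz inequality.
\item Hence the Birkhoff sums of $g-\int g\,d\mu$ are bounded above. By Morris's theorem, every invariant measure supported in $\mathrm{supp}(\mu)$ is then maximizing. In particular some maximizing measure is carried by a minimal set $K\subseteq\mathrm{supp}(\mu)$.
\item Non-emergence applies to minimal sets: if $d^*_\beta(1)\in K$, its orbit closure would equal $K$ and be minimal. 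So $\max_{\lex}K<_{\lex}d^*_\beta(1)$ strictly.
\item Since $d^*_{\beta'}(1)$ increases to $d^*_\beta(1)$ as simple beta-numbers $\beta'$ increase to $\beta$, we get $K\subseteq S_{\beta'}$ for some simple $\beta'<\beta$, which is an SFT.
\end{itemize}
Thus the countable family of these SFTs is maximizable, and TPO follows from \cite[Thm.~6.13]{HJXZ}.

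Your final perturbation step also could not be justified by Lemma~\ref{lem:stability} as you propose. No Wasserstein gap inequality is available for $f-\varepsilon\,d_\theta(\cdot,S^{(n)})$. Once you extend a Contreras perturbation $h$ off $S^{(n)}$, you have no control over $\int h\,d\nu$ for measures $\nu$ not carried by $S^{(n)}$. That passage from TPO on subsystems to TPO on $S_\beta$ is exactly what the maximizable-family theorem of \cite{HJXZ} supplies.
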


Theorem \ref{thm:nonemergent} shows in particular that the set of
$\beta>1$ such that $S_\beta$ has TPO with respect to
$\lip(S_\beta,d_\theta)$ for all $\theta\in(0,1)$ has full Lebesgue measure
and is residual.  Its proof (see Section \ref{subsec:nonemergent}) follows
the strategy used in \cite{HHJL} for $U_\beta$ (see \cite[Lem.~5.23]{HHJL}),
namely to find maximizing measures supported by suitable subsystems; for
beta-shifts these subsystems can be taken to be the beta-shifts
$S_{\beta'}$ of simple beta-numbers $\beta'<\beta$, which are shifts of
finite type, so that the structural theory of \cite{HJXZ} applies directly.

Combining Theorem \ref{thm:nonemergent} with the transfer principle of
Theorem \ref{thm:F}, we obtain the following comparison between typical
periodic optimization for beta-shifts and  beta-transformations.

\begin{corx}[\textbf{TPO for beta-shifts and beta-transformations}]
\label{cor:G}
Let $\beta>1$ and $\gamma\in(0,1]$, and equip the beta-shift
$S_\beta$ with the metric $d_{\beta^{-\gamma}}$.  Then $S_\beta$ has TPO if
and only if $U_\beta$ has TPO with respect to $C^{0,\gamma}(I)$.  If $\beta$
is not a simple beta-number, then these properties are also equivalent to
$T_\beta$ having TPO with respect to $C^{0,\gamma}(I)$; if $\beta$ is a
simple beta-number, then $S_\beta$ and $U_\beta$ have TPO, whereas $T_\beta$
does not.
\end{corx}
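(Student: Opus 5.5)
Split according to whether $\beta$ is emergent, a beta-number, or neither.

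\emph{Non-emergent $\beta$.} Theorem~\ref{thm:nonemergent} gives TPO for $S_\beta$ with respect to $\lip(S_\beta,d_\theta)$ for every $\theta$, in particular $\theta=\beta^{-\gamma}$. Moreover \cite[Thm.~J]{HHJL} gives TPO for both $T_\beta$ and $U_\beta$ in $C^{0,\gamma}(I)$. So all three properties hold. Simple beta-numbers are emergent \cite[Cor.~5.4]{HHJL}, so this case involves no simple beta-numbers.

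\emph{Beta-numbers.} If $\beta$ is a beta-number, $S_\beta$ is sofic \cite{Bertrand86,blanchard}, so $\partial_MS_\beta=\emptyset$. By \cite[Thm.~1.3]{HJXZ}, or directly by \cite[Thm.~9.13]{HJXZ}, $S_\beta$ then has TPO. Also $U_\beta$ has TPO by \cite[Thm.~I]{HHJL}, so the first equivalence holds.

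\emph{Non-simple beta-numbers.} For the $T_\beta$ statement when $\beta$ is a beta-number but not simple, we must also get TPO for $T_\beta$. I would apply the transfer principle of Theorem~\ref{thm:F} in the contrapositive form used below. A cleaner route is to note that for non-simple $\beta$, $\Mmax(T_\beta,g)=\Mmax(U_\beta,g)$ for all $g$ in an open dense set (or all $g$), since $T_\beta$ and $U_\beta$ differ only on the countable set of preimages of discontinuities, which carry no invariant measure except through the orbit of $1$. I expect \cite{HHJL} contains this. Failing that, I would show it from the coding $h_\beta$ of Section~\ref{subsec:coding}.

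\emph{Simple beta-numbers.} Here $S_\beta$ is of finite type and has TPO by \cite{Con16}, and $U_\beta$ has TPO by \cite[Thm.~I]{HHJL}. $T_\beta$ fails TPO because a nonempty open set of functions has no maximizing measure \cite[Rem.~5.21]{HHJL}.

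\emph{Emergent $\beta$ that are not beta-numbers.} These are in particular not simple. Set $K:=\partial_MS_\beta$, the $\omega$-limit set of $d^*_\beta(1)$ (Lemma~\ref{lem:markov-boundary}). This is a closed, invariant and, by emergence, minimal set, equal to the orbit closure of $d^*_\beta(1)$. The hypothesis of Theorem~\ref{thm:F} that $0^N$ does not occur in $K$ holds: otherwise, by minimality, $K$ would contain $0^\infty$ and so equal $\{0^\infty\}$, forcing $d^*_\beta(1)=0^\infty$, which is impossible. Also $K$ is infinite and minimal, so it contains no periodic orbit.

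Now suppose $S_\beta$ fails TPO. By \cite[Thm.~9.13]{HJXZ}, some nonempty open set of $f$ each has a maximizing measure supported on $K$. Indeed, the open dense set given by that theorem, intersected with the nonempty open complement of $\overline{\cP(S_\beta)}$, gives such a set. This is (a) of Theorem~\ref{thm:F}, so neither $U_\beta$ nor $T_\beta$ has TPO.

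Conversely, suppose $U_\beta$ (or $T_\beta$) fails TPO. By \cite[Thm.~5.22]{HHJL}, the failure is located in $\mathrm{Crit}^\gamma(\beta)$, which should provide a nonempty open set of $g$ with a maximizing measure supported on $\Lambda=h_\beta(K)$, the closure of the $U_\beta$-orbit of $1$. This is (b), and Theorem~\ref{thm:F} then shows $S_\beta$ lacks TPO.

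\emph{Main obstacle.} The hardest step is this last one: extracting from \cite[Thm.~5.22]{HHJL} an \emph{open} set of functions, rather than merely a non-dense set of TPO functions, on which the maximizing measure lives on $\Lambda$. I would first try to mirror the argument for $S_\beta$: intersect the open dense set from \cite{HHJL} with the complement of the closure of the periodic-optimization functions. The remaining checks are then that $T_\beta$ and $U_\beta$ share maximizing measures there, which Theorem~\ref{thm:F} provides, and that $\Lambda=h_\beta(K)$.
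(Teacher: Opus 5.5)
\textbf{Verdict: genuine gap.} Your case split is sound, and so are the direction ``$S_\beta$ fails TPO $\Rightarrow$ $U_\beta,T_\beta$ fail TPO'' and the $T_\beta$ reduction via (C2). Intersecting the open dense set of \cite[Thm.~9.13]{HJXZ} with the complement of $\overline{\cP(S_\beta)}$ is a legitimate substitute for the paper's appeal to \cite[Cor.~10.1]{HJXZ}.

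The gap is the converse, ``$U_\beta$ fails TPO $\Rightarrow$ $S_\beta$ fails TPO'', which you leave at ``should provide''. Your suggested route intersects an open dense set with the complement of the closure of the set of functions having the periodic optimization property. That presupposes this set is \emph{not dense} in $C^{0,\gamma}(I)$, which does not follow from the definition. For $U_\beta$, failure of TPO only says there is no \emph{open} dense set of such functions. For subshifts \cite[Cor.~6.6]{HJXZ} makes the two notions equivalent, but nothing you invoke does this for interval maps.

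The paper sidesteps the issue with the locking set. $\mathrm{Lock}^\gamma(U_\beta)$ is open and consists of functions with the periodic optimization property, so failure of TPO directly gives that it is not dense. By \cite[Thm.~5.22]{HHJL}, $C^{0,\gamma}(I)=\mathrm{Crit}^\gamma(\beta)\cup\overline{\mathrm{Lock}^\gamma(U_\beta)}$. Hence the non-empty open set $C^{0,\gamma}(I)\setminus\overline{\mathrm{Lock}^\gamma(U_\beta)}$ lies inside $\mathrm{Crit}^\gamma(\beta)$.

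A second step is also missing. Membership of $\mathrm{Crit}^\gamma(\beta)$ only says that the $U_\beta$-orbit of $1$ is maximizing. Since $U_\beta$ is discontinuous, weak$^*$ limits of empirical measures along that orbit need not be $U_\beta$-invariant. So a maximizing measure giving full mass to $\Lambda=h_\beta(K)$ is not automatic.

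The paper obtains it from \cite[Lem.~5.10]{HHJL}, working on the symbolic side where $\sigma$ is continuous. Each such $g$ has a maximizing measure $H_\beta(\mu)$ with $\mu$ quasi-generated by the $\sigma$-orbit of $d_\beta(1)$. Hence $\mu(K)=1$, where $K=\omega_\beta$ is the $\omega$-limit set of $d_\beta(1)$, and $H_\beta(\mu)(\Lambda)=1$. With these two ingredients, condition (b) of Theorem~\ref{thm:F} holds for $K$, and the rest of your argument goes through.
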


For non-emergent $\beta$ both $S_\beta$ and $U_\beta$ have TPO, by Theorem
\ref{thm:nonemergent} and \cite[Thm.~J]{HHJL}, so the substance of
Corollary \ref{cor:G} lies in the emergent case.  If $\beta$ is emergent
and not a beta-number, then the Markov boundary of $S_\beta$ is the orbit
closure of $d_\beta(1)$, and the structural theorems of \cite{HHJL} and
\cite{HJXZ} recalled in Section \ref{subsec:transformations} identify it as
the only possible obstruction to TPO in each of the two settings; Theorem
\ref{thm:F}, applied with $K$ equal to this Markov boundary, then shows that
TPO fails in one setting if and only if it fails in the other (see Section
\ref{subsec:transfer}).

\subsection{Parameters without TPO, and zero-entropy optimization}\label{subsec:parameters}

By Theorems \ref{thm:A} and \ref{thm:E}, the parameters $\beta>1$ such
that $d_\beta(1)$ is a Sturmian sequence form an uncountable set (one
$\beta$ for each irrational $\alpha\in(0,1)$), for which none of $S_\beta$,
$T_\beta$ and $U_\beta$ has TPO.  The following result shows that,
nevertheless, in each of these three settings the parameters without TPO
form a set of zero packing dimension.

\begin{thmx}[\textbf{Parameters without TPO}]\label{thm:dimension}
Each of the following three sets has packing dimension zero, and hence
Hausdorff dimension zero:
\begin{enumerate}
\item[(i)] the set of $\beta>1$ such that, for some $\theta\in(0,1)$, the beta-shift
$S_\beta$ does not have TPO with respect to $\lip(S_\beta,d_\theta)$;
\item[(ii)] the set of $\beta>1$ such that, for some
$\gamma\in(0,1]$, the upper beta-transformation $U_\beta$ does not have TPO with respect to
$C^{0,\gamma}(I)$;
\item[(iii)] the set of $\beta>1$ such that, for some
$\gamma\in(0,1]$, the beta-transformation $T_\beta$ does not have TPO with respect to
$C^{0,\gamma}(I)$.
\end{enumerate}
\end{thmx}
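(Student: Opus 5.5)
By Corollary \ref{cor:G}, sets (i) and (ii) coincide for each matching pair $\gamma$, $\theta=\beta^{-\gamma}$. Since (i) quantifies over all $\theta$, I will instead show directly that each set is contained in one fixed set $E$ of packing dimension zero; packing dimension dominates Hausdorff dimension, so the Hausdorff statement follows. Set (iii) differs from (ii) only by the simple beta-numbers, which form a countable set, and countable sets have packing dimension zero.

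\textbf{Reduction to emergent parameters.} By Theorem \ref{thm:nonemergent} and \cite[Thm.~J]{HHJL}, every parameter in (i)--(iii) is emergent. Beta-numbers have TPO for $S_\beta$ and $U_\beta$ (\cite[Thm.~I]{HHJL} and Corollary \ref{cor:G}). Hence, up to a countable set, we may assume $\beta$ is emergent and not a beta-number. Then $\partial_M S_\beta$ is the orbit closure $X$ of $d_\beta(1)$ (Lemma \ref{lem:markov-boundary}), and this orbit closure is minimal. By \cite[Thm.~9.13]{HJXZ}, failure of TPO means that the open dense set of functions which either have periodic optimization or have a maximizing measure supported on $\partial_M S_\beta$ is not contained, up to density, in $\cP$. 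Therefore some non-empty open set of Lipschitz functions each has a maximizing measure supported on $X$. For $U_\beta$ and $T_\beta$ I pass to this situation through Theorem \ref{thm:F}, taking $K=X$. Since $X$ is minimal and not $\{0^\infty\}$, some word $0^N$ is excluded from $K$, as Theorem \ref{thm:F} requires.

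\textbf{Robust measure has slow complexity.} Next I invoke the Yuan--Hunt-type result referred to in the introduction (Theorem \ref{thm:robust}). It says that the support of a measure maximizing for an open set of Lipschitz functions has at most $\exp(C\sqrt{n\log n})$ words of length $n$. Since $X$ is minimal, that support equals $X$. So $X$ has subexponential complexity $p_X(n)\le \exp(C\sqrt{n\log n})$, with $C$ depending on $\theta$ and on the open set. Let $E$ be the set of $\beta>1$ such that $p_{X_\beta}(n)=e^{o(n)}$ for the orbit closure $X_\beta$ of $d^*_\beta(1)$. All three sets lie in $E$ together with a countable set.

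\textbf{Packing dimension of $E$ (the main obstacle).} Restrict to $\beta\in[b,b+1]$ and take a countable union over $b$. The map $\beta\mapsto d^*_\beta(1)$ is increasing, and parameters whose expansions share the first $n$ digits lie in an interval of length $\lesssim\beta^{-n}\le b^{-n}$. To bound packing dimension I use upper box dimension on countably many pieces: packing dimension is at most $\sup_j \overline{\dim}_B E_j$ when $E=\bigcup_j E_j$. I decompose $E=\bigcup_{\epsilon}\bigcup_m E_{\epsilon,m}$, where
\[E_{\epsilon,m}:=\{\beta: p_{X_\beta}(n)\le e^{\epsilon n}\text{ for all }n\ge m\}.\]
The hardest point is counting the possible length-$n$ prefixes of $d^*_\beta(1)$ for $\beta\in E_{\epsilon,m}$; I need the count to be $e^{O(\epsilon n)}$. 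I would try to show that a prefix of length $n$ is determined by its length-$k$ subwords with $k=\lfloor\epsilon' n\rfloor$, by reading the sequence as a path in the Rauzy graph of order $k$ (at most $e^{\epsilon k}$ vertices) that follows at most about $n$ edges. The crude path count $\sim e^{\epsilon n}\cdot(\text{branching})^n$ is too large, so this needs care. The way out is to use a bound uniform in $k$ with $k\approx n$ directly: the prefix itself is a word of length $n$ in the language, and the number of admissible prefixes is at most the number of ways to choose which language occurs. For a fixed $\beta$ one has only $p(n)\le e^{\epsilon n}$ words, but this counts words, not languages, and that gap is the real obstacle. So instead I fall back on entropy. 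Since $p_X(n)\le e^{\epsilon n}$ for $n\ge m$, the length-$n$ prefix of $d^*_\beta(1)$ can be encoded block by block, with blocks of length $m$ chosen from the $e^{\epsilon m}$ words of length $m$ of $X$. That set of words is itself one of at most $2^{2^m}$ possibilities, which is a constant independent of $n$. Therefore the number of prefixes is at most $2^{2^m}e^{\epsilon n+m}$. This yields $\overline{\dim}_B E_{\epsilon,m}\le \epsilon/\log b$, and letting $\epsilon\to0$ gives $\dim_P E=0$.
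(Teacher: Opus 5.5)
\textbf{There is a genuine gap where you invoke Theorem~\ref{thm:robust}.}

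From \cite[Thm.~9.13]{HJXZ} you correctly derive that every $f$ in some non-empty open set $\mathcal V\subseteq\lip(S_\beta,d_\theta)$ has \emph{some} maximizing measure $\nu_f$ with $\nu_f(X)=1$. Theorem~\ref{thm:robust}, however, is about one fixed $\mu$ such that $\{f:\mu\in\Mmax(f)\}$ has non-empty interior. Its proof needs this. The identities $u_g\circ\sigma-u_g=q_g$, for all $g\in\mathcal F_k$ at once, must hold on a single set of full $\mu$-measure, so that the points $x(w)\in[w]_\Omega\cap\Omega_{k,n}$ exist. If different $g$ are maximized by different measures, that intersection may be empty.

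Your sentence ``Since $X$ is minimal, that support equals $X$'' does not bridge this. Minimality shows that each $\nu_f$ has support $X$, but not that the $\nu_f$ coincide. If $X$ carries several invariant measures, different functions in $\mathcal V$ may be maximized by different measures on $X$. Nothing in your argument stops $X$ from having uncountably many ergodic measures, so you also cannot extract a single robust measure by a Baire category argument over them.

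The missing ingredient is the unique ergodicity of $X=\Omega_\beta$. The paper obtains it in Lemma~\ref{lem:emergent}(ii) from a non-trivial structural result. If $\Omega_\beta$ were not uniquely ergodic, then $S_\beta$ would be a specimen shift space in the sense of \cite[Def.~10.12]{HJXZ}, and so would have TPO by \cite[Thm.~10.13]{HJXZ}. Only once $\Omega_\beta$ is uniquely ergodic is its invariant measure maximizing for every function in $\mathcal V$, so that Theorem~\ref{thm:robust} applies and gives zero entropy.

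The rest of your argument follows the paper's route. This includes the reduction to emergent parameters that are not beta-numbers, via Theorem~\ref{thm:nonemergent}, Corollary~\ref{cor:G} and countability of the beta-numbers. It also includes the dimension count, which is essentially Lemma~\ref{lem:zerodim} together with Lemma~\ref{lem:param-cylinder}. The paper fixes the block length and counts prefixes of length $jn$; you fix $m$ and count prefixes of length $n$.

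Two small corrections in that count, neither of which affects the bound $\overline{\dim}_B E_{\epsilon,m}\le\epsilon/\log b$:
\begin{itemize}
\item With digits in $\{0,\ldots,B-1\}$, the set $\w_m(X)$ is one of at most $2^{B^m}$ subsets, not $2^{2^m}$.
\item The final partial block contributes a factor $B^m$, not $e^m$.
\end{itemize}
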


Part (iii) of Theorem \ref{thm:dimension} sharpens the
result of \cite[Thm.~E]{HHJL} that $T_\beta$ has TPO for Lebesgue almost
every $\beta>1$.  By Theorem \ref{thm:nonemergent} and \cite[Thm.~J]{HHJL},
the three sets in Theorem \ref{thm:dimension} consist of emergent
parameters.
Theorem
\ref{thm:dimension} is proved by combining Theorem \ref{thm:nonemergent}, Corollary
\ref{cor:G}, the structural results of \cite{HHJL, HJXZ},
and the following theorem.

\begin{thmx}[\textbf{Sub-exponential complexity of robustly maximizing
measures}]\label{thm:robust}
Let $\beta>1$ and $\theta\in(0,1)$, and let $\mu\in\m(S_\beta,\sigma)$.
If the set of $f\in\lip(S_\beta,d_\theta)$ for which $\mu$ is a maximizing
measure has non-empty interior in $\lip(S_\beta,d_\theta)$, then there exists
$C>0$ such that
\[
  \bigl|\w_n(\mathrm{supp}(\mu))\bigr|\le\exp\bigl(C\sqrt{n\log n}\,\bigr)
  \qquad\text{for all }n\ge2 ,
\]
and in particular $\mathrm{supp}(\mu)$ has zero topological entropy.
\end{thmx}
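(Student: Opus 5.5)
Suppose the set of $f\in\lip(S_\beta,d_\theta)$ for which $\mu$ is maximizing contains a ball $B(f_0,\varepsilon)$. The plan is a Yuan--Hunt/Bousch type argument: if $\mathrm{supp}(\mu)$ had too many words, we would build a periodic orbit (or an invariant measure) that almost shadows $\mu$. A small Lipschitz perturbation would then make that competitor beat $\mu$, contradicting robustness.

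First, reduce to a quantitative gap. Perturb $f_0$ to $f_1:=f_0-\frac{\varepsilon}{2}d_\theta(\cdot,\mathrm{supp}(\mu))$, which stays in the ball. Then $\mu$ is maximizing for $f_1$, and by the standard stability argument (as in Lemma \ref{lem:stability}) we get, for every $\nu\in\m(S_\beta,\sigma)$,
\[
\int f_1\,d\mu-\int f_1\,d\nu\ \ge\ c\,W_\theta(\nu,\mu),
\]
where $c>0$ depends on $\varepsilon$. Indeed, for $|g|_{\lip}\le 1$ the function $f_1+\frac{\varepsilon}{2}g$ still has $\mu$ maximizing, which gives $\int f_1\,d\mu-\int f_1\,d\nu\ge\frac{\varepsilon}{2}\bigl(\int g\,d\nu-\int g\,d\mu\bigr)$; taking the supremum over $g$ gives the inequality. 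In particular every competitor $\nu$ whose $W_\theta$-distance to $\mu$ is small also pays a cost $\int d_\theta(\cdot,\mathrm{supp}\,\mu)\,d\nu$.

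Second, construct competitors from a long recurrent segment. Take a $\mu$-generic point $x\in\mathrm{supp}(\mu)$ and a return time $p$ with $d_\theta(\sigma^p x,x)\le\theta^{k}$ and $k$ large. The periodic point $y=(x_0\dots x_{p-1})^\infty$ need not lie in $S_\beta$. The key beta-shift fact to use is this: if $\sigma^jx\le_\lex d^*_\beta(1)$ for all $j$, then periodizing a word $w$ that occurs in $S_\beta$ is admissible provided $w$ is followed by something lexicographically small, for instance if we close with a block ending in $0$'s, or if we choose the return so that the continuation after the closing is below the corresponding segment of $d^*_\beta(1)$. Concretely, I would use the alternative competitor $y=(x_0\dots x_{p-1}0^m)^\infty$, which lies in $S_\beta$ for all $m\ge 0$ since appending zeros only decreases lexicographically. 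Its orbit stays $\theta^{\,\text{(distance to next zero-block)}}$-close to orbit segments of $x$ except near the inserted block. Bousch's argument for Yuan--Hunt then says that the gain of $\int f_1$ along the shadowing segment against the loss at the junction is controlled by the closing error $\theta^{k}$ and by the $W_\theta$ estimate. This yields a constraint: $\mathrm{supp}(\mu)$ cannot contain a word of length $n$ that recurs with a short gap, relative to $n$, without forcing a periodic competitor that beats $\mu$. Quantitatively, I expect the following: for each $n$, the minimal return time to $n$-cylinders along generic points is at least of order $\theta^{-n}$-independent polynomially, and more precisely the number of distinct $n$-words is bounded because any two occurrences of the same $n$-word at distance $p$ in $x$ give a competitor whose advantage is at least $c\,W_\theta$ minus $O(\theta^n)\cdot p^{-1}$.

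Third, turn this into the complexity bound $\exp(C\sqrt{n\log n})$. By pigeonhole, a window of length $L$ in $x$ containing more than $|\w_n|$ starting positions repeats an $n$-word with gap $p\le|\w_n|$. The competitor then has $W_\theta$-distance from $\mu$ bounded below by roughly $1/p$ (it is a periodic measure of period $\le p+m$ not equal to $\mu$, and $\mu$ is not carried by short orbits), while its cost deficit is bounded by $\theta^n/p$. Optimizing this tradeoff, with the $\exp(\sqrt{n\log n})$ arising from counting the partitions/concatenations of return words at scales $\sqrt{n/\log n}$, gives the bound. The main obstacle is precisely this counting step: making the admissibility-preserving periodization in $S_\beta$ compatible with the quantitative Bousch lower bound on $W_\theta(\nu,\mu)$. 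I would first try to mimic Bousch's proof of the Yuan--Hunt theorem, replacing closing by the zero-padding trick. Zero entropy then follows immediately, since $\sqrt{n\log n}/n\to0$.
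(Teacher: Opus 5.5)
\textbf{There is a genuine gap: the argument in Steps~2--3 does not work, and the complexity bound is never derived.} Your Step~1 is fine; it is Remark~\ref{rem:converse}.

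\emph{The Yuan--Hunt/Bousch mechanism is unavailable.} Bousch's proof bounds the loss $\int f_1\,d\mu-\int f_1\,d\nu$ of a shadowing periodic orbit by the closing error only because a Lipschitz sub-action exists. Then the Birkhoff sum of $q:=f_1-\int f_1\,d\mu$ along the recurrent segment is a difference of two nearby values of a Lipschitz function. On beta-shifts this fails exactly where it matters. For every $f$ in the open set $U$ of Theorem~\ref{thm:A}, a Lipschitz sub-action would make $f-\int f\,d\mu_\alpha$ a Lipschitz coboundary on $X_\alpha$. But on $X_\alpha$ we have $c_f=1+c_{f-F}\neq0$, by Theorem~\ref{thm:D} and \eqref{eq:c-bound}.

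\emph{Your padded competitor is too expensive.} The orbit of $(x_{[0,p)}0^m)^\infty$ pays $O(1)$ per period, at the junction and along the block $0^m$, where $q\approx q(0^\infty)<0$. So nothing makes its deficit smaller than order $1/p$. By Step~1 the deficit is at least $cW_\theta(\nu,\mu)$, which you estimate as order $1/p$.

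\emph{Your Step~3 estimates prove too much.} Deficit $\lesssim\theta^n/p$ together with $W_\theta(\nu,\mu)\gtrsim1/p$ would contradict the gap inequality of Step~1 for every non-periodic $\mu$, once $\theta^n$ is small. That is the Yuan--Hunt conclusion, which Theorem~\ref{thm:A} refutes for $\mu_\alpha$. A competitor argument of this kind therefore loses exactly what the theorem is about. Nothing in your sketch makes $|\w_n(\mathrm{supp}\,\mu)|$ enter, and the bound $\exp(C\sqrt{n\log n})$ is asserted rather than derived.

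\emph{The admissibility claim is also false.} The assertion that $(w0^m)^\infty\in S_\beta$ for all $m\ge0$ fails: with $d^*_\beta(1)=(1000)^\infty$ and $w=1$ it fails for $m\le2$.

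\textbf{The missing idea.} Use the open set to perturb $f$ in exponentially many independent directions, and read combinatorial information off sub-actions; no competitor measure is needed.

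The paper uses the sub-action $u_g(x):=\sup\{S_jq_g(z):j\ge0,\ \sigma^jz=x\}$ of Lemma~\ref{lem:subaction}. It is only Borel, but it has two key properties:
\begin{enumerate}
\item[(a)] it is bounded, via the periodic points $(v(b-1))^\infty$;
\item[(b)] it satisfies the one-sided inequality $u_g(y)-u_g(x)\le\theta(1-\theta)^{-1}|g|_\lip\,d_\theta(x,y)$ for $x\le_\lex y$, because a word that can precede $y$ in $S_\beta$ can precede every lexicographically smaller point.
\end{enumerate}

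The counting then runs as follows.
\begin{enumerate}
\item[(1)] Apply this to $f$ and to all $f+\eta_k\ind_{[v]}$, $v\in\w_k(\mathrm{supp}\,\mu)$, with $\eta_k$ a small multiple of $\theta^k$; all of these keep $\mu$ maximizing.
\item[(2)] Approximate monotonicity on cylinders of length $k+\ell$ gives a partition of $\mathrm{supp}\,\mu$ into $\exp(O(k))$ pieces. On each piece every $u_g$ oscillates by less than $\eta_k/4$.
\item[(3)] The $\mu$-a.e.\ identity $S_pq_g=u_g\circ\sigma^p-u_g$, with $p=n-k+1$, then applies to length-$n$ words. Two such words whose chosen typical points start in the same piece, and land after $p$ steps in the same piece, contain every $k$-word equally often.
\item[(4)] Each such class has at most $n^{n/k}\lceil\beta\rceil^k$ members. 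Taking $k\approx\sqrt{n\log n}$ balances this against the $\exp(O(k))$ classes.
\end{enumerate}
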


For uniformly hyperbolic systems, Yuan \& Hunt \cite[Thm.~4.6]{YH99} showed
that if an orbit which is not periodic, and which is measure-recurrent (that
is, it generates an invariant measure whose support contains it), is optimal
for a Lipschitz function, then there are arbitrarily small Lipschitz
perturbations of this function for which it is no longer optimal.  In terms
of invariant
measures (see Bousch \cite{Bousch2008}, who gave a new proof of this
theorem), the result is that if the set of Lipschitz functions for which a
given invariant measure is maximizing has non-empty interior, then this
measure is periodic.  By Theorem \ref{thm:A}, this
conclusion fails for beta-shifts, even those with specification, and Theorem \ref{thm:robust} shows that a weaker conclusion
persists: the support of such a measure has zero topological entropy.  In
fact such a measure is either periodic, or the unique
invariant measure on the orbit closure of $d_\beta(1)$, which is then
minimal, so that $\beta$ is emergent (see Remark \ref{rem:robust}).
Theorem \ref{thm:robust} may be compared with the suggestion of Hunt \&
Ott \cite{HO96,HO96b} recalled in the Introduction (cf.~\cite[p.~1208]{YH99}).

We shall say that a dynamical system has \emph{typical zero-entropy
optimization} with respect to a Banach space of real-valued functions if
some open dense subset of this space consists of functions having a unique
maximizing measure $\mu$, such that all invariant measures supported by
$\mathrm{supp}(\mu)$ have zero entropy.  
For a continuous map, and in particular
for a subshift, this condition on $\mu$ means that $\mathrm{supp}(\mu)$ has
zero topological entropy, by the variational principle \cite[Ch.~8]{Wal82}.
As the only invariant measure supported by a periodic orbit
is the corresponding periodic orbit measure, which has zero entropy, TPO
implies typical zero-entropy optimization.  The following result shows that
typical zero-entropy optimization holds for all $\beta>1$, including those
for which TPO fails. 

\begin{thmx}[\textbf{Typical zero-entropy optimization}]\label{thm:TZO}
Let $\beta>1$.  Then the beta-shift $S_\beta$ has typical zero-entropy optimization
with respect to $\lip(S_\beta,d_\theta)$, for all $\theta\in(0,1)$, and the upper beta-transformation
$U_\beta$ has typical zero-entropy optimization with respect to
$C^{0,\gamma}(I)$, for all $\gamma\in(0,1]$, as does the beta-transformation $T_\beta$ unless $\beta$
is a simple beta-number.
\end{thmx}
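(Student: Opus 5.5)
The plan is to combine the structural theorem of \cite{HJXZ} with Theorem \ref{thm:robust}, and then to transfer to the interval maps via Theorem \ref{thm:F} and Corollary \ref{cor:G}.

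\textbf{The beta-shift.} Fix $\theta\in(0,1)$. If $\beta$ is non-emergent, Theorem \ref{thm:nonemergent} gives TPO, and TPO implies typical zero-entropy optimization. If $\beta$ is a beta-number, $S_\beta$ is sofic, so its Markov boundary is empty. In that case \cite[Thm.~9.13]{HJXZ} gives an open dense set of periodic-optimization functions, and we are done.

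\textbf{The remaining case for the shift.} Suppose $\beta$ is emergent and not a beta-number. By Lemma \ref{lem:markov-boundary}, $K:=\partial_MS_\beta$ is the $\omega$-limit set of $d^*_\beta(1)$. Emergence says the orbit closure of $d^*_\beta(1)$ is minimal, so $K$ equals this orbit closure and is minimal. By \cite[Thm.~9.13]{HJXZ} there is an open dense set $\mathcal O$ of functions each of which either has the periodic optimization property or has a maximizing measure supported on $K$. Let
\[
\mathcal O_1:=\{f\in\mathcal O:\text{$f$ has the periodic optimization property}\}.
\]
This set is open by \cite[Cor.~6.6]{HJXZ}-type stability; if needed, take its interior. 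Split into two cases.
\begin{enumerate}
\item[(a)] If the set $\mathcal O\setminus\overline{\mathcal O_1}$ is empty, then $\mathcal O_1$ is dense and TPO holds.
\item[(b)] Otherwise, $\mathcal O\setminus\overline{\mathcal O_1}$ is a non-empty open set $V$, and every $f\in V$ has a maximizing measure carried by $K$.
\end{enumerate}

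\textbf{Case (b).} Let $K'\subseteq K$ be a minimal subset with invariant measure $\mu$; by minimality $K'=K$. The set of $f$ for which $\mu$ is maximizing then contains $V$, because every invariant measure on $K$ is maximizing once one of them is and they share its support. The care needed is this: if $K$ is not uniquely ergodic, pick an ergodic $\mu$ and use a Baire/perturbation argument. First add $-\epsilon\, d_\theta(\cdot,K)$ terms. Then use that the maximizing set is constant across generic $f$. Theorem \ref{thm:robust} now gives $h_{\rm top}(K)=0$.

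Next I build an open dense set of functions with a unique maximizing measure. Uniqueness of the maximizing measure is generic (residual), and locally, near $f\in V$, perturbing by $-\epsilon\,\Psi_K$ with $\Psi_K=d_\theta(\cdot,K)$ forces every maximizing measure into $K$, robustly on an open set. I then need uniqueness on an open dense set. For this I would show that on $V$ the maximizing measures live on $K$, and that $K$ is uniquely ergodic. Unique ergodicity should follow from the Theorem \ref{thm:robust} argument: a non-uniquely ergodic minimal $K$ would allow perturbations separating two ergodic measures, contradicting robustness of one of them.

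This is the step I expect to be the main obstacle: getting an \emph{open} dense set with \emph{unique} maximizing measure, rather than a residual one, on the non-periodic piece. If unique ergodicity of $K$ is not directly available, I would replace $V$ by the open subset where $\int f\,d\mu$ strictly exceeds the values on $S_\beta$ outside small neighbourhoods of $K$. I would then rely on the zero entropy of $\mathrm{supp}$, which needs only that maximizers are carried by $K$: every invariant measure on $K$ has zero entropy. Under the definition adopted for typical zero-entropy optimization, uniqueness is still required, so unique ergodicity of the minimal $K$ is the crux.

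\textbf{Interval maps.} Set $\theta=\beta^{-\gamma}$. For $U_\beta$, and for $T_\beta$ when $\beta$ is not a simple beta-number, I would combine \cite[Thms.~I, J, 5.22]{HHJL} with Theorem \ref{thm:F} applied to $K$. This yields an open set of $g$ whose maximizers, which coincide for $U_\beta$ and $T_\beta$, all live on $\Lambda=h_\beta(K)$. On that set the same conclusions hold, since $h_\beta$ is injective on $K$ and therefore preserves zero entropy. The set is dense with the periodic-optimization set, by the same dichotomy.
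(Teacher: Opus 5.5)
\textbf{Verdict: there is a gap at the step you yourself call the crux.} Your architecture matches the paper's: the structural theorem of \cite{HJXZ} on the Markov boundary $K=\Omega_\beta$, then Theorem \ref{thm:robust} for zero entropy, then transfer to $U_\beta$ and $T_\beta$. But unique ergodicity of $K$ is never established, and the reason you give for the step just before it is invalid.

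In case (b) you assert that a fixed invariant measure $\mu$ on $K$ is maximizing for every $f\in V$, ``because every invariant measure on $K$ is maximizing once one of them is and they share its support''. Sharing a support proves nothing: on a minimal, non-uniquely ergodic $K$, a Lipschitz function can integrate differently against two ergodic measures of full support. Your fallback (a Baire argument over the ergodic measures of $K$, ``the maximizing set is constant across generic $f$'') does not work as stated either. A Baire argument needs a countable family, and $K$ may a priori carry uncountably many ergodic measures.

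What you actually need is the \emph{subordination property}. Every $g\in\lip(S_\beta,d_\theta)$ has it, by \cite[Thm.~1]{Mor07}, because Lemma \ref{lem:subaction}(i) bounds the Birkhoff sums of $g-\int g\,d\mu$ from above; this is exactly how it is used in the proof of Theorem \ref{thm:nonemergent}. Granting it, the argument goes through as follows:
\begin{enumerate}
\item Every invariant measure on the minimal set $K$ is maximizing for every $f\in V$.
\item Your separation idea then works: two distinct invariant measures on $K$ would be separated by some Lipschitz $h$, contradicting that both are maximizing for $f$ and for $f+\varepsilon h$. So $K$ is uniquely ergodic.
\item Theorem \ref{thm:robust} gives zero entropy, and the same separation gives $\Mmax(f)=\{\mu_K\}$ for all $f\in V$.
\end{enumerate}
This would be a legitimate and fairly self-contained alternative to the paper's route. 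The paper gets unique ergodicity from the specification property of $S_\beta$ and the specimen shift theorem of \cite{HJXZ} (Lemma \ref{lem:emergent}(ii)), and only then obtains a robustly maximizing measure from \cite[Cor.~10.1]{HJXZ}. As written, however, your proposal leaves this step open.

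\textbf{The open-dense step is also incomplete.} The set $\cP$ of functions with the periodic optimization property is not open in general; \cite[Cor.~6.6]{HJXZ} only says that density of $\cP$ yields an open dense subset. Moreover, your case (b) needs $\mathcal O_1=\cP\cap\mathcal O$, not its interior. So to conclude that $\mathrm{int}(\cP)\cup V$ is dense, you must show that $\mathrm{int}(\cP)$ is dense in $\cP$. That can be done: perturb $f$ to $f-\varepsilon\,d_\theta(\cdot,O)$, where $O$ is the maximizing periodic orbit, apply Lemma \ref{lem:rigid-stability} with $X=O$ and $\phi=0$, then Lemma \ref{lem:stability}. The proposal does not do this.

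The paper avoids the issue with a Baire argument. Each $f$ in the open dense set of Lemma \ref{lem:emergent}(i) has a maximizing measure in the countable set $\mathcal E$ consisting of the periodic orbit measures and $\mu_\beta$. Hence the interiors $\mathcal L_\mu$ of the closed sets $\{f:\mu\in\Mmax(f)\}$, $\mu\in\mathcal E$, have dense union, and each $\mathcal L_\mu$ consists of functions whose unique maximizing measure is $\mu$.

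\textbf{Interval maps.} Injectivity of $h_\beta$ on $K$ is not what gives zero entropy. You need $U_\beta\circ h_\beta=h_\beta\circ\sigma$ on $\Omega_\beta$, which holds because $h_\beta(\sigma x)\in[\beta^{-N},1]$ forces $\beta h_\beta(x)\in(x_0,x_0+1]$. Then $h_\beta(\Omega_\beta)$ carries only the measure $H_\beta(\mu_\beta)$. For $T_\beta$ you also need the separate observation that $T_\beta=U_\beta$ almost everywhere for every invariant measure, so that the two entropies agree.
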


The exception in Theorem \ref{thm:TZO} is necessary: if $\beta$ is a simple
beta-number then, for all $\gamma\in(0,1]$, some non-empty open subset of
$C^{0,\gamma}(I)$ consists of functions having no maximizing measure for
$T_\beta$ \cite[Rem.~5.21]{HHJL}.  The proof of Theorem \ref{thm:TZO} shows
moreover that the open dense set can be chosen so that the unique maximizing
measure of each of its members is either a periodic orbit measure, or the
unique invariant measure on the orbit closure of $d_\beta(1)$ under
$\sigma$, respectively its image under $h_\beta$, which is the unique
invariant measure on the closure of the orbit of $1$ under $U_\beta$.  The
second case is needed only if TPO fails, and then the orbit closure of
$d_\beta(1)$ satisfies the complexity bound of Theorem \ref{thm:robust} (see
Lemma \ref{lem:emergent} and Remark \ref{rem:robust}).

Although the parameters without TPO form a set of packing dimension zero,
they are dense in $(1,\infty)$:

\begin{thmx}[\textbf{Density of parameters without TPO}]\label{thm:density}
Let $J\subseteq(1,\infty)$ be a non-empty open interval.  Then $J$ contains
uncountably many $\beta$ such that, for all $\theta\in(0,1)$, the beta-shift
$S_\beta$ does not have TPO with respect to $\lip(S_\beta,d_\theta)$, and,
for all $\gamma\in(0,1]$, neither $T_\beta$ nor $U_\beta$ has TPO with
respect to $C^{0,\gamma}(I)$.  In particular, each of the three sets in
Theorem \ref{thm:dimension} is dense in $(1,\infty)$.
\end{thmx}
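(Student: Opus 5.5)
The approach is a tuning construction: produce tuned Sturmian parameters inside every interval $J$, and then apply Theorems \ref{thm:tuned-rigidity} and \ref{thm:tuned} (the tuned analogues of Theorems \ref{thm:D} and \ref{thm:A}) together with the transfer principle Theorem \ref{thm:F}.

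\textbf{Step 1: reduce to simple beta-numbers.} The simple beta-numbers are dense in $(1,\infty)$. Indeed, truncating $d_\beta(1)$ after its $n$-th nonzero digit gives an admissible finite expansion, hence a simple beta-number $\beta_n$. We have $\beta_n\uparrow\beta$, and the map $\beta\mapsto d_\beta(1)$ is lexicographically monotone. So it suffices to show that every simple beta-number $\beta_0$ is a limit of uncountably many parameters $\beta$ with the required properties.

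\textbf{Step 2: the tuned parameters near $\beta_0$.} Write $d^*_{\beta_0}(1)=(w)^\infty$ as a periodic quasi-greedy expansion. Choose two words $w_0<_\lex w_1$ of equal length $p$, for instance $w_1=w$ and $w_0$ equal to $w$ with its last digit lowered by one. Given a maximal Sturmian sequence $\bfa$, substitute $a_i\mapsto w_{a_i}$ to form $\bfb$. The tuning results from Section \ref{sec:density} say the following:
\begin{enumerate}
\item[(i)] $\bfb$ satisfies Parry's criterion, so $\bfb=d_\beta(1)$ for a unique $\beta$.
\item[(ii)] The orbit closure $K$ of $\bfb$ is minimal, uniquely ergodic and not periodic.
\item[(iii)] $K$ has the rigidity property of Theorem \ref{thm:tuned-rigidity}.
\end{enumerate}
Theorem \ref{thm:tuned} then gives a non-empty open set of Lipschitz functions on $S_\beta$, for every $\theta$, whose unique maximizing measure is the non-periodic measure on $K$. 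Hence $S_\beta$ does not have TPO.

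\textbf{Step 3: density and uncountability.} To land in $J$, I would iterate the tuning, or choose $\bfa$ beginning with a long block of $1$s (small $1-\alpha$). Then $\bfb$ agrees with $w^\infty$ on a long prefix, so $\beta$ is close to $\beta_0$ by continuity of $\beta\mapsto d_\beta(1)$ in the lexicographic-product sense. Distinct $\alpha$ give distinct $\bfb$, hence distinct $\beta$. Irrational $\alpha$ close to $1$ form an uncountable set, so uncountably many such $\beta$ lie in $J$.

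\textbf{Step 4: interval maps.} Since $\bfb$ is not eventually periodic, $\beta$ is not a beta-number, and in particular not a simple one. The word $0^N$ does not occur in $K$ for large $N$, because $K$ is minimal and not $\{0^\infty\}$. Take $\gamma\in(0,1]$ and set $\theta=\beta^{-\gamma}$. Theorem \ref{thm:F}(a) holds for this $K$, and $K$ contains no periodic orbit. Therefore neither $T_\beta$ nor $U_\beta$ has TPO with respect to $C^{0,\gamma}(I)$. The final sentence of the theorem follows immediately.

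\textbf{Main obstacle.} The hard part is Step 2. One must check admissibility, namely $\sigma^j\bfb\le_\lex\bfb$ at shifts $j$ not divisible by $p$; this needs $w$ to be Lyndon-like with respect to its own suffixes. One must also check that the choice of $w_0,w_1$ gives rigidity. I would handle both by citing the tuning theorems.
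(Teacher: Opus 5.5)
Your proposal is correct and is essentially the paper's proof. Steps 1 and 3 together are Corollary \ref{cor:tuned-dense}: simple beta-numbers are dense, and letting $\alpha\to1$ pushes $\beta_{w,\alpha}$ up to $\beta_0$ from below. Step 2 is Lemma \ref{lem:tuning}(iv) together with Theorem \ref{thm:tuned}.

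The one real difference is Step 4. The paper passes from the beta-shift to $T_\beta,U_\beta$ through Corollary \ref{cor:G}. That corollary relies on the structural result \cite[Cor.~10.1]{HJXZ} to produce an open set as in Theorem \ref{thm:F}(a), with $K$ the Markov boundary. You instead take that open set directly from Theorem \ref{thm:tuned} and apply Theorem \ref{thm:F} with $K=X_{w,\alpha}$. All hypotheses of Theorem \ref{thm:F} hold:
\begin{itemize}
\item $\beta$ is not a simple beta-number;
\item $X_{w,\alpha}\subseteq Y_{w,\alpha}$;
\item no word $0^N$ occurs in $K$, by minimality and compactness;
\item $K$ contains no periodic orbit.
\end{itemize}
So your route is valid, and slightly more self-contained.

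There are two small slips to fix.

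First, $w$ must be the \emph{shortest} period of $d^*_{\beta_0}(1)$; that is what makes it a tuning word. Also, $w_0=w^-$ must lower the last \emph{non-zero} digit of $w$, not the last digit, which can be $0$. For the golden mean, $d^*_{\beta_0}(1)=(10)^\infty$, so $w=10$ and $w^-=00$. Your ``for instance'' choice is then undefined.

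Second, in Step 3 you need that agreement of $d^*_\beta(1)$ with $d^*_{\beta_0}(1)$ on a long prefix forces $\beta$ to be close to $\beta_0$. Either Lemma \ref{lem:param-cylinder} or the monotone squeeze $d^*_{\beta'}(1)<_\lex\zeta_w(\bfa(\alpha))<_\lex w^\infty$ of Lemma \ref{lem:tuning}(v) gives this. Continuity of $\beta\mapsto d_\beta(1)$ is the wrong direction, and that map is in fact discontinuous at the simple beta-number $\beta_0$. Iterated tuning is not needed.
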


As these three sets consist of emergent parameters, which form a meagre
set \cite[Cor.~5.7]{HHJL}, each of them is a dense meagre subset of
$(1,\infty)$ of packing dimension zero.  For $T_\beta$, the density of the
set in Theorem \ref{thm:dimension}(iii) also follows from the density of the
simple beta-numbers \cite{Par60}, for which $T_\beta$ does not have TPO
\cite[Rem.~5.21]{HHJL}, whereas the parameters in Theorem \ref{thm:density}
are not beta-numbers: they are \emph{tuned Sturmian parameters}, in the
sense of Definition \ref{def:tuned}, obtained from maximal Sturmian sequences
by a version, for beta-shifts, of Douady--Hubbard tuning of external angles
of the Mandelbrot set (see \cite{DH85,Dou86,bullettsentenac}), and Theorem
\ref{thm:tuned} shows that their beta-shifts, the \emph{tuned Sturmian
beta-shifts}, do not have TPO.  By contrast with Theorem \ref{thm:density},
the set of $\beta$ such that $d_\beta(1)$ is a Sturmian sequence has nowhere
dense closure (see Remark \ref{rem:sturmian-closure}).

So, for beta-shifts, the conjecture of Bochi \cite{bochi_slides} recalled in
the Introduction holds for all $\beta>1$ in its zero-entropy form (Theorem
\ref{thm:TZO}), but fails in its periodic form for a dense set of parameters
$\beta$, of packing dimension zero (Theorems \ref{thm:dimension} and
\ref{thm:density}).

\subsection{The proof strategy}

We now outline the proofs of our results, beginning with those for
Sturmian beta-shifts.

The key observation underlying Theorem \ref{thm:D} is that each cylinder
set $C=[v]_{X_\alpha}$ of $X_\alpha$ is coded by an arc $I_v$ of the circle
whose endpoints lie in $\Z\alpha+\Z$, and whose length is $\mu_\alpha(C)$.
The centred\footnote{The terminology \emph{centred} reflects the fact that
subtracting the constant $\mu_\alpha(C)$ ensures the vanishing of the
integral of the function with respect to $\mu_\alpha$.} indicator function
$\ind_{I_v}-|I_v|$ of that arc is therefore a coboundary for the rotation
$R_\alpha t:=t+\alpha$, with an explicit piecewise affine transfer function
(Lemma \ref{lem:coboundary}).
Composing with the Sturmian coding (Definition \ref{def:coding})
carries this identity
from the circle to $X_\alpha$, where it says that the centred indicator
function $\ind_C-\mu_\alpha(C)$ is an integer multiple of $\Phi-\alpha$,
modulo a coboundary whose transfer function is, up to sign, the sum of the
Birkhoff sums $S_r(\Phi-\alpha)$ over a range of consecutive values of $r$
(Proposition \ref{prop:centred-cylinder}).

Theorem \ref{thm:D} follows by approximating an arbitrary
$f\in\lip(X_\alpha,d_\theta)$ by the functions $f_n$ that are constant on
each cylinder set of length $n$, and then writing $f-\int f\,d\mu_\alpha$
as the sum of the successive differences $f_{n+1}-f_n$.  Here the Sturmian
hypothesis is used, to exploit the fact that $X_\alpha$ has
precisely $k+1$ words of length $k$: passing from length $n$ to length
$n+1$ therefore subdivides just one cylinder set,
so that, for a suitable choice of the values of $f_n$, the difference
$f_{n+1}-f_n$ is a constant multiple of the indicator function of a single
cylinder set, and this constant is $O(\theta^{n})$ since $f$ is Lipschitz.
The theorem follows by
applying Proposition
\ref{prop:centred-cylinder} to each term and summing.

The proof of Theorem \ref{thm:C} from Theorem \ref{thm:D} (see Section
\ref{sec:proofs}) exploits the particular form of the function
$F=\Phi-\Psi$.  To bound $\int f\,d\nu-\int f\,d\mu_\alpha$ for a Lipschitz
function $f$ on $Y_\alpha$, we subtract from $f$ the decomposition of its
restriction to $X_\alpha$ given by Theorem \ref{thm:D}, after extending
$\tilde f$ to a Lipschitz function on $Y_\alpha$.  The term
$c_f(\Phi-\alpha)$ of this decomposition is absorbed by the term $\Phi$ in
$F$, since $\int\Phi\,d\nu\le\alpha$ for all $\nu\in\m(Y_\alpha,\sigma)$;
this maximizing property of $\mu_\alpha$ is readily proved by counting the
digits $1$ along the first return loops of a graph presentation of
$Y_\alpha$ (Section \ref{sec:maximizing}).  The remainder vanishes on
$X_\alpha$, so is bounded by a multiple of $\Psi$, and is therefore
absorbed by the term $-\Psi$ in $F$.
The proofs of Theorems \ref{thm:A} and \ref{thm:C} use only the existence
statements of Theorem \ref{thm:D}, together with the bounds
\eqref{eq:c-bound} and \eqref{eq:u-bound}.  The argument is formulated
abstractly in Lemma \ref{lem:rigid-stability}, which is used again for the
tuned Sturmian beta-shifts of Section \ref{sec:density}.

Theorem \ref{thm:E} is deduced from Theorem \ref{thm:C} by means of the map
$h_\beta$, which maps $Y_\alpha$ onto $I$ and induces a bijection between the
invariant measures of $\sigma$ and of $U_\beta$ (see Section
\ref{subsec:coding}).  This coding does not preserve distances to $X_\alpha$: for
example $10^\infty\notin X_\alpha$, whereas
$h_\beta(10^\infty)=h_\beta(0\bfa)$ lies in $h_\beta(X_\alpha)$.
However $\sigma(10^\infty)=0^\infty$, and $h_\beta(0^\infty)=0$ is far from
$h_\beta(X_\alpha)$; in general, the distance from $x$ to $X_\alpha$ is
bounded, on average with respect to invariant measures, by a multiple of a
power of the distance from $h_\beta(x)$ to $h_\beta(X_\alpha)$ (Lemma
\ref{lem:key}).
The same inequality, combined with a H\"older extension
argument, 
gives Theorem \ref{thm:F}, which together with the structural results of
\cite{HHJL} and \cite{HJXZ} gives Corollary \ref{cor:G} for emergent
$\beta$; for non-emergent $\beta$, Corollary \ref{cor:G} follows from
Theorem \ref{thm:nonemergent}.

Theorems \ref{thm:nonemergent}, \ref{thm:dimension}, \ref{thm:robust} and
\ref{thm:TZO} are proved in Section \ref{sec:dimension}.
For $g\in\lip(S_\beta,d_\theta)$ with maximizing
measure $\mu$, and $x\in S_\beta$, let $u_g(x)$ be the supremum of the
Birkhoff sums $S_j(g-\int g\,d\mu)(z)$ over all $j\ge0$ and $z\in S_\beta$
with $\sigma^jz=x$.  
A word which can precede a point $y\in S_\beta$ can also precede all points
of $S_\beta$ which are lexicographically smaller than $y$, and this gives a
one-sided Lipschitz inequality for $u_g$ with respect to the lexicographic
order (Lemma \ref{lem:subaction}(iii)).  Moreover, if $vb0^\infty\in S_\beta$
for a word $v$ and a digit $b\ge1$, then the periodic point
$(v(b-1))^\infty$ also belongs to
$S_\beta$, and together with the one-sided inequality this implies that
$u_g$ is bounded; it satisfies $g-\int g\,d\mu\le u_g\circ\sigma-u_g$, with
equality $\mu$-almost everywhere (Lemma \ref{lem:subaction}).
By a theorem of Morris
\cite{Mor07}, the boundedness of $u_g$ implies that $g$ has a maximizing
measure supported by a minimal set.  If $\beta$ is non-emergent, such a
minimal set does not contain $d^*_\beta(1)$, so is contained in one of the
shifts of finite type $S_{\beta'}$, $\beta'<\beta$, and Theorem
\ref{thm:nonemergent} follows from the structural theory of \cite{HJXZ}.

Theorem \ref{thm:robust} is proved by a counting argument.  If $\mu$ is a
maximizing measure for all functions in a non-empty open set containing $f$,
then $\mu$ is also a maximizing measure for the functions $g$ obtained by
adding to $f$ small multiples of the indicator functions of cylinder sets of
length $k$.  
By the one-sided inequality of Lemma \ref{lem:subaction}(iii), the functions
$u_g$ are approximately non-increasing, with respect to the lexicographic
order, on small cylinder sets; this allows the support of $\mu$ to be
partitioned into a number of sets which grows at most exponentially in $k$,
on each of which all of the functions $u_g$ are approximately constant.
Along the orbits of $\mu$-almost all points, the Birkhoff sums of
$g-\int g\,d\mu$ are differences of values of $u_g$, and it follows that the
words of length $n$ occurring in the support of $\mu$ fall into a number of
classes, which again grows at most exponentially in $k$, such that the words
in each class contain each word of length $k$ equally often.
Each class
therefore has at most $n^{n/k}\lceil\beta\rceil^k$ elements, as its words
have a common set of at most $n$ subwords of length $k$, and choosing $k$ of
order $\sqrt{n\log n}$ gives Theorem \ref{thm:robust}.
For emergent $\beta$ such that $S_\beta$ does not have TPO, the structural
results of \cite{HJXZ} show that the orbit closure of $d_\beta(1)$, which is
the Markov boundary of $S_\beta$, is uniquely ergodic, and that its invariant
measure is a maximizing measure for all functions in a non-empty open set;
so this orbit closure has zero topological entropy, by Theorem
\ref{thm:robust}.  The parameters $\beta$ with this property form a set of
packing dimension zero (Lemma \ref{lem:zerodim}), and Theorem
\ref{thm:dimension} follows from Theorem \ref{thm:nonemergent}, using
Corollary \ref{cor:G} for $T_\beta$ and $U_\beta$.  Theorem \ref{thm:TZO} is
deduced using the same ingredients, together with the structural theorem
\cite[Thm.~5.22]{HHJL} for $U_\beta$.

Theorem \ref{thm:density} is proved in Section \ref{sec:density} by a
tuning construction, a version for beta-shifts of Douady--Hubbard tuning of
external angles of the Mandelbrot set \cite{Dou86,DH85} (see also
Bullett \& Sentenac 
\cite[Part~2, \S 2]{bullettsentenac}).
If $\beta_1$ is a simple beta-number, $w$ is the shortest word such that
$d^*_{\beta_1}(1)=w^\infty$, and $p$ is the length of $w$, then replacing
each digit $1$ of the maximal Sturmian sequence $\bfa$ of rotation number
$\alpha$ by $w$, and each digit $0$ by the word $w^-$ obtained by
decreasing the last non-zero digit of $w$ by one, gives the expansion
$d_\beta(1)$ of a tuned Sturmian parameter $\beta<\beta_1$, which tends to
$\beta_1$ as $\alpha$ tends to $1$; as simple beta-numbers are dense in
$(1,\infty)$ \cite{Par60}, so are the tuned Sturmian parameters.  The orbit
closure of $d_\beta(1)$, the tuned Sturmian subshift, is built from $p$
copies of the Sturmian subshift $X_\alpha$, and Theorem \ref{thm:D} extends
to it, with the indicator function of the cylinder set $[w]$ in place of the
digit function (Theorem \ref{thm:tuned-rigidity}). 
Along orbit
segments of $S_\beta$ which stay close to the tuned Sturmian subshift, the
blocks $w$ and $w^-$ follow each other
in the same manner
as the digits $1$ and $0$ do in
points of the Sturmian beta-shift $Y_\alpha$, which gives a version of
Proposition \ref{prop:Phimax} (Proposition \ref{prop:tuned-max}).
So
the argument for Theorem \ref{thm:C} shows that the invariant measure on the
tuned Sturmian subshift is the unique maximizing measure for all functions
in a non-empty open set (Theorem \ref{thm:tuned}).

\subsection{Organisation of the paper}
Section \ref{sec:prelim} records further notation, the criterion for a
maximizing measure to persist under small Lipschitz perturbations, and some
preliminaries concerning Sturmian sequences and the shifts $X_\alpha$ and
$Y_\alpha$. Section \ref{sec:maximizing} describes
the graph presentation of $Y_\alpha$,
identifies the Markov boundary of a non-sofic beta-shift,
and proves that $\mu_\alpha$ maximizes
the digit function.  Section \ref{sec:cob} establishes the coboundary
identity for cylinder sets, first on the circle and then on $X_\alpha$.
Section \ref{sec:cohomology} proves Theorem \ref{thm:D} and the
accompanying rigidity statement. Section \ref{sec:proofs} proves
Theorems \ref{thm:A} and \ref{thm:C}, and Corollary \ref{cor:B}.  Section
\ref{sec:interval} proves Theorems \ref{thm:E} and \ref{thm:F}, and Corollary
\ref{cor:G}; it also indicates how Theorems \ref{thm:A}, \ref{thm:C},
\ref{thm:D} and \ref{thm:E} extend to Sturmian sequences 
over other two-letter alphabets
(Remark \ref{rem:alphabets}).
Section \ref{sec:dimension} proves Theorems
\ref{thm:nonemergent}, \ref{thm:dimension}, \ref{thm:robust} and
\ref{thm:TZO}.  Section \ref{sec:density} introduces tuned Sturmian
beta-shifts, proves analogues of Theorems \ref{thm:A}, \ref{thm:C} and
\ref{thm:D} for them (Theorems \ref{thm:tuned-rigidity} and
\ref{thm:tuned}), and deduces Theorem \ref{thm:density}.

\subsection*{Acknowledgments} {W. Huang is partially supported by the National Key R\&D Program of China (Nos. 2024YFA1013602, 2024YFA1013600) and the National Natural Science Foundation of China grant (No. 12426201). L. Xu is partially supported by the National Key R\&D Program of China (Nos. 2024YFA1013602, 2024YFA1013600) and the National Natural Science Foundation (Nos. 12371197, 12426201). Y. Zhang is partially supported by the National Natural Science Foundation (Nos.~12671216, 12271432), and USTC-AUST Mathematics Basic Discipline Research Center.}

\section{Preliminaries}\label{sec:prelim}

In this section we collect further notation, a criterion for an invariant
measure to remain the unique maximizing measure under small Lipschitz
perturbations, and basic properties of Sturmian sequences and of the shifts
$X_\alpha$ and $Y_\alpha$.

\subsection{Further notation}

The following notation is used throughout the paper.

\begin{notation}
For $m\in\N$, $x\in\Sigma_m$ and $0\le r\le s$ we write $x_{[r,s)}:=x_rx_{r+1}\cdots
x_{s-1}$, defining this to be the empty word when $r=s$.  For a finite word
$v=v_0v_1\cdots v_{n-1}$ we write $|v|:=n$ for its length, and
$v_{[i,j)}:=v_i\cdots v_{j-1}$ for $0\le i\le j\le n$.  Juxtaposition
denotes concatenation, of two words or of a word followed by a sequence;
$v^k$ denotes the concatenation of $k$ copies of $v$, and
$v^\infty:=vvv\cdots$.
We say that a word $v$ \emph{occurs} in
$X$ if $v=x_{[r,s)}$ for some $x\in X$ and $0\le r\le s$, and let $\w_k(X)$
denote the set of words of length $k$ occurring in $X$.
We write
$|v|_1$ for the number of occurrences of the digit $1$ in $v$, and $[v]:=\{x\in\Sigma_m:x_{[0,|v|)}=v\}$ for the associated cylinder set,
the value of $m$ being clear from the context;
for a subshift $X\subseteq\Sigma_m$ we write $[v]_X:=[v]\cap X$.
For a Borel set
$C\subseteq X$ and a Borel probability measure $\mu$ on $X$ we
let $\ind_C$ denote the indicator function of $C$, and
call
$\ind_C-\mu(C)$ the \emph{centred indicator function}, the constant
being subtracted so that the integral with respect to $\mu$ vanishes.

For $g:X\to\R$ and $r\in\N_0$ we write $S_rg:=\sum_{h=0}^{r-1}g\circ\sigma^h$
for the Birkhoff sum of length $r$, with $S_0g:=0$.

For $t\in\R$, $\{t\}$ denotes the fractional part of $t$.  We identify the
circle $\R/\Z$ with $[0,1)$ by $t\mapsto\{t\}$, and write
$R_\alpha t:=t+\alpha$ for the rotation by $\alpha$.
We write $\mathrm{Leb}$ for Lebesgue measure on $\R/\Z$, identified
with $[0,1)$.
\end{notation}

The Wasserstein distance \eqref{eq:wass} metrises the weak$^*$ topology on
the set of Borel probability measures on $X$; see \cite{villani,villani2}.

\subsection{A criterion for robust maximization}

The particular form of Theorem \ref{thm:C} is motivated by the following
criterion, which together with its converse (Remark \ref{rem:converse})
asserts that an invariant measure persists as the unique maximizing measure
under all small Lipschitz perturbations if and only if the amount by which
its integral exceeds that of any other invariant measure is at least
proportional to the Wasserstein
distance between the two.

\begin{lem}\label{lem:stability}
Let $X$ be a subshift equipped with a metric $d_\theta$, let
$f\in\lip(X,d_\theta)$, let $\mu\in\m(X,\sigma)$, and let $\delta>0$ satisfy
\begin{equation}\label{eq:gap}
  \int f\,d\mu-\int f\,d\nu\ \ge\ \delta\,W_\theta(\nu,\mu)
  \qquad\text{for all }\nu\in\m(X,\sigma).
\end{equation}
Then $\Mmax(f+g)=\{\mu\}$ for all $g\in\lip(X,d_\theta)$ with
$|g|_{\lip(X)}<\delta$.
\end{lem}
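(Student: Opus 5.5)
The plan is to compare $\int(f+g)\,d\mu$ with $\int(f+g)\,d\nu$ for an arbitrary $\nu\in\m(X,\sigma)$ with $\nu\ne\mu$, using the gap hypothesis \eqref{eq:gap} for the $f$ part and the definition \eqref{eq:wass} of $W_\theta$ for the $g$ part. First, existence of maximizing measures is automatic: $X$ is compact, $\m(X,\sigma)$ is a non-empty weak$^*$ compact convex set, and $\nu\mapsto\int(f+g)\,d\nu$ is weak$^*$ continuous. So $\Mmax(f+g)\neq\emptyset$, and it suffices to show that every $\nu\ne\mu$ gives a strictly smaller integral.

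The key step is the following estimate. Fix $\nu\in\m(X,\sigma)$ with $\nu\neq\mu$. If $g$ has $|g|_{\lip(X)}=L<\delta$, then for $L>0$ the function $g/L$ has Lipschitz seminorm $1$, so by \eqref{eq:wass} applied with the roles of $\mu,\nu$ swapped (the supremum is symmetric, since $f$ may be replaced by $-f$),
\[
  \int g\,d\nu-\int g\,d\mu\le L\,W_\theta(\nu,\mu).
\]
The case $L=0$ is trivial, since then $g$ is constant. Combining this with \eqref{eq:gap} gives
\[
  \int(f+g)\,d\mu-\int(f+g)\,d\nu\ \ge\ (\delta-L)\,W_\theta(\nu,\mu).
\]

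It remains to know that $W_\theta(\nu,\mu)>0$ whenever $\nu\ne\mu$; this is the only point needing a word of justification. Lipschitz functions on $(X,d_\theta)$ separate Borel probability measures. Indeed, indicator functions of cylinder sets $[v]_X$ are Lipschitz, since they are locally constant with constancy scale $\theta^{|v|}$. Cylinders generate the Borel $\sigma$-algebra and form a $\pi$-system, so two measures agreeing on all of them coincide. Alternatively one may simply cite the fact, noted above, that $W_\theta$ metrises the weak$^*$ topology and is therefore a metric.

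Hence the right-hand side above is strictly positive. So $\mu$ is the unique maximizing measure, giving $\Mmax(f+g)=\{\mu\}$. I do not expect any real obstacle: the argument is a two-line combination of the hypothesis with the duality definition of $W_\theta$.
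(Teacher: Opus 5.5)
Your proof is correct and is essentially the paper's argument: you combine the gap hypothesis with the duality bound $\int g\,d\nu-\int g\,d\mu\le|g|_{\lip(X)}\,W_\theta(\nu,\mu)$ (trivial when $g$ is constant) to obtain the strictly positive lower bound $(\delta-|g|_{\lip(X)})\,W_\theta(\nu,\mu)$ for every $\nu\ne\mu$. Your justification that $W_\theta(\nu,\mu)>0$, via Lipschitz cylinder indicators, is a fine supplement to what the paper takes for granted. The compactness remark is harmless but unnecessary, since $\mu$ strictly beating every other invariant measure already makes it the unique maximizer.
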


\begin{proof}
Let $|g|_{\lip(X)}<\delta$ and let $\nu\in\m(X,\sigma)\setminus\{\mu\}$, so
that $W_\theta(\nu,\mu)>0$.  If $g$ is constant then
$\int g\,d\mu-\int g\,d\nu=0$, and otherwise applying \eqref{eq:wass} to
$g/|g|_{\lip(X)}$ gives
$\int g\,d\mu-\int g\,d\nu\ge-|g|_{\lip(X)}W_\theta(\nu,\mu)$; in either
case, by \eqref{eq:gap},
\[
  \int(f+g)\,d\mu-\int(f+g)\,d\nu
  \ \ge\ \bigl(\delta-|g|_{\lip(X)}\bigr)W_\theta(\nu,\mu)\ >\ 0 .
\]
So $\mu$ is the unique maximizing measure for $f+g$.
\end{proof}

Lemma \ref{lem:stability} admits the following converse.

\begin{rem}\label{rem:converse}
If $\mu\in\Mmax(f+g)$ for all $g\in\lip(X,d_\theta)$ with
$|g|_{\lip(X)}<\delta$, then \eqref{eq:gap} holds.  Indeed, suppose
\eqref{eq:gap} fails for some $\nu$ (necessarily with $\nu\ne\mu$ and hence
$W_\theta(\nu,\mu)>0$), and choose $\delta'\in[0,\delta)$ with
$\int f\,d\mu-\int f\,d\nu<\delta'W_\theta(\nu,\mu)$.  By \eqref{eq:wass}
there is a function $h$ with $|h|_{\lip(X)}\le1$ such that
$\int h\,d\nu-\int h\,d\mu$ is close enough to $W_\theta(\nu,\mu)$ that
$g:=\delta'h$ satisfies $\int(f+g)\,d\nu>\int(f+g)\,d\mu$; then
$|g|_{\lip(X)}<\delta$ and $\mu\notin\Mmax(f+g)$.  So the stability of
$\mu$ as a maximizing measure is equivalent to an inequality of the form
\eqref{eq:gap}.
\end{rem}

\subsection{Sturmian sequences and Sturmian beta-shifts}\label{subsec:sturmian}

We now collect basic properties of Sturmian sequences, and of the shifts
$X_\alpha$ and $Y_\alpha$.

Throughout the rest of the paper, unless stated otherwise, $\alpha\in(0,1)$ is irrational, $\bfa$ is
the maximal Sturmian sequence \eqref{eq:a} of rotation number $\alpha$,
$X_\alpha$ and $Y_\alpha$ are the associated Sturmian subshift and Sturmian
beta-shift, and $\Phi$ is the digit function \eqref{eq:functions}.

As $\Phi(x)=x_0$, summing \eqref{eq:a} gives
\begin{equation}\label{eq:prefixsum}
  S_j\Phi(\bfa)=\sum_{i<j}a_i=\lceil j\alpha\rceil\qquad(j\ge0).
\end{equation}
In the terminology of \cite[Ch.~2]{Lothaire}, $\bfa$ is the upper
mechanical word of slope $\alpha$ and intercept $0$, so that
$\bfa=1c_\alpha$, where $c_\alpha$ is the characteristic word of slope
$\alpha$.

Sturmian sequences arise as codings of orbits of the rotation $R_\alpha$, as
follows.

\begin{defn}\label{def:coding}
For $t\in\R$, let $\bfb(t)=\bfb_\alpha(t)\in\Sigma$ be given by
\begin{equation}\label{eq:coding}
  \bfb(t)_i:=\lfloor t+(i+1)\alpha\rfloor-\lfloor t+i\alpha\rfloor
  \qquad(i\ge0).
\end{equation}
The function $\bfb=\bfb_\alpha:\R\to\Sigma$ will be referred to as the \emph{Sturmian coding} of rotation number
$\alpha$.
\end{defn}

For a fixed $i\ge0$, the function $\bfb(\cdot)_i$ takes only the values $0$
and $1$, satisfies $\bfb(t+1)_i=\bfb(t)_i$, and is right continuous with
exactly two discontinuities in each interval of length $1$, at the points
congruent to $-i\alpha$ and $-(i+1)\alpha$ modulo $1$.  Hence the
coordinatewise one-sided limits
\[
  \bfb^{\pm}(t):=\lim_{s\to t^{\pm}}\bfb(s)
\]
exist in $\Sigma$ for all $t$, with $\bfb^{+}(t)=\bfb(t)$, and the
functions $\bfb^{\pm}$ are $1$-periodic.  More generally, for a function
$g$ on $\R$ having one-sided limits everywhere we define $g(t^{+})$ and
$g(t^{-})$ by $g(t^{\pm}):=\lim_{s\to t^{\pm}}g(s)$, so that in particular
$g(t^{+})=g(t)$ when $g$ is right continuous.  In any identity involving
several such expressions, together with $\bfb^{\pm}$, the same choice of
sign is made throughout.

The basic properties of the Sturmian coding are as follows.

\begin{lem}\label{lem:coding}
Let $\alpha\in(0,1)$ be irrational, let $\bfa$ be the maximal Sturmian
sequence of rotation number $\alpha$, let $\bfb$ be the Sturmian coding, and let $I_0:=[0,1-\alpha)$ and
$I_1:=[1-\alpha,1)$, regarded as subsets of $\R/\Z$.  Then for all
$t\in\R$ and $i\ge0$, the digit $\bfb(t)_i$ equals $1$ if
$t+i\alpha\in I_1$ modulo $1$, and equals $0$ otherwise.  Moreover $\bfa=\bfb^{-}(0)$, and $a_i=\bfb(0)_i$
for all $i\ge1$, whereas $a_0=1$ and $\bfb(0)_0=0$.  Finally,
\begin{equation}\label{eq:shiftcoding}
  \sigma\bfb(t)=\bfb(R_\alpha t),\qquad
  \sigma\bfb^{\pm}(t)=\bfb^{\pm}(R_\alpha t).
\end{equation}
\end{lem}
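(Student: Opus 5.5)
The statement is elementary, and I plan to prove it by direct computation from the definitions \eqref{eq:a} and \eqref{eq:coding}, writing everything in terms of floor and ceiling functions.

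\emph{Digit description.} Set $s:=\{t+i\alpha\}$. Then $\lfloor t+(i+1)\alpha\rfloor-\lfloor t+i\alpha\rfloor=\lfloor s+\alpha\rfloor$, because $\lfloor n+y\rfloor=n+\lfloor y\rfloor$ for integers $n$. Since $s\in[0,1)$ and $\alpha\in(0,1)$, we have $s+\alpha\in[0,2)$. Hence $\lfloor s+\alpha\rfloor=1$ exactly when $s\ge 1-\alpha$, that is, exactly when $t+i\alpha\in I_1$ modulo $1$.

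\emph{Shift identity.} Directly from \eqref{eq:coding}, $\bfb(t)_{i+1}=\lfloor (t+\alpha)+(i+1)\alpha\rfloor-\lfloor (t+\alpha)+i\alpha\rfloor=\bfb(t+\alpha)_i$. This gives $\sigma\bfb(t)=\bfb(R_\alpha t)$ as an identity of functions of $t\in\R$, and $1$-periodicity makes it compatible with the identification of $\R/\Z$ with $[0,1)$. For the one-sided limits, apply the identity at $s$ and let $s\to t^{\pm}$; the limits are coordinatewise, and $\sigma$ is continuous on $\Sigma$. Since $s\to t^\pm$ if and only if $s+\alpha\to (t+\alpha)^\pm$, we obtain $\sigma\bfb^\pm(t)=\bfb^\pm(R_\alpha t)$.

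\emph{Relation with $\bfa$.} For $s\to 0^-$ and fixed $i$, I use $\lfloor s+y\rfloor\to\lceil y\rceil-1$ as $s\to0^-$, valid for every real $y$: for $y\notin\Z$ this gives $\lfloor y\rfloor=\lceil y\rceil-1$, and for $y\in\Z$ it gives $y-1$. Therefore $\bfb^-(0)_i=(\lceil (i+1)\alpha\rceil-1)-(\lceil i\alpha\rceil-1)=a_i$, so $\bfa=\bfb^-(0)$. For $\bfb(0)$ itself, $\bfb(0)_i=\lfloor (i+1)\alpha\rfloor-\lfloor i\alpha\rfloor$. When $i\ge1$, neither $i\alpha$ nor $(i+1)\alpha$ is an integer, because $\alpha$ is irrational; so $\lfloor\cdot\rfloor=\lceil\cdot\rceil-1$ at both points, and $\bfb(0)_i=a_i$. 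When $i=0$, $\bfb(0)_0=\lfloor\alpha\rfloor-0=0$, while $a_0=\lceil\alpha\rceil-\lceil0\rceil=1$.

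No step is a real obstacle. The only points needing care are the boundary cases where $i\alpha$ is an integer, which happen only at $i=0$, and the convention that the same choice of sign is used throughout in the one-sided limit identities.
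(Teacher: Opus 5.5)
Your proof is correct and follows essentially the same route as the paper: the same reduction $\bfb(t)_i=\lfloor s+\alpha\rfloor$ with $s=\{t+i\alpha\}$, the same non-integrality observation for $i\ge1$, and the shift identity read off directly from \eqref{eq:coding}. The only cosmetic difference is in proving $\bfb^{-}(0)=\bfa$: you take the limit one coordinate at a time via $\lfloor s+y\rfloor\to\lceil y\rceil-1$ as $s\to0^-$, whereas the paper picks a single $\eta$ that works for the first $n$ coordinates at once. Since convergence in $\Sigma$ is coordinatewise, these amount to the same argument.
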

\begin{proof}
Writing $s:=\{t+i\alpha\}$, \eqref{eq:coding} gives
$\bfb(t)_i=\lfloor s+\alpha\rfloor$, which equals $1$ if $s+\alpha\ge1$ and
$0$ otherwise, since $0\le s+\alpha<2$.  As $s+\alpha\ge1$ if and only if
$s\ge1-\alpha$, i.e.~if and only if $t+i\alpha\in I_1$ modulo $1$, the
first statement follows; and \eqref{eq:shiftcoding} is immediate from
\eqref{eq:coding}.  For $i\ge1$
neither $i\alpha$ nor $(i+1)\alpha$ is an integer, so
$\lceil j\alpha\rceil=\lfloor j\alpha\rfloor+1$ for $j\in\{i,i+1\}$ and
hence $a_i=\bfb(0)_i$, whereas $a_0=\lceil\alpha\rceil=1$ and
$\bfb(0)_0=\lfloor\alpha\rfloor=0$.  Finally, given $n\in\N$, choose $\eta>0$ smaller
than the distance from each of $\alpha,2\alpha,\ldots,n\alpha$ to the
nearest integer; then $\lfloor-\eta+i\alpha\rfloor=\lceil
i\alpha\rceil-1$ for $i\le n$, so that
$\bfb(-\eta)_i=a_i$ for $i<n$; letting $\eta\downarrow0$ and $n\to\infty$
gives $\bfb^{-}(0)=\bfa$.
\end{proof}

The words of length $k$ occurring in $X_\alpha$ are indexed by the
intervals into which the $k+1$ points $\{-j\alpha\}$, $0\le j\le k$, cut
the circle:

\begin{notation}\label{not:cuts}
For $k\in\N$ and $v\in\{0,1\}^k$ set
\begin{equation}\label{eq:Iv}
  I_v:=\{t\in[0,1):\bfb(t)_{[0,k)}=v\},
\end{equation}
and for
$j\ge0$ set
\[
  c_j:=\{-j\alpha\} .
\]
We call $c_0,\ldots,c_k$ the \emph{cuts of level $k$}, i.e.~the
first $k+1$ terms of the single sequence $(c_j)_{j\ge0}$.
When $I_v$ is the argument of an
indicator function on $\R$, it is identified with the $1$-periodic set
$I_v+\Z$.
\end{notation}

The cuts of level $k$ determine the intervals $I_v$, as follows.

\begin{lem}\label{lem:cuts}
Let $\alpha\in(0,1)$ be irrational, let $k\in\N$, and let the intervals
$I_v$, $v\in\{0,1\}^k$, and the cuts $c_j$ be as in Notation
\ref{not:cuts}.  Then:
\begin{enumerate}
\item[(i)] the cuts of level $k$ are $k+1$ distinct points of $[0,1)$, one of
which is $c_0=0$,
and they divide $[0,1)$ into $k+1$ intervals of positive length, each of the
form $[c,c')$ with $c$ a cut of level $k$ and $c'$ either a cut of level $k$
or $1$;
\item[(ii)] each of these $k+1$ intervals equals $I_v$ for exactly one
$v\in\{0,1\}^k$, distinct intervals corresponding to distinct words, and
$I_v=\emptyset$ for all other $v$.
\end{enumerate}
\end{lem}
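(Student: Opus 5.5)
The plan is to read off the word $\bfb(t)_{[0,k)}$ from the position of $t$ relative to the cuts, using the first statement of Lemma \ref{lem:coding}. By that lemma, for $0\le i<k$ the digit $\bfb(t)_i$ is $1$ exactly when $\{t+i\alpha\}\in[1-\alpha,1)$, i.e.\ when $t$ lies in the arc $[c_{i+1},c_i)$ of $\R/\Z$, that is, the arc running from $\{-(i+1)\alpha\}$ to $\{-i\alpha\}$ (as a subset of $[0,1)$ this is either an interval or a union of two intervals, depending on whether it wraps past $0$). Thus the $i$-th digit, as a function of $t\in[0,1)$, is the indicator of a half-open arc whose endpoints are cuts of level $k$ (since $i+1\le k$), and it is constant on any half-open interval $[c,c')$ not containing any cut of level $k$ in its interior.

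For (i): the cuts $c_j=\{-j\alpha\}$, $0\le j\le k$, are distinct because $\alpha$ is irrational, so $(j-j')\alpha\notin\Z$ for $j\ne j'$; and $c_0=0$. Ordering them as $0=d_0<d_1<\dots<d_k<1$ and setting $d_{k+1}:=1$ gives the $k+1$ intervals $[d_m,d_{m+1})$ of positive length, each of the stated form, which partition $[0,1)$.

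For (ii): by the observation above, $t\mapsto\bfb(t)_{[0,k)}$ is constant on each $[d_m,d_{m+1})$, because each digit function is the indicator of a union of arcs with left-closed, right-open endpoints among the cuts (the wrap-around at $0$ causes no trouble since $0=c_0$ is itself a cut). Hence each $I_v$ is a union of some of these intervals, and $I_v=\emptyset$ for $v$ not attained. The step needing care is to show distinct intervals give distinct words. Take two adjacent intervals separated by a cut $c_j=d_m$ with $1\le m\le k$ (so $j\ge1$, as $c_0=0$ is not an interior boundary). Crossing $c_j$ changes the indicator of $[c_{j},c_{j-1})$ (the digit $i=j-1$) and of $[c_{j+1},c_j)$ (the digit $i=j$, only if $j\le k-1$). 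If $1\le j\le k-1$ both digits $j-1$ and $j$ flip, and if $j=k$ digit $k-1$ flips; either way the words differ. This gives only adjacent distinctness, so for non-adjacent intervals I would instead use the first flipped digit: for $t\in[d_m,d_{m+1})$ and $t'\in[d_{m'},d_{m'+1})$ with $m<m'$, the cut $d_{m+1}=c_j$ lies in $(t,t']$. If $j\ge1$, the arc $[c_j,c_{j-1})$ contains exactly one of $t,t'$ provided $c_{j-1}\notin(t,t']$; otherwise one would pick instead a cut in $(t,t']$ and argue with a nested sequence. The cleanest route I would try first is counting: the map $t\mapsto\bfb(t)_{[0,k)}$ is injective on the set of intervals iff exactly $k+1$ words arise, and I would compare with the known fact that $X_\alpha$ has exactly $k+1$ words of length $k$ (Lemma \ref{lem:sturmian}); failing that, directly locate, for given $t<t'$ in different intervals, an index $i<k$ whose arc $[c_{i+1},c_i)$ separates them, by choosing a cut $c_j\in(t,t']$ with $j\ge 1$ and $j$ minimal and checking the neighbouring cut in the circular order. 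This separation step is the main obstacle.
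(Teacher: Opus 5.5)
Part (i), and the constancy of $t\mapsto\bfb(t)_{[0,k)}$ on each interval, are fine. The gap is in part (ii): you do not prove that distinct intervals carry distinct words. You list possible routes and leave the separation step open. The route you prefer, counting, is circular here. The count $|\w_k(X_\alpha)|=k+1$ in Lemma \ref{lem:sturmian}(ii) is itself proved in the paper as ``the count of the $k+1$ intervals in Lemma \ref{lem:cuts}''. You could avoid the circularity by importing the Morse--Hedlund complexity theorem from outside, but you would then also need $\w_k(X_\alpha)=\{\bfb(t)_{[0,k)}:t\in[0,1)\}$. That is a much heavier input than this lemma warrants.

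Your direct route in fact closes at once, and you already wrote down the key observation. Suppose $t\in[d_m,d_{m+1})$ and $t'$ lies in a later interval. Then $d_{m+1}\in(t,t']$, and $d_{m+1}=c_j$ for some $j\ge1$, since $d_{m+1}>0$. Take $j\ge1$ minimal with $c_j\in(t,t']$. Then $c_{j-1}\notin(t,t']$ holds automatically: for $j\ge2$ by minimality, and for $j=1$ because $c_0=0\le t$. So your proviso is always satisfied. By your own remark, the arc $[c_j,c_{j-1})$, on which digit $j-1$ equals $1$, contains $t'$ but not $t$, and the words differ at index $j-1\le k-1$.

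The paper packages the same fact through monotonicity. Summing \eqref{eq:coding} gives $\sum_{i<j}\bfb(t)_i=\lfloor t+j\alpha\rfloor$ for $t\in[0,1)$. This is a non-decreasing right-continuous step function of $t$ that jumps by one exactly at $c_j$. Hence any cut $c_j$ with $1\le j\le k$ in $(t,t']$ makes the numbers of digits $1$ in the length-$j$ prefixes differ. This gives constancy, part (i) and injectivity in one stroke, without locating a differing digit. Your minimal $j$ is exactly the first $j$ at which these prefix sums disagree.
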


\begin{proof}
Summing \eqref{eq:coding} over $i<j$ gives
$\sum_{i<j}\bfb(t)_i=\lfloor t+j\alpha\rfloor$ for $t\in[0,1)$, so the word
$\bfb(t)_{[0,k)}$ is determined by the $k$ integers
$\lfloor t+j\alpha\rfloor$, $1\le j\le k$.  As $t$ increases through
$[0,1)$, the $j$th of these is non-decreasing, and increases by one when
$t$ passes $c_j$.  Since $\alpha$ is irrational the points
$c_0,\ldots,c_k$ are distinct, which gives (i).
Each of these $k$ integers is a right continuous step function of $t$, so all
of them are constant on each of the intervals $[c,c')$ in (i), and hence so is
$\bfb(t)_{[0,k)}$;
and if $t<t'$ lie in different intervals then some $c_j$ with $1\le j\le k$
separates them, so that $\lfloor t+j\alpha\rfloor<\lfloor t'+j\alpha\rfloor$
and the two words differ,
thus giving (ii).
\end{proof}

In terms of the Sturmian coding, the standard properties of $X_\alpha$,
$Y_\alpha$ and $\mu_\alpha$ are as follows.

\begin{lem}\label{lem:sturmian}
Let $\alpha\in(0,1)$ be irrational, with $\bfa$ the maximal Sturmian
sequence of rotation number $\alpha$, and let $X_\alpha$ and $Y_\alpha$ be the
Sturmian subshift, and Sturmian beta-shift, of rotation number $\alpha$.
Let $\bfb$ be the Sturmian coding, let the intervals $I_v$ be as in Notation
\ref{not:cuts}, and let $\Phi$ be the digit function.  Then:
\begin{enumerate}
\item[(i)] $X_\alpha=\{\bfb^{\pm}(t):t\in\R\}$, and $X_\alpha$ is minimal,
non-periodic and uniquely ergodic, with invariant measure
$\mu_\alpha=\bfb_*\mathrm{Leb}$;
\item[(ii)] $\w_k(X_\alpha)=\{v\in\{0,1\}^k:I_v\ne\emptyset\}$ has exactly
$k+1$ elements, and $\mu_\alpha([v]_{X_\alpha})=|I_v|$ for all
$v\in\{0,1\}^k$, with the convention $|I_v|=0$ when $I_v=\emptyset$;
\item[(iii)] $\int\Phi\,d\mu_\alpha=|I_1|=\alpha$;
\item[(iv)] all $v\in\w_k(X_\alpha)$ satisfy
$\lfloor k\alpha\rfloor\le|v|_1\le\lceil k\alpha\rceil$, and consequently
$|S_k\Phi(x)-k\alpha|<1$ for all $x\in X_\alpha$ and $k\in\N_0$;
\item[(v)] $\bfa=\max_\lex X_\alpha$ and $\sigma^j\bfa<_\lex\bfa$ for all
$j\ge1$; consequently there is a unique $\beta\in(1,2)$ with
$\bfa=d_\beta(1)$, the set $Y_\alpha$ of \eqref{eq:Yalpha} is the beta-shift
of that $\beta$, and $X_\alpha\subseteq Y_\alpha$.
\end{enumerate}
\end{lem}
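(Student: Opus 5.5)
We will derive all five parts from the description of the Sturmian coding in Lemmas \ref{lem:coding} and \ref{lem:cuts}, treating $X':=\{\bfb^{\pm}(t):t\in\R\}$ as the main object and identifying it with $X_\alpha$ at the end of the first step.

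For (i), we first show that $X'$ is closed. Take a sequence $\bfb^{\pm}(t_n)\to x$ and pass to a subsequence along which $t_n\to t$ modulo $1$ monotonically from one side, or with $t_n=t$ eventually. By the right continuity and the one-sided limits of each coordinate, $x=\bfb^{+}(t)$ or $x=\bfb^{-}(t)$, so $X'$ is closed. By \eqref{eq:shiftcoding}, $X'$ is shift-invariant.

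To identify $X'$ with $X_\alpha$, note that $\bfa=\bfb^-(0)$ and $\sigma^n\bfa=\bfb^-(n\alpha)$. The orbit $\{n\alpha\}$ is dense, and $\bfb^{\pm}(t)$ is approximated coordinatewise by $\bfb(s)$ with $s$ near $t$ on the appropriate side. Since $\bfb$ and $\bfb^-$ agree on any prescribed finite block of coordinates away from finitely many points, the orbit of $\bfa$ is dense in $X'$, and hence $X'=X_\alpha$.

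Minimality follows by the same argument applied to the orbit of any $\bfb^{\pm}(t)$. Non-periodicity holds because the words of length $k$, counted in (ii), number $k+1$, which is unbounded. For unique ergodicity, let $\nu$ be invariant. Its cylinder values are determined by Birkhoff averages of $\ind_{[v]}$ along generic points. Every point is a coding of $t$, and $\ind_{[v]}(\sigma^n\bfb(t))=\ind_{I_v}(t+n\alpha)$ up to the endpoint modification for $\bfb^-$. By Weyl equidistribution these averages converge uniformly to $|I_v|$, which gives $\nu=\bfb_*\mathrm{Leb}$.

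For (ii), Lemma \ref{lem:cuts} gives exactly $k+1$ nonempty $I_v$. The words $\bfb^-(t)_{[0,k)}$ are limits from the left, so they are again among these, and the measure statement is just $\mu_\alpha=\bfb_*\mathrm{Leb}$. Part (iii) is the case $k=1$ of (ii) together with $I_1=[1-\alpha,1)$ from Lemma \ref{lem:coding}.

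For (iv), summing \eqref{eq:coding} gives $|v|_1=\lfloor t+k\alpha\rfloor-\lfloor t\rfloor\in\{\lfloor k\alpha\rfloor,\lceil k\alpha\rceil\}$ for $t\in[0,1)$. The case of $\bfb^-$ follows by taking limits. Since $k\alpha\notin\Z$ for $k\ge1$, this yields $|S_k\Phi(x)-k\alpha|<1$.

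For (v), we compare $\bfa$ with an arbitrary $x\in X_\alpha$. Suppose $x_{[0,j)}=a_{[0,j)}$ and $x_j\ne a_j$. By (iv) applied to the length-$(j+1)$ prefixes, $|x_{[0,j+1)}|_1\le\lceil (j+1)\alpha\rceil=S_{j+1}\Phi(\bfa)$ by \eqref{eq:prefixsum}. Hence $x_j\le a_j$, so $x\le_\lex\bfa$. Strictness for $\sigma^j\bfa$ with $j\ge1$ holds because $X_\alpha$ is non-periodic, so $\sigma^j\bfa\ne\bfa$. Parry's criterion \cite{Par60} then gives a unique $\beta$ with $d_\beta(1)=\bfa$, and $\beta\in(1,2)$ since $a_0=1$ and the digits lie in $\{0,1\}$. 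It follows that $Y_\alpha$ is the beta-shift of $\beta$, and $X_\alpha\subseteq Y_\alpha$ because every point of $X_\alpha$, and every shift of it, is $\le_\lex\bfa$.

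The main technical nuisance is the limit bookkeeping between $\bfb^+$ and $\bfb^-$ in (i), which is needed to show $X'$ is closed and equal to the orbit closure. I would handle this with the coordinatewise step-function description from Lemma \ref{lem:cuts}.
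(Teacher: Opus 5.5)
Your proof is correct. It is more self-contained than the paper's and takes a different route in parts (i) and (v).

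\textbf{Part (i).} The paper does not argue topologically. It observes that $\bfb(t)$ and $\bfb^{-}(t)$ are the lower and upper mechanical words of slope $\alpha$ and intercept $t$. It then cites the Morse--Hedlund theorem, that mechanical words of slope $\alpha$ are exactly the Sturmian words of slope $\alpha$, and the fact that their factors depend only on the slope. Hence $\{\bfb^{\pm}(t)\}$ is the subshift generated by any one of its elements, and minimality and unique ergodicity are taken as standard. You instead prove directly that $\{\bfb^{\pm}(t):t\in\R\}$ is closed and shift-invariant and that every orbit in it is dense. You use only the one-sided limits of the step functions $\bfb(\cdot)_i$ and the density of $\{t+n\alpha\}$, and you get unique ergodicity from equidistribution of the rotation. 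This costs the $\pm$ bookkeeping but avoids the external citations.

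\textbf{Part (v).} The paper shows that $t\mapsto\bfb(t)$ is lexicographically non-decreasing on $[0,1)$, using $\sum_{i<j}\bfb(t)_i=\lfloor t+j\alpha\rfloor$, and then lets $s\uparrow1$. You instead combine the balance bound of (iv) with \eqref{eq:prefixsum}, which shows that every prefix of $\bfa$ attains the maximal count $\lceil j\alpha\rceil$. At the first disagreement this forces $x_j<a_j$. Your argument is shorter and needs no limit in $t$. The paper's version yields the extra fact that the coding is monotone.

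\textbf{Parts (ii)--(iv)} follow the paper.

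\textbf{A small omission.} To conclude that $Y_\alpha$ is the beta-shift $S_\beta$, which is defined through $d^*_\beta(1)$, you should note that $d^*_\beta(1)=d_\beta(1)=\bfa$, because $\bfa$ has infinitely many digits $1$. Also, $\beta\in(1,2)$ is most directly read off from $\lfloor\beta\rfloor=a_0=1$.
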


\begin{proof}
These are standard; see \cite{MH40} and \cite[Ch.~2]{Lothaire}, or
\cite[Ch.~6]{Fogg}.  For the first equality in (i), note that
\eqref{eq:coding} exhibits $\bfb(t)$ as the lower mechanical word of slope
$\alpha$ and intercept $t$, while
$\bfb^{-}(t)_i=\lceil t+(i+1)\alpha\rceil-\lceil t+i\alpha\rceil$ exhibits
$\bfb^{-}(t)$ as the upper mechanical word of the same slope and intercept;
so $\{\bfb^{\pm}(t):t\in\R\}$ is the set of all mechanical words of slope
$\alpha$, which by the theorem of Morse \& Hedlund
\cite[Thm.~2.1.13]{Lothaire} is the set of all Sturmian words of slope
$\alpha$.  The factors of such a word depend only on its slope
\cite[Prop.~2.1.18]{Lothaire}, so that set is the subshift generated by any
one of its elements, which is $X_\alpha$.  Part (ii) is the count of
the $k+1$ intervals in Lemma \ref{lem:cuts} together with unique ergodicity
of $R_\alpha$ applied to the intervals $I_v$.  Part (iv) follows from
$|v|_1=\lfloor t+k\alpha\rfloor-\lfloor t\rfloor$ for $t\in I_v$.  For (v),
the identity $\sum_{i<j}\bfb(t)_i=\lfloor t+j\alpha\rfloor$ shows that for
$0\le t<s<1$, at the first index where the partial sums of $\bfb(t)$ and
$\bfb(s)$ differ the digit of $\bfb(t)$ is the smaller; hence
$\bfb(t)\le_\lex\bfb(s)$, and letting $s\uparrow1$ and using (i) gives
$\bfa=\bfb^{-}(0)=\max_\lex X_\alpha$.  As $X_\alpha$ is non-periodic the
inequality $\sigma^j\bfa\le_\lex\bfa$ is strict for $j\ge1$, so Parry's
characterisation \cite{Par60} of the sequences arising as $d_\beta(1)$
gives a unique $\beta>1$ with $\bfa=d_\beta(1)$.  The first term of
$d_\beta(1)$ is $\lfloor\beta\rfloor$, so $\lfloor\beta\rfloor=a_0=1$, and
hence $\beta\in(1,2)$.  As $\bfa$ has infinitely many non-zero terms,
$d^*_\beta(1)=\bfa$, so $Y_\alpha$ is the beta-shift of $\beta$; and
$X_\alpha\subseteq Y_\alpha$ because $\sigma^jx\le_\lex\bfa$ for all
$x\in X_\alpha$ and $j\ge0$.
\end{proof}

\section{The maximizing property of the digit function}\label{sec:maximizing}

In this section we describe a graph presentation of $Y_\alpha$, identify the
Markov boundary of a non-sofic beta-shift, deduce that $Y_\alpha$ has the
specification property, and prove that $\mu_\alpha$ is a maximizing measure
for the digit function $\Phi$ (Proposition \ref{prop:Phimax}).

The following graph presentation for beta-shifts does not need any Sturmian
hypothesis:

\begin{defn}\label{def:graph}
Let $\bfa=(a_i)_{i\ge0}$ be a sequence of non-negative integers such that
$a_0\ge1$ and
$\sigma^j\bfa\le_\lex\bfa$ for all $j\ge1$, so that $\bfa\in\Sigma_{a_0}$.  We define the \emph{graph of
$\bfa$} to mean the directed labelled graph $G(\bfa)$ with vertex set
$\N_0$ whose edges are, for each $r\in\N_0$, an edge from $r$ to $r+1$
labelled $a_r$, together with an edge from $r$ to $0$ labelled $c$ for
each integer $c$ with $0\le c<a_r$.  A \emph{first return loop} of $G(\bfa)$ is a path from the
vertex $0$ to itself, of length at least $1$, visiting the vertex $0$ only
at its two endpoints; we identify such a path with the word formed by its
labels.  Finally, set
\begin{equation*}
  Y(\bfa):=\{x\in\Sigma_{a_0}:\sigma^jx\le_\lex\bfa\ \text{ for all }j\ge0\} .
\end{equation*}
\end{defn}

The edges leaving a given vertex carry distinct labels, so for all
$x\in\Sigma_{a_0}$ there is at most one path in $G(\bfa)$ starting at the vertex
$0$ whose sequence of labels is $x$; that is, $G(\bfa)$ is
right-resolving.  When such a path exists we write
$r_0(x)=0,r_1(x),r_2(x),\ldots$ for its sequence of vertices, defined for
as long as the path can be continued.

Definition \ref{def:graph} applies to all beta-shifts, as follows.

\begin{rem}\label{rem:general}
For all $\beta>1$, the hypotheses of Definition \ref{def:graph} are
satisfied by the quasi-greedy expansion $\bfa=d^*_\beta(1)$, by Parry's
criterion \cite{Par60}, and $Y(\bfa)$ is then the beta-shift $S_\beta$, as
the first term of $d^*_\beta(1)$ is $\lceil\beta\rceil-1$; here
$d^*_\beta(1)=d_\beta(1)$ unless $\beta$ is a simple beta-number, in which
case $d^*_\beta(1)$ is periodic.
So Lemma
\ref{lem:presentation} below is a statement about all beta-shifts.  If
$\bfa$ is not eventually periodic then $G(\bfa)$ is the Fischer automaton of
$Y(\bfa)$ in the sense of Blanchard \& Hansel \cite{BH86}, and in general the
presentation is standard; see
\cite[\S 5]{PS05}, \cite[\S 5.1]{Tho12} and \cite[\S 3.2]{GK}.  The short
proof of Lemma \ref{lem:presentation} is included because the precise form
of $G(\bfa)$ is used at various points below.
\end{rem}

The following lemma describes $Y(\bfa)$ in terms of paths in $G(\bfa)$,
and identifies the first return loops of $G(\bfa)$.

\begin{lem}\label{lem:presentation}
Let $\bfa=(a_i)_{i\ge0}$ be a sequence of non-negative integers such that
$a_0\ge1$ and
$\sigma^j\bfa\le_\lex\bfa$ for all $j\ge1$, and let $G(\bfa)$ and $Y(\bfa)$
be as in Definition \ref{def:graph}.  Then $Y(\bfa)$ is the set of
$x\in\Sigma_{a_0}$ labelling an infinite path in $G(\bfa)$ that starts at the
vertex $0$.  Moreover, the first return loops of $G(\bfa)$ are precisely the
words
\begin{equation}\label{eq:Rn}
  R_{n,c}:=a_0a_1\cdots a_{n-2}\,c\qquad
  (n\ge1,\ 0\le c<a_{n-1}),
\end{equation}
the word $R_{n,c}$ having length $n$; in particular $R_{1,0}=0$ is a first
return loop.
\end{lem}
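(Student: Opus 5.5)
The plan is to prove the first-return-loop description first, since it is purely combinatorial, and then use it to get the path description of $Y(\bfa)$.

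\emph{First return loops.} Starting at vertex $0$, the only way to avoid returning to $0$ is to follow the edges $r\to r+1$, whose labels are $a_r$. A path of length $n$ that first returns to $0$ at step $n$ must therefore read $a_0a_1\cdots a_{n-2}$ along $0\to1\to\cdots\to n-1$. It must then take an edge from $n-1$ to $0$, whose label is some $c$ with $0\le c<a_{n-1}$. This gives exactly the words $R_{n,c}$. Conversely, each such word labels such a loop, and distinct pairs $(n,c)$ give distinct paths. Since $a_0\ge1$, the word $R_{1,0}=0$ occurs.

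\emph{Paths lie in $Y(\bfa)$.} Let $x$ label an infinite path from $0$. We must show $\sigma^jx\le_\lex\bfa$ for every $j$.
\begin{itemize}
\item The vertex reached after reading $x_{[0,j)}$ is $r=r_j(x)$, so the path from time $j$ starts at vertex $r$.
\item It suffices to prove the claim: every infinite path starting at vertex $r$ has label $y\le_\lex\sigma^r\bfa$.
\item Given the claim, the hypothesis $\sigma^r\bfa\le_\lex\bfa$ gives $\sigma^jx\le_\lex\bfa$.
\end{itemize}
To prove the claim, note that the path from $r$ either follows the spine forever, so $y=\sigma^r\bfa$, or leaves it. If it first leaves at vertex $r+m$, then $y_{[0,m)}=a_{[r,r+m)}$ and $y_m=c<a_{r+m}$, so $y<_\lex\sigma^r\bfa$. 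No induction is needed here, because leaving the spine already decides the comparison.

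\emph{Points of $Y(\bfa)$ label paths.} Let $x\in Y(\bfa)$. We build the path greedily and check that it never gets stuck. Suppose that after reading $x_{[0,j)}$ we are at vertex $r$, and assume inductively that $x_{[j-r,j)}=a_{[0,r)}$. This holds trivially when $r=0$, and holds whenever the vertices $0,1,\ldots,r$ were reached along the spine since the last visit to $0$.

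Now $\sigma^{j-r}x\le_\lex\bfa$ and the first $r$ digits of $\sigma^{j-r}x$ agree with those of $\bfa$. Hence $x_j\le a_r$.
\begin{itemize}
\item If $x_j=a_r$, take the edge to $r+1$; the invariant is preserved.
\item If $x_j<a_r$, take the edge labelled $x_j$ back to $0$; the invariant holds trivially.
\end{itemize}
So the path continues forever, and $x$ labels an infinite path from $0$.

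The main point needing care is choosing this invariant: the last $r$ digits read equal $a_{[0,r)}$. With it, admissibility of $x$ translates directly into the existence of the required edge at each step. I do not expect a serious obstacle anywhere.
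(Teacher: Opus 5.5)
Your proof is correct and is essentially the paper's argument: the key claim that every infinite path from vertex $r$ has label $y\le_\lex\sigma^r\bfa$, with equality along the spine and strict inequality at the first exit edge, and the description of the first return loops are exactly as in the paper. The only difference is cosmetic. For the inclusion of $Y(\bfa)$ in the set of path labels, you carry the invariant $x_{[j-r,j)}=a_{[0,r)}$ forward, whereas the paper argues by contradiction: it backtracks from a first step at which no edge is available to a position $p$ with $\sigma^px>_\lex\bfa$, which recovers the same invariant.
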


\begin{proof}
For $r\in\N_0$ let $\cQ_r$ denote the set of $y\in\Sigma_{a_0}$ labelling an
infinite path in $G(\bfa)$ starting at the vertex $r$, and let
$(s_j)_{j\ge0}$, with $s_0=r$, be the vertex sequence of that path.  We
claim that $y\in\cQ_r$ implies $y\le_\lex\sigma^r\bfa$.  If $s_j\ne0$ for
all $j\ge1$ then inductively $s_j=r+j$ and $y_j=a_{r+j}$, so
$y=\sigma^r\bfa$.  Otherwise let $h\ge1$ be the smallest integer such that $s_h=0$; then
$y_j=a_{r+j}$ for $j\le h-2$, while the $h$th edge runs from $s_{h-1}=r+h-1$
to $0$ and is therefore labelled by some $c<a_{r+h-1}$, so
$y_{h-1}<a_{r+h-1}$ and
$y<_\lex\sigma^r\bfa$.

If $x$ labels an infinite path starting at $0$ then
$\sigma^jx\in\cQ_{r_j(x)}$, so $\sigma^jx\le_\lex\sigma^{r_j(x)}\bfa
\le_\lex\bfa$ by hypothesis, and $x\in Y(\bfa)$.  Conversely, let
$x\in Y(\bfa)$ and suppose $r_j(x)$ were undefined for some smallest $j$,
necessarily $j\ge1$.  Writing $\rho:=r_{j-1}(x)$, no edge labelled
$x_{j-1}$ leaves the vertex $\rho$.  The labels of the edges leaving $\rho$
are $0,1,\ldots,a_\rho$, so $x_{j-1}>a_\rho$.  As
$x_{j-1}\le a_0$ we have $\rho\ge1$, so $\rho=r_{j-2}(x)+1$ and
$x_{j-2}=a_{r_{j-2}(x)}$; iterating downwards gives
$r_{j-1-\ell}(x)=\rho-\ell$ and $x_{j-1-\ell}=a_{\rho-\ell}$ for
$1\le\ell\le\rho$.  Setting $p:=j-1-\rho$ this gives $x_{p+i}=a_i$ for
$0\le i<\rho$, while $x_{p+\rho}=x_{j-1}>a_\rho$, so that
$\sigma^p x>_\lex\bfa$, contradicting that $x\in Y(\bfa)$.

For the last statement,
following the edges labelled
$a_0,\ldots,a_{n-2}$ from the vertex $0$
leads to the vertex $n-1$ without returning to $0$, and a final symbol $c$
then returns to $0$
if and only if $c<a_{n-1}$; all first return loops
arise in this way.  Since $a_0\ge1$, the word $R_{1,0}=0$ is a first return
loop.
\end{proof}

The following lemma identifies the Markov boundary (see Section \ref{subsec:transformations}) of a beta-shift which is not sofic.  If
$\mathbf e$ satisfies the hypotheses of Definition \ref{def:graph}
and is not eventually periodic, then $\sigma^j\mathbf e<_\lex\mathbf e$ for
all $j\ge1$, so $Y(\mathbf e)$ is a beta-shift by Parry's criterion
\cite{Par60}, which is not sofic \cite{Bertrand86,blanchard}; all non-sofic
beta-shifts arise in this way.

\begin{lem}\label{lem:markov-boundary}
Let $\mathbf e=(e_i)_{i\ge0}$ be a sequence of non-negative integers such
that $e_0\ge1$ and
$\sigma^j\mathbf e\le_\lex\mathbf e$ for all $j\ge1$, and suppose that
$\mathbf e$ is not eventually periodic.  Then the Markov boundary
$\partial_MY(\mathbf e)$ of the beta-shift $Y(\mathbf e)$ is the
$\omega$-limit set of $\mathbf e$.
\end{lem}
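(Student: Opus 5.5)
Write $Y:=Y(\mathbf e)$ and $\omega:=\omega(\mathbf e)$. The plan is to compute follower sets in $Y$ through the graph $G(\mathbf e)$ of Lemma \ref{lem:presentation}, and to prove both inclusions. The basic observation is this. Since $G(\mathbf e)$ is right-resolving, and since a path from vertex $r$ labelled $y$ exists if and only if every suffix of $y$ stays lexicographically below the relevant shift of $\mathbf e$, the follower set of the vertex $r$ is the set of words $v$ such that $v\le_\lex$ the prefix of $\sigma^r\mathbf e$ together with the global constraint. Different vertices $r\ne r'$ have different follower sets: because $\mathbf e$ is not eventually periodic, $\sigma^r\mathbf e\ne\sigma^{r'}\mathbf e$, and a first differing position gives a word in one follower set but not in the other. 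For a word $uw\in\w(Y)$, $F_Y(uw)$ is the union of the follower sets of the terminal vertices of the paths labelled $uw$ that start at vertices reachable from $0$. These terminal vertices form a set of the form $\{0\}$ or a set of vertices of bounded size. More simply, $F_Y(uw)=F(r)$, where $r$ is the largest terminal vertex, since $F(r)\supseteq F(r')$ when $\sigma^{r'}\mathbf e\le_\lex\sigma^r\mathbf e$ along the suffix structure. I would first check this carefully: $F_Y(z)$ equals the follower set of the vertex $r(z)$, the largest $r$ with $z$ ending in $e_0\cdots e_{r-1}$ and $z$ admissible.

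For the inclusion $\omega\subseteq\partial_MY$, take $x\in\omega$ and a word $w$ occurring in $x$. Then $w$ occurs in $\mathbf e$ at infinitely many positions, since $w$ is a factor of points in $\omega$, hence of $\mathbf e$ arbitrarily far out. For such a position $p$, let $u:=e_0\cdots e_{p-1}$, so that $uw=e_0\cdots e_{p+|w|-1}$ and $r(uw)=p+|w|$. This gives infinitely many distinct vertices and therefore infinitely many distinct follower sets. So $\{F_Y(uw)\}$ is infinite and $x\in\partial_MY$.

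For the converse, take $x\in\partial_MY$. For each prefix $w=x_{[0,n)}$, infinitely many follower sets $F_Y(uw)$ occur, so the vertices $r(uw)$ are unbounded. Then $uw$ ends in $e_0\cdots e_{r-1}$ with $r$ arbitrarily large, and once $r>|w|$ this means $w=e_{[r-|w|,r)}$ with $r-|w|$ arbitrarily large. Hence every prefix of $x$ occurs in $\mathbf e$ at arbitrarily large positions. Since $x_{[0,n)}$ appears in $\mathbf e$ beyond any position $N$ for every $n$, a diagonal compactness argument gives $\sigma^{p_n}\mathbf e\to x$ with $p_n\to\infty$, so $x\in\omega$.

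The main obstacle is the first step: justifying that $F_Y(z)$ is determined by the single vertex $r(z)$, namely the longest suffix of $z$ that is a prefix of $\mathbf e$, and that distinct vertices give distinct follower sets. For the latter, given $r<r'$, I would use the first $i$ with $e_{r+i}\ne e_{r'+i}$ and take a word agreeing with the larger of the two up to position $i$. This word is admissible from one vertex but not the other, and I need to check that it is admissible globally, which holds because it is a prefix of a shift of $\mathbf e$.
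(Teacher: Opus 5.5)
Your proposal is correct and is essentially the paper's proof. Your $r(z)$ is the paper's $\ell(v)$, the length of the longest suffix of $v$ that is a prefix of $\mathbf e$. The paper verifies the step you flag by a direct lexicographic argument: for $x\in Y(\mathbf e)$, $vx\in Y(\mathbf e)$ if and only if $x\le_\lex\sigma^{\ell(v)}\mathbf e$. It then separates the follower sets of distinct prefixes of $\mathbf e$ exactly as you propose. Both inclusions follow from the fact that the words occurring in the $\omega$-limit set are precisely those occurring in $\mathbf e$ at arbitrarily large positions.

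In your graph language, the flagged step holds because the unique path from vertex $0$ labelled $z$ ends at vertex $\ell(z)$: reading a prefix of $\mathbf e$ from any vertex $s$ stays on the spine, since $\sigma^s\mathbf e\le_\lex\mathbf e$. You should therefore drop the remark about the ``largest terminal vertex'' over paths from arbitrary start vertices. That vertex can exceed $\ell(z)$ and has a strictly smaller follower set, so it is not the right quantity.
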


\begin{proof}
For a word $v$ occurring in $Y(\mathbf e)$, let $\ell(v)$ be the length of
the longest suffix of $v$ which is a prefix of $\mathbf e$, and let $x\in
Y(\mathbf e)$.  As $v$ occurs in $Y(\mathbf e)$, each suffix of $v$ which is
not a prefix of $\mathbf e$ is lexicographically smaller than the prefix of
$\mathbf e$ of the same length, so $vx\in Y(\mathbf e)$ if and only if
$x\le_\lex\sigma^k\mathbf e$ for all $k\ge1$ such that the suffix of $v$ of
length $k$ is a prefix of $\mathbf e$.  For such $k$ we have $k\le\ell(v)$,
and $\sigma^{\ell(v)}\mathbf e\le_\lex\sigma^k\mathbf e$ because
$\sigma^{\ell(v)-k}\mathbf e\le_\lex\mathbf e$ and these two sequences have
the same first $k$ symbols.  Thus $vx\in Y(\mathbf e)$ if and only if
$x\le_\lex\sigma^{\ell(v)}\mathbf e$, so the follower set of $v$, i.e.~the set of words $u$ such that $vu$ occurs in $Y(\mathbf e)$, consists of the
prefixes of those $x\in Y(\mathbf e)$ with $x\le_\lex\sigma^{\ell(v)}\mathbf
e$, and depends only on $\ell(v)$.  The follower sets of the prefixes of
$\mathbf e$ are pairwise distinct: the sequences $\sigma^n\mathbf e$,
$n\ge0$, are pairwise distinct as $\mathbf e$ is not eventually periodic,
and if $\sigma^n\mathbf e<_\lex\sigma^k\mathbf e$ then the shortest prefix of
$\sigma^k\mathbf e$ which is not a prefix of $\sigma^n\mathbf e$ lies in the
follower set of $\mathbf e_{[0,k)}$ but not in that of $\mathbf e_{[0,n)}$.

Now let $w$ be a word.  If $w$ occurs in $\mathbf e$ at arbitrarily large
positions $p$, then the words $\mathbf e_{[0,p)}w=\mathbf e_{[0,p+|w|)}$
have infinitely many distinct follower sets.  If instead all occurrences of
$w$ in $\mathbf e$ start before some position $B$, then $\ell(uw)<B+|w|$ for
all words $uw$ occurring in $Y(\mathbf e)$, so these words have only finitely
many distinct follower sets.  The words occurring in $\mathbf e$ at
arbitrarily large positions are precisely the words occurring in the
$\omega$-limit set of $\mathbf e$, so by \cite[Def.~8.7]{HJXZ} this
$\omega$-limit set is $\partial_MY(\mathbf e)$.
\end{proof}

We now compare Theorem \ref{thm:A} with the structural theory of
\cite{HJXZ} recalled in Section \ref{subsec:transformations}.

\begin{rem}\label{rem:markov-boundary}
Let $\alpha\in(0,1)$ be irrational.  As $X_\alpha$ is minimal and contains
$\bfa$, the $\omega$-limit set of $\bfa$ is $X_\alpha$, so
$\partial_MY_\alpha=X_\alpha$ by Lemma \ref{lem:markov-boundary}.  As
$X_\alpha$ contains no periodic orbit, it does not have TPO, so
\cite[Thm.~1.3]{HJXZ}, according to which a shift space has TPO whenever its
Markov boundary has TPO, does not apply to $Y_\alpha$; and Theorem
\ref{thm:A} shows that the set of Lipschitz functions having a maximizing
measure supported by $\partial_MY_\alpha$ has non-empty interior, that is,
$Y_\alpha$ is not \emph{fragile} in the sense of \cite[Def.~10.2]{HJXZ}.
The failure of TPO for $Y_\alpha$ is nevertheless not determined by its
Markov boundary alone: all beta-shifts $S\ne Y_\alpha$ with
$\partial_MS=X_\alpha$ have TPO with respect to $\lip(S,d_\theta)$, for all
$\theta\in(0,1)$.  To see this, let $\beta>1$ be such that $S=S_\beta$.  As
$\partial_MS$ is non-empty, $S$ is not sofic, so $\beta$ is not a
beta-number, and the $\omega$-limit set of $d^*_\beta(1)$ is
$\partial_MS=X_\alpha$, by Lemma \ref{lem:markov-boundary}.  In particular
$\bfa\in S$, so $\bfa\le_\lex d^*_\beta(1)$, and the inequality is strict as
$S\ne Y_\alpha$, so $d^*_\beta(1)\notin X_\alpha$.  Hence the orbit closure
of $d^*_\beta(1)$ properly contains the closed invariant set $X_\alpha$, so
is not minimal, and $\beta$ is non-emergent \cite[Prop.~5.3]{HHJL}.  So $S$
has TPO by Theorem \ref{thm:nonemergent}, and the beta-transformations
$T_\beta$ and $U_\beta$ also have TPO \cite[Thm.~J]{HHJL}; by contrast, if
$\beta_\alpha$ denotes the parameter with $S_{\beta_\alpha}=Y_\alpha$, then
neither $T_{\beta_\alpha}$ nor $U_{\beta_\alpha}$ has TPO (Theorem
\ref{thm:E}).  For example, $Y(1\bfa)$ (see Definition \ref{def:graph}) is
such a beta-shift: as $\bfa$ begins with $1^k0$ for some $k\in\N$, we have
$\bfa<_\lex1\bfa$, so $\sigma^j(1\bfa)<_\lex1\bfa$ for all $j\ge1$, and the
$\omega$-limit set of $1\bfa$ is $X_\alpha$.
\end{rem}

For the maximal Sturmian sequence $\bfa=\bfa(\alpha)$ of \eqref{eq:a}, with
$\alpha\in(0,1)$ irrational, the hypotheses of Definition \ref{def:graph}
hold, since $a_0=\lceil\alpha\rceil=1$ and $\sigma^j\bfa<_\lex\bfa$ for all
$j\ge1$ by Lemma \ref{lem:sturmian}(v), and $Y(\bfa)=Y_\alpha$ by
\eqref{eq:Yalpha}; in this case we shall write $G_\alpha$ instead of
$G(\bfa)$.

We now record the specification property of Sturmian beta-shifts,  which gives Corollary
\ref{cor:B} and is used in the proof of Theorem \ref{thm:E}.

\begin{lem}\label{lem:spec}
Let $\alpha\in(0,1)$ be irrational, let $\bfa$ be the maximal Sturmian
sequence of rotation number $\alpha$, and let $Y_\alpha$ be the
Sturmian beta-shift of rotation number $\alpha$.  Then there is $N\in\N$ such
that the word $0^N$ does not occur in $\bfa$.  For all such $N$, and all words
$u,v$ occurring in $Y_\alpha$, the word $u0^Nv$ occurs in $Y_\alpha$, and
$(u0^N)^\infty\in Y_\alpha$.  In particular $Y_\alpha$ has the specification
property.
\end{lem}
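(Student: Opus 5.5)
We first establish the existence of $N$ from Lemma \ref{lem:sturmian}(iv). Take $N\in\N$ with $N\alpha>1$; then every word $v$ of length $N$ occurring in $X_\alpha$ satisfies $|v|_1\ge\lfloor N\alpha\rfloor\ge1$. Since $\bfa\in X_\alpha$, the word $0^N$ does not occur in $\bfa$.

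Now fix any $N$ such that $0^N$ does not occur in $\bfa$. The key step is a lexicographic comparison. Let $y\in Y_\alpha$ be arbitrary, and consider the sequence $z:=u0^Ny$. We must check that $\sigma^jz\le_\lex\bfa$ for every $j\ge0$, and there are three ranges of $j$.

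\emph{Shifts starting inside $u$.} For $j<|u|$, the sequence $\sigma^jz$ begins with the suffix $s:=u_{[j,|u|)}$ followed by $0^N$. Since $u$ occurs in $Y_\alpha$, the word $s$ is a prefix of some element of $Y_\alpha$, so $s\le_\lex\bfa_{[0,|s|)}$ as words.
If the inequality is strict, we are done. If instead $s=\bfa_{[0,|s|)}$, we compare $0^N$ with $a_{[|s|,|s|+N)}$. The latter word is not $0^N$, because $0^N$ does not occur in $\bfa$. Hence $0^N<_\lex a_{[|s|,|s|+N)}$, and so $\sigma^jz<_\lex\bfa$.

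\emph{Shifts starting inside $0^N$.} For $|u|\le j<|u|+N$, the sequence $\sigma^jz$ begins with $0$. Since $a_0=1$, we get $\sigma^jz<_\lex\bfa$.

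\emph{Shifts starting in $y$.} For $j\ge|u|+N$, the sequence $\sigma^jz$ is a shift of $y$, and it lies below $\bfa$ because $y\in Y_\alpha$.

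Applying this with $y$ chosen as any element of $Y_\alpha$ beginning with $v$ shows that $u0^Nv$ occurs in $Y_\alpha$.

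For the periodic point $(u0^N)^\infty$, the same argument works, with each shift beginning inside some copy of $u0^N$. The only point to note is that the comparison is always decided within the first $|s|+N$ symbols, or at the first $0$ of a block $0^N$. So no infinite tail ever needs to be compared, and the argument is not circular.

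Specification then follows with the connecting word $w=0^N$. I expect no real obstacle here; the only care needed is in the boundary case where a suffix of $u$ coincides with a prefix of $\bfa$, and that case is handled by the absence of $0^N$ in $\bfa$.
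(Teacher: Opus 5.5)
Your proof is correct, and it differs from the paper's in both parts.

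For the existence of $N$, the paper argues by minimality: if $0^N$ occurred in $\bfa$ for every $N$, then $0^\infty\in X_\alpha$, and hence $X_\alpha=\{0^\infty\}$. You instead use the balance property of Lemma \ref{lem:sturmian}(iv), which gives an explicit choice, namely any $N$ with $N\alpha>1$. That is quantitative but specific to Sturmian sequences. The minimality argument needs only that the orbit closure of the expansion of $1$ is minimal and infinite, which is why the paper reuses it for general emergent $\beta$ in the proof of Lemma \ref{lem:emergent}.

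For the main claim, the paper works in the graph $G_\alpha$ of Lemma \ref{lem:presentation}. A path labelled $0^N$ that never visited the vertex $0$ would spell a subword of $\bfa$, so every path labelled $0^N$ ends at $0$. Thus $u0^N$ labels a path from $0$ back to $0$, and both $u0^Nv0^\infty$ and $(u0^N)^\infty$ label infinite paths from $0$. This needs no case analysis and no separate treatment of the periodic point. Your three-case check of $\sigma^jz\le_\lex\bfa$ verifies the same conclusion directly from the definition \eqref{eq:Yalpha}, without the graph presentation. Your remark that every comparison is settled within the first $|s|+N$ symbols, or at the first symbol, is exactly what handles $(u0^N)^\infty$.

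Both versions of this step use only $a_0\ge1$ and the absence of $0^N$ from $\bfa$, so either one applies with $d_\beta(1)$ in place of $\bfa$.
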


\begin{proof}
If $0^N$ occurred in $\bfa$ for all $N\in\N$ then $0^\infty$ would lie in
$X_\alpha$, so $X_\alpha=\{0^\infty\}$ by minimality, a contradiction; so the first statement is proved.
Now fix 
$N\in\N$ such
that $0^N$ does not occur in $\bfa$.
All words occurring in
$Y_\alpha$ are prefixes of points of $Y_\alpha$, since
$\sigma(Y_\alpha)\subseteq Y_\alpha$, so by Lemma \ref{lem:presentation}
each of them labels a path in $G_\alpha$ starting at the vertex $0$.  An edge labelled $0$
leaves each vertex $r$: it runs to $0$ if $a_r\ge1$, and to $r+1$ if $a_r=0$;
in particular the edge labelled $0$ leaving the vertex $0$ is a loop.  A path
labelled $0^N$ which never visits the vertex $0$ uses only edges of the form
$r\to r+1$, so its label is a word occurring in $\bfa$, which is impossible;
hence all paths labelled $0^N$ end at the vertex $0$.  So if $u$ labels a path
starting at $0$ then $u0^N$ labels a path from $0$ to $0$, and the sequences
$u0^Nv0^\infty$ and $(u0^N)^\infty$ label infinite paths starting at the
vertex $0$; by Lemma \ref{lem:presentation} they lie in $Y_\alpha$.  (The
specification property of $Y_\alpha$ can also be deduced from the theorem of
Bertrand-Mathis \cite{Bertrand86,Bertrand88} that a beta-shift has the
specification property if and only if the strings of zeros in $d_\beta(1)$
have bounded length; see also \cite[Prop.~3.5]{Sch97} and
\cite{blanchard,PS05}.)
\end{proof}

The numbers of digits $1$ in the first return loops of $G_\alpha$ are as
follows.

\begin{lem}\label{lem:returnwords}
Let $\alpha\in(0,1)$ be irrational, and let $\bfa$ be the maximal Sturmian
sequence of rotation number $\alpha$.  Then $a_{n-1}=1$ for infinitely many
$n\in\N$, the first return loops of the graph $G_\alpha=G(\bfa)$ of
Definition \ref{def:graph} are the words
$R_{n,0}$ of \eqref{eq:Rn} with $n\in\N$ and $a_{n-1}=1$, and
\begin{equation}\label{eq:defect}
  |R_{n,0}|_1=\lfloor n\alpha\rfloor\qquad(n\in\N,\ a_{n-1}=1) .
\end{equation}
\end{lem}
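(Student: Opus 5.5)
The plan is to read everything off Lemma \ref{lem:presentation} together with the prefix-sum identity \eqref{eq:prefixsum}.

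First, the digits of $\bfa$ lie in $\{0,1\}$. Lemma \ref{lem:presentation} says the first return loops are the words $R_{n,c}$ with $0\le c<a_{n-1}$. This condition forces $a_{n-1}=1$ and $c=0$, so the first return loops of $G_\alpha$ are exactly the words $R_{n,0}=a_0\cdots a_{n-2}0$ with $a_{n-1}=1$.

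Second, $a_{n-1}=1$ holds for infinitely many $n$. If not, $\bfa$ would be eventually $0^\infty$. That contradicts Lemma \ref{lem:spec}, which gives that $0^N$ does not occur in $\bfa$ for some $N$. Alternatively, \eqref{eq:prefixsum} shows $S_j\Phi(\bfa)=\lceil j\alpha\rceil\to\infty$.

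Third, count the $1$s. By \eqref{eq:prefixsum},
\[
|a_0\cdots a_{n-2}|_1=S_{n-1}\Phi(\bfa)=\lceil (n-1)\alpha\rceil ,
\]
and the final digit of $R_{n,0}$ is $0$, so $|R_{n,0}|_1=\lceil(n-1)\alpha\rceil$. It remains to show that this equals $\lfloor n\alpha\rfloor$ when $a_{n-1}=1$. Since $a_{n-1}=\lceil n\alpha\rceil-\lceil(n-1)\alpha\rceil=1$ and $\alpha$ is irrational, $n\alpha\notin\Z$ for $n\ge1$, so $\lceil n\alpha\rceil=\lfloor n\alpha\rfloor+1$. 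Hence
\[
\lceil(n-1)\alpha\rceil=\lceil n\alpha\rceil-1=\lfloor n\alpha\rfloor .
\]
The case $n=1$ is consistent with this: $R_{1,0}=0$ and $\lfloor\alpha\rfloor=0$.

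None of these steps is a real obstacle. The only care needed is the last step, where irrationality is used to pass from ceiling to floor, and the edge case $n=1$, where $\lceil 0\rceil=0$ is used.
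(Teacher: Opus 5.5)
Your proof is correct and follows the paper's argument essentially verbatim: the loops come from Lemma \ref{lem:presentation} using $a_{n-1}\in\{0,1\}$, the infinitely many digits $1$ come from \eqref{eq:prefixsum}, and the count uses $\lceil(n-1)\alpha\rceil=\lceil n\alpha\rceil-1=\lfloor n\alpha\rfloor$ by irrationality. Your single formula also covers $n=1$, since $\lceil 0\rceil=0=\lceil\alpha\rceil-1$; the paper just checks that case separately.
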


\begin{proof}
The digit $1$ occurs infinitely often in $\bfa$, by \eqref{eq:prefixsum}
and $\alpha>0$.  As $a_{n-1}\in\{0,1\}$ for all $n\in\N$, the first return
loops $R_{n,c}$, $0\le c<a_{n-1}$, of Lemma \ref{lem:presentation} are the
words $R_{n,0}$ with $a_{n-1}=1$.  For such $n$, \eqref{eq:prefixsum} gives
$|R_{n,0}|_1=\sum_{i\le n-2}a_i=\lceil(n-1)\alpha\rceil$, which equals
$\lfloor n\alpha\rfloor$ when $n=1$, both being $0$; and for $n\ge2$ it
equals $\lceil n\alpha\rceil-1=\lfloor n\alpha\rfloor$, by \eqref{eq:a} and
since $n\alpha\notin\Z$.
\end{proof}

By \eqref{eq:defect}, the digit $1$ has frequency $\lfloor n\alpha\rfloor/n<\alpha$
in each first return loop $R_{n,0}$ of $G_\alpha$.  As every point of
$Y_\alpha$ labels a path in $G_\alpha$ starting at the vertex $0$, this
leads to the following result, which shows that no invariant measure on
$Y_\alpha$ gives the digit $1$ a frequency larger than $\alpha$.

\begin{prop}\label{prop:Phimax}
Let $\alpha\in(0,1)$ be irrational, let $Y_\alpha$ be the Sturmian
beta-shift of rotation number $\alpha$, with Sturmian measure $\mu_\alpha$,
and let $\Phi$ be the digit function.  Then
\begin{equation}\label{eq:uniform}
  S_n\Phi(x)=\sum_{i<n}x_i\ \le\ n\alpha+1
  \qquad\text{for all }x\in Y_\alpha\text{ and all }n\in\N .
\end{equation}
Consequently $\int\Phi\,d\nu\le\alpha$ for all $\nu\in\m(Y_\alpha,\sigma)$,
with equality for $\nu=\mu_\alpha$, so that $\mu_\alpha\in\Mmax(\Phi)$.
\end{prop}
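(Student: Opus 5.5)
The plan is to decompose each $x\in Y_\alpha$ along its path in $G_\alpha$ into first return loops followed by a final incomplete excursion. By Lemma \ref{lem:presentation}, $x$ labels an infinite path from the vertex $0$. Fix $n$ and let $0=t_0<t_1<\dots<t_m\le n$ be the times $j\le n$ with $r_j(x)=0$. Each block $x_{[t_{i-1},t_i)}$ is a first return loop, so by Lemma \ref{lem:returnwords} it equals some $R_{\ell,0}$ with $\ell=t_i-t_{i-1}$. Then \eqref{eq:defect} gives
\[
|x_{[t_{i-1},t_i)}|_1=\lfloor \ell\alpha\rfloor\le \ell\alpha .
\]

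The remaining segment is $x_{[t_m,n)}$, of length $L:=n-t_m$. Along it the path never returns to $0$ before time $n$, so it uses only edges $r\to r+1$. Hence $x_{[t_m,n)}=a_0\cdots a_{L-1}$, and by \eqref{eq:prefixsum} its digit sum is $\lceil L\alpha\rceil<L\alpha+1$. Summing the loop bounds and this tail bound gives
\[
S_n\Phi(x)\le (t_m)\alpha+L\alpha+1=n\alpha+1,
\]
which is \eqref{eq:uniform}. The only care needed is the bookkeeping of the last incomplete excursion, and that is handled by \eqref{eq:prefixsum}.

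For the consequence, let $\nu\in\m(Y_\alpha,\sigma)$. Integrating \eqref{eq:uniform} and using invariance gives
\[
n\int\Phi\,d\nu=\int S_n\Phi\,d\nu\le n\alpha+1 .
\]
Dividing by $n$ and letting $n\to\infty$ yields $\int\Phi\,d\nu\le\alpha$. Equality for $\mu_\alpha$ is Lemma \ref{lem:sturmian}(iii), since $X_\alpha\subseteq Y_\alpha$ by Lemma \ref{lem:sturmian}(v). Hence $\mu_\alpha\in\Mmax(\Phi)$.

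No step is a serious obstacle; the main point is checking that the tail segment really is a prefix of $\bfa$, which follows from the structure of $G_\alpha$.
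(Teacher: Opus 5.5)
Your proposal is correct and matches the paper's proof essentially step for step. You decompose the path of $x$ in $G_\alpha$ into first return loops, each contributing $\lfloor \ell\alpha\rfloor$ digits $1$ by \eqref{eq:defect}, plus a final segment that is a prefix of $\bfa$ contributing $\lceil L\alpha\rceil$ digits $1$ by \eqref{eq:prefixsum}, and then integrate against an invariant measure using Lemma \ref{lem:sturmian}(iii),(v) for the equality case, exactly as the paper does.
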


\begin{proof}
Let $x\in Y_\alpha$.
By Lemma \ref{lem:presentation}, whose hypotheses hold for $\bfa$ as noted above, $x$ labels
an infinite path in $G_\alpha$ from the
vertex $0$, with vertex sequence $(r_j(x))_{j\ge0}$.  Fix $n\in\N$ and let
$0=q_0<q_1<\cdots<q_s\le n$ be an enumeration of those $j\le n$ with $r_j(x)=0$, a
finite and non-empty list.  For $1\le i\le s$ the segment
$x_{[q_{i-1},q_i)}$ is a first return loop, so
$x_{[q_{i-1},q_i)}=R_{n_i,0}$ with $n_i:=q_i-q_{i-1}$ and
$a_{n_i-1}=1$, by Lemma \ref{lem:returnwords}.

For the final segment, set $k:=n-q_s\ge0$.  For $q_s<j\le n$ we have
$r_j(x)\ne0$, and the only edge from a vertex $h$ which does not lead to
$0$ is the one labelled $a_h$ leading to $h+1$; so inductively
$r_{q_s+h}(x)=h$ and $x_{q_s+h}=a_h$ for $0\le h\le k-1$, that is
$x_{[q_s,n)}=a_{[0,k)}$, whose number of digits $1$ is
$\lceil k\alpha\rceil$ by \eqref{eq:prefixsum}.

Combining these with \eqref{eq:defect},
\[
  \sum_{i<n}x_i=\sum_{i=1}^{s}\lfloor n_i\alpha\rfloor+\lceil k\alpha\rceil
  \le\sum_{i=1}^{s}n_i\alpha+k\alpha+1=n\alpha+1 ,
\]
which is \eqref{eq:uniform}.

Now let $\nu\in\m(Y_\alpha,\sigma)$.  Integrating \eqref{eq:uniform} and
using invariance of $\nu$ gives
$n\int\Phi\,d\nu=\int S_n\Phi\,d\nu\le n\alpha+1$, so
$\int\Phi\,d\nu\le\alpha+1/n$ for all $n\in\N$, and therefore
$\int\Phi\,d\nu\le\alpha$.  Finally $\mu_\alpha\in\m(Y_\alpha,\sigma)$ by
Lemma \ref{lem:sturmian}(v), and $\int\Phi\,d\mu_\alpha=\alpha$ by Lemma
\ref{lem:sturmian}(iii).
\end{proof}

\section{Decomposition of centred indicator functions}\label{sec:cob}

In this section we prove Proposition \ref{prop:centred-cylinder},
decomposing the centred indicator function of a cylinder set of $X_\alpha$
as the sum of an integer multiple of $\Phi-\alpha$, and a coboundary.

We begin with some notation.

\begin{notation}\label{not:cob}
Let $\alpha\in(0,1)$ be irrational, let $X_\alpha$ be the Sturmian subshift
of rotation number $\alpha$, 
let $\Phi$ be the digit function, let the
cuts $c_j$ and the intervals $I_v$ be as in Notation \ref{not:cuts}, and
let $k\in\N$.
By Lemmas \ref{lem:cuts}
and \ref{lem:sturmian}(ii),
each $v\in\w_k(X_\alpha)$ determines an interval
$I_v$ whose endpoints are cuts of level $k$, and we write
\begin{equation}\label{eq:endpoints}
  I_v=[c_i,c_j),
\end{equation}
where $i=i(v)$ and $j=j(v)$ satisfy $0\le i,j\le k$ and $i\ne j$, and where
the right endpoint is to be taken as $1$ in the case $j=0$.
Set
\[
  \gamma_{v,r}:=
  \begin{cases}
    \phantom{-}1 & \text{if } i\le r<j,\\
    -1 & \text{if } j\le r<i,\\
    \phantom{-}0 & \text{otherwise,}
  \end{cases}
  \qquad(0\le r\le k-1),
\]
\begin{equation}\label{eq:Bv}
  B_v:=\sum_{r=0}^{k-1}\gamma_{v,r} ,
\end{equation}
and
$$\phi_v:=\ind_{I_v}-|I_v|,$$
a $1$-periodic function on $\R$.

If
$v\in\{0,1\}^k\setminus\w_k(X_\alpha)$ then we set $B_v:=0$, $|I_v|:=0$,
$\gamma_{v,r}:=0$, and $\phi_v:=0$.

Finally, for $r\in\N_0$ define $H_r:\R\to[0,1)$ by
$H_r(t):=\{t+r\alpha\}$, define functions on $\Sigma$ by
\begin{equation}\label{eq:Kr}
  K_0:=0,\qquad K_r:=S_r(\alpha-\Phi)\quad(r\ge1),
\end{equation}
and set
\[
  G_v:=\sum_{r=0}^{k-1}\gamma_{v,r}H_r,\qquad
  V_v:=\sum_{r=0}^{k-1}\gamma_{v,r}K_r .
\]
\end{notation}

The quantities in Notation \ref{not:cob} have the following elementary
properties.

\begin{lem}\label{lem:Bvbounds}
Let $\alpha$ and $k$ be as in Notation \ref{not:cob}, let $v\in\{0,1\}^k$,
and let $\gamma_{v,r}$, $B_v$, $K_r$ and $V_v$ be as defined there.  If
$v\in\w_k(X_\alpha)$, with $i$ and $j$ as in \eqref{eq:endpoints}, then the
non-zero $\gamma_{v,r}$ all have the same sign, and
\begin{equation}\label{eq:Bvalue}
  B_v=j-i,\qquad |B_v|\le k .
\end{equation}
For all $r\in\N_0$, the function $K_r$ depends on the
coordinates $x_0,\ldots,x_{r-1}$ only, the function $V_v$ depends on the
coordinates $x_0,\ldots,x_{k-2}$ only, and
\begin{equation}\label{eq:Krbound}
  \|K_r\|_{C(\Sigma)}\le r\max(\alpha,1-\alpha)\le r ,
\end{equation}
\begin{equation}\label{eq:Vvbound}
  \|V_v\|_{C(\Sigma)}\le\sum_{r=0}^{k-1}r=\tfrac12k(k-1) .
\end{equation}
\end{lem}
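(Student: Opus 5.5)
The plan is to check each claim directly from the definitions in Notation \ref{not:cob}, with no further machinery.

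\emph{Sign and value of $B_v$.} For $v\in\w_k(X_\alpha)$ we have $i\ne j$ in \eqref{eq:endpoints}. If $i<j$, the defining cases give $\gamma_{v,r}=1$ exactly for $i\le r<j$ and $0$ otherwise. The case $j\le r<i$ is empty, so all non-zero $\gamma_{v,r}$ equal $1$. If $j<i$, symmetrically only the value $-1$ occurs, for $j\le r<i$. One caveat: the definition restricts to $0\le r\le k-1$, while $\max(i,j)\le k$, so the range $[\min(i,j),\max(i,j))$ lies inside $\{0,\dots,k-1\}$. Counting then gives $B_v=j-i$ in both cases, and $|B_v|\le k$ since $0\le i,j\le k$.

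\emph{Coordinates and bound for $K_r$.} By definition $K_r=\sum_{h<r}(\alpha-\Phi\circ\sigma^h)$, and $\Phi\circ\sigma^h(x)=x_h$. So $K_r$ depends only on $x_0,\dots,x_{r-1}$, and trivially $K_0=0$. Each summand $\alpha-x_h$ lies in $\{\alpha,\alpha-1\}$, hence has absolute value at most $\max(\alpha,1-\alpha)\le 1$. Summing $r$ such terms gives \eqref{eq:Krbound}.

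\emph{Coordinates and bound for $V_v$.} The main thing to watch is the coordinate claim, which needs $x_0,\dots,x_{k-2}$ rather than $x_0,\dots,x_{k-1}$. It holds because $V_v=\sum_{r=0}^{k-1}\gamma_{v,r}K_r$ involves $K_r$ only for $r\le k-1$, and $K_{k-1}$ depends on $x_0,\dots,x_{k-2}$ by the previous step. For $v\notin\w_k(X_\alpha)$ we have $V_v=0$, so the claims are trivial there. Finally, $|\gamma_{v,r}|\le1$ and \eqref{eq:Krbound} give
\[
\|V_v\|_{C(\Sigma)}\le\sum_{r=0}^{k-1}\|K_r\|_{C(\Sigma)}\le\sum_{r=0}^{k-1}r=\tfrac12k(k-1),
\]
which is \eqref{eq:Vvbound}.
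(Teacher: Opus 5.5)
Your proposal is correct and follows the paper's proof essentially step for step. It uses the same case split on the sign of $j-i$ for $\gamma_{v,r}$ and $B_v$, the same termwise bound on $K_r$ as a sum of $r$ terms taking the values $\alpha$ or $\alpha-1$, and the same deduction for $V_v$ as a combination of $K_0,\ldots,K_{k-1}$ with coefficients in $\{-1,0,1\}$. Your explicit check that $[\min(i,j),\max(i,j))\subseteq\{0,\ldots,k-1\}$, and your treatment of the case $v\notin\w_k(X_\alpha)$, fill in small details that the paper leaves implicit.
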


\begin{proof}
Let $v\in\w_k(X_\alpha)$.  Since $i\ne j$, at most one of the first two
cases in the definition of $\gamma_{v,r}$ arises for any given $r$, and
only one of the two arises at all, so the non-zero $\gamma_{v,r}$ all have
the same sign.  That case contributes $+1$ for each $r$ with $i\le r<j$, or
$-1$ for each $r$ with $j\le r<i$, so $B_v=j-i$ in either case, and
$|B_v|\le k$ because $0\le i,j\le k$.

For the remaining statements, $K_r$ is by \eqref{eq:Kr} a sum of $r$ terms
each taking the two values $\alpha$ and $\alpha-1$, and the $h$th of them
depends on the coordinate $x_h$ only, which gives \eqref{eq:Krbound}; the
statements about $V_v$ follow, since $V_v$ is a linear combination of
$K_0,\ldots,K_{k-1}$ with coefficients in $\{-1,0,1\}$.
\end{proof}

\subsection{The identity on the circle}

Each of the functions $\phi_v$ is a coboundary for the rotation $R_\alpha$, as
described by the following
lemma (see also Remark~\ref{rem:brs}).

\begin{lem}\label{lem:coboundary}
Let $\alpha\in(0,1)$ be irrational, let $X_\alpha$ be the Sturmian subshift
of rotation number $\alpha$, let $k\in\N$, let $v\in\{0,1\}^k$, and let
$\phi_v$, $H_r$ and $G_v$ be as in Notation \ref{not:cob}.  If
$v\in\w_k(X_\alpha)$, with $i$ and $j$ as in \eqref{eq:endpoints}, then
\[
  \phi_v=H_j-H_i=G_v\circ R_\alpha-G_v .
\]
If $v\notin\w_k(X_\alpha)$ then $\phi_v=G_v=0$.
\end{lem}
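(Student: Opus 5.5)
The proof has three parts: the identity $\phi_v=H_j-H_i$, a telescoping computation giving $G_v\circ R_\alpha-G_v=H_j-H_i$, and the trivial case $v\notin\w_k(X_\alpha)$. The first part is the only step needing care.

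\emph{Step 1: $\phi_v=H_j-H_i$.} Fix $v\in\w_k(X_\alpha)$, so that $I_v=[c_i,c_j)$ by \eqref{eq:endpoints}, with right endpoint $1$ when $j=0$. Since $c_r=\{-r\alpha\}$, we have $H_r(t)=\{t-c_r\}$, which is the sawtooth with jump $-1$ at $c_r$ (mod $1$) and slope $1$. Hence $H_j-H_i$ is locally constant, with a jump of $+1$ at $c_i$ and $-1$ at $c_j$. Since $c_i\neq c_j$, it equals $\ind_{I_v}+C$ for some constant $C$. Both $H_j-H_i$ and $\ind_{I_v}-|I_v|$ have integral zero over a period, so $C=-|I_v|$.

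To avoid orientation slips, I will check this by evaluating directly on the two arcs, for $t\in[0,1)$:
\begin{itemize}
\item If $c_i<c_j$, then on $[c_i,c_j)$ we get $H_j(t)-H_i(t)=(t-c_j+1)-(t-c_i)=1-(c_j-c_i)$, which is $1-|I_v|$, and off this arc the difference is $-|I_v|$.
\item If $c_j<c_i$ (including $j=0$), then $I_v$ wraps around, $|I_v|=1-c_i+c_j$, and the same computation works.
\end{itemize}

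\emph{Step 2: the coboundary identity.} Since $H_r(t+\alpha)=H_{r+1}(t)$,
\[
G_v\circ R_\alpha-G_v=\sum_{r=0}^{k-1}\gamma_{v,r}\,(H_{r+1}-H_r).
\]
If $i<j$, then $\gamma_{v,r}=1$ exactly for $i\le r<j$, and the sum telescopes to $H_j-H_i$. If $j<i$, then $\gamma_{v,r}=-1$ exactly for $j\le r<i$, and the sum is $-(H_i-H_j)=H_j-H_i$. In both cases all indices $r+1$ are at most $k$, so only $H_0,\dots,H_k$ appear.

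\emph{Step 3: the case $v\notin\w_k(X_\alpha)$.} Here everything is zero by definition: $\phi_v=0$ and all $\gamma_{v,r}=0$, so $G_v=0$. This is consistent with Lemma \ref{lem:sturmian}(ii), which gives $I_v=\emptyset$.

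The main care point is Step 1, specifically the wrap-around case $j=0$ or $c_j<c_i$, where one must use the convention that the right endpoint is $1$. The explicit evaluation on arcs above handles it.
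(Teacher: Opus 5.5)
Your proof is correct and follows the paper's route: the paper checks $\ind_{[p,q)}(t)-(q-p)=\{t-q\}-\{t-p\}$ on the three subintervals of $[0,1)$, takes $p=c_i$, $q=c_j$ (with $q=1$ when $j=0$, using $\{t-1\}=\{t\}=H_0(t)$), and then uses the same telescoping as your Step 2. The only difference is that your wrap-around case $c_j<c_i$ with $j\ne0$ never arises: $c_0=0$ is a cut, so $c_i<c_j$ whenever $j\ne0$. Handling that case anyway is harmless.
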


\begin{proof}
The second statement is immediate from Notation \ref{not:cob}, so let
$v\in\w_k(X_\alpha)$.  For $0\le p<q\le1$ and $t\in[0,1)$,
\begin{equation}\label{eq:indicator}
  \ind_{[p,q)}(t)-(q-p)=\{t-q\}-\{t-p\} ,
\end{equation}
as is checked separately on the three intervals $[0,p)$, $[p,q)$ and
$[q,1)$, on which the right-hand side takes the values $p-q$, $1-q+p$ and
$p-q$ respectively.  Note that $c_i<c_j$ whenever $j\ne0$, since $I_v$ is a
non-empty interval, and that $c_j$ is taken as $1$ when $j=0$; so we may
take $p=c_i$ and $q=c_j$ in \eqref{eq:indicator}.  Using
$c_i\equiv-i\alpha$ and $c_j\equiv-j\alpha$ modulo $1$ gives
$\phi_v=H_j-H_i$; here the case $j=0$ is covered because
$\{t-1\}=\{t\}=H_0(t)$.

For the second equality, $H_r(R_\alpha t)=\{t+(r+1)\alpha\}=H_{r+1}(t)$, so
by Notation \ref{not:cob},
\[
  G_v\circ R_\alpha-G_v=\sum_{r=0}^{k-1}\gamma_{v,r}(H_{r+1}-H_r) .
\]
If $i<j$ then $\gamma_{v,r}=1$ exactly for $i\le r<j$ and the sum is
$H_j-H_i$; if $j<i$ then $\gamma_{v,r}=-1$ exactly for $j\le r<i$ and the
sum is $-(H_i-H_j)$, which is again $H_j-H_i$.
\end{proof}

We note the relation of Lemma \ref{lem:coboundary} to the theory of
bounded remainder sets.

\begin{rem}\label{rem:brs}
Lemma \ref{lem:coboundary} is a special case of a classical result.  An
interval whose length lies in $\Z\alpha+\Z$ is a \emph{bounded remainder
set} for $R_\alpha$, meaning that its centred indicator function is an
$R_\alpha$-coboundary with bounded transfer function; this is due to Hecke
\cite{Hecke} and Ostrowski \cite{Ost27,Ost30},
while Kesten \cite{Kesten}
proved the converse, confirming a conjecture of Erd\H{o}s \& Sz\"usz.  The
intervals $I_v$ satisfy the condition, since
$|I_v|=c_j-c_i\equiv(i-j)\alpha$ modulo $1$.
What is needed below, however, is not the boundedness of a transfer
function but the explicit form of $G_v$, namely
$G_v=\pm\sum_{\min\{i,j\}\le r<\max\{i,j\}}H_r$, the sign being that of
$j-i$: it is this form which transports to $X_\alpha$ in Proposition
\ref{prop:centred-cylinder}.
For more on bounded
remainder sets, including transfer functions constructed in this way, see
\cite{GL}.
\end{rem}

\subsection{The identity on $X_\alpha$}

Lemma \ref{lem:coboundary} is a statement about functions on the circle.
The corresponding result on $X_\alpha$ is Proposition \ref{prop:centred-cylinder}
below, in which the transfer function $V_v$ is, up to sign, the sum of the
Birkhoff sums $S_r(\Phi-\alpha)$ over a range of consecutive values of $r$,
and in which a term that is an integer multiple of $\Phi-\alpha$ itself
appears alongside the coboundary.
What connects the
two is the following comparison between the functions $K_r$ on $X_\alpha$
and the functions $H_r$ on $\R$.

\begin{lem}\label{lem:Krcoding}
Let $\alpha\in(0,1)$ be irrational,
let $\bfb$ be the Sturmian coding,
and let $H_r$ and $K_r$ be as in Notation
\ref{not:cob}.  Then for all $r\in\N_0$ and $t\in\R$,
\[
  K_r\bigl(\bfb^{\pm}(t)\bigr)=H_r(t^{\pm})-H_0(t^{\pm}) .
\]
\end{lem}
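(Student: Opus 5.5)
The plan is to compute both sides directly from the definitions, first for $\bfb(t)=\bfb^{+}(t)$ and then passing to left limits. For $r=0$ both sides vanish, since $K_0=0$ and $H_0-H_0=0$, so assume $r\ge1$.

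\textbf{Computing the left-hand side.} By \eqref{eq:Kr},
\[
  K_r(x)=r\alpha-\sum_{h<r}x_h .
\]
Summing \eqref{eq:coding} telescopically gives
\[
  \sum_{h<r}\bfb(t)_h=\lfloor t+r\alpha\rfloor-\lfloor t\rfloor .
\]
Hence
\[
  K_r(\bfb(t))=r\alpha-\lfloor t+r\alpha\rfloor+\lfloor t\rfloor
  =\bigl(t+r\alpha-\lfloor t+r\alpha\rfloor\bigr)-\bigl(t-\lfloor t\rfloor\bigr)
  =\{t+r\alpha\}-\{t\}.
\]
The right-hand side of this is $H_r(t)-H_0(t)$, which proves the $+$ case, since $H_r$ is right continuous.

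\textbf{The left limits.} Here the plan is to use that $K_r$ depends only on $x_0,\ldots,x_{r-1}$ (Lemma \ref{lem:Bvbounds}). The coordinates of $\bfb^{-}(t)$ are the limits as $s\to t^-$ of the coordinates of $\bfb(s)$, and each of these is eventually constant as $s\uparrow t$. So for $s<t$ close enough to $t$ the first $r$ coordinates of $\bfb(s)$ and $\bfb^{-}(t)$ agree, and therefore
\[
  K_r(\bfb^{-}(t))=K_r(\bfb(s))=H_r(s)-H_0(s).
\]
Letting $s\uparrow t$ gives $H_r(t^-)-H_0(t^-)$, and both one-sided limits exist because $H_r$ and $H_0$ are piecewise affine.

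No real obstacle is expected. The only care needed is in the limit step: one must use that $K_r$ is locally constant (it depends on finitely many coordinates) rather than invoke continuity of $K_r$ on $\Sigma$.
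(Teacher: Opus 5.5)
Your proof is correct, but it takes a different route from the paper's.

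The paper does not compute $K_r$ from the floor formula. Instead it proceeds in three steps:
\begin{itemize}
\item It applies the circle identity of Lemma \ref{lem:coboundary} with $k=1$ and $v=1$ (where $I_1=[c_1,c_0)$), giving $\phi_1=H_0-H_1$.
\item It combines this with $\Phi(\bfb^{\pm}(t))=\ind_{I_1}(t^{\pm})$ from Lemma \ref{lem:coding}, obtaining $(\alpha-\Phi)(\bfb^{\pm}(t))=H_1(t^{\pm})-H_0(t^{\pm})$.
\item It telescopes along the orbit using $\sigma\bfb^{\pm}(t)=\bfb^{\pm}(R_\alpha t)$ and $H_h\circ R_\alpha=H_{h+1}$.
\end{itemize}
This handles both signs at once.

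You instead telescope the coding formula \eqref{eq:coding} directly to get $K_r(\bfb(t))=\{t+r\alpha\}-\{t\}$. This is more elementary and self-contained, since it does not need Lemma \ref{lem:coboundary}. The cost is a separate limiting step for $\bfb^{-}$. That step is sound: $K_r$ depends only on $x_0,\ldots,x_{r-1}$, and each coordinate $\bfb(\cdot)_i$ is a right-continuous step function, hence constant on some interval $(t-\eta,t)$. You are also right to use local constancy of $K_r$ rather than continuity.

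What the paper's route buys is presentation: the lemma appears as the $k=1$ case of the circle coboundary transported along orbits. That is exactly the mechanism reused in Proposition \ref{prop:centred-cylinder}.

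You could avoid the limit altogether by computing the $-$ case the same way as the $+$ case. Use $\bfb^{-}(t)_i=\lceil t+(i+1)\alpha\rceil-\lceil t+i\alpha\rceil$ and $\{u^{-}\}=u-\lceil u\rceil+1$; both sides then equal $r\alpha-\lceil t+r\alpha\rceil+\lceil t\rceil$.
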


\begin{proof}
The case $r=0$ is trivial.
Taking $k=1$ and $v=1$ in Notation
\ref{not:cuts} gives $I_1=[1-\alpha,1)=[c_1,c_0)$, so that $i=1$ and
$j=0$ in \eqref{eq:endpoints}, and
Lemma \ref{lem:coboundary} gives $\phi_1=H_0-H_1$.  Now
$\Phi(\bfb^{\pm}(t))=\bfb^{\pm}(t)_0=\ind_{I_1}(t^{\pm})$ by Lemma
\ref{lem:coding}, and $|I_1|=\alpha$, so that
$(\alpha-\Phi)(\bfb^{\pm}(t))=-\phi_1(t^{\pm})$, and therefore
\[
  (\alpha-\Phi)\bigl(\bfb^{\pm}(t)\bigr)=H_1(t^{\pm})-H_0(t^{\pm}) .
\]
Now $H_1(R^h_\alpha t)=H_{h+1}(t)$ and $H_0(R^h_\alpha t)=H_h(t)$, so by
\eqref{eq:shiftcoding} and \eqref{eq:Kr},
\[
  K_r\bigl(\bfb^{\pm}(t)\bigr)
  =\sum_{h=0}^{r-1}(\alpha-\Phi)\bigl(\bfb^{\pm}(R^h_\alpha t)\bigr)
  =\sum_{h=0}^{r-1}\bigl(H_{h+1}(t^{\pm})-H_h(t^{\pm})\bigr)
  =H_r(t^{\pm})-H_0(t^{\pm}) . \qedhere
\]
\end{proof}

Using Lemma \ref{lem:Krcoding}, we now transport the identity of Lemma
\ref{lem:coboundary} from the circle to $X_\alpha$.

\begin{prop}\label{prop:centred-cylinder}
Let $\alpha\in(0,1)$ be irrational, let $X_\alpha$ be the Sturmian subshift
of rotation number $\alpha$, with Sturmian measure $\mu_\alpha$, let $\Phi$
be the digit function, let $k\in\N$ and $v\in\{0,1\}^k$, let $B_v$ and
$V_v$ be as in Notation \ref{not:cob}, and set $C:=[v]_{X_\alpha}$.  Then
the centred indicator function of $C$ satisfies
\begin{equation}\label{eq:centred-cylinder}
  \ind_{C}-\mu_\alpha(C)=-B_v(\Phi-\alpha)+V_v\circ\sigma-V_v
  \qquad\text{on }X_\alpha .
\end{equation}
\end{prop}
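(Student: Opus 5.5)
Since both sides of \eqref{eq:centred-cylinder} are functions on $X_\alpha$, and $X_\alpha=\{\bfb^{\pm}(t):t\in\R\}$ by Lemma \ref{lem:sturmian}(i), I will evaluate both sides at $x=\bfb^{\pm}(t)$ and reduce the identity to Lemma \ref{lem:coboundary} on the circle, using Lemma \ref{lem:Krcoding} to translate $V_v$ into $G_v$. If $v\notin\w_k(X_\alpha)$ then $C=\emptyset$, $\mu_\alpha(C)=0$, $B_v=0$ and $V_v=0$, so both sides vanish; hence I assume $v\in\w_k(X_\alpha)$, with $I_v=[c_i,c_j)$ as in \eqref{eq:endpoints}.

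For the left side, $\bfb^{\pm}(t)\in C$ if and only if $\bfb^{\pm}(t)_{[0,k)}=v$. For the sign $+$ this is $t\in I_v$ modulo $1$, by \eqref{eq:Iv}. For the sign $-$, the words $\bfb(s)_{[0,k)}$ are constant on the left-open neighbourhoods $(t-\eta,t)$, so the condition is $\ind_{I_v}(t^{-})=1$. Together with $\mu_\alpha(C)=|I_v|$ from Lemma \ref{lem:sturmian}(ii), this gives $(\ind_C-\mu_\alpha(C))(\bfb^{\pm}(t))=\phi_v(t^{\pm})$.

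For the right side, Lemma \ref{lem:Krcoding} gives $V_v(\bfb^{\pm}(t))=\sum_r\gamma_{v,r}(H_r(t^\pm)-H_0(t^\pm))=G_v(t^{\pm})-B_vH_0(t^{\pm})$. By \eqref{eq:shiftcoding}, $\sigma\bfb^{\pm}(t)=\bfb^{\pm}(t+\alpha)$, and one-sided limits commute with translation, so
\[
  (V_v\circ\sigma-V_v)(\bfb^{\pm}(t))=\bigl(G_v\circ R_\alpha-G_v\bigr)(t^{\pm})-B_v\bigl(H_1(t^{\pm})-H_0(t^{\pm})\bigr).
\]
The proof of Lemma \ref{lem:Krcoding} shows that $H_1-H_0=(\alpha-\Phi)\circ\bfb^{\pm}$ in the one-sided sense, so the last term equals $B_v(\Phi-\alpha)(\bfb^{\pm}(t))$. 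Adding $-B_v(\Phi-\alpha)$ therefore cancels it, leaving $(G_v\circ R_\alpha-G_v)(t^{\pm})$. By Lemma \ref{lem:coboundary} this equals $\phi_v(t^{\pm})$, which matches the left side.

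The main obstacle is only the careful handling of the one-sided limits. I need that every function involved ($\phi_v$, $H_r$, $G_v$) is right continuous with left limits, and that evaluating at $t^{-}$ commutes with sums and with $R_\alpha$, so that the identity of Lemma \ref{lem:coboundary}, which holds pointwise for all $t$, passes to the left limits. This follows by taking limits $s\to t^{-}$ in a pointwise identity, so it is routine.
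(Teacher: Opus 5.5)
Your proposal is correct and follows essentially the same route as the paper. The paper likewise uses Lemma \ref{lem:Krcoding} to get $G_v(t^{\pm})=V_v(\bfb^{\pm}(t))+B_vH_0(t^{\pm})$, shifts by $R_\alpha$ and subtracts, invokes Lemma \ref{lem:coboundary}, and identifies $H_1(t^{\pm})-H_0(t^{\pm})$ with $(\alpha-\Phi)(\bfb^{\pm}(t))$ and $\phi_v(t^{\pm})$ with $\ind_C-\mu_\alpha(C)$ at $\bfb^{\pm}(t)$; your version just organises this as evaluating both sides, and spells out the left-limit identification $\ind_C(\bfb^{-}(t))=\ind_{I_v}(t^{-})$ a little more explicitly.
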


\begin{proof}
We may assume $v\in\w_k(X_\alpha)$, since otherwise both sides vanish.
The key identity is \eqref{eq:Gvpullback} below, which expresses the circle
transfer function $G_v$ of Lemma \ref{lem:coboundary}, evaluated at
$t^{\pm}$, as the sum of $V_v(\bfb^{\pm}(t))$ and the multiple
$B_vH_0(t^{\pm})$.
The first term yields the coboundary
$V_v\circ\sigma-V_v$ of \eqref{eq:centred-cylinder}, and the second
yields the multiple of $\Phi-\alpha$, since $H_0$ evaluated in this way is
a transfer function for $\alpha-\Phi$.

Let $t\in\R$.  By Lemma \ref{lem:Krcoding},
$H_r(t^{\pm})=K_r(\bfb^{\pm}(t))+H_0(t^{\pm})$ for $0\le r<k$; multiplying
by $\gamma_{v,r}$, summing over $r$, and using Notation \ref{not:cob} and
\eqref{eq:Bv}, we obtain
\begin{equation}\label{eq:Gvpullback}
  G_v(t^{\pm})=V_v\bigl(\bfb^{\pm}(t)\bigr)+B_vH_0(t^{\pm}) .
\end{equation}
Since \eqref{eq:Gvpullback} holds for all $t\in\R$, it holds in particular
with $R_\alpha t$ in place of $t$; moreover
$\bfb^{\pm}(R_\alpha t)=\sigma\bfb^{\pm}(t)$ by \eqref{eq:shiftcoding}.
Writing $x:=\bfb^{\pm}(t)$, this gives
\begin{equation}\label{eq:Gvpullbackshift}
  G_v\bigl((R_\alpha t)^{\pm}\bigr)
  =V_v(\sigma x)+B_vH_0\bigl((R_\alpha t)^{\pm}\bigr) ,
\end{equation}
and subtracting \eqref{eq:Gvpullback} from \eqref{eq:Gvpullbackshift} gives
\begin{equation}\label{eq:Gvdifference}
  G_v\bigl((R_\alpha t)^{\pm}\bigr)-G_v(t^{\pm})
  =\bigl(V_v(\sigma x)-V_v(x)\bigr)
   +B_v\bigl(H_0\bigl((R_\alpha t)^{\pm}\bigr)-H_0(t^{\pm})\bigr) .
\end{equation}
The left-hand side of \eqref{eq:Gvdifference} is $\phi_v(t^{\pm})$, by
Lemma \ref{lem:coboundary} together with the fact that $R_\alpha$ is
increasing, so that if $s\to t^{\pm}$ then $R_\alpha s\to(R_\alpha t)^{\pm}$.
Using that
$H_1=H_0\circ R_\alpha$,
together with
Lemma
\ref{lem:Krcoding} and \eqref{eq:Kr},
we see that
\[
H_0\bigl((R_\alpha t)^{\pm}\bigr)-H_0(t^{\pm})
=
H_1(t^{\pm})-H_0(t^{\pm})
=K_1(x)= (\alpha-\Phi)(x),
\]
so
\eqref{eq:Gvdifference} becomes
\begin{equation}\label{eq:phiv}
  \phi_v(t^{\pm})
  =\bigl(V_v(\sigma x)-V_v(x)\bigr)+B_v(\alpha-\Phi)(x) .
\end{equation}
Finally, $\phi_v(t^{\pm})=\ind_{I_v}(t^{\pm})-|I_v|=\ind_C(x)-\mu_\alpha(C)$,
by Notation \ref{not:cob}, \eqref{eq:Iv} and Lemma \ref{lem:sturmian}(ii),
so \eqref{eq:phiv} gives
\[
  \ind_C(x)-\mu_\alpha(C)
  =-B_v(\Phi-\alpha)(x)+(V_v\circ\sigma-V_v)(x) .
\]
By Lemma \ref{lem:sturmian}(i) all points of $X_\alpha$ are of the form
$\bfb^{\pm}(t)$ for some $t\in\R$, so this proves
\eqref{eq:centred-cylinder}.
\end{proof}

We conclude this section with an intrinsic description, used in Corollary
\ref{cor:cf}, of the quantities $B_v$ and $\mu_\alpha([v]_{X_\alpha})$ in
Proposition \ref{prop:centred-cylinder}.

\begin{rem}\label{rem:intrinsic}
For $\alpha$ and $k$ as in Proposition \ref{prop:centred-cylinder}, and
$v\in\w_k(X_\alpha)$, the integer $B_v$ and the measure $\mu_\alpha([v]_{X_\alpha})$ are
determined by the Birkhoff sums $K_0,\ldots,K_k$ on $[v]$, in the following way.
Note that $K_0,\ldots,K_k$ are constant on $[v]$, since $K_r$ depends only on
$x_0,\ldots,x_{r-1}$.  For $t\in I_v$, Lemma \ref{lem:Krcoding} gives
$K_r(\bfb(t))=H_r(t)-H_0(t)$, which equals $-c_r$ if $c_r\le t$ and $1-c_r$ if
$c_r>t$.  These $k+1$ values are distinct, those of the first kind lying in
$(-1,0]$ and those of the second kind in $(0,1)$.  No cut of level $k$ lies in
the interior of $I_v$, so the smallest value is attained only at $r=i$ and the
greatest only at $r=j$ (the greatest value being $0$, attained at $r=0$, if
$j=0$), and their difference is $1-|I_v|$.
Writing $r_-(v)$ and $r_+(v)$ for the
indices in $\{0,1,\ldots,k\}$ at which $K_r$ is smallest and greatest on
$[v]$, we therefore have
\[
  B_v=r_+(v)-r_-(v),\qquad
  \mu_\alpha\bigl([v]_{X_\alpha}\bigr)
  =1-\Bigl(\max_{0\le r\le k}K_r-\min_{0\le r\le k}K_r\Bigr),
\]
the second of which is the formula of Bousch \& Mairesse
\cite[Prop.~1.2]{BM02} for the measure of a cylinder set under a Sturmian
measure, in the form given in
\cite[Lem.~3.13]{AJ}.
\end{rem}

\section{The Lipschitz cohomology of the Sturmian subshift $X_\alpha$}\label{sec:cohomology}

In this section we prove Theorem \ref{thm:D}, and deduce consequences for
the continuous transfer functions of Lipschitz functions on $X_\alpha$.

\subsection{Construction of $c_f$ and $\tilde f$}\label{subsec:construction}

Theorem \ref{thm:D} is proved in two parts.  Here we construct
$c_f$ and $\tilde f$ satisfying \eqref{eq:cohomology}, \eqref{eq:c-bound}
and \eqref{eq:u-bound}.  The two uniqueness statements are deferred to
Section \ref{subsec:rigidity}, and are equivalent to the statement that
$\Phi-\alpha$ is not a Lipschitz coboundary, which uses the analysis of
the function $\tau$ carried out there.

\begin{proof}[Proof of the existence statements of Theorem \ref{thm:D}]

Set $L:=|f|_{\lip(X_\alpha)}$.  The proof is in five steps.  Steps 1 and 2
expand $f$, up to an additive constant, as a series of centred indicator
functions of cylinder sets, with coefficients decaying like $\theta^{n}$;
this is where the Sturmian hypothesis is used, through
the fact that $\w_n(X_\alpha)$ has exactly $n+1$ elements.  Step 3 applies
Proposition \ref{prop:centred-cylinder} to each term of that series and sums the results,
which proves \eqref{eq:cohomology} and \eqref{eq:c-bound}.
Steps 4 and 5 rewrite the transfer function as a series in
$(\alpha-\Phi)\circ\sigma^{h}$ and bound its Lipschitz seminorm, which
gives \eqref{eq:u-bound}.
Once the uniqueness statements are available,
\eqref{eq:c-bound} and \eqref{eq:u-bound} are upgraded from statements
about the particular $c_f$ and $\tilde f$ constructed here to statements
about invariants of $f$.

\emph{Step 1: a refining sequence of partitions, and representatives.}
Set $\cA_0:=\{X_\alpha\}$ and, for $n\in\N$,
\[
  \cA_n:=\bigl\{[v]_{X_\alpha}:v\in\w_n(X_\alpha)\bigr\},
\]
a partition of $X_\alpha$ into $|\w_n(X_\alpha)|$ non-empty clopen sets.  We
claim that, for all $n\in\N_0$, exactly one atom of $\cA_n$ is the union of
two atoms of $\cA_{n+1}$, while all other atoms of $\cA_n$ are themselves
atoms of $\cA_{n+1}$.

Indeed, this is immediate for $n=0$, since $\cA_0$ has the single atom
$X_\alpha$ and $\cA_1$ the two atoms $[0]_{X_\alpha}$ and $[1]_{X_\alpha}$.
For $n\in\N$, each $w\in\w_{n+1}(X_\alpha)$ has a unique prefix of length
$n$, lying in $\w_n(X_\alpha)$; and each $v\in\w_n(X_\alpha)$ has at least
one extension $vb\in\w_{n+1}(X_\alpha)$, since $v$ occurs in some
$y\in X_\alpha$, say $v=y_{[i,i+n)}$, and then $y_{[i,i+n+1)}$ is such an
extension.  Grouping the words of length $n+1$ according to their prefix
therefore expresses $|\w_{n+1}(X_\alpha)|$ as a sum over
$v\in\w_n(X_\alpha)$ of terms equal to $1$ or $2$; these two numbers being
$n+2$ and $n+1$ by Lemma \ref{lem:sturmian}(ii), exactly one term equals
$2$.  For all other $v$, with unique extension $vb$, all points of
$[v]_{X_\alpha}$ have $(n+1)$st digit $b$, so $[v]_{X_\alpha}=[vb]_{X_\alpha}$
and this atom of $\cA_n$ is also an atom of $\cA_{n+1}$; the claim follows.

We choose a representative point in each atom of each partition $\cA_n$,
inductively.  The single atom of $\cA_0$ is $X_\alpha$; let its
representative be any point of $X_\alpha$.  Suppose representatives have
been chosen for the atoms of $\cA_n$, let $A$ be the unique atom of $\cA_n$
that is the union of two atoms of $\cA_{n+1}$, and let $C_n$ be whichever
of those two atoms does not contain the representative of $A$; the other
one contains it, and retains it as its own representative.  All atoms
of $\cA_{n+1}$ other than $C_n$ then already contain a representative,
which we retain; as the representative of $C_n$ we choose a new point.
Since $C_n$ is an atom of $\cA_{n+1}$, there is a unique
$v_n\in\w_{n+1}(X_\alpha)$ with $C_n=[v_n]_{X_\alpha}$.

\emph{Step 2: the expansion of $f$ into centred indicator functions.}
Let $f_n:X_\alpha\to\R$ be constant on each atom of $\cA_n$, with value
there the value of $f$ at the chosen representative of that atom.
Since all atoms of $\cA_{n+1}$ other than $C_n$ retain the representative
of the atom of $\cA_n$ containing it, $f_{n+1}$ and $f_n$ agree off $C_n$,
and there is $d_n\in\R$ with
\begin{equation}\label{eq:successive-difference}
  f_{n+1}-f_n=d_n\ind_{C_n} .
\end{equation}
Here $d_n$ is the difference of the values of $f$ at the representative of
$C_n$ and at the representative of $A$.  These two points both lie in $A$,
an atom of $\cA_n$, so they agree on their coordinates $0,1,\ldots,n-1$ and
are therefore at distance at most $\theta^{n}$, giving
\begin{equation}\label{eq:dn}
  |d_n|\le L\theta^{n} .
\end{equation}
Similarly, each $x\in X_\alpha$ agrees on its first $n$ coordinates with the
representative of the atom of $\cA_n$ containing it, so
\begin{equation}\label{eq:fn-uniform}
  \|f-f_n\|_{C(X_\alpha)}\le L\theta^{n} .
\end{equation}

Summing the differences \eqref{eq:successive-difference} gives
\begin{equation}\label{eq:partial-sum}
  f_N=f_0+\sum_{n=0}^{N-1}d_n\ind_{C_n}\qquad(N\ge1) .
\end{equation}
Since $\cA_0$ has a single atom, $f_0$ is constant; integrating
\eqref{eq:partial-sum} with respect to $\mu_\alpha$, and subtracting this
from \eqref{eq:partial-sum},
we have
\begin{equation}\label{eq:expression-minus-integral}
  f_N-\int f_N\,d\mu_\alpha
  =\sum_{n=0}^{N-1}d_n\bigl(\ind_{C_n}-\mu_\alpha(C_n)\bigr) .
\end{equation}
By \eqref{eq:fn-uniform}, the left-hand side of \eqref{eq:expression-minus-integral} converges uniformly to
$f-\int f\,d\mu_\alpha$ as $N\to\infty$, hence
\begin{equation}\label{eq:centered-expansion}
  f-\int f\,d\mu_\alpha
  =\sum_{n=0}^{\infty}d_n\bigl(\ind_{C_n}-\mu_\alpha(C_n)\bigr)
\end{equation}
uniformly on $X_\alpha$.

\emph{Step 3: decomposition of each term.}
The word $v_n$ has length $n+1$, so Proposition \ref{prop:centred-cylinder},
applied with $k=n+1$ and $C=C_n$, gives an integer $B_n:=B_{v_n}$ and a
function $V_n:=V_{v_n}$ with
\begin{equation}\label{eq:Cn-decomp}
  \ind_{C_n}-\mu_\alpha(C_n)=-B_n(\Phi-\alpha)+V_n\circ\sigma-V_n
  \qquad\text{on }X_\alpha,
\end{equation}
where $\gamma_{n,r}:=\gamma_{v_n,r}$ and, by Notation \ref{not:cob},
\eqref{eq:Bvalue} and \eqref{eq:Vvbound},
\begin{equation}\label{eq:BnVn}
  |B_n|\le n+1,\qquad V_n=\sum_{r=0}^{n}\gamma_{n,r}K_r,\qquad
  \gamma_{n,r}\in\{-1,0,1\},\qquad
  \|V_n\|_{C(\Sigma)}\le\tfrac12n(n+1) .
\end{equation}
Define
\begin{equation}\label{eq:cfuf}
  c_f:=-\sum_{n=0}^{\infty}d_nB_n,\qquad
  \tilde f:=\sum_{n=0}^{\infty}d_nV_n .
\end{equation}
The first series converges absolutely, by \eqref{eq:dn} and
\eqref{eq:BnVn}, and
\[
  |c_f|\le L\sum_{n=0}^{\infty}(n+1)\theta^{n}=\frac{L}{(1-\theta)^{2}},
\]
which is \eqref{eq:c-bound}.  The second series converges
absolutely in $C(\Sigma)$, since by \eqref{eq:dn} and \eqref{eq:BnVn}
\begin{equation}\label{eq:uf-absolute}
  \sum_{n=0}^{\infty}|d_n|\,\|V_n\|_{C(\Sigma)}
  \le\frac L2\sum_{n=0}^{\infty}n(n+1)\theta^{n}<\infty ,
\end{equation}
so $\tilde f$ is a well defined continuous function on $\Sigma$, and in
particular $\sum_nd_n(V_n\circ\sigma)$ converges uniformly to
$\tilde f\circ\sigma$, uniform convergence being preserved by composition
with $\sigma$.

Now substitute \eqref{eq:Cn-decomp} into \eqref{eq:centered-expansion}.
Each of the three resulting series, namely those with terms
$-d_nB_n(\Phi-\alpha)$, $d_nV_n\circ\sigma$ and $-d_nV_n$, converges
absolutely and uniformly on $X_\alpha$ by \eqref{eq:dn}, \eqref{eq:BnVn} and
\eqref{eq:uf-absolute}, so they may be separated and summed individually.
This gives
\[
  f-\int f\,d\mu_\alpha
  =\Bigl(-\sum_{n\ge0}d_nB_n\Bigr)(\Phi-\alpha)
  +\Bigl(\sum_{n\ge0}d_nV_n\Bigr)\circ\sigma-\sum_{n\ge0}d_nV_n
  =c_f(\Phi-\alpha)+\tilde f\circ\sigma-\tilde f
\]
on $X_\alpha$, which is \eqref{eq:cohomology}.  It remains to bound
$|\tilde f|_{\lip(X_\alpha)}$, and in particular to show that $\tilde f$ is
Lipschitz.

\emph{Step 4: rewriting $\tilde f$ as a series in
$(\alpha-\Phi)\circ\sigma^{h}$.}
By \eqref{eq:BnVn} and \eqref{eq:Kr},
\[
  V_n=\sum_{r=1}^{n}\gamma_{n,r}K_r
  =\sum_{r=1}^{n}\gamma_{n,r}\sum_{h=0}^{r-1}(\alpha-\Phi)\circ\sigma^{h} .
\]
 Substituting this into the
definition \eqref{eq:cfuf} of $\tilde f$ gives a triple sum whose terms
have supremum norm at most $|d_n|$, by \eqref{eq:Krbound}, and which has at
most $\tfrac12n(n+1)$ terms for each $n$; by \eqref{eq:uf-absolute} it
therefore converges absolutely in $C(\Sigma)$, and may be summed in any
order.  Gathering the coefficient of each $(\alpha-\Phi)\circ\sigma^{h}$,
and noting that $d_n\gamma_{n,r}$ contributes to the index $h$ exactly when
$r>h$, we see that
\begin{equation}\label{eq:bh-expansion}
  \tilde f=-\sum_{h=0}^{\infty}b_h(\Phi-\alpha)\circ\sigma^{h},\qquad
  b_h:=\sum_{r=h+1}^{\infty}\ \sum_{n=r}^{\infty}d_n\gamma_{n,r} .
\end{equation}
Since $|\gamma_{n,r}|\le1$, two applications of \eqref{eq:dn} give
\begin{equation}\label{eq:bh-bound}
  |b_h|\le L\sum_{r=h+1}^{\infty}\ \sum_{n=r}^{\infty}\theta^{n}
  =\frac{L}{1-\theta}\sum_{r=h+1}^{\infty}\theta^{r}
  =\frac{L\theta^{h+1}}{(1-\theta)^{2}} .
\end{equation}

\emph{Step 5: the Lipschitz bound.}
Let $x,y\in X_\alpha$ with $x\ne y$, and let $\ell\ge0$ be the least index
with $x_\ell\ne y_\ell$, so that $d_\theta(x,y)=\theta^{\ell}$.  Since
$(\Phi-\alpha)(\sigma^{h}z)=z_h-\alpha$, the $h$th terms of
\eqref{eq:bh-expansion} at $x$ and at $y$ differ by
$|b_h|\,|x_h-y_h|$, which vanishes for $h<\ell$ and is at most $|b_h|$ for
$h\ge\ell$.  So by \eqref{eq:bh-bound},
\[
  |\tilde f(x)-\tilde f(y)|\le\sum_{h=\ell}^{\infty}|b_h|
  \le\frac{L}{(1-\theta)^{2}}\sum_{h=\ell}^{\infty}\theta^{h+1}
  =\frac{\theta L}{(1-\theta)^{3}}\,d_\theta(x,y) ,
\]
and taking the supremum over $x\ne y$ gives
$\tilde f\in\lip(X_\alpha,d_\theta)$ together with \eqref{eq:u-bound}.
\end{proof}

\subsection{Rigidity, and the uniqueness of $c_f$}\label{subsec:rigidity}

The uniqueness statements of Theorem \ref{thm:D} are consequences of the
following properties of the supremum of the Birkhoff sums of $\Phi-\alpha$.

\begin{prop}\label{prop:rigidity}
Let $\alpha\in(0,1)$ be irrational, let $X_\alpha$ be the Sturmian subshift
of rotation number $\alpha$, with Sturmian measure $\mu_\alpha$,
let $\bfb$ be the Sturmian coding, let $\Phi$ be the digit function, and
let $H_0$ be as in Notation \ref{not:cob}.  Define
\[
  \tau:=\sup_{r\ge0}S_r(\Phi-\alpha)\qquad\text{on }X_\alpha .
\]
Then:
\begin{enumerate}
\item[(i)] $\tau(\bfb^{\pm}(t))=H_0(t^{\pm})$ for all $t\in\R$; in
particular $0\le\tau\le1$;
\item[(ii)] $\alpha-\Phi=\tau\circ\sigma-\tau$ on $X_\alpha$;
\item[(iii)] for all $k\in\N$ and $w\in\w_k(X_\alpha)$ the image
$\tau\bigl([w]_{X_\alpha}\bigr)$ is a closed interval of length
$\mu_\alpha([w]_{X_\alpha})$; consequently $\tau$ is continuous on
$X_\alpha$, but $\tau\notin\lip(X_\alpha,d_\theta)$ for all
$\theta\in(0,1)$;
\item[(iv)] for all $\theta\in(0,1)$ there is no
$u\in\lip(X_\alpha,d_\theta)$ with $\Phi-\alpha=u\circ\sigma-u$ on
$X_\alpha$.
\end{enumerate}
Consequently, for all $\theta\in(0,1)$, the two uniqueness statements of
Theorem \ref{thm:D} hold,
the functional $f\mapsto c_f$ is as described there, and the quotient of
$\lip(X_\alpha,d_\theta)$ by the constants and the Lipschitz coboundaries
is one-dimensional, spanned by the equivalence class of $\Phi$.
\end{prop}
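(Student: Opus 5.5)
The plan is to compute $\tau$ explicitly through the Sturmian coding, using Lemma \ref{lem:Krcoding}. Since $S_r(\Phi-\alpha)=-K_r$, that lemma gives
\[
  S_r(\Phi-\alpha)\bigl(\bfb^{\pm}(t)\bigr)=H_0(t^{\pm})-H_r(t^{\pm}).
\]
The values $H_r(t^{\pm})=\{t+r\alpha\}^{\pm}$ lie in $[0,1]$. Here the left limit of $\{\cdot\}$ at an integer is $1$, while otherwise the values lie in $[0,1)$. So the supremum over $r$ of $H_0-H_r$ is at most $H_0(t^{\pm})$, with equality at $r=0$ trivially when $H_0(t^\pm)$ is approached, and otherwise approached by density of $\{t+r\alpha\}$ near $0$. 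More carefully, $\inf_r H_r(t^{\pm})=0$, so $\sup=H_0(t^{\pm})-0$. The one care point is the $+$ side when $t+r\alpha\in\Z$, where $H_r=0$ is attained. This gives (i), and $0\le\tau\le1$ follows. Every point of $X_\alpha$ is some $\bfb^\pm(t)$ by Lemma \ref{lem:sturmian}(i), so $\tau$ is well defined. For (ii), the pullback identity in the proof of Lemma \ref{lem:Krcoding}, namely $(\alpha-\Phi)(\bfb^\pm(t))=H_1(t^\pm)-H_0(t^\pm)$, combined with $\tau(\sigma\bfb^\pm(t))=\tau(\bfb^\pm(R_\alpha t))=H_0((R_\alpha t)^\pm)=H_1(t^\pm)$, gives $\alpha-\Phi=\tau\circ\sigma-\tau$ directly.

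For (iii), fix $w\in\w_k(X_\alpha)$ with $I_w=[c_i,c_j)$. By Lemma \ref{lem:sturmian}(i) and the coding, $[w]_{X_\alpha}$ consists of $\bfb(t)$ for $t\in I_w$ together with $\bfb^-(t)$ for $t\in(c_i,c_j]$. By (i), $\tau$ maps these onto the set of values $\{t\}$ and its left limits, which is a closed interval of length $|I_w|=\mu_\alpha([w])$. When $j=0$, the right endpoint $1$ arises as the left limit. Continuity follows because cylinder images shrink: lengths are $\le\max_v|I_v|\to0$. For non-Lipschitzness, some $w\in\w_k$ has $\mu_\alpha([w])\ge1/(k+1)$, since there are $k+1$ words. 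Its diameter is at most $\theta^k$, and $(k+1)^{-1}/\theta^k\to\infty$.

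For (iv), suppose $u$ is Lipschitz with $\Phi-\alpha=u\circ\sigma-u$. Then $\tau+u$ is $\sigma$-invariant and continuous on the minimal $X_\alpha$. Each of its level sets is closed and invariant, so by minimality $\tau+u$ is constant. Then $\tau$ would be Lipschitz, contradicting (iii). This is the main conceptual step, and it is short. The consequences then follow. If $c(\Phi-\alpha)$ is a Lipschitz coboundary with $c\ne0$, then so is $\Phi-\alpha$ after scaling, which is impossible; hence $c_f$ is unique. Then $\tilde f\circ\sigma-\tilde f$ is determined, and two Lipschitz transfer functions differ by a continuous invariant function, hence a constant by minimality. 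The functional $f\mapsto c_f$ is therefore linear by uniqueness, vanishes on constants and Lipschitz coboundaries, and has $c_\Phi=1$. Any other such functional agrees with it on the spanning decomposition \eqref{eq:cohomology}, so the quotient is one-dimensional, spanned by $[\Phi]$, and nonzero since $\Phi-\alpha$ is not a coboundary.

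The main obstacle I expect is the bookkeeping of one-sided limits in (i) and (iii). Specifically, one must check that the supremum in (i) equals $H_0(t^\pm)$ exactly, with the left-limit value $1$ at integers, and that the image in (iii) really is the full closed interval including both endpoints.
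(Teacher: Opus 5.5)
Your proof is correct and follows the paper's argument essentially step for step. You get (i) from Lemma \ref{lem:Krcoding} together with $\inf_r H_r(t^{\pm})=0$ by density; (ii) by pulling back $H_1-H_0$; (iii) by identifying $\tau([w]_{X_\alpha})$ with the closed interval with endpoints $c_i,c_j$ and then using a cylinder of measure at least $(k+1)^{-1}$; and (iv) because minimality forces $\tau+u$ to be constant.

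Two small things to tidy. First, the remark about ``equality at $r=0$'' in (i) is garbled: the $r=0$ term is $0$, and your ``more carefully'' sentence is the actual argument. Second, for the non-vanishing of the class of $\Phi$, add the one-line observation that $\Phi=c+u\circ\sigma-u$ forces $c=\alpha$ on integrating against $\mu_\alpha$ (or simply note that $c_\Phi=1$).
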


\begin{proof}
(i) By \eqref{eq:Kr} and Lemma \ref{lem:Krcoding},
\[
  S_r(\Phi-\alpha)\bigl(\bfb^{\pm}(t)\bigr)
  =-K_r\bigl(\bfb^{\pm}(t)\bigr)
  =H_0(t^{\pm})-H_0\bigl((R^r_\alpha t)^{\pm}\bigr) ,
\]
the last equality because $H_r=H_0\circ R^r_\alpha$ on $\R$, and
$R^r_\alpha$ is translation by $r\alpha$, so that if $s\to t^{\pm}$ then
$R^r_\alpha s\to(R^r_\alpha t)^{\pm}$.
Each $H_0((R^r_\alpha t)^{\pm})$ lies in $[0,1]$, and since $\alpha$ is
irrational the orbit $\{\{t+r\alpha\}:r\ge0\}$ is dense in $[0,1)$, so the
infimum of $H_0((R^r_\alpha t)^{\pm})$ over $r\ge0$ is $0$.  Taking the
supremum over $r\ge0$ therefore gives $\tau(\bfb^{\pm}(t))=H_0(t^{\pm})$,
and the bounds $0\le\tau\le1$ follow.

(ii) By (i) and \eqref{eq:shiftcoding},
$\tau(\sigma\bfb^{\pm}(t))-\tau(\bfb^{\pm}(t))
=H_0((R_\alpha t)^{\pm})-H_0(t^{\pm})$, which equals
$H_1(t^{\pm})-H_0(t^{\pm})=K_1(\bfb^{\pm}(t))$ by Lemma \ref{lem:Krcoding},
as $H_1=H_0\circ R_\alpha$; and $K_1=\alpha-\Phi$.

(iii) Let $x,y\in X_\alpha$ with $x_{[0,k)}=y_{[0,k)}=:w$, necessarily in
$\w_k(X_\alpha)$, and write $I_w=[c_i,c_j)$ as in \eqref{eq:endpoints}.
If $x=\bfb^{+}(t)$ then $\bfb(t)_{[0,k)}=w$, so $\{t\}\in I_w$ and
$\tau(x)=\{t\}$.  If $x=\bfb^{-}(t)$ with $t\notin\Z$ then $\{t\}$ lies in the
closure of $I_w$ and $\tau(x)=H_0(t^{-})=\{t\}$; and if $x=\bfb^{-}(t)$ with
$t\in\Z$ then $x=\bfa$, so $w=\bfa_{[0,k)}$, whose interval has right endpoint
$1$, and $\tau(x)=1$. In
all cases $\tau(x)$ lies in the closure of $I_w$ taken in $[0,1]$, so
$|\tau(x)-\tau(y)|\le|I_w|$.  The level-$k$ partitions refine as $k$
increases, the cuts of level $k$ being among those of level $k+1$, and
$\{c_j:j\ge0\}$ is dense in $[0,1)$ since $\alpha$ is irrational; so
$\max_w|I_w|\to0$ as $k\to\infty$, and $\tau$ is continuous.

Conversely, for $t\in I_w$ the point $\bfb^{+}(t)$ lies in
$[w]_{X_\alpha}$ with $\tau(\bfb^{+}(t))=t$ by (i), while $\bfb^{-}(c_j)$
lies in $[w]_{X_\alpha}$ with $\tau(\bfb^{-}(c_j))=c_j$, the right endpoint
being taken as $1$ when $j=0$.  So $\tau([w]_{X_\alpha})$ is the closed
interval with endpoints $c_i$ and $c_j$, whose length is $|I_w|$, that is
$\mu_\alpha([w]_{X_\alpha})$ by Lemma \ref{lem:sturmian}(ii).  Since
$\w_k(X_\alpha)$ has $k+1$ elements, whose cylinder sets have
$\mu_\alpha$-measures summing to $1$, some $w\in\w_k(X_\alpha)$ has
$\mu_\alpha([w]_{X_\alpha})\ge(k+1)^{-1}$;
choosing $x,y\in[w]_{X_\alpha}$ with $\tau(x)$ and $\tau(y)$ the two
endpoints of the interval $\tau([w]_{X_\alpha})$ gives
$d_\theta(x,y)\le\theta^{k}$ and
$|\tau(x)-\tau(y)|=\mu_\alpha([w]_{X_\alpha})\ge(k+1)^{-1}$, so that
$|\tau|_{\lip(X_\alpha)}\ge\theta^{-k}(k+1)^{-1}$ for all $k$.

(iv) Suppose $u\in\lip(X_\alpha,d_\theta)$ satisfies
$\Phi-\alpha=u\circ\sigma-u$.  With (ii) this gives
$(u+\tau)\circ\sigma=u+\tau$, so the continuous function $u+\tau$ is
$\sigma$-invariant.  The set on which a continuous $\sigma$-invariant
function attains its maximum is non-empty, closed, and $\sigma$-invariant,
hence equals $X_\alpha$ by minimality; so $u+\tau$ is constant and $\tau$ is
Lipschitz, contradicting (iii).

Finally, we prove the uniqueness statements.  As noted in Remark \ref{rem:uniqueness}, the uniqueness of $c_f$ is equivalent to the statement
that $\Phi-\alpha$ is not a Lipschitz coboundary on $X_\alpha$, which is
(iv).  Given $c_f$, the function $\tilde f$ is unique up to an additive
constant: if $\tilde f$ and $\tilde f'$ both satisfy
\eqref{eq:cohomology} then $(\tilde f'-\tilde f)\circ\sigma=\tilde
f'-\tilde f$, so $\tilde f'-\tilde f$ is constant by the argument used in
(iv).

For the functional, let $f,g\in\lip(X_\alpha,d_\theta)$.  Adding the
decompositions \eqref{eq:cohomology} of $f$ and of $g$ gives a decomposition for
$f+g$, so $c_{f+g}=c_f+c_g$ by uniqueness, and similarly $c_{\lambda
f}=\lambda c_f$; the functional vanishes on the constants and on
$h\circ\sigma-h$ for $h\in\lip(X_\alpha,d_\theta)$, because in those two
cases \eqref{eq:cohomology} holds with $c_f=0$; and $c_\Phi=1$ because
\eqref{eq:cohomology} holds for $f=\Phi$ with $c_f=1$ and $\tilde f=0$.
Since, by the existence statements of Theorem \ref{thm:D}, all
$f\in\lip(X_\alpha,d_\theta)$ are sums of
$c_f\Phi$, a constant and a Lipschitz coboundary, these properties determine
the functional, and the equivalence class of $\Phi$ spans the quotient.  This
class is non-zero: if $\Phi=c+u\circ\sigma-u$ with $c\in\R$ and
$u\in\lip(X_\alpha,d_\theta)$, then integrating with respect to $\mu_\alpha$
gives $c=\alpha$, contradicting (iv).
\end{proof}

We are now able to complete the proof of Theorem \ref{thm:D}.

\begin{proof}[Proof of Theorem \ref{thm:D}]
The constant $c_f$ and the function $\tilde f$ constructed in Section
\ref{subsec:construction} satisfy \eqref{eq:cohomology},
\eqref{eq:c-bound} and \eqref{eq:u-bound}, and Proposition
\ref{prop:rigidity} gives the two uniqueness statements, together with the
properties of the functional $f\mapsto c_f$ and the description of the
quotient.
\end{proof}

By the uniqueness of $c_f$, the series in \eqref{eq:cfuf} gives $c_f$ for
all choices of the representatives in Step 1 of Section
\ref{subsec:construction}.

\begin{rem}\label{rem:linsine}
Part (ii) of Proposition \ref{prop:rigidity} is a particular case of a general
result, since the Birkhoff sums of $\Phi-\alpha$ are uniformly bounded,
Sturmian sequences being balanced (Lemma \ref{lem:sturmian}(iv)).
That a continuous function with uniformly bounded Birkhoff sums over a
minimal system is a continuous coboundary is due to Browder
\cite[Thm.~1]{Browder}, generalising Gottschalk \& Hedlund
\cite[Thm.~14.11]{GH}, and that the supremum of the Birkhoff sums then gives a
continuous transfer function is due to Lin \& Sine \cite[Thm.~9]{linsine}; see
also Lyubich \cite{lyubich}.
The direct argument above is included because what is needed is the
identification of $\tau$ in part (i): parts (ii) and (iii) follow from it
and together imply part (iv), from which the uniqueness statements of
Theorem \ref{thm:D} are deduced.
\end{rem}

We now deduce two consequences of Theorem \ref{thm:D} and Proposition
\ref{prop:rigidity}.  The first identifies $c_f$ when $f$ is the indicator
function of a cylinder set, and relates $\tilde f$ to the supremum of the
Birkhoff sums of $f-\int f\,d\mu_\alpha$.

\begin{cor}\label{cor:cf}
Let $\alpha\in(0,1)$ be irrational, let $\theta\in(0,1)$, let $X_\alpha$ be
the Sturmian subshift of rotation number $\alpha$ equipped with the metric
$d_\theta$, with Sturmian measure $\mu_\alpha$, 
let $\Phi$ be the digit function, let $\tau$ be as in Proposition
\ref{prop:rigidity}, and let $f\mapsto c_f$ be the functional of Theorem
\ref{thm:D}.  Then:
\begin{enumerate}
\item[(i)] for all $k\in\N$ and $v\in\{0,1\}^k$ the indicator function of
the cylinder set $[v]_{X_\alpha}$ has $c_{\ind_{[v]_{X_\alpha}}}=-B_v$,
with $B_v$ as in Notation \ref{not:cob}; and if $v\in\w_k(X_\alpha)$ then
\[
  -B_v=r_-(v)-r_+(v) ,
\]
with $r_\pm(v)$ as in Remark \ref{rem:intrinsic};
\item[(ii)] for all $f\in\lip(X_\alpha,d_\theta)$ the function
$\tau_f:=\sup_{n\ge0}S_n\bigl(f-\int f\,d\mu_\alpha\bigr)$ is real-valued and
continuous on $X_\alpha$, with $\tau_\Phi=\tau$, and $\tilde f-c_f\tau+\tau_f$
is constant for all $\tilde f$ as in Theorem \ref{thm:D}; consequently
$c\,\tau-\tau_f$ is continuous for all $c\in\R$, it is Lipschitz if and only
if $c=c_f$, and for $c\ne c_f$ it is not H\"older continuous.
\end{enumerate}
\end{cor}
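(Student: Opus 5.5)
For (i), I will use Proposition \ref{prop:centred-cylinder} together with the uniqueness part of Theorem \ref{thm:D}. The indicator function $\ind_{[v]_{X_\alpha}}$ is locally constant, hence Lipschitz. By \eqref{eq:centred-cylinder},
\[
\ind_{[v]_{X_\alpha}}-\mu_\alpha([v]_{X_\alpha})=-B_v(\Phi-\alpha)+V_v\circ\sigma-V_v ,
\]
where $V_v$ is a finite combination of the $K_r$ and so depends on finitely many coordinates. Hence $V_v$ is Lipschitz on $X_\alpha$, and this identity is a decomposition of the form \eqref{eq:cohomology}. Uniqueness of $c_f$ then gives $c_{\ind_{[v]}}=-B_v$. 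For $v\notin\w_k(X_\alpha)$ both sides vanish and $B_v=0$. For $v\in\w_k(X_\alpha)$, Remark \ref{rem:intrinsic} gives $B_v=r_+(v)-r_-(v)$, which yields the formula.

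For (ii), write $g:=f-\int f\,d\mu_\alpha$. By Theorem \ref{thm:D}, $g=c_f(\Phi-\alpha)+\tilde f\circ\sigma-\tilde f$, so
\[
S_ng=c_fS_n(\Phi-\alpha)+\tilde f\circ\sigma^n-\tilde f .
\]
The second term is bounded, and $S_n(\Phi-\alpha)$ is bounded by Lemma \ref{lem:sturmian}(iv), so $\tau_f$ is real-valued. Proposition \ref{prop:rigidity}(ii) gives $\Phi-\alpha=\tau-\tau\circ\sigma$, hence $g=u-u\circ\sigma$ with $u:=c_f\tau-\tilde f$ continuous. Then $S_ng=u-u\circ\sigma^n$, and therefore
\[
\tau_f=u-\inf_{n\ge0}u\circ\sigma^n .
\]
By minimality of $X_\alpha$, every forward orbit is dense, so $\inf_n u(\sigma^nx)=\min_{X_\alpha}u$ for every $x$; this is the step I need to check carefully, and density of orbits in a minimal system suffices. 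Thus $\tau_f=c_f\tau-\tilde f-\min u$, which shows that $\tilde f-c_f\tau+\tau_f$ is constant and that $\tau_f$ is continuous. Taking $f=\Phi$ gives $c_\Phi=1$, and $\tilde f$ constant, so $\tau_\Phi=\tau-\min\tau$. Since $\tau$ takes the value $0$ (for example $H_0(0^+)=0$), we get $\tau_\Phi=\tau$; this can also be seen directly from the definitions.

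For the final assertions, $c\tau-\tau_f=(c-c_f)\tau+\tilde f+\text{const}$, which is continuous. If $c=c_f$, it is Lipschitz since $\tilde f$ is. If $c\ne c_f$, it fails to be H\"older because $\tilde f$ is Lipschitz, hence H\"older, and $\tau$ is not H\"older. The latter needs a strengthening of Proposition \ref{prop:rigidity}(iii), and I expect this to be the main obstacle. Some $w\in\w_k(X_\alpha)$ satisfies $\mathrm{diam}\,\tau([w])\ge(k+1)^{-1}$ while $\mathrm{diam}[w]\le\theta^k$. For any exponent $\gamma$ we have $(k+1)^{-1}\theta^{-\gamma k}\to\infty$, so $\tau$ is not $\gamma$-H\"older with respect to $d_\theta$ for any $\gamma$ (equivalently, it is not Lipschitz for $d_{\theta^\gamma}$, as Remark \ref{rem:uniqueness} indicates).
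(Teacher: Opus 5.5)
Your proposal is correct and follows essentially the same route as the paper. Part (i) goes via Proposition \ref{prop:centred-cylinder}, uniqueness of $c_f$ and Remark \ref{rem:intrinsic}. Part (ii) writes $f-\int f\,d\mu_\alpha$ as a continuous coboundary with transfer function $\pm(\tilde f-c_f\tau)$, then uses density of forward orbits to identify $\tau_f$ as $\max w-w$ (equivalently, your $u-\min u$).

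Two small tidy-ups. The non-H\"older step needs slightly more care with exponents: a $\gamma$-H\"older function with $\gamma>1$ is already Lipschitz, since $d_\theta\le1$, so subtracting the Lipschitz $\tilde f$ is legitimate. Also, no strengthening of Proposition \ref{prop:rigidity}(iii) is needed, because it already holds for every $\theta\in(0,1)$, hence for every $d_{\theta^\gamma}$.
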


\begin{proof}
(i)  The function $V_v$ of Notation \ref{not:cob} depends on finitely many
coordinates, so is Lipschitz, and Proposition \ref{prop:centred-cylinder} is
therefore an instance of \eqref{eq:cohomology} with
$f=\ind_{[v]_{X_\alpha}}$, $c_f=-B_v$ and $\tilde f=V_v$.  The first
equality follows from the uniqueness of $c_f$, and the second from Remark
\ref{rem:intrinsic}.

(ii)  By Proposition \ref{prop:rigidity}(ii) we have
$\Phi-\alpha=-(\tau\circ\sigma-\tau)$, so \eqref{eq:cohomology} may be
rewritten as
\[
  f-\int f\,d\mu_\alpha=w\circ\sigma-w,\qquad w:=\tilde f-c_f\tau ,
\]
with $w$ continuous.  Hence
$S_n(f-\int f\,d\mu_\alpha)=w\circ\sigma^{n}-w$ for all $n\ge0$, so that
$\tau_f(x)=\sup_{n\ge0}w(\sigma^{n}x)-w(x)$.  For each $x\in X_\alpha$ the
orbit $\{\sigma^nx:n\ge0\}$ is dense, by minimality, and $w$ is continuous,
so $\sup_{n\ge0}w(\sigma^{n}x)=\max w$; thus $\tau_f=\max w-w$, which is
real-valued and continuous, and $\tilde f=w+c_f\tau=c_f\tau-\tau_f+\max w$, so
that $c_f\tau-\tau_f$ is Lipschitz.  For the remaining statements,
$c\,\tau-\tau_f$ is continuous for all $c\in\R$ because $\tau$ and $\tau_f$
are, and
\[
  c\,\tau-\tau_f=(c-c_f)\tau+\bigl(\tilde f-\max w\bigr) .
\]
For $\theta'\in[\theta,1)$ we have $d_\theta\le d_{\theta'}$ pointwise, so
$\tilde f-\max w$ is $d_{\theta'}$-Lipschitz, and hence $c\,\tau-\tau_f$ is
$d_{\theta'}$-Lipschitz if and only if $(c-c_f)\tau$ is, which by
Proposition \ref{prop:rigidity}(iii) happens only when $c=c_f$.  Since the
H\"older classes relative to $d_\theta$ are the $d_{\theta'}$-Lipschitz
classes, $\theta'\in(0,1)$, and the $d_{\theta'}$-Lipschitz class is
contained in the $d_\theta$-Lipschitz class for $\theta'\in(0,\theta)$, it
follows that $c\,\tau-\tau_f$ is non-H\"older when $c\ne c_f$.
\end{proof}

The second describes the continuous transfer functions of Lipschitz
functions on $X_\alpha$ with zero integral, and shows how Liv\v{s}ic
regularity fails on $X_\alpha$.

\begin{cor}[Failure of Liv\v{s}ic regularity on $X_\alpha$]\label{cor:livsic}
Let $\alpha\in(0,1)$ be irrational, let $\theta\in(0,1)$, let $X_\alpha$ be
the Sturmian subshift of rotation number $\alpha$ equipped with the metric
$d_\theta$, with Sturmian measure $\mu_\alpha$,
let $\Phi$ be the digit function,
and let $\tau$ be as in Proposition
\ref{prop:rigidity}.  Let $f\in\lip(X_\alpha,d_\theta)$ satisfy
$\int f\,d\mu_\alpha=0$, and let $c_f$ and $\tilde f$ be as in Theorem
\ref{thm:D}.  Then $f$ is a continuous coboundary on $X_\alpha$, all
continuous transfer functions for $f$ differ from $\tilde f-c_f\tau$ by
constants, and:
\begin{enumerate}
\item[(i)] if $c_f=0$ then the continuous transfer functions for $f$ are
Lipschitz;
\item[(ii)] if $c_f\ne0$ then no continuous transfer function for $f$ is
H\"older continuous.
\end{enumerate}
In particular $\Phi-\alpha$ is a Lipschitz function on $X_\alpha$ which is
a continuous coboundary but not a H\"older coboundary.
\end{cor}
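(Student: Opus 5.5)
The plan is to reduce everything to the identity already established in the proof of Corollary \ref{cor:cf}(ii). By Theorem \ref{thm:D} with $\int f\,d\mu_\alpha=0$ we have $f=c_f(\Phi-\alpha)+\tilde f\circ\sigma-\tilde f$, and Proposition \ref{prop:rigidity}(ii) gives $\Phi-\alpha=\tau-\tau\circ\sigma$. Hence
\[
  f=w\circ\sigma-w,\qquad w:=\tilde f-c_f\tau ,
\]
with $w$ continuous, since $\tilde f$ is Lipschitz and $\tau$ is continuous by Proposition \ref{prop:rigidity}(iii). So $f$ is a continuous coboundary with transfer function $w$.

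For uniqueness up to constants, let $u$ be any continuous transfer function for $f$. Then $(u-w)\circ\sigma=u-w$, so $u-w$ is a continuous $\sigma$-invariant function on the minimal system $X_\alpha$. Its maximum set is non-empty, closed and invariant, hence all of $X_\alpha$, exactly as in the proof of Proposition \ref{prop:rigidity}(iv). Therefore $u-w$ is constant, and every continuous transfer function has the form $\tilde f-c_f\tau+c$ with $c\in\R$.

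For (i), if $c_f=0$ these functions are $\tilde f+c$, which are Lipschitz by Theorem \ref{thm:D}.

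For (ii), suppose $c_f\ne0$ and that some transfer function $\tilde f-c_f\tau+c$ were H\"older. It would then be $d_{\theta'}$-Lipschitz for some $\theta'\in(0,1)$, and we may take $\theta'\ge\theta$ after enlarging, since $d_\theta\le d_{\theta'}$ pointwise. Since $\tilde f$ is also $d_{\theta'}$-Lipschitz, $c_f\tau$, and hence $\tau$, would be $d_{\theta'}$-Lipschitz. This contradicts Proposition \ref{prop:rigidity}(iii). An alternative is to cite the non-H\"older conclusion of Corollary \ref{cor:cf}(ii) directly, noting that $\tau_f=\max w-w$ there.

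For the final sentence, take $f=\Phi-\alpha$. Then $\int f\,d\mu_\alpha=0$ by Lemma \ref{lem:sturmian}(iii), and $c_f=c_\Phi=1\ne0$ by Theorem \ref{thm:D}, so (ii) applies.

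The only delicate point is the H\"older bookkeeping: H\"older continuity with respect to $d_\theta$ means Lipschitz continuity for some $d_{\theta'}$, and one must pass to $\theta'\ge\theta$ so that $\tilde f$ remains Lipschitz. This is handled in the same way as in Corollary \ref{cor:cf}.
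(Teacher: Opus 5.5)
Your proposal is correct and follows essentially the same route as the paper: you rewrite \eqref{eq:cohomology} as $f=w\circ\sigma-w$ with $w=\tilde f-c_f\tau$ via Proposition~\ref{prop:rigidity}(ii), obtain uniqueness up to constants from minimality, and derive (ii) from Proposition~\ref{prop:rigidity}(iii) using the same passage to $d_{\theta'}$ with $\theta'\ge\theta$ as in Corollary~\ref{cor:cf}(ii). The only difference is that you spell out the H\"older bookkeeping inline, where the paper simply cites that argument.
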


\begin{proof}
By Proposition \ref{prop:rigidity}(ii) we have
$\Phi-\alpha=\tau-\tau\circ\sigma$, so that \eqref{eq:cohomology} becomes
\[
  f=c_f(\Phi-\alpha)+\tilde f\circ\sigma-\tilde f=w\circ\sigma-w,
  \qquad w:=\tilde f-c_f\tau ,
\]
and $w$ is continuous because $\tilde f$ is Lipschitz and $\tau$ is
continuous.  If $w'$ is another continuous transfer function for $f$ then
$w-w'$ is a continuous $\sigma$-invariant function on the minimal system
$X_\alpha$, hence constant.  Since $\tilde f$ is Lipschitz, $w$ is
Lipschitz if and only if $c_f\tau$ is, which by Proposition
\ref{prop:rigidity}(iii) happens only when $c_f=0$; and for $c_f\ne0$ the
function $w$ is not H\"older continuous, by the argument given for
Corollary \ref{cor:cf}(ii).  The last statement is the case $f=\Phi-\alpha$,
for which $c_f=1$ by Theorem \ref{thm:D}.
\end{proof}

\section{Stability of the Sturmian maximizing measure}\label{sec:proofs}

In this section we prove Theorems \ref{thm:A} and \ref{thm:C}, and
Corollary \ref{cor:B}.  We first isolate the mechanism behind Theorem
\ref{thm:C}: a decomposition of the kind given by Theorem \ref{thm:D}, on a
closed invariant subset $X$ of a subshift $Y$, together with a function
$\phi$ on $Y$ whose integral is maximized by an invariant measure on $X$,
gives an inequality of the form \eqref{eq:gap} for the function
$\phi-d_\theta(\cdot,X)$.  The same argument is used again in Section
\ref{sec:density}.
We shall use the fact that $\sigma$ is
Lipschitz on each $(\Sigma_m,d_\theta)$, with
\begin{equation}\label{eq:Lip-shift}
  d_\theta(\sigma x,\sigma y)\le\theta^{-1}d_\theta(x,y)
  \qquad(x,y\in\Sigma_m) .
\end{equation}

\begin{lem}\label{lem:rigid-stability}
Let $Y$ be a subshift equipped with a metric $d_\theta$, with Wasserstein
distance $W_\theta$, let $X\subseteq Y$
be a non-empty closed set with $\sigma(X)=X$, let $\mu\in\m(X,\sigma)$,
regarded also as an element of $\m(Y,\sigma)$, and let
$\phi\in\lip(Y,d_\theta)$ satisfy $\int\phi\,d\nu\le\int\phi\,d\mu$ for all
$\nu\in\m(Y,\sigma)$.  Let $A,B\ge0$ be such that for all
$f\in\lip(X,d_\theta)$ there are $c\in\R$ and $u\in\lip(X,d_\theta)$ with
$|c|\le A|f|_{\lip(X)}$, $|u|_{\lip(X)}\le B|f|_{\lip(X)}$ and
\begin{equation}\label{eq:abstract-decomp}
  f-\int f\,d\mu=c\Bigl(\phi-\int\phi\,d\mu\Bigr)+u\circ\sigma-u
  \qquad\text{on }X .
\end{equation}
Set $\Psi_X:=d_\theta(\cdot,X)$ on $Y$, and
$C:=\max\{A,\,1+A|\phi|_{\lip(Y)}+(1+\theta^{-1})B\}$.  Then
\begin{equation}\label{eq:abstract-gap}
  \int(\phi-\Psi_X)\,d\mu-\int(\phi-\Psi_X)\,d\nu\ \ge\ C^{-1}\,
  W_\theta(\nu,\mu)\qquad\text{for all }\nu\in\m(Y,\sigma) .
\end{equation}
\end{lem}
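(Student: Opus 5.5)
Fix $\nu\in\m(Y,\sigma)$ and $g\in\lip(Y,d_\theta)$ with $|g|_{\lip(Y)}\le1$. Since $W_\theta(\nu,\mu)$ is the supremum of $\int g\,d\nu-\int g\,d\mu$ over such $g$, it suffices to prove
\[
  \int g\,d\nu-\int g\,d\mu\ \le\ C\Bigl(\int(\phi-\Psi_X)\,d\mu-\int(\phi-\Psi_X)\,d\nu\Bigr).
\]
Because $\Psi_X=0$ on $X$, $\mu(X)=1$ and $\Psi_X\ge0$, the right-hand side equals $C(\Delta_\phi+\int\Psi_X\,d\nu)$, where $\Delta_\phi:=\int\phi\,d\mu-\int\phi\,d\nu\ge0$ by hypothesis.

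\emph{Decompose on $X$.} Apply the hypothesis to $f:=g|_X$, which satisfies $|f|_{\lip(X)}\le1$. This gives $c$ and $u\in\lip(X,d_\theta)$ with $|c|\le A$ and $|u|_{\lip(X)}\le B$ satisfying \eqref{eq:abstract-decomp}. Extend $u$ to $\hat u\in\lip(Y,d_\theta)$ with the same Lipschitz constant, by the McShane extension $\hat u(y):=\inf_{x\in X}(u(x)+B\,d_\theta(x,y))$, which is valid on any metric space. Then define on $Y$
\[
  r:=g-\int g\,d\mu-c\Bigl(\phi-\int\phi\,d\mu\Bigr)-(\hat u\circ\sigma-\hat u),
\]
which vanishes on $X$ because $\sigma(X)=X$. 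Integrating against $\nu$ kills the coboundary term, so
\[
  \int g\,d\nu-\int g\,d\mu=-c\,\Delta_\phi+\int r\,d\nu.
\]

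\emph{Bound the two terms.} First, $-c\,\Delta_\phi\le|c|\Delta_\phi\le A\Delta_\phi$. Second, $r$ vanishes on $X$, so $|r(y)|\le|r|_{\lip(Y)}\,\Psi_X(y)$, using that $X$ is compact so the distance is attained. The Lipschitz seminorm is bounded by
\[
  |r|_{\lip(Y)}\le 1+|c|\,|\phi|_{\lip(Y)}+|\hat u\circ\sigma|_{\lip}+|\hat u|_{\lip}\le1+A|\phi|_{\lip(Y)}+(\theta^{-1}+1)B,
\]
using \eqref{eq:Lip-shift} for the term $\hat u\circ\sigma$. Hence $\int r\,d\nu\le(1+A|\phi|_{\lip(Y)}+(1+\theta^{-1})B)\int\Psi_X\,d\nu$. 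Combining the two bounds with the definition of $C$ as a maximum gives the displayed inequality, and taking the supremum over $g$ yields \eqref{eq:abstract-gap}.

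\emph{Main obstacle.} The only genuine point is extending $u$ from $X$ to $Y$ without increasing its Lipschitz constant, so that the remainder $r$ is controlled by $\Psi_X$. McShane's extension handles this directly; everything else is bookkeeping with signs. Note that the sign of $c$ is irrelevant, since $\Delta_\phi\ge0$.
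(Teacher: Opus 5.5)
Your proof is correct and follows the paper's argument essentially step for step: decompose the restriction of $g$ to $X$ using the hypothesis, extend $u$ to $Y$ by McShane's theorem, bound the residual $r$ (which vanishes on $X$) by $|r|_{\lip(Y)}\Psi_X$, and absorb the $c$-term using $\int\phi\,d\mu-\int\phi\,d\nu\ge0$. The differences are cosmetic: you normalize $|g|_{\lip(Y)}\le1$ at the outset rather than carrying $L=|f|_{\lip(Y)}$, and you write the McShane extension explicitly with constant $B$.
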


\begin{proof}
Let $\nu\in\m(Y,\sigma)$ and $f\in\lip(Y,d_\theta)$, and set
$L:=|f|_{\lip(Y)}$.  The restriction of $f$ to $X$ has Lipschitz seminorm at
most $L$, so there are $c\in\R$ and $u\in\lip(X,d_\theta)$ satisfying
\eqref{eq:abstract-decomp}, with $|c|\le AL$ and $|u|_{\lip(X)}\le BL$.  By
McShane's extension theorem \cite{McShane} (see \cite[Thm.~1.33]{W2018}),
$u$ extends from the subset $X$ of the metric space $(Y,d_\theta)$ to a
function on $Y$ with the same Lipschitz seminorm, which we continue to
denote by $u$.  Define $r:Y\to\R$ by
\begin{equation}\label{eq:residual}
  r:=f-\int f\,d\mu-c\Bigl(\phi-\int\phi\,d\mu\Bigr)-(u\circ\sigma-u) ,
\end{equation}
so that $r$ vanishes on $X$, by \eqref{eq:abstract-decomp}.  By
\eqref{eq:Lip-shift} we have
$|u\circ\sigma|_{\lip(Y)}\le\theta^{-1}|u|_{\lip(Y)}$, so
$|r|_{\lip(Y)}\le L+|c|\,|\phi|_{\lip(Y)}+(1+\theta^{-1})BL\le CL$.  
For
all $y\in Y$ and $x\in X$ it follows that
$|r(y)|=|r(y)-r(x)|\le CL\,d_\theta(y,x)$, and taking the infimum over
$x\in X$ gives
\begin{equation}\label{eq:residual-distance}
  |r(y)|\le CL\,\Psi_X(y)\qquad(y\in Y) .
\end{equation}
Integrating \eqref{eq:residual} with respect to $\nu$, and using that
$\int(u\circ\sigma-u)\,d\nu=0$, gives
\[
  \int f\,d\nu-\int f\,d\mu
  =c\Bigl(\int\phi\,d\nu-\int\phi\,d\mu\Bigr)+\int r\,d\nu .
\]
As $\int\phi\,d\mu-\int\phi\,d\nu\ge0$ and $|c|\le AL\le CL$, it follows
from \eqref{eq:residual-distance} that
\[
  \int f\,d\nu-\int f\,d\mu
  \le CL\Bigl(\Bigl(\int\phi\,d\mu-\int\phi\,d\nu\Bigr)+\int\Psi_X\,d\nu\Bigr) .
\]
Both $\int\phi\,d\mu-\int\phi\,d\nu$ and $\int\Psi_X\,d\nu$ are
non-negative, and as $\int\Psi_X\,d\mu=0$ their sum is the left-hand side
of \eqref{eq:abstract-gap}.  Hence $\int f\,d\nu-\int f\,d\mu$ is at most $C$
times this left-hand side whenever $L\le1$, and taking the supremum over all
such $f$ gives \eqref{eq:abstract-gap}, by \eqref{eq:wass}.
\end{proof}

We are now able to prove Theorem \ref{thm:C}, by combining Lemma
\ref{lem:rigid-stability} with Proposition \ref{prop:Phimax} and Theorem
\ref{thm:D}.

\begin{proof}[Proof of Theorem \ref{thm:C}]
We apply Lemma \ref{lem:rigid-stability} with $Y:=Y_\alpha$,
$X:=X_\alpha$, $\mu:=\mu_\alpha$ and $\phi:=\Phi$, for which
$\Psi_X=\Psi$ and $\phi-\Psi_X=F$.  Here $X_\alpha\subseteq Y_\alpha$ by
Lemma \ref{lem:sturmian}(v), and $\sigma(X_\alpha)=X_\alpha$ as $X_\alpha$
is minimal, by Lemma \ref{lem:sturmian}(i).  By Proposition
\ref{prop:Phimax}, $\int\Phi\,d\nu\le\int\Phi\,d\mu_\alpha=\alpha$ for all
$\nu\in\m(Y_\alpha,\sigma)$.  So the decomposition \eqref{eq:cohomology}
of Theorem \ref{thm:D} is of the form \eqref{eq:abstract-decomp}, with
$c:=c_f$ and $u:=\tilde f$, and by \eqref{eq:c-bound} and
\eqref{eq:u-bound} we may take $A:=(1-\theta)^{-2}$ and
$B:=\theta(1-\theta)^{-3}$.  As $|\Phi|_{\lip(Y_\alpha)}\le1$, by
\eqref{eq:FLip}, the constant $C$ of Lemma \ref{lem:rigid-stability}
satisfies
\[
  C\le1+A+(1+\theta^{-1})B
  =1+\frac{1}{(1-\theta)^{2}}+\frac{1+\theta}{(1-\theta)^{3}}
  =1+\frac{2}{(1-\theta)^{3}}=C_\theta ,
\]
so \eqref{eq:abstract-gap} gives the inequality of Theorem \ref{thm:C}.
\end{proof}

Theorem \ref{thm:A} now follows from Theorem \ref{thm:C} and Lemma
\ref{lem:stability}.

\begin{proof}[Proof of Theorem \ref{thm:A}]
Theorem \ref{thm:C} is the hypothesis \eqref{eq:gap} of Lemma
\ref{lem:stability}, with $f=F$, $\mu=\mu_\alpha$ and
$\delta=C_\theta^{-1}$; so $\Mmax(F+g)=\{\mu_\alpha\}$ for all
$g\in\lip(Y_\alpha,d_\theta)$ with $|g|_{\lip(Y_\alpha)}<C_\theta^{-1}$,
that is $\Mmax(f)=\{\mu_\alpha\}$ for all $f\in U$.

The set $U$ is non-empty, since $F\in U$, and open in
$\lip(Y_\alpha,d_\theta)$, since
$|f+h-F|_{\lip(Y_\alpha)}\le|f-F|_{\lip(Y_\alpha)}+\|h\|_{\lip(Y_\alpha)}$
for all $h$.
The measure $\mu_\alpha$ is not a periodic orbit measure, as its support
is the infinite minimal set $X_\alpha$.  Hence $U\cap\cP(Y_\alpha)=\emptyset$, so
$\cP(Y_\alpha)$ is not dense in $\lip(Y_\alpha,d_\theta)$, therefore $Y_\alpha$
does not have typical periodic optimization.
\end{proof}

Corollary \ref{cor:B} is now immediate.

\begin{proof}[Proof of Corollary \ref{cor:B}]
Let $\alpha\in(0,1)$ be irrational.  The Sturmian beta-shift $Y_\alpha$ has
the specification property by Lemma \ref{lem:spec}, and does not have
typical periodic optimization by Theorem~\ref{thm:A}.
\end{proof}

Finally, we verify the properties of the magic Morse shift stated in the
Introduction, in particular that it does not have specification.

\begin{rem}\label{rem:magic-morse}
The magic Morse shift of \cite[\S 12]{HJXZ} is synchronized but does not
have the specification property.
To recall its definition, let $\mathbf m=0110100110010110\ldots$ be the
Morse sequence, that is, the fixed point beginning with $0$ of the
substitution $0\mapsto01$, $1\mapsto10$, and for $k\ge0$ let $p_k$ be its
prefix of length $2^k$, so that $p_{k+1}=p_k\overline{p_k}$, where
$\overline{v_1\cdots v_n}:=\bar{v}_1\cdots\bar{v}_n$, with $\bar{0}:=1$ and
$\bar{1}:=0$.
The magic Morse shift is the closure $X$
in $\{0,1,\flat\}^{\N_0}$ of the set of all sequences
$\sigma^n(p_{k_1}\flat\,p_{k_2}\flat\,p_{k_3}\flat\cdots)$ with
$n,k_1,k_2,\ldots\ge0$.  It is readily checked that $X$ is transitive, and
the symbol $\flat$ is a synchronizing word for $X$ \cite[\S 12]{HJXZ}, so
$X$ is synchronized.

To see that $X$ does not have specification, let $N\in\N$.  All words occurring in $X$ occur in the sequences
$\sigma^n(p_{k_1}\flat\,p_{k_2}\flat\cdots)$, so in such a word each maximal
block of symbols $0$ and $1$ that is immediately followed by $\flat$ is a
suffix of some $p_k$.  Hence if $u$ is a word occurring in $\mathbf m$,
and $uw\flat$ occurs in $X$ for some word $w$ of length at most $N$, then,
considering the first symbol $\flat$ after $u$, we see that $u$ occurs in
some $p_k$, ending within distance $N$ of the end of $p_k$.  Since
$p_{k+1}=p_k\overline{p_k}$, for all $\ell\in\N$ the words $p_k$ with
$2^k\ge\ell$ have at most two distinct suffixes of length $\ell$; so for all
$L\in\N$ at most $2(N+1)$ words $u$ of length $L$ arise in this way.  On the
other hand the orbit closure of $\mathbf m$ is infinite, so the number of
words of length $L$ occurring in $\mathbf m$ tends to infinity with $L$.
Hence for some $L$ there is a word $u$ of length $L$ occurring in $X$ such
that $uw\flat$ occurs in $X$ for no word $w$ of length at most $N$.  As $N$
was arbitrary, $X$ does not have the specification property.
\end{rem}

\section{Sturmian beta-transformations}\label{sec:interval}

In this section we prove Theorems \ref{thm:E} and \ref{thm:F}, and Corollary
\ref{cor:G}, with $T_\beta$, $U_\beta$, $C^{0,\gamma}(I)$, $S_\beta$ and
$h_\beta$ as in Section \ref{subsec:transformations}.  Theorem \ref{thm:E}
is deduced from Theorem \ref{thm:C} by means of an inequality (Lemma
\ref{lem:key}) comparing distances in the beta-shift with distances in $I$,
and the same inequality underlies Theorem \ref{thm:F}.

\subsection{Coding, and a key inequality}\label{subsec:coding}

In this subsection we assume that $\beta>1$ is not a simple
beta-number, so that $d^*_\beta(1)=d_\beta(1)$ has infinitely many non-zero
digits and $S_\beta=Y(d_\beta(1))$ (cf.~Definition \ref{def:graph}); we also
fix $\gamma\in(0,1]$, set $\theta:=\beta^{-\gamma}$, and equip $S_\beta$
with the metric $d_\theta$.
With $h_\beta$ as in Section
\ref{subsec:transformations}, we write 
\begin{equation}\label{eq:Hbeta_def}
H_\beta(\nu):=\nu\circ h_\beta^{-1}
\end{equation}
for $\nu\in\m(S_\beta,\sigma)$.  Up to
a shift of indices (reflecting the fact that sequences in \cite{HHJL} are indexed from $1$ rather than $0$),
$S_\beta$ and $h_\beta$ are the beta-shift $\mathcal S_\beta=X_\beta$ and the
map $h_\beta$ of \cite[Def.~3.7, Prop.~3.14, Rem.~3.15(ii)]{HHJL}.  We use
the following facts from \cite{HHJL}:
\begin{enumerate}
\item[(C1)] $h_\beta(S_\beta)=I$ \cite[Prop.~3.9(x)]{HHJL};
\item[(C2)] $H_\beta$ is a bijection from $\m(S_\beta,\sigma)$ onto
$\m(I,U_\beta)=\m(I,T_\beta)$, which maps periodic orbit measures onto
periodic orbit measures \cite[Props.~3.12(iv), 3.27(i),(iv)]{HHJL} (for the
last statement note that $H_\beta^{-1}$ is induced by the injective map
$\pi^*_\beta$, and $\sigma\circ\pi^*_\beta=\pi^*_\beta\circ U_\beta$
\cite[Prop.~3.9(iii),(vi)]{HHJL}); consequently \cite[Prop.~3.27(iii)]{HHJL}
\[
  \Mmax(U_\beta,g)=\Mmax(T_\beta,g)=H_\beta\bigl(\Mmax(g\circ h_\beta)\bigr)
  \qquad(g\in C(I)),
\]
where $\Mmax(g\circ h_\beta)$ is the set of maximizing measures for
$g\circ h_\beta$ with respect to $\sigma:S_\beta\to S_\beta$;
\item[(C3)] $U_\beta^n(1)=h_\beta(\sigma^nd_\beta(1))$ for all $n\ge0$
\cite[Prop.~3.9(i),(iii),(iv)]{HHJL}.
\end{enumerate}
If $x,y\in S_\beta$ satisfy $x_{[0,\ell)}=y_{[0,\ell)}$, then
$h_\beta(x)-h_\beta(y)=\beta^{-\ell}\bigl(h_\beta(\sigma^\ell x)-h_\beta(\sigma^\ell y)\bigr)$,
where $\sigma^\ell x,\sigma^\ell y\in S_\beta$, so (C1) gives
$|h_\beta(x)-h_\beta(y)|\le\beta^{-\ell}$,
and therefore
\begin{equation}\label{eq:pullback-Lip}
  |g\circ h_\beta|_{\lip(S_\beta)}\le|g|_\gamma
  \qquad\text{for all }g\in C^{0,\gamma}(I) ;
\end{equation}
if moreover $x_\ell>y_\ell$, then (C1) gives
\begin{equation}\label{eq:first-difference}
  h_\beta(x)-h_\beta(y)
  =\beta^{-\ell-1}\bigl(x_\ell-y_\ell+h_\beta(\sigma^{\ell+1}x)-h_\beta(\sigma^{\ell+1}y)\bigr)
  \ge\beta^{-\ell-1}h_\beta(\sigma^{\ell+1}x) .
\end{equation}

The distance from $x\in S_\beta$ to a closed invariant set $K$ cannot in
general be bounded in terms of the distance from $h_\beta(x)$ to
$h_\beta(K)$: in the Sturmian case, $10^\infty\notin X_\alpha$, whereas
$h_\beta(10^\infty)=h_\beta(0\bfa)\in h_\beta(X_\alpha)$.
However, $\sigma(10^\infty)=0^\infty$, and $h_\beta(0^\infty)=0$ is far from
$h_\beta(K)$ when no element of $K$ contains a long run of zeros; the
following lemma shows that such a bound does hold on average with respect to
invariant measures.

\begin{lem}\label{lem:key}
Suppose $\beta>1$ is not a simple beta-number, let $\gamma\in(0,1]$ and
$\theta:=\beta^{-\gamma}$, and let $H_\beta$ be as in \eqref{eq:Hbeta_def}.
Let $K\subseteq S_\beta$ be a non-empty closed set with $\sigma(K)\subseteq
K$, let $N\in\N$ be such that the word $0^N$ does not occur in any element of $K$,
and set $\Lambda:=h_\beta(K)$, $\Psi_K:=d_\theta(\cdot,K)$,
$\rho_\Lambda:=\operatorname{dist}(\cdot,\Lambda)^\gamma$,
$A_K:=(\beta^{N+1}/(\beta-1))^\gamma$ and
$C_K:=\beta^{\gamma(N+2)}+A_K/(1-\theta)$.  Then
$\Lambda\subseteq[\beta^{-N},1]$,
$d_\theta(x,y)\le\beta^{\gamma(N+1)}|h_\beta(x)-h_\beta(y)|^\gamma$ for all
$x,y\in K$, and
\begin{equation}\label{eq:key}
  \int\Psi_K\,d\nu\le C_K\int\rho_\Lambda\circ h_\beta\,d\nu
  \qquad\text{for all }\nu\in\m(S_\beta,\sigma) .
\end{equation}
In particular, if $\nu\in\m(S_\beta,\sigma)$ and $H_\beta(\nu)(\Lambda)=1$,
then $\nu(K)=1$.
\end{lem}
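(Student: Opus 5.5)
The plan has three parts: the two elementary assertions about $\Lambda$ and about distances within $K$, then the averaged inequality \eqref{eq:key}, which I would derive from a pointwise bound involving the whole forward orbit.

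For $\Lambda\subseteq[\beta^{-N},1]$: since $0^N$ does not occur in $x\in K$, some $x_i\ge1$ with $i\le N-1$. Hence $h_\beta(x)\ge\beta^{-i-1}\ge\beta^{-N}$, and $h_\beta(x)\le1$ by (C1).

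For the distance comparison on $K$: take $x\ne y$ in $K$ with first difference at $\ell$, say $x_\ell>y_\ell$. By \eqref{eq:first-difference} and $\sigma^{\ell+1}x\in K$,
\[
|h_\beta(x)-h_\beta(y)|\ge\beta^{-\ell-1}h_\beta(\sigma^{\ell+1}x)\ge\beta^{-\ell-1-N}.
\]
Raising to the power $\gamma$ gives $d_\theta(x,y)=\beta^{-\gamma\ell}\le\beta^{\gamma(N+1)}|h_\beta(x)-h_\beta(y)|^\gamma$.

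For \eqref{eq:key}, the form of $C_K$ suggests proving the pointwise inequality
\[
\Psi_K(y)\le\beta^{\gamma(N+2)}\rho_\Lambda(h_\beta y)+A_K\sum_{j\ge1}\theta^{j-1}\rho_\Lambda(h_\beta\sigma^jy)\qquad(y\in S_\beta).
\]
Integrating against $\nu$ and using invariance (the series converges uniformly since $\rho_\Lambda\le1$) then gives exactly the constant $C_K=\beta^{\gamma(N+2)}+A_K/(1-\theta)$.

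To prove the pointwise bound, fix $y\notin K$. Since $\Lambda$ is compact, choose $z\in K$ with $|h_\beta(y)-h_\beta(z)|=\delta:=\operatorname{dist}(h_\beta(y),\Lambda)$, and let $\ell$ be the first index with $y_\ell\ne z_\ell$. Then $\Psi_K(y)\le\theta^\ell$, and there are three cases.
\begin{enumerate}
\item[(a)] If $z_\ell>y_\ell$, then \eqref{eq:first-difference} and $\sigma^{\ell+1}z\in K$ give $\delta\ge\beta^{-\ell-1-N}$, so $\theta^\ell\le\beta^{\gamma(N+1)}\rho_\Lambda(h_\beta y)$.
\item[(b)] If $y_\ell>z_\ell$ and $h_\beta(\sigma^{\ell+1}y)\ge\beta^{-N-1}$, then likewise $\delta\ge\beta^{-\ell-N-2}$, so $\theta^\ell\le\beta^{\gamma(N+2)}\rho_\Lambda(h_\beta y)$.
\item[(c)] If $y_\ell>z_\ell$ and $h_\beta(\sigma^{\ell+1}y)<\beta^{-N-1}$, then, because $\Lambda\subseteq[\beta^{-N},1]$,
\[
\rho_\Lambda(h_\beta\sigma^{\ell+1}y)\ge(\beta^{-N}-\beta^{-N-1})^\gamma=A_K^{-1}.
\]
Hence $\theta^\ell\le A_K\theta^{\ell}\rho_\Lambda(h_\beta\sigma^{\ell+1}y)$, which is the $j=\ell+1$ term of the series.
\end{enumerate}

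Case (c) is the main obstacle, since that is where no pointwise bound in terms of $\rho_\Lambda(h_\beta y)$ alone is available, as the example $10^\infty$ shows. The remedy is to transfer the cost to a later point of the orbit, whose image under $h_\beta$ lies near $0$ and hence far from $\Lambda$; the geometric weights $\theta^{j-1}$ make this summable.

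For the final assertion: if $H_\beta(\nu)(\Lambda)=1$, then $\rho_\Lambda\circ h_\beta=0$ holds $\nu$-a.e. By \eqref{eq:key} we get $\int\Psi_K\,d\nu=0$, so $\Psi_K=0$ $\nu$-a.e. Since $K$ is closed, $\Psi_K$ vanishes exactly on $K$, and therefore $\nu(K)=1$.
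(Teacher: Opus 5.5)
Your proof is correct and follows the paper's argument essentially step for step: it uses the same pointwise bound $\Psi_K(y)\le\beta^{\gamma(N+2)}\rho_\Lambda(h_\beta y)+A_K\sum_{j\ge1}\theta^{j-1}\rho_\Lambda(h_\beta\sigma^jy)$, derived from a nearest point $z\in K$ via the same three cases and then integrated using invariance. The only cosmetic difference is that you split cases (b)/(c) according to whether $h_\beta(\sigma^{\ell+1}y)\ge\beta^{-N-1}$, whereas the paper splits according to whether $y_{[\ell+1,\ell+N+2)}$ contains a non-zero digit; both give the same estimates (and the pointwise bound holds trivially for $y\in K$, which is worth stating).
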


\begin{proof}
Each $z\in K$ has a non-zero digit among $z_0,\ldots,z_{N-1}$, so
$h_\beta(z)\ge\beta^{-N}$, and $h_\beta(z)\le1$ by (C1).  If $x,y\in K$ first
differ at $\ell$, say with $x_\ell>y_\ell$, then $\sigma^{\ell+1}x\in K$, and
\eqref{eq:first-difference} gives
$|h_\beta(x)-h_\beta(y)|\ge\beta^{-\ell-1-N}=\beta^{-N-1}d_\theta(x,y)^{1/\gamma}$.

Now let $x\in S_\beta\setminus K$, choose $z\in K$ with
$|h_\beta(x)-h_\beta(z)|=\operatorname{dist}(h_\beta(x),\Lambda)=:\delta$, and
let $\ell$ be the least index with $x_\ell\ne z_\ell$, so that
$\Psi_K(x)\le\theta^\ell$.  If $z_\ell>x_\ell$, then \eqref{eq:first-difference}
gives $\delta\ge\beta^{-\ell-1}h_\beta(\sigma^{\ell+1}z)\ge\beta^{-\ell-1-N}$.
If $x_\ell>z_\ell$ and $x_{[\ell+1,\ell+N+2)}$ contains a non-zero digit, then
\eqref{eq:first-difference} gives
$\delta\ge\beta^{-\ell-1}h_\beta(\sigma^{\ell+1}x)\ge\beta^{-\ell-N-2}$.  In
both cases $\theta^\ell=\beta^{-\gamma\ell}\le\beta^{\gamma(N+2)}\delta^\gamma
=\beta^{\gamma(N+2)}\rho_\Lambda(h_\beta(x))$.  In the remaining case
$x_\ell>z_\ell$ and $x_{[\ell+1,\ell+N+2)}=0^{N+1}$, so
$h_\beta(\sigma^{\ell+1}x)=\beta^{-N-1}h_\beta(\sigma^{\ell+N+2}x)\le\beta^{-N-1}$,
and as $\Lambda\subseteq[\beta^{-N},1]$ we get
\[
  \rho_\Lambda\bigl(h_\beta(\sigma^{\ell+1}x)\bigr)\ge(\beta^{-N}-\beta^{-N-1})^\gamma
  =A_K^{-1},
\]
so that
$\theta^\ell\le A_K\,\theta^{\ell}\rho_\Lambda(h_\beta(\sigma^{\ell+1}x))$.  Hence in all cases
\[
  \Psi_K(x)\le\beta^{\gamma(N+2)}\rho_\Lambda\bigl(h_\beta(x)\bigr)
  +A_K\sum_{k\ge1}\theta^{k-1}\rho_\Lambda\bigl(h_\beta(\sigma^kx)\bigr),
\]
and this holds trivially for $x\in K$.  Integrating with respect to $\nu$,
and using the $\sigma$-invariance of $\nu$, gives \eqref{eq:key}.  Finally,
if $H_\beta(\nu)(\Lambda)=1$ then $\int\rho_\Lambda\circ
h_\beta\,d\nu=\int\rho_\Lambda\,dH_\beta(\nu)=0$, so $\int\Psi_K\,d\nu=0$, and
$\nu(K)=1$ as $K$ is closed.
\end{proof}

\subsection{Proof of Theorem \ref{thm:E}}

We are now able to prove Theorem \ref{thm:E}, by combining Theorem
\ref{thm:C} with Lemma \ref{lem:key}.

\begin{proof}[Proof of Theorem \ref{thm:E}]
As $d_\beta(1)=\bfa$ has infinitely many digits $1$, $\beta$ is not a
simple beta-number, and $S_\beta=Y_\alpha$.  Let $N$
be as in Lemma \ref{lem:spec}.  As all words occurring in $X_\alpha$ occur
in $\bfa$, Lemma \ref{lem:key} applies to $K:=X_\alpha$, for which
$\Psi_K=\Psi$; and by (C3), $h_\beta(X_\alpha)$ is the closure of
$\{h_\beta(\sigma^n\bfa):n\ge0\}=\{U_\beta^n(1):n\ge0\}$, that is,
$\Lambda=\Lambda_\alpha$.  Let $\widehat\mu_\alpha:=H_\beta(\mu_\alpha)$, so that
$\widehat\mu_\alpha(\Lambda_\alpha)\ge\mu_\alpha(X_\alpha)=1$, and let
$\rho:=\rho_{\Lambda_\alpha}$.  If $\nu\in\m(I,U_\beta)$ and
$\nu(\Lambda_\alpha)=1$ then, by (C2) and Lemma \ref{lem:key},
$\nu=H_\beta(\nu')$ with $\nu'(X_\alpha)=1$, so $\nu'=\mu_\alpha$ by unique
ergodicity, and $\nu=\widehat\mu_\alpha$.  Moreover $\widehat\mu_\alpha$ has
no atoms, since $h_\beta$ is injective on $X_\alpha$ (Lemma \ref{lem:key}) and
$\mu_\alpha$ has no atoms 
(an atom of an invariant probability measure being a periodic point, of
which the infinite minimal set $X_\alpha$ has none); so $\widehat\mu_\alpha$ is not a periodic orbit measure.  Also
$\widehat F\in C^{0,\gamma}(I)$, as $|\mathrm{id}_I|_\gamma\le1$ and
$|\rho|_\gamma\le1$.

Let $\nu\in\m(Y_\alpha,\sigma)$.  Since $\beta
h_\beta(x)=x_0+h_\beta(\sigma x)$, we have $\Phi=(\beta-1)h_\beta+h_\beta-
h_\beta\circ\sigma$, so $\int\Phi\,d\nu=(\beta-1)\int h_\beta\,d\nu$.  As
$\rho\circ h_\beta$ and $\Psi$ vanish on $X_\alpha$, and
$\int\Phi\,d\mu_\alpha=\alpha$, it follows that
\[
  \widehat D(\nu):=\int\widehat F\,d\widehat\mu_\alpha-\int\widehat
  F\,dH_\beta(\nu)=\Bigl(\alpha-\int\Phi\,d\nu\Bigr)+\int\rho\circ h_\beta\,d\nu ,
\]
while $\int F\,d\mu_\alpha-\int F\,d\nu=(\alpha-\int\Phi\,d\nu)+\int\Psi\,d\nu$.
Since $\alpha-\int\Phi\,d\nu\ge0$ (Proposition \ref{prop:Phimax}) and $C_K\ge1$,
Lemma \ref{lem:key} and Theorem \ref{thm:C} give
\[
  \widehat D(\nu)\ge C_K^{-1}\Bigl(\int F\,d\mu_\alpha-\int F\,d\nu\Bigr)
  \ge(C_KC_\theta)^{-1}\,W_\theta(\nu,\mu_\alpha) .
\]
Now set $\delta:=(C_KC_\theta)^{-1}$, let $\psi\in
C^{0,\gamma}(I)$ satisfy $|\psi-\widehat F|_\gamma<\delta$, and let
$\nu\ne\mu_\alpha$.  By \eqref{eq:pullback-Lip},
$g:=(\psi-\widehat F)\circ h_\beta$ satisfies
$|g|_{\lip(Y_\alpha)}<(C_KC_\theta)^{-1}$, so by \eqref{eq:wass},
\begin{align*}
  \int\psi\,d\widehat\mu_\alpha-\int\psi\,dH_\beta(\nu)
  &=\widehat D(\nu)+\int g\,d\mu_\alpha-\int g\,d\nu\\
  &\ge\bigl((C_KC_\theta)^{-1}-|g|_{\lip(Y_\alpha)}\bigr)W_\theta(\nu,\mu_\alpha)>0 .
\end{align*}
By (C2) it follows that
$\Mmax(U_\beta,\psi)=\Mmax(T_\beta,\psi)=\{\widehat\mu_\alpha\}$.  As
$\widehat\mu_\alpha$ is not a periodic orbit measure, neither
$(U_\beta,\psi)$ nor $(T_\beta,\psi)$ has the periodic optimization property,
for any $\psi$ in the open set $\{\psi:|\psi-\widehat F|_\gamma<\delta\}$, so
neither $U_\beta$ nor $T_\beta$ has TPO with respect to $C^{0,\gamma}(I)$.
\end{proof}

We note the relation between Theorem \ref{thm:E} and the critical set
$\mathrm{Crit}^\gamma(\beta)$ of \cite{HHJL}:

\begin{rem}\label{rem:E-critical}
For $\psi$ as in Theorem \ref{thm:E}, (C3) and unique ergodicity give
$\frac1n\sum_{i<n}\psi(U_\beta^i1)=\frac1n\sum_{i<n}(\psi\circ
h_\beta)(\sigma^i\bfa)\to\int\psi\,d\widehat\mu_\alpha$, so the $U_\beta$-orbit
of $1$ is maximizing.  Thus the open set of Theorem \ref{thm:E} lies in
$\mathrm{Crit}^\gamma(\beta)$, which can therefore have non-empty interior.
\end{rem}

We conclude this subsection by indicating how Theorems \ref{thm:A},
\ref{thm:C}, \ref{thm:D} and \ref{thm:E} extend to Sturmian sequences over
other alphabets.

\begin{rem}[Sturmian sequences over other alphabets]\label{rem:alphabets}
Let $\ell<m$ be non-negative integers, and let $\iota:\Sigma\to\Sigma_m$ be
the map which replaces each digit $0$ by $\ell$ and each digit $1$ by $m$;
we call the elements of the subshifts $\iota(X_\alpha)$, with
$\alpha\in(0,1)$ irrational, \emph{Sturmian sequences over the alphabet
$\{\ell,m\}$}.  Theorems \ref{thm:A}, \ref{thm:C}, \ref{thm:D} and
\ref{thm:E} extend to these sequences, with modifications which we now
indicate.  As $\iota$ is strictly increasing with respect to $\le_\lex$,
satisfies $\iota\circ\sigma=\sigma\circ\iota$, and is an isometry for all
metrics $d_\theta$, the sequence $\iota(\bfa)$ is the lexicographically
largest element of the minimal subshift $\iota(X_\alpha)$, and
$\sigma^j\iota(\bfa)<_\lex\iota(\bfa)$ for all $j\ge1$.  So there is a unique
$\beta>1$ with $d_\beta(1)=\iota(\bfa)$ (by \cite{Par60}), which lies in
$(m,m+1)$, and its beta-shift is $Y(\iota(\bfa))$, in the notation of
Definition \ref{def:graph}; this beta-shift has the specification property,
by the proof of Lemma \ref{lem:spec}.  Define $\Phi_{\ell,m}:\Sigma_m\to\R$
by $\Phi_{\ell,m}(x):=1$ if $x_0=m$, $\Phi_{\ell,m}(x):=0$ if $\ell\le
x_0<m$, and $\Phi_{\ell,m}(x):=-1$ if $x_0<\ell$, so that
$\Phi_{\ell,m}\circ\iota=\Phi$, and set
$F_{\ell,m}:=\Phi_{\ell,m}-d_\theta(\cdot,\iota(X_\alpha))$.  Then the
conclusions of Theorems \ref{thm:A} and \ref{thm:C} hold for
$Y(\iota(\bfa))$, $F_{\ell,m}$ and the measure $\mu_\alpha\circ\iota^{-1}$,
provided $C_\theta$ is replaced by $1+3(1-\theta)^{-3}$ when $\ell\ge1$.
Indeed, Theorem \ref{thm:D} carries over to $\iota(X_\alpha)$ via the
isometric conjugacy $\iota$.  The first return loops of the graph
$G(\iota(\bfa))$ are the words $\iota(\bfa)_{[0,n-1)}c$ with
$c<\iota(\bfa)_{n-1}$, and the sum of the values of $\Phi_{\ell,m}$ along
such a loop is at most $\lfloor n\alpha\rfloor$, as in Lemma
\ref{lem:returnwords}: if $a_{n-1}=0$ then $\iota(\bfa)_{[0,n-1)}$ contains
$\lfloor n\alpha\rfloor+1$ digits $m$, while $c<\ell$, so that the final
digit contributes $-1$.  It follows that $S_n\Phi_{\ell,m}\le n\alpha+1$ on
$Y(\iota(\bfa))$, as in Proposition \ref{prop:Phimax}, 
so Lemma
\ref{lem:rigid-stability} applies as in the proof of Theorem \ref{thm:C},
with $Y(\iota(\bfa))$, $\iota(X_\alpha)$, $\mu_\alpha\circ\iota^{-1}$ and
$\Phi_{\ell,m}$ in place of $Y_\alpha$, $X_\alpha$, $\mu_\alpha$ and $\Phi$;
if $\ell\ge1$ then $|\Phi_{\ell,m}|_\lip=2$, and the constant $C$ of the
lemma is $1+(3-\theta)(1-\theta)^{-3}\le1+3(1-\theta)^{-3}$.
The value $-1$ cannot be replaced
by $0$ when $\ell\ge1$: if $\bfa$ begins with $1^k0$ then $\alpha<k/(k+1)$,
whereas the periodic point $(m^k0)^\infty$ belongs to $Y(\iota(\bfa))$ and
the digit $m$
has frequency $k/(k+1)$ in it.  Similarly, Theorem \ref{thm:E}
holds with $\widehat F$ replaced by
$t\mapsto(\beta-1)t/(m-\ell)-M\operatorname{dist}(t,\Lambda)^\gamma$, where
$\Lambda$ is the closure of the $U_\beta$-orbit of $1$, $M:=1$ if
$\ell=m-1$, and $M:=2C_K$ otherwise, with $C_K$ as in Lemma \ref{lem:key}
for $K:=\iota(X_\alpha)$; here the proof uses the inequality
$(x_0-\ell)/(m-\ell)\le\Phi_{\ell,m}(x)+d_\theta(x,\iota(X_\alpha))$, valid
for all $x\in\Sigma_m$, in which the second term can be omitted when
$\ell=m-1$.  Finally, if $\ell=m-1$ then $\iota$ is the tuning map
$\zeta_m$ of Definition \ref{def:tuning}, so $Y(\iota(\bfa))$ is the tuned
Sturmian beta-shift $Y_{m,\alpha}$ of Definition \ref{def:tuned}, and the
failure of TPO for it is also a special case of Theorem \ref{thm:tuned}.
\end{rem}

\subsection{Proofs of Theorem \ref{thm:F} and Corollary
\ref{cor:G}}\label{subsec:transfer}

We now prove Theorem \ref{thm:F}, and deduce Corollary \ref{cor:G}.

Until the end of the proof of Theorem \ref{thm:F}, let $\beta$, $\gamma$ and
$\theta$ be as in Section~\ref{subsec:coding}: thus $\beta>1$ is not a
simple beta-number, $\gamma\in(0,1]$, $\theta=\beta^{-\gamma}$, and
$S_\beta$ is equipped with the metric $d_\theta$.  By \eqref{eq:pullback-Lip}
and (C1), $g\mapsto g\circ h_\beta$ is an injective bounded linear map from
$C^{0,\gamma}(I)$ to $\lip(S_\beta,d_\theta)$, and by (C2) the maximizing
measures of $g$ for $U_\beta$ are the images under $h_\beta$ of those of
$g\circ h_\beta$.  The image of this map lies in the closed subspace of
functions $f$ with $f(c0^\infty)=f((c-1)d_\beta(1))$ for all integers
$1\le c<\beta$, as
$h_\beta(c0^\infty)=c/\beta=h_\beta((c-1)d_\beta(1))$, and this subspace has
empty interior.  (In the Sturmian case, $0\bfa\in X_\alpha$, as
$\sigma(X_\alpha)=X_\alpha$ and $1\bfa>_\lex\bfa=\max_\lex X_\alpha$, so
$F(0\bfa)=0$, whereas $F(10^\infty)\ge1-\theta$; so all such $f$ satisfy
$|f-F|_{\lip(Y_\alpha)}\ge1-\theta>C_\theta^{-1}$, and none lies in the open
set $U$ of Theorem \ref{thm:A}.)  For this reason the proof of Theorem
\ref{thm:F} relies on the extension estimate \eqref{eq:extension} below.

\begin{proof}[Proof of Theorem \ref{thm:F}]
We write $\|\cdot\|_\lip$ and $|\cdot|_\lip$ for the norm and seminorm of
$\lip(S_\beta,d_\theta)$, use the notation of Lemma \ref{lem:key}, and for
$f\in C(S_\beta)$ let $Q(f)$, respectively $Q_K(f)$, be the maximum of $\int
f\,d\mu$ over all $\mu\in\m(S_\beta,\sigma)$, respectively over those with
$\mu(K)=1$.  By (C2) and Lemma \ref{lem:key}, $H_\beta(\mu)(\Lambda)=1$ if
and only if $\mu(K)=1$; so (a) says that $Q(f)=Q_K(f)$ for $f\in\cV$, and (b)
that $Q(g\circ h_\beta)=Q_K(g\circ h_\beta)$ for $g\in\cV'$.

For $u\in\lip(S_\beta,d_\theta)$, Lemma \ref{lem:key} shows that
$u\circ(h_\beta|_K)^{-1}$ is $\gamma$-H\"older on $\Lambda$ with constant
$\beta^{\gamma(N+1)}|u|_\lip$, so McShane's extension theorem \cite{McShane}
for the metric $|s-t|^\gamma$, followed by truncation at $\pm\|u\|_{C(S_\beta)}$,
gives $\bar u\in C^{0,\gamma}(I)$ with $\bar u\circ h_\beta=u$ on $K$,
$|\bar u|_\gamma\le\beta^{\gamma(N+1)}|u|_\lip$ and
$\|\bar u\|_{C(I)}\le\|u\|_{C(S_\beta)}$, so that
$\|\bar u\|_\gamma\le\beta^{\gamma(N+1)}\|u\|_\lip$.  Comparing $x\in S_\beta$
with some $z\in K$ satisfying $d_\theta(x,z)=\Psi_K(x)$, and using
\eqref{eq:pullback-Lip}, we obtain
\begin{equation}\label{eq:extension}
  |u-\bar u\circ h_\beta|\le\bigl(|u|_\lip+|\bar u|_\gamma\bigr)
  \Psi_K\le\bigl(1+\beta^{\gamma(N+1)}\bigr)|u|_\lip\Psi_K\qquad\text{on }S_\beta .
\end{equation}

(a)$\Rightarrow$(b).  Let $f\in\cV$ and $\varepsilon>0$ be such that $\cV$
contains all $g$ with $\|g-f\|_\lip<\varepsilon$.  With $\bar f$ as above,
let $\varphi:=\bar f-M\rho_\Lambda$, where
$M:=(1+\beta^{\gamma(N+1)})|f|_\lip C_K+1$, and
let $\cV'$ be the set of $\psi\in C^{0,\gamma}(I)$ with
$\|\psi-\varphi\|_\gamma<\varepsilon$.  Let $\psi\in\cV'$.  By
\eqref{eq:pullback-Lip}, $g:=f+(\psi-\varphi)\circ h_\beta$ satisfies
$\|g-f\|_\lip<\varepsilon$, so $Q(g)=Q_K(g)$; and $\psi\circ h_\beta=g$ on
$K$, so $Q_K(\psi\circ h_\beta)=Q(g)$.  On $S_\beta$, \eqref{eq:extension}
gives $\psi\circ h_\beta=g+(\bar f\circ h_\beta-f)-M\rho_\Lambda\circ
h_\beta\le g+(1+\beta^{\gamma(N+1)})|f|_\lip\Psi_K-M\rho_\Lambda\circ h_\beta$, so by
\eqref{eq:key} all $\nu\in\m(S_\beta,\sigma)$ satisfy
$\int\psi\circ h_\beta\,d\nu\le Q_K(\psi\circ h_\beta)-\int\rho_\Lambda\,dH_\beta(\nu)$.
Hence, for all $\psi\in\cV'$, all members of $\Mmax(U_\beta,\psi)$ give full
measure to $\Lambda$; in particular (b) holds.

(b)$\Rightarrow$(a).  Let $\varphi\in\cV'$ and $r>0$ be such that $\cV'$
contains all $\psi$ with $\|\psi-\varphi\|_\gamma<r$.  Let
$f:=\varphi\circ h_\beta-\Psi_K\in\lip(S_\beta,d_\theta)$, let $\cV$ be the set
of $g\in\lip(S_\beta,d_\theta)$ with $\|g-f\|_\lip<\varepsilon:=\min\{\beta^{-\gamma(N+1)}r,
(2+2\beta^{\gamma(N+1)})^{-1}\}$, and let $g\in\cV$.  With $u:=g-f$ and $\bar u$ as above,
$\psi:=\varphi+\bar u$ satisfies $\|\psi-\varphi\|_\gamma<r$, so $Q(\psi\circ
h_\beta)=Q_K(\psi\circ h_\beta)$; and $g=\psi\circ h_\beta$ on $K$, so
$Q_K(g)=Q(\psi\circ h_\beta)$.  
On $S_\beta$, \eqref{eq:extension} gives
$g=\psi\circ h_\beta-\Psi_K+(u-\bar u\circ h_\beta)\le\psi\circ
h_\beta-\frac12\Psi_K$, so all $\nu\in\m(S_\beta,\sigma)$ satisfy $\int
g\,d\nu\le Q_K(g)-\frac12\int\Psi_K\,d\nu$.  Hence, for all $g\in\cV$, all
maximizing measures of $g$ give full measure to $K$; in particular (a)
holds.

The two implications show that if (a) or (b) holds, then $\cV$ and $\cV'$
can be chosen such that all maximizing measures of all $f\in\cV$ give full
measure to $K$, and all members of $\Mmax(U_\beta,g)=\Mmax(T_\beta,g)$,
$g\in\cV'$, give full measure to $\Lambda$, as claimed.  Finally, if $K$
contains no periodic orbit then no periodic orbit measure gives full measure
to $K$, nor, by (C2) and Lemma \ref{lem:key}, to $\Lambda$; so, with $\cV$
and $\cV'$ chosen in this way, no $f\in\cV$ has the periodic optimization
property, and neither does $(U_\beta,g)$, nor $(T_\beta,g)$, for any
$g\in\cV'$.
\end{proof}

Corollary \ref{cor:G} is deduced from Theorem \ref{thm:F}, the structural
results of \cite{HHJL} and \cite{HJXZ}, and Theorem \ref{thm:nonemergent},
whose proof in Section \ref{subsec:nonemergent} does not use the results of
this section.
  Theorem \ref{thm:nonemergent} is needed for non-emergent $\beta$,
as Theorem \ref{thm:F} cannot be applied to all of them with
$K=\partial_MS_\beta$: for Lebesgue almost every $\beta$ the orbit of
$d_\beta(1)$ is dense in $S_\beta$ \cite[Thm.~E]{Sch97}, so that
$\partial_MS_\beta=S_\beta$, which contains the fixed point $0^\infty$,
whereas Theorem \ref{thm:F} requires that some word $0^N$ does not occur in any
element of $K$.

\begin{proof}[Proof of Corollary \ref{cor:G}]
Set $\theta:=\beta^{-\gamma}$.  We first show that $S_\beta$ has TPO with
respect to $\lip(S_\beta,d_\theta)$ if and only if $U_\beta$ has TPO with
respect to $C^{0,\gamma}(I)$.  If $\beta$ is non-emergent then both have
TPO, by Theorem \ref{thm:nonemergent} and \cite[Thm.~J]{HHJL}.  If $\beta$
is a simple beta-number then $S_\beta$ is a shift of finite type, as
$d^*_\beta(1)$ is periodic (see e.g.~\cite{blanchard}), so has TPO
\cite[Lem.~9.4]{HJXZ}, and $U_\beta$ has TPO \cite[Thm.~I]{HHJL}.  As
beta-numbers which are not simple are non-emergent \cite[Cor.~5.4]{HHJL},
we may therefore assume that $\beta$ is emergent and not a beta-number.

Let $\omega_\beta$ denote the $\omega$-limit set of $d_\beta(1)$, which is
the Markov boundary of $S_\beta$ by Lemma \ref{lem:markov-boundary}.  As
$d_\beta(1)$ is the upper $\beta$-expansion of $1$
\cite[Prop.~3.9(i)]{HHJL}, its orbit closure is minimal
\cite[Prop.~5.3]{HHJL}, so is equal to $\omega_\beta$, and, being infinite,
contains no periodic orbit.  In particular $0^\infty\notin\omega_\beta$, so
by compactness some word $0^N$ occurs in no element of $\omega_\beta$, and
$K:=\omega_\beta$ satisfies the hypotheses of Theorem \ref{thm:F}.

Suppose that $U_\beta$ does not have TPO.  The set
$\mathrm{Lock}^\gamma(U_\beta)$ of functions with the locking property
\cite[\S 1.5]{HHJL} is open and consists of functions with the periodic
optimization property, so is not dense, and by \cite[Thm.~5.22]{HHJL} the
non-empty open set $\cV'$ of functions not in its closure lies in
$\mathrm{Crit}^\gamma(\beta)$.  By \cite[Lem.~5.10]{HHJL}, each $g\in\cV'$
has a maximizing measure $H_\beta(\mu)$ with $\mu$ quasi-generated by the
orbit of $d_\beta(1)$, that is, $\mu$ is a weak$^*$ limit point of the
measures $n^{-1}\sum_{i<n}\delta_{\sigma^id_\beta(1)}$, $n\in\N$
\cite[Def.~5.9]{HHJL}; so $\mu(\omega_\beta)=1$ and
$H_\beta(\mu)(h_\beta(\omega_\beta))=1$.  So Theorem \ref{thm:F}(b) holds
with $K=\omega_\beta$, and as $\omega_\beta$ contains no periodic orbit,
$S_\beta$ does not have TPO.

Conversely, suppose that $S_\beta$ does not have TPO.  Then, by
\cite[Cor.~10.1]{HJXZ}, the set of $f\in\lip(S_\beta,d_\theta)$ having a
maximizing measure $\mu$ with $\mu(\omega_\beta)=1$ has non-empty interior,
that is, Theorem \ref{thm:F}(a) holds with $K=\omega_\beta$; so $U_\beta$
does not have TPO with respect to $C^{0,\gamma}(I)$.

It remains to consider $T_\beta$.  If $\beta$ is not a simple beta-number,
then $\m(I,T_\beta)=\m(I,U_\beta)$ and $\mathrm{Per}(T_\beta)=
\mathrm{Per}(U_\beta)$ \cite[Prop.~3.12(iv)]{HHJL}, and
$\Mmax(T_\beta,g)=\Mmax(U_\beta,g)$ for all $g\in C(I)$, by (C2); so
$(T_\beta,g)$ has the periodic optimization property if and only if
$(U_\beta,g)$ does, and $T_\beta$ has TPO if and only if $U_\beta$ does.  If
$\beta$ is a simple beta-number, then $T_\beta$ does not have TPO with
respect to $C^{0,\gamma}(I)$, as some non-empty open subset of
$C^{0,\gamma}(I)$ consists of functions having no maximizing measure for
$T_\beta$ \cite[Rem.~5.21]{HHJL}.
\end{proof}

We conclude this section by comparing the obstructions to TPO for
$S_\beta$ and for $U_\beta$.

\begin{rem}[Obstructions to TPO for emergent parameters]
\label{rem:G-converse}
For emergent $\beta$ which are not beta-numbers, Theorem \ref{thm:F} with
$K=\partial_MS_\beta$, the orbit closure of $d_\beta(1)$, relates the
obstructions to TPO in the two settings.  Condition (a) says that $S_\beta$
is not fragile in the sense of \cite[Def.~10.2]{HJXZ}, and condition (b) says
that $\mathrm{Crit}^\gamma(\beta)$ has non-empty interior, since by
\cite[Lems.~3.13, 5.6, 5.10, 5.11]{HHJL} a function $g\in C^{0,\gamma}(I)$
belongs to $\mathrm{Crit}^\gamma(\beta)$ if and only if some
$\nu\in\Mmax(U_\beta,g)$ satisfies $\nu(h_\beta(\partial_MS_\beta))=1$.  So
$S_\beta$ is fragile if and only if $\mathrm{Crit}^\gamma(\beta)$ has
empty interior, and as $\partial_MS_\beta$ contains no periodic orbit, each
of these two conditions is equivalent to TPO in the corresponding setting
\cite[Cor.~10.1]{HJXZ}, \cite[Thm.~5.22]{HHJL}.  In the Sturmian case,
Theorem \ref{thm:A} shows directly that $Y_\alpha$ is not fragile (Remark
\ref{rem:markov-boundary}), and Theorems \ref{thm:A} and \ref{thm:F},
together with the unique ergodicity of $X_\alpha$, give Theorem
\ref{thm:E}, apart from the explicit form of the open set.
\end{rem}

\section{Robustly maximizing measures, and parameters without TPO}\label{sec:dimension}

In this section we prove Theorems \ref{thm:nonemergent}, \ref{thm:robust},
\ref{thm:dimension} and \ref{thm:TZO}, in this order.  Whenever
$\beta>1$ and $\theta\in(0,1)$ are given, the beta-shift $S_\beta$ is
equipped with the metric $d_\theta$, and we write $\|\cdot\|_\lip$ and
$|\cdot|_\lip$ for the norm and seminorm of $\lip(S_\beta,d_\theta)$.

\subsection{Sub-actions on beta-shifts}\label{subsec:subactions}

The proofs of Theorems \ref{thm:nonemergent} and \ref{thm:robust} are
based on the following lemma, which constructs a bounded sub-action.

\begin{lem}\label{lem:subaction}
Let $\beta>1$ and $\theta\in(0,1)$, let $g\in\lip(S_\beta,d_\theta)$
and $\mu\in\Mmax(g)$, let $q:=g-\int g\,d\mu$, and define
$u:S_\beta\to[0,\infty]$ by
\begin{equation}\label{eq:subaction}
  u(x):=\sup\bigl\{S_jq(z):j\ge0,\ z\in S_\beta,\ \sigma^jz=x\bigr\} .
\end{equation}
Then $u$ is a Borel function, and:
\begin{enumerate}
\item[(i)] $0\le u\le\bigl(1+2\theta(1-\theta)^{-1}\bigr)|g|_\lip$;
\item[(ii)] $q\le u\circ\sigma-u$, with equality $\mu$-almost everywhere;
\item[(iii)] $u(y)-u(x)\le\theta(1-\theta)^{-1}|g|_\lip\,d_\theta(x,y)$ for
all $x,y\in S_\beta$ with $x\le_\lex y$.
\end{enumerate}
\end{lem}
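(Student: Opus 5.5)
The plan is to establish the one-sided inequality (iii) first, then the bound (i), and finally the sub-action property (ii) and Borel measurability. All three rest on one combinatorial fact about $S_\beta$, with $S_\beta=Y(d^*_\beta(1))$ as in Remark \ref{rem:general}. Suppose $w$ is a word with $wy\in S_\beta$, and $x\le_\lex y$ with $x\in S_\beta$. Then $wx\in S_\beta$. Indeed, for each suffix $s$ of $w$ we have $sx\le_\lex sy\le_\lex d^*_\beta(1)$, and the shifts $\sigma^j(wx)$ with $j\ge|w|$ are shifts of $x$. We also use the Lipschitz estimate $|S_jq(wx)-S_jq(wx')|\le|g|_\lip\sum_{h<j}\theta^{j-h}d_\theta(x,x')\le\theta(1-\theta)^{-1}|g|_\lip\,d_\theta(x,x')$. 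This holds because $d_\theta(\sigma^h(wx),\sigma^h(wx'))=\theta^{j-h}d_\theta(x,x')$ when $|w|=j$.

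For (iii), let $x\le_\lex y$, and let $z=wy$ with $|w|=j$ be any competitor in the supremum defining $u(y)$. By the fact above, $wx\in S_\beta$ is a competitor for $u(x)$, and the Lipschitz estimate gives $S_jq(wx)\ge S_jq(wy)-\theta(1-\theta)^{-1}|g|_\lip d_\theta(x,y)$. Taking suprema gives (iii), read in $[0,\infty]$ until (i) is known.

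For (i), the lower bound $u\ge0$ comes from taking $j=0$. For the upper bound, fix $z=wx$ with $|w|=j$. Since $0^\infty\le_\lex x$, the fact above gives $w0^\infty\in S_\beta$, and the Lipschitz estimate gives $S_jq(wx)\le S_jq(w0^\infty)+\theta(1-\theta)^{-1}|g|_\lip$.

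If $w=0^j$, then $S_jq(0^\infty)=jq(0^\infty)\le0$. This is because the Dirac measure at the fixed point $0^\infty$ is invariant and $\mu$ is maximizing.

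Otherwise write $w=vb0^{m}$ with $b\ge1$ and $|v|=i$. Then $S_jq(w0^\infty)=S_{i+1}q(vb0^\infty)+mq(0^\infty)$, and the second term is $\le0$ as before. Now compare $vb0^\infty$ with the periodic point $p:=(v(b-1))^\infty$. I expect the main obstacle to be checking that $p\in S_\beta$. The plan is to verify $\sigma^k p\le_\lex d^*_\beta(1)$ via the graph presentation of Lemma \ref{lem:presentation}. Along the path labelled $vb0^\infty$, replacing the final digit $b$ by $b-1$ either returns to vertex $0$, or continues along $d^*_\beta(1)$ from a vertex from which a lower label is admissible. In either case the path can be closed up and repeated, using $\sigma^k d^*_\beta(1)\le_\lex d^*_\beta(1)$.

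Granting $p\in S_\beta$, the periodic orbit measure of $p$ is invariant, so $S_{i+1}q(p)\le0$. The points $\sigma^h(vb0^\infty)$ and $\sigma^h p$ agree up to coordinate $i-h$, so their distance is at most $\theta^{i-h}$ for $h\le i$. Hence $S_{i+1}q(vb0^\infty)\le|g|_\lip\sum_{h\le i}\theta^{i-h}\le(1-\theta)^{-1}|g|_\lip$. Adding the two contributions gives $u\le\bigl((1-\theta)^{-1}+\theta(1-\theta)^{-1}\bigr)|g|_\lip=(1+2\theta(1-\theta)^{-1})|g|_\lip$.

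For (ii), if $\sigma^jz=x$ then $\sigma^{j+1}z=\sigma x$ and $S_{j+1}q(z)=S_jq(z)+q(x)$. Taking suprema gives $u(\sigma x)\ge u(x)+q(x)$. Hence $e:=u\circ\sigma-u-q$ is non-negative. Since $u$ is bounded and Borel, $\int(u\circ\sigma-u)\,d\mu=0$, and $\int q\,d\mu=0$, so $\int e\,d\mu=0$ and $e=0$ $\mu$-almost everywhere.

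Borel measurability is routine. We have $u=\sup_j u_j$, where $u_j(x)$ is the maximum over the finitely many words $w$ of length $j$ of $S_jq(wx)$, taken over the set of $x$ with $wx\in S_\beta$. Each such set is closed, so each $u_j$ is a finite maximum of continuous functions on closed sets, extended by $-\infty$ elsewhere. Each $u_j$ is therefore Borel, and so is the countable supremum $u$.
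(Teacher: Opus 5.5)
Your proof is correct and follows the paper's argument essentially step for step. Part (iii) uses the extension property ($wy\in S_\beta$ and $x\le_\lex y$ imply $wx\in S_\beta$) with the geometric Lipschitz sum; part (i) compares $vb0^\infty$ with the periodic point $(v(b-1))^\infty$; part (ii) is the integral argument. The only differences are cosmetic: you get measurability from a finite maximum over words, and you compare with $w0^\infty$ directly rather than bounding $u(0^\infty)$ and then invoking (iii).

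In the step you flagged, your dichotomy is unnecessary. Let $\rho$ be the vertex reached after reading $v$ and let $e_\rho$ be the digit of $d^*_\beta(1)$ at index $\rho$. The edge labelled $b$ leaving $\rho$ forces $b\le e_\rho$, so the edge labelled $b-1<e_\rho$ always returns to the vertex $0$. Hence $v(b-1)$ is a closed path at $0$. (The paper instead checks $v(b-1)y\in S_\beta$ directly in the lexicographic order.)
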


\begin{proof}
We first show that $u$ is a Borel function satisfying
\eqref{eq:subaction-ineq} below, and then prove (iii), (i) and (ii), in this
order: the proof of (i) uses (iii), and the equality in (ii) uses (i).

For $x\in S_\beta$ and $j\ge0$ let $\sigma^{-j}(x):=\{z\in S_\beta:\sigma^jz=x\}$.
As the first symbol of $d^*_\beta(1)$ is non-zero, the point $0^jx$ belongs to
$S_\beta$, and hence to $\sigma^{-j}(x)$; and $\sigma^{-j}(x)$ is compact, so
$u_j(x):=\max\{S_jq(z):z\in\sigma^{-j}(x)\}$ is well defined, and
$u=\sup_{j\ge0}u_j$.  If $x_r\to x$ in $S_\beta$, and $z_r\in\sigma^{-j}(x_r)$
satisfies $S_jq(z_r)=u_j(x_r)$ for all $r$, then all limit points of the
sequence $(z_r)$ belong to $\sigma^{-j}(x)$, so $\limsup_ru_j(x_r)\le u_j(x)$,
by continuity of $S_jq$.  Thus $u_j$ is upper semicontinuous, and $u$ is a
Borel function.  If $z\in\sigma^{-j}(x)$ then $z\in\sigma^{-j-1}(\sigma x)$,
so $u(\sigma x)\ge S_{j+1}q(z)=S_jq(z)+q(x)$, and taking the supremum over
$j$ and $z$ gives
\begin{equation}\label{eq:subaction-ineq}
  u(x)+q(x)\le u(\sigma x)\qquad\text{for all }x\in S_\beta .
\end{equation}

(iii) Let $x,y\in S_\beta$ with $x\le_\lex y$, let $j\ge1$ and
$z\in\sigma^{-j}(y)$, and let $w:=z_{[0,j)}$, so that $z=wy$.  For
$0\le i<j$ we have $\sigma^i(wx)\le_\lex\sigma^i(wy)\le_\lex d^*_\beta(1)$,
while $\sigma^i(wx)=\sigma^{i-j}x$ for $i\ge j$, so $wx\in\sigma^{-j}(x)$.
Moreover $d_\theta(\sigma^i(wx),\sigma^i(wy))=\theta^{j-i}d_\theta(x,y)$ for
$0\le i<j$, so
\[
  S_jq(z)\le S_jq(wx)+|g|_\lip\,d_\theta(x,y)\sum_{i=0}^{j-1}\theta^{j-i}
  \le u(x)+\theta(1-\theta)^{-1}|g|_\lip\,d_\theta(x,y) .
\]
This inequality also holds for $j=0$, as $u(x)\ge0$, and taking the
supremum over $j$ and $z$ gives (iii).

(i) The lower bound holds since $S_0q=0$.  Let
$c:=\theta(1-\theta)^{-1}|g|_\lip$.  We first show that
$u(0^\infty)\le|g|_\lip+c$.  Let $j\ge1$ and $z\in\sigma^{-j}(0^\infty)$.  If
$z=0^\infty$ then $S_jq(z)=j\,q(0^\infty)\le0$, as $\delta_{0^\infty}$ is
invariant.  Otherwise let $k$ be the largest integer such that $z_{k-1}\ne0$,
so that $k\le j$ and $z=vb0^\infty$ for a word $v$ of length $k-1$ and a
digit $b\ge1$.  For all
$y\in S_\beta$ the point $v(b-1)y$ belongs to $S_\beta$: for $0\le i<k-1$ we
have $\sigma^i(v(b-1)y)<_\lex\sigma^i(vb0^\infty)\le_\lex d^*_\beta(1)$, while
$\sigma^{k-1}(v(b-1)y)=(b-1)y<_\lex d^*_\beta(1)$, as
$b0^\infty=\sigma^{k-1}z\le_\lex d^*_\beta(1)$, and
$\sigma^i(v(b-1)y)=\sigma^{i-k}y$ for $i\ge k$.  So $S_\beta$
contains the points $(v(b-1))^n0^\infty$, $n\ge1$, and hence, being closed, the
periodic point $p:=(v(b-1))^\infty$.  The measure
$k^{-1}\sum_{i=0}^{k-1}\delta_{\sigma^ip}$ belongs to $\m(S_\beta,\sigma)$,
and $\int q\,d\nu\le0$ for all $\nu\in\m(S_\beta,\sigma)$, so $S_kq(p)\le0$.
For $0\le i<k$ the points $\sigma^iz$ and $\sigma^ip$ have the same first
$k-1-i$ symbols, so, as $q(0^\infty)\le0$,
\[
  S_jq(z)=S_kq(z)+(j-k)\,q(0^\infty)
  \le S_kq(p)+|g|_\lip\sum_{i=0}^{k-1}\theta^{k-1-i}
  \le(1-\theta)^{-1}|g|_\lip=|g|_\lip+c .
\]
As $S_0q=0$, it follows that $u(0^\infty)\le|g|_\lip+c$.  Finally, all
$x\in S_\beta$ satisfy $0^\infty\le_\lex x$ and $d_\theta(0^\infty,x)\le1$,
so (iii) gives $u(x)\le u(0^\infty)+c\le|g|_\lip+2c$, which is the upper
bound in (i).

(ii) The inequality is \eqref{eq:subaction-ineq}.  So $u\circ\sigma-u-q$ is
a non-negative Borel function, which is bounded by (i), and whose integral
with respect to $\mu$ vanishes, as $\mu$ is invariant and $\int q\,d\mu=0$;
hence it vanishes $\mu$-almost everywhere.
\end{proof}

Lemma~\ref{lem:subaction} says that $u$ is a bounded, though not
necessarily continuous, \emph{sub-action} for $g$ (see e.g.~\cite{CLT01,Jen19}).
For beta-shifts with the specification property, a bound of the form given in
Lemma~\ref{lem:subaction}(i)
also follows from the proof of \cite[Prop.~10.9]{HJXZ}, but the above proof
of (i) requires no such hypothesis.  Part (i) is a symbolic version of
the upper bound obtained in the proof of \cite[Prop.~4.7(i)]{HHJL} for
beta-transformations,
while part (iii) is a symbolic version of the one-sided
inequality of \cite[Lem.~4.5]{HHJL}, and is based on the same property of
inverse branches (cf.~\cite[Rem.~4.11]{HHJL}): a word which can precede a
point of $S_\beta$ can also precede all lexicographically smaller points of
$S_\beta$.

\subsection{Proof of Theorem \ref{thm:nonemergent}}\label{subsec:nonemergent}

Using Lemma \ref{lem:subaction}, we are now able to prove Theorem
\ref{thm:nonemergent}.

\begin{proof}[Proof of Theorem \ref{thm:nonemergent}]
Let $\theta\in(0,1)$, and let $\mathcal Z$ be the set of beta-shifts
$S_{\beta'}$, where $\beta'\in(1,\beta)$ is a simple beta-number.  Each
$S_{\beta'}\in\mathcal Z$ is a subshift of $S_\beta$, 
as $d^*_{\beta'}(1)\le_\lex d^*_\beta(1)$ \cite[Prop.~3.9(xiii)]{HHJL}, and is a shift of finite type, as
$d^*_{\beta'}(1)$ is periodic (see e.g.~\cite{blanchard}), so has TPO
\cite[Lem.~9.4]{HJXZ}.  As $\mathcal Z$ is countable, it suffices, by
\cite[Thm.~6.13]{HJXZ}, to show that $\mathcal Z$ is a maximizable family in
the sense of \cite[Def.~5.5]{HJXZ}, that is, that each
$g\in\lip(S_\beta,d_\theta)$ has a maximizing measure supported by some
member of $\mathcal Z$.

Let $g\in\lip(S_\beta,d_\theta)$ and $\mu\in\Mmax(g)$, and let $q$ and $u$
be as in Lemma \ref{lem:subaction}.  If $n\ge1$ and $x\in S_\beta$ then
$x\in\sigma^{-n}(\sigma^nx)$, so $S_nq(x)\le u(\sigma^nx)$, and Lemma
\ref{lem:subaction}(i) shows that the Birkhoff sums $S_nq$, $n\ge1$, are
bounded above uniformly.  As $\int q\,d\nu\le0=\int q\,d\mu$ for all
$\nu\in\m(S_\beta,\sigma)$, a theorem of Morris \cite[Thm.~1]{Mor07} (see
also \cite[Rem.~4.6, Lem.~4.7]{HJXZ}) shows that $g$ has the
\emph{subordination property}: all $\nu\in\m(S_\beta,\sigma)$ whose support
is contained in that of $\mu$ belong to $\Mmax(g)$.  Let $K$ be a minimal
subshift contained in the support of $\mu$, and let
$\nu\in\m(S_\beta,\sigma)$ satisfy $\nu(K)=1$, so that $\nu\in\Mmax(g)$.

As $\beta$ is non-emergent, the orbit closure of $d^*_\beta(1)$ is not
minimal \cite[Prop.~5.3]{HHJL}, so $d^*_\beta(1)\notin K$, since otherwise
the minimality of $K$ would imply that this orbit closure is $K$.  As
$K\subseteq S_\beta$, it follows that $\mathbf k:=\max_\lex K$ satisfies
$\mathbf k<_\lex d^*_\beta(1)$.  
Since $d^*_{\beta'}(1)$ increases
to $d^*_\beta(1)$ as $\beta'$ increases to $\beta$
\cite[Prop.~3.9(xiii),(xiv)]{HHJL}, and simple beta-numbers are dense in
$(1,\infty)$ \cite{Par60}, there is a simple beta-number $\beta'<\beta$ with
$\mathbf k\le_\lex d^*_{\beta'}(1)$.
For all $x\in K$ and $j\ge0$ we have
$\sigma^jx\in K$, so $\sigma^jx\le_\lex\mathbf k\le_\lex d^*_{\beta'}(1)$;
hence $K\subseteq S_{\beta'}$, and the maximizing measure $\nu$ is supported
by $S_{\beta'}\in\mathcal Z$.
\end{proof}

\subsection{Proof of Theorem \ref{thm:robust}}\label{subsec:robust}

Lemma \ref{lem:subaction} is also the main ingredient in the proof of
Theorem \ref{thm:robust}.

\begin{proof}[Proof of Theorem \ref{thm:robust}]
The support $\Omega:=\mathrm{supp}(\mu)$ is a subshift, since $\mu$ is $\sigma$-invariant.
 By
hypothesis there are $f\in\lip(S_\beta,d_\theta)$ and $r>0$ such that
$\mu\in\Mmax(f+g)$ for all $g\in\lip(S_\beta,d_\theta)$ with
$\|g\|_\lip<r$.  Let $\delta:=r/2$.  If $g\in\lip(S_\beta,d_\theta)$
satisfies $|g|_\lip<\delta$, and $x_0\in S_\beta$, then
$\|g-g(x_0)\|_\lip\le2|g|_\lip<r$, as $S_\beta$ has $d_\theta$-diameter at
most $1$; since adding a constant does not change maximizing measures, it
follows that
\begin{equation}\label{eq:robust}
  \mu\in\Mmax(f+g)\qquad\text{for all }g\in\lip(S_\beta,d_\theta)\text{
  with }|g|_\lip<\delta .
\end{equation}
Let $L:=|f|_\lip+\delta$ and $M:=\bigl(1+2\theta(1-\theta)^{-1}\bigr)L$, and
fix $\ell\in\N$ such that $\theta^{\ell+1}(1-\theta)^{-1}L\le\delta/16$.

Let $k\in\N$, and set $\eta_k:=\delta\theta^k/2$ and
$\lambda_k:=\delta\theta^k/16$.  Let $\mathcal F_k$ be the set consisting of
$f$ and the functions $f_v:=f+\eta_k\ind_{[v]_{S_\beta}}$,
$v\in\w_k(\Omega)$.  If $\ind_{[v]_{S_\beta}}$ takes different values at
$x,y\in S_\beta$, then $x$ and $y$ differ in one of their first $k$
symbols, so $d_\theta(x,y)\ge\theta^{k-1}$; hence
$|\eta_k\ind_{[v]_{S_\beta}}|_\lip\le\eta_k\theta^{1-k}=\delta\theta/2<\delta$,
and \eqref{eq:robust} gives $\mu\in\Mmax(g)$ for all $g\in\mathcal F_k$,
while $|g|_\lip\le L$.  For $g\in\mathcal F_k$ let $q_g:=g-\int g\,d\mu$,
and let $u_g$ be the function \eqref{eq:subaction} for $g$, so that
$0\le u_g\le M$ by Lemma \ref{lem:subaction}(i).  Let $\mathcal Z_k$ be the
set of cylinder sets $[w]_\Omega$, $w\in\w_{k+\ell}(\Omega)$, which is a
partition of $\Omega$.  If $Z\in\mathcal Z_k$ and $x,y\in Z$, then
$d_\theta(x,y)\le\theta^{k+\ell}$, so Lemma \ref{lem:subaction}(iii) gives
\begin{equation}\label{eq:almost-monotone}
  u_g(y)-u_g(x)\le\lambda_k\qquad\text{for all }g\in\mathcal F_k,\
  Z\in\mathcal Z_k\text{ and }x,y\in Z\text{ with }x\le_\lex y .
\end{equation}

For $Z\in\mathcal Z_k$, $g\in\mathcal F_k$ and $x\in Z$, let
$b_{g,Z}(x):=\inf\{u_g(z):z\in Z,\ z\le_\lex x\}$.  Then $b_{g,Z}$ is
non-increasing on $Z$ with respect to $\le_\lex$, and by
\eqref{eq:almost-monotone},
\begin{equation}\label{eq:b-approx}
  0\le u_g-b_{g,Z}\le\lambda_k\quad\text{and}\quad0\le b_{g,Z}\le M
  \qquad\text{on }Z .
\end{equation}
For $i\in\N_0$ let $Z^i$ be the set of $x\in Z$ such that
$\sum_{g\in\mathcal F_k}\lfloor b_{g,Z}(x)/\lambda_k\rfloor=i$, and let
$\mathcal D_k$ be the set of all non-empty $Z^i$ with $Z\in\mathcal Z_k$ and
$i\in\N_0$, which is a partition of $\Omega$.  As
$|\mathcal F_k|\le|\w_k(\Omega)|+1\le2\lceil\beta\rceil^k$, the set $Z^i$ is empty for
$i>2\lceil\beta\rceil^kM/\lambda_k$, and as $|\mathcal Z_k|\le\lceil\beta\rceil^{k+\ell}$, it follows
that
\begin{equation}\label{eq:partition-count}
  |\mathcal D_k|\le\lceil\beta\rceil^{k+\ell}\bigl(1+2\lceil\beta\rceil^kM/\lambda_k\bigr)
  \le\Gamma\,(\lceil\beta\rceil^2/\theta)^k ,\qquad\text{where }\Gamma:=\lceil\beta\rceil^\ell(1+32M/\delta) .
\end{equation}
If $x,y\in Z^i$, say with $x\le_\lex y$, then
$\lfloor b_{g,Z}(y)/\lambda_k\rfloor\le\lfloor b_{g,Z}(x)/\lambda_k\rfloor$
for all $g\in\mathcal F_k$, and the sums of these integers over
$g\in\mathcal F_k$ are equal, so $0\le b_{g,Z}(x)-b_{g,Z}(y)<\lambda_k$ for
all $g\in\mathcal F_k$.  Together with \eqref{eq:b-approx} this gives
\begin{equation}\label{eq:small-oscillation}
  |u_g(x)-u_g(y)|<2\lambda_k\qquad\text{for all }g\in\mathcal F_k,\
  P\in\mathcal D_k\text{ and }x,y\in P .
\end{equation}

Now let $n\ge k$, and let $p:=n-k+1$.  By Lemma \ref{lem:subaction}(ii) and
the invariance of $\mu$, the set $\Omega_{k,n}$ of those $x\in\Omega$ such
that $u_g(\sigma^{j+1}x)-u_g(\sigma^jx)=q_g(\sigma^jx)$ for all
$g\in\mathcal F_k$ and $0\le j<p$ satisfies $\mu(\Omega_{k,n})=1$.  For
$w\in\w_n(\Omega)$ the set $[w]_\Omega$ is non-empty and open in $\Omega$,
so $\mu([w]_\Omega)>0$, and we may choose a point
$x(w)\in[w]_\Omega\cap\Omega_{k,n}$; summing over $0\le j<p$ then gives
\begin{equation}\label{eq:cocycle}
  S_pq_g(x(w))=u_g(\sigma^px(w))-u_g(x(w))\qquad\text{for all
  }g\in\mathcal F_k .
\end{equation}
For $P,P'\in\mathcal D_k$ let $\w^{P,P'}$ be the set of $w\in\w_n(\Omega)$
such that $x(w)\in P$ and $\sigma^px(w)\in P'$, so that $\w_n(\Omega)$ is the
union of these sets.  Let $w,w'\in\w^{P,P'}$ and $v\in\w_k(\Omega)$.  As
$q_{f_v}-q_f=\eta_k\bigl(\ind_{[v]_{S_\beta}}-\mu([v]_{S_\beta})\bigr)$,
applying \eqref{eq:cocycle} to $f_v$ and to $f$, at $x(w)$ and at $x(w')$,
gives
\[
  \eta_k\bigl(S_p\ind_{[v]_{S_\beta}}(x(w))-S_p\ind_{[v]_{S_\beta}}(x(w'))\bigr)
  =\Delta_{f_v}-\Delta_f ,
\]
where $\Delta_g:=u_g(\sigma^px(w))-u_g(\sigma^px(w'))-u_g(x(w))+u_g(x(w'))$
satisfies $|\Delta_g|<4\lambda_k=\eta_k/2$, by
\eqref{eq:small-oscillation}.  So the integers
$S_p\ind_{[v]_{S_\beta}}(x(w))$ and $S_p\ind_{[v]_{S_\beta}}(x(w'))$ differ
by less than $1$, and are therefore equal.  As $x(w)_{[0,n)}=w$, the first
of these integers is the number of $j\in\{0,\ldots,n-k\}$ such that
$w_{[j,j+k)}=v$, and similarly for the second.  Thus all $w\in\w^{P,P'}$
have the same set $A$ of subwords of length $k$, and $|A|\le p\le n$.  Each
$w\in\w^{P,P'}$ is the concatenation of $\lfloor n/k\rfloor$ words belonging
to $A$ and a word of length less than $k$, so $|\w^{P,P'}|\le n^{n/k}\lceil\beta\rceil^k$,
and \eqref{eq:partition-count} gives
\[
  |\w_n(\Omega)|\le|\mathcal D_k|^2\,n^{n/k}\,\lceil\beta\rceil^k
  \le\Gamma^2(\lceil\beta\rceil^5/\theta^2)^k\,n^{n/k} .
\]
For $n\ge2$ we may take $k:=\lceil\sqrt{n\log n}\,\rceil$, so that $k\le n$
and $(n/k)\log n\le\sqrt{n\log n}$; as $\sqrt{n\log n}\ge1$ and $\Gamma\ge1$,
this gives $\log|\w_n(\Omega)|\le C\sqrt{n\log n}$, where
$C:=2\log\Gamma+2\log(\lceil\beta\rceil^5/\theta^2)+1$.  Finally, the topological entropy of
$\Omega$ is $\lim_{n\to\infty}n^{-1}\log|\w_n(\Omega)|$ (see e.g.~\cite[Ch.~7]{Wal82}),
which is therefore zero.
\end{proof}

\subsection{Proof of Theorem \ref{thm:dimension}}\label{subsec:dimension}

Theorem \ref{thm:dimension} is deduced from Theorems \ref{thm:nonemergent}
and \ref{thm:robust} by means of a dimension estimate (Lemma
\ref{lem:zerodim}).

For $t>1$ and a bounded sequence $x=(x_i)_{i\ge0}$ of non-negative integers
let $h_t(x):=\sum_{i\ge0}x_it^{-i-1}$, as in
Section \ref{subsec:transformations}, and note that
$h_\beta(d^*_\beta(1))=1$ for all $\beta>1$, as $d^*_\beta(1)$ is an
expansion of $1$ in base $\beta$.  We first show, using an estimate for the
parameters $\beta$ for which $d^*_\beta(1)$ has a given prefix
(cf.~\cite{Sch97}), that the parameters $\beta$ for which the orbit closure
of $d^*_\beta(1)$ has zero topological entropy form a set of packing
dimension zero.

\begin{lem}\label{lem:param-cylinder}
Let $B\ge2$ be an integer, let $b\in(1,B)$ and $n\in\N$, and let
$\beta,\beta'\in[b,B)$ be such that
$d^*_\beta(1)$ and $d^*_{\beta'}(1)$ have the same first $n$ symbols.  Then
$|\beta-\beta'|\le B^2(B-1)b^{-n}/(b-1)$.
\end{lem}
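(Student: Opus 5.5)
Assume without loss of generality that $\beta'<\beta$. The idea is to compare the two power series $h_\beta$ and $h_{\beta'}$ evaluated at the common expansion. Write $x:=d^*_\beta(1)$ and $x':=d^*_{\beta'}(1)$, and let $w$ denote their common prefix of length $n$. All digits of both sequences lie in $\{0,\ldots,B-1\}$, since the first digit is $\lceil\beta\rceil-1\le B-1$ and it dominates all the others. We have $h_\beta(x)=1=h_{\beta'}(x')$.

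The first step splits $x$ into its prefix and tail. Then
\[
  1=h_\beta(x)=\sum_{i<n}w_i\beta^{-i-1}+\beta^{-n}h_\beta(\sigma^nx),
\]
and likewise for $\beta'$. The tails satisfy $0\le h_\beta(\sigma^nx)\le1$ by (C1), and the same holds for $\beta'$. Set $P(t):=\sum_{i<n}w_it^{-i-1}$. Subtracting the two expansions gives
\[
  P(\beta')-P(\beta)=\beta^{-n}h_\beta(\sigma^nx)-\beta'^{-n}h_{\beta'}(\sigma^nx')\le\beta'^{-n}\le b^{-n}.
\]

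The second step is a lower bound for $P(\beta')-P(\beta)$ in terms of $\beta-\beta'$. Since $w_0=\lceil\beta\rceil-1\ge1$, the term $i=0$ already gives
\[
  P(\beta')-P(\beta)\ge w_0\Bigl(\frac1{\beta'}-\frac1\beta\Bigr)=w_0\frac{\beta-\beta'}{\beta\beta'}\ge\frac{\beta-\beta'}{B^2},
\]
because every term of $P$ is decreasing in $t$. Combining the two steps yields $\beta-\beta'\le B^2b^{-n}$. This is stronger than the claimed bound, since $(B-1)/(b-1)\ge1$.

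The extra factor $(B-1)/(b-1)$ in the statement suggests the authors argue differently, for instance by bounding the tails by $\sum_{i\ge n}(B-1)t^{-i-1}=(B-1)t^{-n}/(t-1)$ rather than by $1$. If (C1) is not available for $\beta$ a simple beta-number, I will use exactly that geometric estimate for the tails. The point to check is that the difference of tails is at most $(B-1)b^{-n}/(b-1)$, which holds because both tails are non-negative. Together with the lower bound $(\beta-\beta')/B^2$ from the first term, this gives the stated inequality exactly.

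The main obstacle is only this bookkeeping: the signs of the tails and the monotonicity of $P$. There are no real difficulties.
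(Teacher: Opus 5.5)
Your proof is correct and is essentially the paper's argument. The paper applies the mean value theorem to $\varphi(t):=h_t(d^*_\beta(1))$, whose derivative is at most $-B^{-2}$ because the first digit is at least $1$ (this matches your termwise monotonicity of $P$). It then bounds $|h_{\beta'}(d^*_{\beta'}(1))-h_{\beta'}(d^*_\beta(1))|$ by the geometric tail $(B-1)b^{-n}/(b-1)$, which is exactly your fallback estimate. Your primary variant is also valid for every $\beta>1$, simple beta-numbers included, because $h_\beta(\sigma^n d^*_\beta(1))\in[0,1]$ always holds; it gives the sharper bound $B^2b^{-n}$, though the lemma only needs the cruder one.
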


\begin{proof}
Let $\mathbf e:=d^*_\beta(1)$ and $\mathbf e':=d^*_{\beta'}(1)$, whose
terms lie in $\{0,1,\ldots,B-1\}$ as $\beta,\beta'<B$, and define
$\varphi:[b,B]\to\R$ by $\varphi(t):=h_t(\mathbf e)$.  As $e_0\ge1$, the
function $\varphi$ is differentiable, with $\varphi'(t)\le-t^{-2}\le-B^{-2}$.
As $\varphi(\beta)=1=h_{\beta'}(\mathbf e')$, and $\mathbf e$ and
$\mathbf e'$ have the same first $n$ symbols, the mean value theorem gives
\begin{align*}
  B^{-2}|\beta-\beta'|\le|\varphi(\beta)-\varphi(\beta')|
  &=|h_{\beta'}(\mathbf e')-h_{\beta'}(\mathbf e)|\\
  &\le\sum_{i\ge n}(B-1)(\beta')^{-i-1}=\frac{(B-1)(\beta')^{-n}}{\beta'-1}
  \le\frac{(B-1)b^{-n}}{b-1} .\qedhere
\end{align*}
\end{proof}

Lemma \ref{lem:param-cylinder} gives the following dimension estimate.

\begin{lem}\label{lem:zerodim}
Let $E\subseteq(1,\infty)$ be such that for all $\beta\in E$ the orbit closure
$\Omega_\beta$ of $d^*_\beta(1)$ has zero topological entropy.  Then $E$ has
packing dimension zero, and hence Hausdorff dimension zero.
\end{lem}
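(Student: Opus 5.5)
The plan is to cover $E$ by countably many pieces $E_{b,B}:=E\cap[b,B)$, with $B\ge2$ an integer and $b\in(1,B)$ rational. Packing dimension is countably stable, and upper box dimension dominates packing dimension, so it suffices to show that each $E_{b,B}$ has upper box-counting dimension zero. In fact, since $\dim_P E\le\inf\{\sup_i\overline{\dim}_B E_i: E\subseteq\bigcup_i E_i\}$, it is enough to cover each $E_{b,B}$ by countably many sets of upper box dimension zero. Hausdorff dimension zero then follows from $\dim_H\le\dim_P$.

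The natural decomposition uses the entropy hypothesis. For $\beta\in E$ the orbit closure $\Omega_\beta$ has zero entropy, so $\lim_n n^{-1}\log|\w_n(\Omega_\beta)|=0$. This limit is not uniform in $\beta$. So for each $\varepsilon>0$ and $m\in\N$, I will let $E_{b,B}^{\varepsilon,m}$ be the set of $\beta\in E_{b,B}$ such that $|\w_n(\Omega_\beta)|\le e^{\varepsilon n}$ for all $n\ge m$. For each fixed $\varepsilon$ these sets increase in $m$ and cover $E_{b,B}$. Hence $\dim_P E_{b,B}\le\sup_m\overline{\dim}_B E^{\varepsilon,m}_{b,B}$, and it suffices to show that this is at most $C\varepsilon$ with $C$ depending only on $b,B$; then we let $\varepsilon\to0$.

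The main step is counting the possible prefixes of length $n$ of $d^*_\beta(1)$ for $\beta\in E^{\varepsilon,m}_{b,B}$. A prefix $e_{[0,n)}$ is built from pieces of length $k$, each of which lies in $\w_k(\Omega_\beta)$. The difficulty is that the language $\w_k(\Omega_\beta)$ depends on $\beta$, so I cannot simply say that all pieces lie in one set of size $e^{\varepsilon k}$. My first approach is a two-scale count. Fix $k\ge m$ and $n=Nk$. The word $d^*_\beta(1)_{[0,n)}$ is a concatenation of $N$ words of length $k$, all belonging to $A:=\w_k(\Omega_\beta)$, where $A$ is a subset of $\{0,\dots,B-1\}^k$ of size at most $e^{\varepsilon k}$. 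The set $A$ itself can be chosen in at most $\binom{B^k}{\le e^{\varepsilon k}}\le B^{k e^{\varepsilon k}}$ ways, which is a constant once $k$ is fixed. Given $A$, the number of prefixes is at most $|A|^N\le e^{\varepsilon n}$. So the number of distinct length-$n$ prefixes is at most $c_k e^{\varepsilon n}$, where $c_k$ does not depend on $n$.

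By Lemma \ref{lem:param-cylinder}, all $\beta\in[b,B)$ sharing a given prefix of length $n$ lie in an interval of length $\le B^2(B-1)b^{-n}/(b-1)$. Thus $E^{\varepsilon,m}_{b,B}$ is covered by $c_ke^{\varepsilon n}$ intervals of length $O(b^{-n})$. This gives $\overline{\dim}_B\le\varepsilon/\log b$, with intermediate $n$ handled by monotonicity in $n$. Letting $\varepsilon\to0$ finishes the argument.

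The main obstacle, and the step to check carefully, is that $c_k$ must really be independent of $n$. This holds because $k$ is fixed with $k\ge m$ before $n\to\infty$, and it is exactly what makes the fixed language $A$ usable.
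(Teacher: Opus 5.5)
Your proof is correct and follows essentially the same route as the paper. In both, you split $E\cap[b,B)$ according to a scale $k$ at which $|\w_k(\Omega_\beta)|\le e^{\varepsilon k}$, count the length-$Nk$ prefixes of $d^*_\beta(1)$ as concatenations of words from one of finitely many possible languages (giving at most $c_ke^{\varepsilon Nk}$ prefixes), and apply Lemma \ref{lem:param-cylinder} to bound the upper box dimension by $\varepsilon/\log b$.

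The only difference is cosmetic. Your sets $E^{\varepsilon,m}_{b,B}$ require the bound for all $n\ge m$, but you only use it at the single scale $k$. The paper's sets $E_n$ impose the bound at just one scale, and that weaker condition is enough.
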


\begin{proof}
The packing dimension of a countable union of sets is the supremum of their
packing dimensions, the packing dimension of a bounded set is at most its
upper box dimension, and Hausdorff dimension is at most packing dimension
\cite[Ch.~3]{Fal03}.  So it suffices to show that $E\cap[b,B)$ has packing
dimension zero for all integers $B\ge2$ and all $b\in(1,B)$.
Fix such $B$ and $b$, and $\varepsilon>0$, and for $n\in\N$ let
$E_n$ be the set of $\beta\in E\cap[b,B)$ such that
$|\w_n(\Omega_\beta)|\le e^{\varepsilon n}$.
As the topological entropy of $\Omega_\beta$ is
$\lim_{n\to\infty}n^{-1}\log|\w_n(\Omega_\beta)|$ \cite[Ch.~7]{Wal82}, which
vanishes for $\beta\in E$, the set $E\cap[b,B)$ is the union of the sets
$E_n$, $n\in\N$,
so it suffices to show that each non-empty $E_n$ has
upper box dimension at most $\varepsilon/\log b$.  Fix $n\in\N$, and let
$j\in\N$.  For $\beta\in E_n$, the word $d^*_\beta(1)_{[0,jn)}$ is a
concatenation of $j$ words belonging to $\w_n(\Omega_\beta)$, which is one
of the $2^{B^n}$ subsets of $\{0,1,\ldots,B-1\}^n$, and has at most $e^{\varepsilon n}$
elements.  So at most $2^{B^n}e^{\varepsilon jn}$ words arise as
$d^*_\beta(1)_{[0,jn)}$ with $\beta\in E_n$, and by Lemma
\ref{lem:param-cylinder}, $E_n$ is covered by at most
$2^{B^n}e^{\varepsilon jn}$ sets of diameter at most
$\delta_j:=B^2(B-1)b^{-jn}/(b-1)$.  As $\delta_{j+1}=b^{-n}\delta_j$, the
upper box dimension of $E_n$ may be computed along the sequence
$(\delta_j)_{j\in\N}$ \cite[Ch.~3]{Fal03}, so it is at most
\[
  \lim_{j\to\infty}\frac{\log\bigl(2^{B^n}e^{\varepsilon jn}\bigr)}{-\log\delta_j}
  =\frac{\varepsilon}{\log b} .\qedhere
\]
\end{proof}

\begin{rem}
The statement about Hausdorff dimension in    
Lemma \ref{lem:zerodim} can also be deduced from a theorem of Bowen
\cite[Thm.~2]{Bow73}, applied to the shift on $\Sigma_{B-1}$: the set of
$x\in\Sigma_{B-1}$ whose orbit quasi-generates some invariant measure of zero
entropy has zero topological entropy in the sense of \cite{Bow73}, and so has
Hausdorff dimension zero with respect to all metrics $d_\theta$, while by
Lemma \ref{lem:param-cylinder} the map $d^*_\beta(1)\mapsto\beta$, defined for
$\beta\in[b,B)$, is Lipschitz with respect to $d_{b^{-1}}$, for all
integers $B\ge2$ and all $b\in(1,B)$, and Lipschitz maps do not increase
Hausdorff dimension.
\end{rem}

The following lemma collects the consequences, for emergent parameters, of
the structural results of \cite{HJXZ} and of Theorem \ref{thm:robust}.

\begin{lem}\label{lem:emergent}
Let $\beta>1$ be emergent and not a beta-number, let $\theta\in(0,1)$,
and let $\Omega_\beta$ be the orbit closure of $d_\beta(1)$.
\begin{enumerate}
\item[(i)] Some open dense subset of $\lip(S_\beta,d_\theta)$ consists of
functions which either have the periodic optimization property, or have a
maximizing measure supported by $\Omega_\beta$.
\item[(ii)] If $S_\beta$ does not have TPO with respect to
$\lip(S_\beta,d_\theta)$, then
$\Omega_\beta$ is uniquely ergodic and has zero topological entropy, and its
invariant measure is a maximizing measure for all functions in some
non-empty open subset of $\lip(S_\beta,d_\theta)$.
\end{enumerate}
\end{lem}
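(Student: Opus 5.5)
We will prove (i) first and then deduce (ii) from it, combining it with the structural theory of \cite{HJXZ} and with Theorem \ref{thm:robust}.

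\emph{Step 1: the Markov boundary and the structural theorem.} As $\beta$ is not a beta-number, $S_\beta$ is not sofic and $d_\beta(1)=d^*_\beta(1)$ is not eventually periodic. So by Lemma \ref{lem:markov-boundary}, $\partial_MS_\beta$ is the $\omega$-limit set of $d_\beta(1)$. Since $\beta$ is emergent, the orbit closure $\Omega_\beta$ is minimal \cite[Prop.~5.3]{HHJL}. This gives two things: $\Omega_\beta$ equals its own $\omega$-limit set, so $\partial_MS_\beta=\Omega_\beta$; and, being infinite and minimal, $\Omega_\beta$ contains no periodic orbit. Part (i) is then exactly \cite[Thm.~9.13]{HJXZ}, applied to $X=S_\beta$ with its Markov boundary $\partial_MS_\beta=\Omega_\beta$.

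\emph{Step 2: part (ii).} Suppose TPO fails. By \cite[Cor.~10.1]{HJXZ}, as used in the proof of Corollary \ref{cor:G}, the set of $f$ having a maximizing measure $\mu$ with $\mu(\Omega_\beta)=1$ has non-empty interior; call that interior $\cV$. The remaining points are these:
\begin{enumerate}
\item[(a)] unique ergodicity of $\Omega_\beta$;
\item[(b)] its invariant measure is maximizing on a non-empty open set;
\item[(c)] zero entropy.
\end{enumerate}

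For (a) I would not rely on minimality alone, since minimal subshifts need not be uniquely ergodic. Instead I would argue by perturbation, which is the main obstacle. Suppose $\nu_1\ne\nu_2$ are ergodic measures on $\Omega_\beta$. Pick $f\in\cV$ and a Lipschitz $h$ with $\int h\,d\nu_1\ne\int h\,d\nu_2$; we may take $h$ to be the indicator of a cylinder. For $f+th$ with $t$ small, the maximizing measures on $\Omega_\beta$ vary, and a generic choice makes the maximizing measure unique; this uniqueness holds on a dense $G_\delta$ set by the standard ergodic optimization fact. Each such unique maximizing measure is an ergodic measure on $\Omega_\beta$. Taking $f$ generic in $\cV$, the set $\{\nu\in\m(\Omega_\beta,\sigma): \nu \text{ is the unique maximizer of some } g\in\cV\}$ should be shown to be a single point. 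Here I would try Theorem \ref{thm:robust}'s sub-action mechanism, Lemma \ref{lem:subaction}, together with Morris's subordination property as in the proof of Theorem \ref{thm:nonemergent}. Subordination says that every invariant measure supported in $\mathrm{supp}(\mu)=\Omega_\beta$ is maximizing for $g$. So for every $g\in\cV$, all of $\m(\Omega_\beta,\sigma)$ consists of maximizing measures; here I use minimality, which gives $\mathrm{supp}(\mu)=\Omega_\beta$ for any invariant $\mu$ on $\Omega_\beta$. Hence $\int g\,d\nu_1=\int g\,d\nu_2$ for all $g$ in the open set $\cV$. Differences of elements of $\cV$ span $\lip(S_\beta,d_\theta)$, so $\nu_1=\nu_2$. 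This proves (a) cleanly.

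The same argument shows that the unique invariant measure $\mu_\beta$ is a maximizing measure for every $g\in\cV$, which is (b). Finally, Theorem \ref{thm:robust} applied to $\mu_\beta$, whose support is $\Omega_\beta$ by minimality, gives sub-exponential word complexity and hence zero topological entropy, which is (c).

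The main point to check carefully is the applicability of Morris's theorem, namely the upper boundedness of the Birkhoff sums of $g-\int g\,d\mu$. This follows from Lemma \ref{lem:subaction}(i) exactly as in the proof of Theorem \ref{thm:nonemergent}.
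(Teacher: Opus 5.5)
Your proof is correct. Part (i) is the paper's argument: not a beta-number, emergent, hence minimal and infinite, so the Markov boundary is $\Omega_\beta$, and then \cite[Thm.~9.13]{HJXZ}. For part (ii), however, you reach unique ergodicity by a genuinely different route.

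The paper argues as follows. Since $\Omega_\beta$ is infinite and minimal, some word $0^N$ does not occur in $d_\beta(1)$, so $S_\beta$ has specification. If $\Omega_\beta$ were not uniquely ergodic, $S_\beta$ would be a specimen shift space in the sense of \cite[Def.~10.12]{HJXZ}, and so would have TPO by \cite[Thm.~10.13]{HJXZ}. Only after this does the paper invoke \cite[Cor.~10.1]{HJXZ} to obtain the open set, and then Theorem \ref{thm:robust} for zero entropy.

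You reverse the order and take the open set $\cV$ from \cite[Cor.~10.1]{HJXZ} first. You then use the bounded sub-action of Lemma \ref{lem:subaction}(i) and Morris's subordination theorem, exactly as in the proof of Theorem \ref{thm:nonemergent}. This shows that every invariant measure on $\Omega_\beta$ is maximizing for each $g\in\cV$. Perturbing inside the open set $\cV$ then gives $\int h\,d\nu_1=\int h\,d\nu_2$ for all Lipschitz $h$, so $\nu_1=\nu_2$.

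This is valid for two reasons. The upper bound on Birkhoff sums from Lemma \ref{lem:subaction}(i) holds for every $g\in\lip(S_\beta,d_\theta)$, with no specification hypothesis. Minimality gives $\mathrm{supp}(\mu)=\Omega_\beta$ for any maximizing measure $\mu$ carried by $\Omega_\beta$.

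What your route buys is self-containedness. It avoids specification and the specimen-shift theorem, using only tools proved in the paper. It also yields unique ergodicity and property (b) in one stroke. More generally, it shows that on any beta-shift, a minimal set carrying a maximizing measure for every function in a non-empty open set is uniquely ergodic. The paper's route is shorter, given the cited structural theorem.

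The exploratory sentences about generic uniqueness on a dense $G_\delta$ set, and about taking $f$ generic in $\cV$, play no role in your final argument and should be deleted.
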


\begin{proof}
As $\beta$ is not a beta-number, $d^*_\beta(1)=d_\beta(1)$ is not eventually
periodic, so $\Omega_\beta$ is infinite, and as $\beta$ is emergent,
$\Omega_\beta$ is minimal \cite[Prop.~5.3]{HHJL}.  So $\Omega_\beta$ is the
$\omega$-limit set of $d_\beta(1)$, which is the Markov boundary of $S_\beta$
(Lemma \ref{lem:markov-boundary}), and (i) follows from
\cite[Thm.~9.13]{HJXZ}.

(ii) As $\Omega_\beta$ is minimal and infinite, $0^\infty\notin\Omega_\beta$,
and by compactness some word $0^N$ does not occur in $d_\beta(1)$.  Hence
$S_\beta$ has the specification property \cite{Bertrand86,Bertrand88}; this
also follows from the proof of Lemma \ref{lem:spec}, which applies with
$d_\beta(1)$ in place of $\bfa$.  If $\Omega_\beta$ were not uniquely
ergodic, then $S_\beta$ would be a specimen shift space in the sense of
\cite[Def.~10.12]{HJXZ}, and so would have TPO \cite[Thm.~10.13]{HJXZ}.  So
$\Omega_\beta$ is uniquely ergodic, and its invariant measure $\mu$ has
support $\Omega_\beta$, by minimality.  As $S_\beta$ does not have TPO,
\cite[Cor.~10.1]{HJXZ} shows that the set of $f\in\lip(S_\beta,d_\theta)$
having a maximizing measure supported by $\Omega_\beta$, that is, the set of
$f$ for which $\mu$ is a maximizing measure, has non-empty interior.  So
$\Omega_\beta$ has zero topological entropy, by Theorem \ref{thm:robust}.
\end{proof}

Theorem \ref{thm:dimension} now follows from Lemmas \ref{lem:zerodim} and
\ref{lem:emergent}, together with Theorem \ref{thm:nonemergent} and
Corollary \ref{cor:G}.

\begin{proof}[Proof of Theorem \ref{thm:dimension}]
Let $E$ be the union of the three sets in Theorem \ref{thm:dimension}; it
suffices to show that $E$ has packing dimension zero.  Let $\beta\in E$ be
such that $\beta$ is not a simple beta-number.
Then there
exists $\theta\in(0,1)$ such that $S_\beta$ does not have TPO with respect to
$\lip(S_\beta,d_\theta)$: if $T_\beta$ or $U_\beta$ does not have TPO with
respect to $C^{0,\gamma}(I)$, then this holds with $\theta:=\beta^{-\gamma}$,
by Corollary \ref{cor:G}.
So $\beta$ is emergent, by Theorem
\ref{thm:nonemergent}, and is therefore not a beta-number
\cite[Cor.~5.4]{HHJL}, and Lemma \ref{lem:emergent}(ii) shows that the orbit
closure of $d^*_\beta(1)=d_\beta(1)$ has zero topological entropy.  By Lemma
\ref{lem:zerodim}, the parameters with this property form a set of
packing dimension zero, so $E$ is contained in the union of a set of
packing dimension zero and the countable set of simple beta-numbers, and
therefore has packing dimension zero, and hence Hausdorff dimension zero
\cite[Ch.~3]{Fal03}.
\end{proof}

\subsection{Proof of Theorem \ref{thm:TZO}}\label{subsec:TZO}

We now prove Theorem \ref{thm:TZO}, using Lemma \ref{lem:emergent},
together with \cite[Thm.~5.22]{HHJL} for $U_\beta$.

\begin{proof}[Proof of Theorem \ref{thm:TZO}]
As noted before the statement of Theorem \ref{thm:TZO}, a system with TPO
has typical zero-entropy optimization.  For $\beta>1$ which is emergent and
not a beta-number, let $\Omega_\beta$ denote the orbit closure of $d_\beta(1)$.

Let $\theta\in(0,1)$, and suppose that $S_\beta$ does not have TPO with
respect to $\lip(S_\beta,d_\theta)$.  Then $\beta$ is emergent, by Theorem
\ref{thm:nonemergent}, and is not a simple beta-number, since otherwise
$S_\beta$ would be a shift of finite type, and would have TPO
\cite{Con16}; so $\beta$ is not a beta-number
\cite[Cor.~5.4]{HHJL}.  By Lemma \ref{lem:emergent}(ii), $\Omega_\beta$ is
uniquely ergodic, with invariant measure $\mu_\beta$ say, and has zero
topological entropy.  For $\mu\in\m(S_\beta,\sigma)$, the set of
$f\in\lip(S_\beta,d_\theta)$ with $\mu\in\Mmax(f)$ is closed; let
$\mathcal L_\mu$ denote its interior.  All $f\in\mathcal L_\mu$ satisfy
$\Mmax(f)=\{\mu\}$: otherwise some $\nu\in\Mmax(f)$ differs from $\mu$, and
as Lipschitz functions separate measures there is
$h\in\lip(S_\beta,d_\theta)$ with $\int h\,d\nu>\int h\,d\mu$, so that
$\mu\notin\Mmax(f+\varepsilon h)$ for all $\varepsilon>0$.  Let $\mathcal E$
be the countable set consisting of $\mu_\beta$ and the periodic orbit
measures, and let $\mathcal G$ be the open dense set of Lemma
\ref{lem:emergent}(i), so that each $f\in\mathcal G$ has a maximizing
measure belonging to $\mathcal E$.  If
$\mathcal V\subseteq\lip(S_\beta,d_\theta)$ is non-empty and open, then
$\mathcal V\cap\mathcal G$ is a non-empty open set covered by the countably
many closed sets $\{f:\mu\in\Mmax(f)\}$, $\mu\in\mathcal E$, so by the
Baire category theorem $\mathcal V$ intersects $\mathcal L_\mu$ for some
$\mu\in\mathcal E$.  Hence the union of the sets $\mathcal L_\mu$,
$\mu\in\mathcal E$, is an open dense subset of $\lip(S_\beta,d_\theta)$,
each member of which has a unique maximizing measure, whose support is a
periodic orbit or $\Omega_\beta$.  As $\Omega_\beta$ has zero topological
entropy, all invariant measures supported by $\Omega_\beta$ have zero
entropy, by the variational principle \cite[Ch.~8]{Wal82}, so $S_\beta$ has
typical zero-entropy optimization with respect to $\lip(S_\beta,d_\theta)$.

Now let $\gamma\in(0,1]$, and suppose that $U_\beta$ does not have TPO with
respect to $C^{0,\gamma}(I)$.  Then $\beta$ is emergent and not a
beta-number \cite[Thms.~I, J]{HHJL}, and $S_\beta$ does not have TPO with
respect to $\lip(S_\beta,d_\theta)$, where $\theta:=\beta^{-\gamma}$, by
Corollary \ref{cor:G}; so, as above, $\Omega_\beta$ is uniquely ergodic, with
invariant measure $\mu_\beta$, and has zero topological entropy.
By \cite[Thm.~5.22]{HHJL}, $C^{0,\gamma}(I)$ is the union of
$\mathrm{Crit}^\gamma(\beta)$ and the closure of the set
$\mathrm{Lock}^\gamma(U_\beta)$ of functions with the locking property,
which is open and consists of functions with the periodic optimization
property \cite[\S 1.5]{HHJL}.  A non-empty open set disjoint from
$\mathrm{Lock}^\gamma(U_\beta)$ is therefore contained in
$\mathrm{Crit}^\gamma(\beta)$, and hence in its interior, so the open set
$\mathrm{Lock}^\gamma(U_\beta)\cup\mathrm{int}(\mathrm{Crit}^\gamma(\beta))$
is dense in $C^{0,\gamma}(I)$.
By \cite[Lem.~5.10]{HHJL}, each
$g\in\mathrm{Crit}^\gamma(\beta)$ has a maximizing measure $H_\beta(\mu)$,
where $H_\beta$ is as in \eqref{eq:Hbeta_def},
and
$\mu\in\m(S_\beta,\sigma)$ is quasi-generated by the orbit of
$d_\beta(1)$ (see the proof of Corollary \ref{cor:G}), so that
$\mu(\Omega_\beta)=1$, and hence $\mu=\mu_\beta$.  Thus all members of
$\mathrm{Crit}^\gamma(\beta)$ have $\nu_\beta:=H_\beta(\mu_\beta)$ as a
maximizing measure, and the argument used above for $S_\beta$, with
H\"older functions in place of Lipschitz functions, shows that $\nu_\beta$
is the unique maximizing measure for all members of
$\mathrm{int}(\mathrm{Crit}^\gamma(\beta))$.
As in the proof of Lemma \ref{lem:emergent}, some
word $0^N$ does not occur in any element of $\Omega_\beta$, so Lemma \ref{lem:key},
with $K:=\Omega_\beta$, shows that $h_\beta$ is injective on $\Omega_\beta$
and that $h_\beta(\Omega_\beta)\subseteq[\beta^{-N},1]$.  For
$x\in\Omega_\beta$ it follows that $\beta h_\beta(x)=x_0+h_\beta(\sigma x)$
lies in $(x_0,x_0+1]$, so that $U_\beta(h_\beta(x))=h_\beta(\sigma x)$.  Thus
$h_\beta$ restricts to a homeomorphism from $\Omega_\beta$ onto
$h_\beta(\Omega_\beta)$ which conjugates $\sigma$ to $U_\beta$, so the only
$U_\beta$-invariant measure supported by $h_\beta(\Omega_\beta)$, which is
the support of $\nu_\beta$, is $\nu_\beta$ itself, and it has zero entropy.
As all members of $\mathrm{Lock}^\gamma(U_\beta)$ have the periodic
optimization property, it follows that $U_\beta$ has typical zero-entropy
optimization with respect to $C^{0,\gamma}(I)$.

Finally, suppose that $\beta$ is not a simple beta-number.  By (C2),
$\m(I,T_\beta)=\m(I,U_\beta)$, and $\Mmax(T_\beta,g)=\Mmax(U_\beta,g)$ for all
$g\in C(I)$.  The maps $T_\beta$ and $U_\beta$ differ only at the points
$c/\beta$ with $c\in\N$ and $c\le\beta$, and $T_\beta^{-1}(\{0\})$ contains
$0$ and these points.  So, by invariance, all $\nu\in\m(I,T_\beta)$ give
zero measure to these points, hence $T_\beta$ and $U_\beta$ agree
$\nu$-almost everywhere, and $\nu$ has the same entropy with respect to
$T_\beta$ as with respect to $U_\beta$.  Consequently, if $g\in C(I)$ has a
unique maximizing measure $\mu$ for $U_\beta$, such that all
$U_\beta$-invariant measures supported by $\mathrm{supp}(\mu)$ have zero
entropy, then $\mu$ is also the unique maximizing measure of $g$ for
$T_\beta$, and all $T_\beta$-invariant measures supported by
$\mathrm{supp}(\mu)$ are $U_\beta$-invariant, so have zero entropy with
respect to $U_\beta$, and hence with respect to $T_\beta$.  As $U_\beta$
has typical zero-entropy optimization with respect to $C^{0,\gamma}(I)$ for
all $\gamma\in(0,1]$, it follows that so does $T_\beta$.
\end{proof}

The arguments in the proof of Theorem \ref{thm:TZO} also identify the
robustly maximizing measures of Theorem \ref{thm:robust}, as follows.

\begin{rem}\label{rem:robust}
Let $\beta>1$ and $\theta\in(0,1)$, and let $\mu\in\m(S_\beta,\sigma)$
be a maximizing measure for all functions in a non-empty open set
$\mathcal U\subseteq\lip(S_\beta,d_\theta)$.  If $\mu$ is not a periodic
orbit measure, then $\beta$ is emergent and not a beta-number, and the orbit
closure $\Omega_\beta$ of $d_\beta(1)$ is uniquely ergodic, with $\mu$ as its
invariant measure.  To see this, suppose that $\mu$ is not a periodic orbit
measure.  Then no $f\in\mathcal U$ has the periodic optimization property,
since otherwise $\Mmax(f)=\{\mu\}$; in particular $S_\beta$ does not have
TPO.  So, as in the proof of Theorem \ref{thm:TZO}, $\beta$ is emergent and
not a beta-number, and Lemma \ref{lem:emergent}(ii) shows that $\Omega_\beta$
is uniquely ergodic, with invariant measure $\mu_\beta$ say.  If $\mathcal G$
is the open dense set of Lemma \ref{lem:emergent}(i), then all members of the
non-empty open set $\mathcal U':=\mathcal U\cap\mathcal G$ have a maximizing
measure supported by $\Omega_\beta$, which is $\mu_\beta$.  If
$h\in\lip(S_\beta,d_\theta)$, $f\in\mathcal U'$, and $\varepsilon>0$ is such
that $f+\varepsilon h\in\mathcal U'$, then $\mu$ and $\mu_\beta$ are
maximizing measures for both $f$ and $f+\varepsilon h$, so
$\int h\,d\mu=\int h\,d\mu_\beta$; as Lipschitz functions separate measures,
it follows that $\mu=\mu_\beta$.  By Theorem \ref{thm:A}, this case occurs
whenever $d_\beta(1)$ is a Sturmian sequence.
\end{rem}

\section{Tuned Sturmian beta-shifts, and density of parameters without TPO}\label{sec:density}

By Theorem \ref{thm:A}, the beta-shift $S_\beta$ does not have TPO whenever
$d_\beta(1)$ is a Sturmian sequence, but the set of such parameters $\beta$
has nowhere dense closure (Remark \ref{rem:sturmian-closure}), so Theorem
\ref{thm:A} alone does not suffice to prove Theorem \ref{thm:density}.  In
this section we enlarge this set of parameters by means of a tuning construction,
which replaces the digits $0$ and $1$ of a maximal Sturmian sequence by two
words of equal length, and deduce Theorem \ref{thm:density}.  The first step
towards proving Theorem \ref{thm:density}
is combinatorial: the resulting tuned Sturmian
parameters accumulate at all simple beta-numbers, and are therefore dense in
$(1,\infty)$ (Corollary \ref{cor:tuned-dense}).  The second step is a
version of Theorems \ref{thm:A} and \ref{thm:C} for the corresponding tuned
Sturmian beta-shifts, showing that, like the Sturmian ones, they do not have
TPO (Theorem \ref{thm:tuned}).  As for Theorem \ref{thm:C}, its proof
applies Lemma \ref{lem:rigid-stability}, now combined with a version of
Theorem \ref{thm:D} for tuned Sturmian subshifts (Theorem
\ref{thm:tuned-rigidity}), which is deduced from Theorem \ref{thm:D} itself,
and a version of Proposition \ref{prop:Phimax} (Proposition
\ref{prop:tuned-max}).

The overall construction is an analogue, for beta-shifts, of Douady--Hubbard
tuning of external angles of the Mandelbrot set, which involves replacing the digits $0$ and $1$ in the binary
expansion of an angle by the periodic blocks of the binary expansions of the
two external angles of the root of a hyperbolic component
(see \cite{Dou86,DH85}). Bullett \& Sentenac
\cite[Part~2, \S 2]{bullettsentenac} used compositions of this operation to
construct minimal closed invariant subsets for the doubling map with a
prescribed sequence of rotation numbers.  
The maximal Sturmian sequences also describe external angles of the
Mandelbrot set: for irrational $\alpha\in(0,1)$, $\sigma(\bfa(\alpha))$ is
the binary expansion of the external angle of the point of the boundary of
the main cardioid with internal argument $\alpha$, by
\cite[\S 6, Thm.~1]{Dou86} and \eqref{eq:prefixsum}.
Substitutions of this kind also
describe renormalisation in the kneading theory of Lorenz maps \cite{GS93},
and appear in the study of the doubling map and beta-transformations
with a hole \cite{AK23,GlS15}, where sequences satisfy
lexicographic bounds from both above and below.  The condition defining a
beta-shift is a bound from above only, and this allows the two words used
here to differ in a single digit.

\subsection{Tuning words}

We begin with the words used in the construction.

\begin{defn}\label{def:tuning}
A \emph{tuning word} is a word $w=w_0w_1\cdots w_{p-1}$ of length
$p\in\N$, whose letters are non-negative integers, such that $w_0\ge1$ and
$\sigma^iw^\infty<_\lex w^\infty$ for $0<i<p$.  For a tuning word $w$, let
$r$ be the largest index with $w_r\ge1$, let $w^-$ be the word obtained from
$w$ by replacing the digit $w_r$ by $w_r-1$, and let the \emph{tuning map}
$\zeta_w:\Sigma\to\Sigma_{w_0}$ be the map which replaces each digit $0$ by
the word $w^-$ and each digit $1$ by the word $w$; we use the same notation
$\zeta_w(v)$ for finite words $v$ over the digits $0$ and $1$.
\end{defn}

Note that $w^-$ differs from $w$ only at the index $r$, so 
$w^-<_\lex w$. The digits of a tuning word $w$ are at most $w_0$,
since $\sigma^iw^\infty\le_\lex w^\infty$ for all $i\ge0$, so $\zeta_w$ does
indeed take values in $\Sigma_{w_0}$.
If $p=1$ then $w$ is a single digit $w_0\ge1$
and $w^-$ is the digit $w_0-1$, so the tuning map $\zeta_w$ is the map
$\iota$ of Remark \ref{rem:alphabets} with $\ell=w_0-1$ and $m=w_0$; in
particular
$\zeta_w(\bfa)$ lies in $\iota(X_\alpha)$, so is a Sturmian sequence over the
alphabet $\{w_0-1,w_0\}$.

The required combinatorial properties of tuning words are described in the following Lemma~\ref{lem:tuning}
(note that part (ii) constitutes
a version of
\cite[Part~2, Lem.~2]{bullettsentenac}):

\begin{lem}\label{lem:tuning}
Let $w$ be a tuning word of length $p$, let $r$, $w^-$ and $\zeta:=\zeta_w$
be as in Definition \ref{def:tuning}, and for irrational $\alpha\in(0,1)$
let $\bfa(\alpha)$ be the maximal Sturmian sequence of rotation number
$\alpha$.  Then:
\begin{enumerate}
\item[(i)] for $0<i<p$, both $w_{[i,p)}$ and $w^-_{[i,p)}$ are
lexicographically smaller than $w_{[0,p-i)}$;
\item[(ii)] $\sigma^p\circ\zeta=\zeta\circ\sigma$, and if $y,y'\in\Sigma$
are distinct and $k$ is the smallest index with $y_k\ne y'_k$, then $pk+r$
is the smallest index at which $\zeta(y)$ and $\zeta(y')$ differ, and
$\zeta(y)<_\lex\zeta(y')$ if $y<_\lex y'$; in particular
$d_\theta(\zeta(y),\zeta(y'))=\theta^r\,d_{\theta^p}(y,y')$ for all
$y,y'\in\Sigma$ and $\theta\in(0,1)$, 
so $\zeta$ is continuous and strictly increasing with respect to $\le_\lex$,
and in particular injective;
\item[(iii)] for all $y\in\Sigma$ and $j\ge0$, we have
$\zeta(y)_{[j,j+p)}=w$ if and only if $j=pk$ for some $k\ge0$ with $y_k=1$;
\item[(iv)] for all irrational $\alpha\in(0,1)$, the sequence
$\zeta(\bfa(\alpha))$ satisfies $\sigma^j\zeta(\bfa(\alpha))<_\lex
\zeta(\bfa(\alpha))$ for all $j\ge1$, so there is a unique
$\beta_{w,\alpha}>1$ with $d_{\beta_{w,\alpha}}(1)=\zeta(\bfa(\alpha))$, and
$\beta_{w,\alpha}$ is not a beta-number;
\item[(v)] the map $\alpha\mapsto\beta_{w,\alpha}$, defined on the set of irrationals in
$(0,1)$, is strictly increasing; if $\beta_1>1$ satisfies
$d^*_{\beta_1}(1)=w^\infty$, then $\beta_{w,\alpha}<\beta_1$ for all
irrational $\alpha\in(0,1)$; and if $\beta'>1$ satisfies
$d^*_{\beta'}(1)<_\lex w^\infty$, then there exists $\alpha_0\in(0,1)$ such
that $\beta_{w,\alpha}>\beta'$ for all irrational $\alpha\in(\alpha_0,1)$.
\end{enumerate}
\end{lem}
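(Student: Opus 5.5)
The plan is to prove the five parts in order, since each later part uses the earlier ones. Throughout, $w$ and $w^-$ differ only at index $r$, where $w^-$ has the smaller digit.

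For (i), fix $0<i<p$ and write the tuning condition as $w_{[i,p)}\,w^\infty<_\lex w_{[0,p-i)}\,\sigma^{p-i}w^\infty$.
\begin{enumerate}
\item[$\bullet$] If $w_{[i,p)}>_\lex w_{[0,p-i)}$, this inequality is violated.
\item[$\bullet$] If $w_{[i,p)}=w_{[0,p-i)}$, cancelling the common prefix gives $w^\infty<_\lex\sigma^{p-i}w^\infty$. This contradicts the tuning condition at the index $p-i\in(0,p)$.
\end{enumerate}
So $w_{[i,p)}<_\lex w_{[0,p-i)}$. The claim for $w^-$ follows because $w^-_{[i,p)}\le_\lex w_{[i,p)}$ coordinatewise.

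For (ii), the relation $\sigma^p\circ\zeta=\zeta\circ\sigma$ is immediate, since $\zeta$ substitutes blocks of the fixed length $p$. If $y,y'$ first differ at $k$, then $\zeta(y)$ and $\zeta(y')$ agree on the first $k$ blocks. Their $k$th blocks are $w$ and $w^-$, which first differ at offset $r$, so the first difference is at index $pk+r$. The sequence with $y_k=1$ carries $w$, which is larger there. Hence $\zeta$ is order preserving, and $d_\theta(\zeta(y),\zeta(y'))=\theta^{pk+r}=\theta^r d_{\theta^p}(y,y')$. Continuity and injectivity follow.

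For (iii), write $j=pk+i$ with $0\le i<p$.
\begin{enumerate}
\item[$\bullet$] If $i=0$, the word at $j$ is the $k$th block, and this equals $w$ if and only if $y_k=1$, because $w\ne w^-$.
\item[$\bullet$] If $0<i<p$, the word at $j$ begins with $w_{[i,p)}$ or $w^-_{[i,p)}$. By (i) this is strictly smaller than $w_{[0,p-i)}$, so the word at $j$ cannot equal $w$.
\end{enumerate}

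For (iv), set $\bfa=\bfa(\alpha)$, recall $a_0=1$, and let $j=pk+i\ge1$.
\begin{enumerate}
\item[$\bullet$] If $i=0$, then $k\ge1$ and $\sigma^j\zeta(\bfa)=\zeta(\sigma^k\bfa)<_\lex\zeta(\bfa)$. This uses (ii) and $\sigma^k\bfa<_\lex\bfa$ from Lemma \ref{lem:sturmian}(v).
\item[$\bullet$] If $0<i<p$, then $\sigma^j\zeta(\bfa)$ begins with $w_{[i,p)}$ or $w^-_{[i,p)}$. By (i) this is strictly below $w_{[0,p-i)}$, which is a prefix of $\zeta(\bfa)$ because $a_0=1$.
\end{enumerate}
Parry's criterion \cite{Par60} then gives the unique $\beta_{w,\alpha}$ with $d_{\beta_{w,\alpha}}(1)=\zeta(\bfa)$. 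Suppose $\zeta(\bfa)$ were eventually periodic. By (iii) the set $\{k:a_k=1\}$ is read off from the occurrences of $w$ at multiples of $p$, so $\bfa$ would be eventually periodic, contradicting Lemma \ref{lem:sturmian}(i). Hence $\beta_{w,\alpha}$ is not a beta-number.

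For (v), there are three claims.
\begin{enumerate}
\item[$\bullet$] \emph{$\alpha\mapsto\bfa(\alpha)$ is strictly increasing in $\le_\lex$.} The partial sums $\lceil j\alpha\rceil$ of \eqref{eq:prefixsum} are non-decreasing in $\alpha$. For $\alpha<\alpha'$ some $j$ has $\lceil j\alpha\rceil<\lceil j\alpha'\rceil$. At the first index where the partial sums differ, $\bfa(\alpha')$ has the larger digit.
\item[$\bullet$] \emph{$\alpha\mapsto\beta_{w,\alpha}$ is strictly increasing, and $\beta_{w,\alpha}<\beta_1$.} Compose the previous point with (ii) and with the strict monotonicity of $\beta\mapsto d^*_\beta(1)$ \cite[Prop.~3.9(xiii)]{HHJL}. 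For the bound, $\bfa<_\lex1^\infty$ gives $\zeta(\bfa)<_\lex\zeta(1^\infty)=w^\infty=d^*_{\beta_1}(1)$.
\item[$\bullet$] \emph{$\beta_{w,\alpha}>\beta'$ for $\alpha$ near $1$.} Let $n$ be the first index where $d^*_{\beta'}(1)$ and $w^\infty$ differ. Choose $m$ with $pm>n$ and take $\alpha_0:=(m-1)/m$. For irrational $\alpha>\alpha_0$ we have $\lceil j\alpha\rceil=j$ for $j\le m$, so $\bfa(\alpha)$ begins with $1^m$. Then $\zeta(\bfa)$ begins with $w^m$, and so $\zeta(\bfa)>_\lex d^*_{\beta'}(1)$. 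Monotonicity of $d^*$ then gives $\beta_{w,\alpha}>\beta'$.
\end{enumerate}

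I expect the main obstacle to be bookkeeping rather than ideas: keeping straight greedy versus quasi-greedy expansions and strict versus non-strict monotonicity. I would rely on \cite[Prop.~3.9]{HHJL} for these, noting that $d^*_{\beta_{w,\alpha}}(1)=d_{\beta_{w,\alpha}}(1)$ since $\zeta(\bfa)$ has infinitely many non-zero digits.
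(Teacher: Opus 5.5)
Your proposal is correct and follows the paper's proof essentially step for step. Part (i) rules out equality of $w_{[i,p)}$ and $w_{[0,p-i)}$ using the tuning condition at index $p-i$; (ii) and (iii) come from the block structure of $\zeta$; (iv) splits $j=pk+i$ and invokes Parry's criterion; and (v) uses the monotonicity of $\alpha\mapsto\bfa(\alpha)$ and of $\beta\mapsto d^*_\beta(1)$.

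The only differences are cosmetic. For non-eventual-periodicity in (iv), the paper uses injectivity of $\zeta$ together with $\sigma^p\circ\zeta=\zeta\circ\sigma$. Your argument via (iii) works, but it implicitly requires replacing a period $P$ by $pP$. In (v) you should take $m\ge2$ so that $\alpha_0=(m-1)/m$ lies in $(0,1)$.
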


\begin{proof}
(i) Let $0<i<p$.  The sequences $\sigma^iw^\infty$ and $w^\infty$ begin with
the words $w_{[i,p)}$ and $w_{[0,p-i)}$ respectively, both of length $p-i$,
and $\sigma^iw^\infty<_\lex w^\infty$; so it suffices to show that these two
words are distinct, as then $w_{[i,p)}<_\lex w_{[0,p-i)}$.  If they were
both equal to a word $u$, then $\sigma^iw^\infty=u\,w^\infty$ and
$w^\infty=u\,\sigma^{p-i}w^\infty$, so $\sigma^iw^\infty<_\lex w^\infty$
would give $w^\infty<_\lex\sigma^{p-i}w^\infty$, contradicting Definition
\ref{def:tuning} with $p-i$ in place of $i$.  Finally, each digit of $w^-$ is
at most the corresponding digit of $w$, so $w^-_{[i,p)}\le_\lex
w_{[i,p)}<_\lex w_{[0,p-i)}$.

(ii) As $w$ and $w^-$ both have length $p$, the map $\zeta$ replaces each
digit by a block of length $p$, so $\sigma^p\circ\zeta=\zeta\circ\sigma$, and
the first $pk$ symbols of $\zeta(y)$ form the word $\zeta(y_{[0,k)})$.  Let
$y,y'$ and $k$ be as in the statement, and suppose without loss of generality that
$y<_\lex y'$, so that $y_k=0$ and $y'_k=1$.  Then $\zeta(y)$ and $\zeta(y')$
have the same first $pk$ symbols, and their next $p$ symbols form the words
$w^-$ and $w$ respectively.  As $w^-$ and $w$ differ only at the index $r$,
where $w^-_r=w_r-1$, it follows that $pk+r$ is the smallest index at which
$\zeta(y)$ and $\zeta(y')$ differ, and that $\zeta(y)<_\lex\zeta(y')$.  Hence
$d_\theta(\zeta(y),\zeta(y'))=\theta^{pk+r}=\theta^r\,d_{\theta^p}(y,y')$.
This equality clearly also holds if $y=y'$. 
So $\zeta$ is continuous and strictly increasing, hence injective.

(iii) Write $j=pk+i$ with $k\ge0$ and $0\le i<p$.  If $i=0$ then
$\zeta(y)_{[j,j+p)}=\zeta(y_k)$, and this word is $w$ if $y_k=1$, and is $w^-\ne w$
if $y_k=0$.  If $0<i<p$ then $\zeta(y)_{[j,j+p)}$ begins with
$\zeta(y_k)_{[i,p)}$, which is 
either $w_{[i,p)}$ or $w^-_{[i,p)}$, and so it differs
from $w_{[0,p-i)}$ by (i); hence $\zeta(y)_{[j,j+p)}\ne w$, so
(iii) is proved.

(iv) Let $\bfa:=\bfa(\alpha)$, let $j\ge1$, and write $j=pk+i$ with $k\ge0$
and $0\le i<p$.  If $i=0$ then $k\ge1$, and
$\sigma^j\zeta(\bfa)=\zeta(\sigma^k\bfa)<_\lex\zeta(\bfa)$, by (ii) and Lemma
\ref{lem:sturmian}(v).  If $0<i<p$ then $\sigma^j\zeta(\bfa)$ begins with
$\zeta(a_k)_{[i,p)}$, which is lexicographically smaller than $w_{[0,p-i)}$
by (i), while $\zeta(\bfa)$ begins with $\zeta(a_0)=w$, as $a_0=1$; so again
$\sigma^j\zeta(\bfa)<_\lex\zeta(\bfa)$.  As the first digit of $\zeta(\bfa)$
is $w_0\ge1$, Parry's criterion \cite{Par60} gives a unique
$\beta_{w,\alpha}>1$ with $d_{\beta_{w,\alpha}}(1)=\zeta(\bfa)$.  If
$\zeta(\bfa)$ were eventually periodic, with some period $P\in\N$ from some
index $n\ge0$ onwards, then $pP$ would also be such a period, and for $k\ge0$ with
$pk\ge n$, (ii) would give
$\zeta(\sigma^{k+P}\bfa)=\sigma^{pk+pP}\zeta(\bfa)=\sigma^{pk}\zeta(\bfa)
=\zeta(\sigma^k\bfa)$, and hence $\sigma^{k+P}\bfa=\sigma^k\bfa$, as $\zeta$
is injective; but $\bfa$ is not eventually periodic, as its orbit closure
$X_\alpha$ is minimal and infinite (Lemma \ref{lem:sturmian}(i)).  So $\zeta(\bfa)$ is not eventually periodic, in other words
$\beta_{w,\alpha}$ is not a beta-number.

(v) Let $\alpha,\alpha'\in(0,1)$ be irrational,
with $\alpha<\alpha'$. 
By \eqref{eq:prefixsum},
$S_j\Phi(\bfa(\alpha))=\lceil j\alpha\rceil\le\lceil j\alpha'\rceil
=S_j\Phi(\bfa(\alpha'))$ for all $j\ge0$, with strict inequality whenever
$j(\alpha'-\alpha)>1$; so at the smallest index at which $\bfa(\alpha)$ and
$\bfa(\alpha')$ differ, the digit of $\bfa(\alpha)$ is $0$ and that of
$\bfa(\alpha')$ is $1$, so $\bfa(\alpha)<_\lex\bfa(\alpha')$.
So $d_{\beta_{w,\alpha}}(1)<_\lex
d_{\beta_{w,\alpha'}}(1)$, by (ii).  By (iv), neither parameter is a simple
beta-number, so these greedy expansions of $1$ are also the quasi-greedy
ones, and as $\beta\mapsto d^*_\beta(1)$ is strictly increasing
\cite[Prop.~3.9(xiii)]{HHJL}, it follows that
$\beta_{w,\alpha}<\beta_{w,\alpha'}$.  Next, if $d^*_{\beta_1}(1)=w^\infty$,
then $d^*_{\beta_{w,\alpha}}(1)=\zeta(\bfa(\alpha))<_\lex\zeta(1^\infty)=
w^\infty$, by (ii), so $\beta_{w,\alpha}<\beta_1$.  Finally, let $\beta'>1$
satisfy $d^*_{\beta'}(1)<_\lex w^\infty$, and choose an integer $n\ge2$ such
that $d^*_{\beta'}(1)_{[0,pn)}<_\lex w^n$.  If $\alpha\in(1-1/n,1)$ is
irrational and $1\le j\le n$, then $j-1\le j-j/n<j\alpha<j$, so $\lceil
j\alpha\rceil=j$, and \eqref{eq:prefixsum} shows that $\bfa(\alpha)$ begins
with $1^n$.  Then $\zeta(\bfa(\alpha))$ begins with $w^n$, so
$d^*_{\beta'}(1)<_\lex d^*_{\beta_{w,\alpha}}(1)$, and therefore
$\beta'<\beta_{w,\alpha}$.  So the last statement of (v) holds with
$\alpha_0:=1-1/n$.
\end{proof}

By Lemma \ref{lem:tuning}(v), if $w$ is a tuning word and $\beta_1>1$
satisfies $d^*_{\beta_1}(1)=w^\infty$, then $\beta_{w,\alpha}$ tends to
$\beta_1$ from below as $\alpha$ tends to $1$.  We show below that every
simple beta-number $\beta_1$ satisfies $d^*_{\beta_1}(1)=w^\infty$ for some
tuning word $w$; as simple beta-numbers are dense in $(1,\infty)$, this gives
the following.

\begin{cor}[Density of the parameters $\beta_{w,\alpha}$]\label{cor:tuned-dense}
Let $J\subseteq(1,\infty)$ be a non-empty open interval.  Then there is a
tuning word $w$, and $\alpha_0\in(0,1)$, such that for all irrational
$\alpha\in(\alpha_0,1)$, the parameter $\beta_{w,\alpha}$ of Lemma
\ref{lem:tuning}(iv), which satisfies
$d_{\beta_{w,\alpha}}(1)=\zeta_w(\bfa(\alpha))$, lies in $J$.  In particular
$J$ contains uncountably many such parameters, none of which is a
beta-number.
\end{cor}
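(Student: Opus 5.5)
The plan is to first show that every simple beta-number $\beta_1$ has $d^*_{\beta_1}(1)=w^\infty$ for some tuning word $w$, and then combine this with Lemma \ref{lem:tuning}(v) and the density of simple beta-numbers \cite{Par60}.

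\emph{Step 1: simple beta-numbers give tuning words.} Let $\beta_1$ be a simple beta-number. By Remark \ref{rem:general}, $d^*_{\beta_1}(1)$ is periodic. Let $w$ be its shortest period block, of length $p$, so that $d^*_{\beta_1}(1)=w^\infty$ and $w_0=\lceil\beta_1\rceil-1\ge1$. By Parry's criterion, $\sigma^iw^\infty\le_\lex w^\infty$ for all $i$. For $0<i<p$, equality $\sigma^iw^\infty=w^\infty$ would make $i$ a period of $w^\infty$. Then $\gcd(i,p)$ would also be a period, and it is smaller than $p$ and divides $p$, contradicting minimality of $p$. So the inequality is strict and $w$ is a tuning word.

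\emph{Step 2: locate $\beta_{w,\alpha}$ inside $J$.} Choose a simple beta-number $\beta_1\in J$, using density of simple beta-numbers, and then choose $\beta'\in J$ with $\beta'<\beta_1$. Since $\beta\mapsto d^*_\beta(1)$ is strictly increasing \cite[Prop.~3.9(xiii)]{HHJL}, we get $d^*_{\beta'}(1)<_\lex d^*_{\beta_1}(1)=w^\infty$. Lemma \ref{lem:tuning}(v) then supplies $\alpha_0\in(0,1)$ such that $\beta'<\beta_{w,\alpha}<\beta_1$ for all irrational $\alpha\in(\alpha_0,1)$. Hence $\beta_{w,\alpha}\in J$.

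\emph{Step 3: uncountability and non-beta-number property.} By Lemma \ref{lem:tuning}(v), the map $\alpha\mapsto\beta_{w,\alpha}$ is strictly increasing, so it is injective on the uncountable set of irrationals in $(\alpha_0,1)$. This gives uncountably many such parameters in $J$. Lemma \ref{lem:tuning}(iv) shows that none of them is a beta-number.

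The only step with real content is Step 1, specifically the gcd argument for strict inequality. Everything else follows directly from Lemma \ref{lem:tuning}.
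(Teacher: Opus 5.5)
Your proposal is correct and follows essentially the same route as the paper: it obtains a tuning word as the minimal period block of $d^*_{\beta_1}(1)$ for a simple beta-number $\beta_1\in J$ via the same gcd argument, then uses Lemma \ref{lem:tuning}(v) with some $\beta'<\beta_1$ in $J$ and the monotonicity of $\beta\mapsto d^*_\beta(1)$. Uncountability comes from the strict monotonicity of $\alpha\mapsto\beta_{w,\alpha}$, and the non-beta-number property from Lemma \ref{lem:tuning}(iv).
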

\begin{proof}
As simple beta-numbers are dense in $(1,\infty)$ (see \cite{Par60}), there are
$\beta',\beta_1\in J$ with $\beta'<\beta_1$ such that $\beta_1$ is a simple
beta-number.  Then $d^*_{\beta_1}(1)$ is periodic, and
$\sigma^id^*_{\beta_1}(1)\le_\lex d^*_{\beta_1}(1)$ for all $i\ge0$ (see
Remark \ref{rem:general}).  Let $w$ be the shortest word such that
$d^*_{\beta_1}(1)=w^\infty$, and let $p$ be its length.  Then $w$ is a tuning
word: its first digit is $\lceil\beta_1\rceil-1\ge1$, and for $0<i<p$ we have
$\sigma^iw^\infty\ne w^\infty$, since otherwise $w^\infty$ would have the
periods $i$ and $p$, and hence the period $\gcd(i,p)<p$, contradicting the
choice of $w$; so $\sigma^iw^\infty<_\lex w^\infty$.  As $\beta\mapsto
d^*_\beta(1)$ is strictly increasing (see \cite[Prop.~3.9(xiii)]{HHJL}), we have
$d^*_{\beta'}(1)<_\lex w^\infty$, so by Lemma \ref{lem:tuning}(v) we can find
$\alpha_0\in(0,1)$ such that $\beta'<\beta_{w,\alpha}<\beta_1$, and hence
$\beta_{w,\alpha}\in J$, for all irrational $\alpha\in(\alpha_0,1)$.  By
Lemma \ref{lem:tuning}(v), the map $\alpha\mapsto\beta_{w,\alpha}$ is
strictly increasing, so these parameters are pairwise distinct, and there are
uncountably many of them. None of them is a beta-number, by Lemma
\ref{lem:tuning}(iv).
\end{proof}

By Corollaries \ref{cor:tuned-dense} and \ref{cor:G}, Theorem
\ref{thm:density} will follow once we show that none of the beta-shifts
$S_{\beta_{w,\alpha}}$ has TPO.  This is Theorem \ref{thm:tuned} below, and
most of the rest of this section is devoted to its proof.

\subsection{Tuned Sturmian subshifts}

By Lemma \ref{lem:tuning}(iv), tuning maps send maximal Sturmian sequences
to expansions $d_\beta(1)$, motivating the following terminology.

\begin{defn}\label{def:tuned}
Let $w$ be a tuning word of length $p$, let $\alpha\in(0,1)$ be
irrational, and let $\beta_{w,\alpha}$ be as in Lemma \ref{lem:tuning}(iv),
so that $d_{\beta_{w,\alpha}}(1)=\zeta_w(\bfa(\alpha))$.  We call
$\zeta_w(\bfa(\alpha))$ the \emph{maximal tuned Sturmian sequence},
$\beta_{w,\alpha}$ the \emph{tuned Sturmian parameter}, and the beta-shift
$Y_{w,\alpha}:=S_{\beta_{w,\alpha}}$ the \emph{tuned Sturmian beta-shift},
corresponding to $w$ and $\alpha$.
The \emph{tuned
Sturmian subshift} $X_{w,\alpha}$ and the \emph{tuned Sturmian measure}
$\mu_{w,\alpha}$, corresponding to $w$ and $\alpha$, are defined by
\[
  X_{w,\alpha}:=\bigcup_{i=0}^{p-1}\sigma^i\zeta_w(X_\alpha),\qquad
  \mu_{w,\alpha}:=\frac1p\sum_{i=0}^{p-1}\mu_\alpha\circ(\sigma^i\circ\zeta_w)^{-1} ,
\]
and the elements of $X_{w,\alpha}$ are called \emph{tuned Sturmian
sequences}.
\end{defn}

By Lemma \ref{lem:tuning}(iv), the tuned Sturmian parameter
$\beta_{w,\alpha}$ is not a beta-number, so
$d^*_{\beta_{w,\alpha}}(1)=d_{\beta_{w,\alpha}}(1)=\zeta_w(\bfa(\alpha))$,
whose first term $w_0$ is $\lceil\beta_{w,\alpha}\rceil-1$; hence the tuned
Sturmian beta-shift is
\begin{equation}\label{eq:Ywalpha}
  Y_{w,\alpha}=\bigl\{x\in\Sigma_{w_0}:\sigma^jx\le_\lex\zeta_w(\bfa(\alpha))
  \ \text{ for all }j\ge0\bigr\} .
\end{equation}
Taking $w=1$ in Definition \ref{def:tuned} gives the Sturmian case: $\zeta_1$
is the identity map on $\Sigma$, so $X_{1,\alpha}=X_\alpha$,
$\mu_{1,\alpha}=\mu_\alpha$, and $Y_{1,\alpha}=Y_\alpha$ by
\eqref{eq:Ywalpha} and \eqref{eq:Yalpha}.

The tuned Sturmian subshift has the following properties
(cf.~\cite[Part~2, Lem.~1, Thm.~7]{bullettsentenac}):

\begin{lem}\label{lem:tuned-subshift}
Let $w$ be a tuning word of length $p$, and let $w^-$ and $\zeta:=\zeta_w$
be as in Definition \ref{def:tuning}.  Let
$\alpha\in(0,1)$ be irrational, with $N_\alpha :=\lceil1/\alpha\rceil$ and
$\bfa:=\bfa(\alpha)$, and let
$Y_{w,\alpha}$, $X_{w,\alpha}$ and $\mu_{w,\alpha}$ be the tuned Sturmian
beta-shift, subshift and measure 
corresponding to $w$ and $\alpha$.  Then:
\begin{enumerate}
\item[(i)] $X_{w,\alpha}$ is the orbit closure of the maximal tuned Sturmian
sequence $\zeta(\bfa)$, and $X_{w,\alpha}\subseteq Y_{w,\alpha}$;
\item[(ii)] for all $x\in X_{w,\alpha}$ there is a unique
$i(x)\in\{0,1,\ldots,p-1\}$ with $x\in\sigma^{i(x)}\zeta(X_\alpha)$; the
word $w$ occurs in $x_{[0,(N_\alpha+1)p)}$; all $j\ge0$ with $x_{[j,j+p)}=w$
satisfy
$j\equiv-i(x)\pmod p$; and all $j\ge0$ with $j\equiv-i(x)\pmod p$ satisfy
$x_{[j,j+p)}\in\{w,w^-\}$;
\item[(iii)] $i(x)$ depends only on $x_{[0,(N_\alpha+1)p)}$, $i(\sigma x)\equiv
i(x)+1\pmod p$ for all $x\in X_{w,\alpha}$, and $\zeta$ restricts to a
homeomorphism from $X_\alpha$ onto $\{x\in X_{w,\alpha}:i(x)=0\}$;
\item[(iv)] $X_{w,\alpha}$ is minimal, infinite and uniquely ergodic, its
invariant measure is $\mu_{w,\alpha}$, which is not a periodic orbit
measure, and $\mu_{w,\alpha}([w])=\alpha/p$.
\end{enumerate}
\end{lem}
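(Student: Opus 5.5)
I would establish the four parts in the order (ii), (iii), (i), (iv), since the locating index $i(x)$ is what organises everything else. For (ii), note first that each $x\in X_{w,\alpha}$ lies in $\sigma^i\zeta(X_\alpha)$ for some $0\le i<p$ by definition. Write $x=\sigma^i\zeta(y)$ with $y\in X_\alpha$. By Lemma \ref{lem:tuning}(iii), the occurrences of $w$ in $\zeta(y)$ are exactly at the positions $pk$ with $y_k=1$. Hence the occurrences of $w$ in $x$ are at positions $pk-i\ge0$, all congruent to $-i$ mod $p$. At every position $j\equiv-i\pmod p$ with $j\ge0$ the block $x_{[j,j+p)}$ is $\zeta(y_k)\in\{w,w^-\}$.

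To find an occurrence of $w$ inside $x_{[0,(N_\alpha+1)p)}$, I would use Lemma \ref{lem:sturmian}(iv): every word of length $N_\alpha$ in $X_\alpha$ contains at least $\lfloor N_\alpha\alpha\rfloor\ge1$ ones. So $y_k=1$ for some $k$ with $1\le k\le N_\alpha$, and then $w$ occurs in $x$ at position $pk-i\in[0,N_\alpha p)$. Uniqueness of $i(x)$ follows immediately, because an occurrence of $w$ at position $j$ forces $i\equiv-j\pmod p$.

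For (iii), the first two claims are now immediate. The value $i(x)$ is determined by the position of any occurrence of $w$ in the prefix $x_{[0,(N_\alpha+1)p)}$. Also $\sigma x=\sigma^{i+1}\zeta(y)$, which equals $\sigma^{i+1-p}\zeta(\sigma y)$ when $i=p-1$, by Lemma \ref{lem:tuning}(ii); this gives $i(\sigma x)\equiv i(x)+1$. For the homeomorphism claim, the set $\{i=0\}$ equals $\zeta(X_\alpha)$. The map $\zeta$ is a continuous injection on the compact set $X_\alpha$ by Lemma \ref{lem:tuning}(ii), so it is a homeomorphism onto its image.

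For (i), the orbit of $\zeta(\bfa)$ is $\{\sigma^{pk+i}\zeta(\bfa)\}=\{\sigma^i\zeta(\sigma^k\bfa)\}$. Its closure is $\bigcup_i\sigma^i\zeta(X_\alpha)$, by continuity of $\zeta$ and $\sigma$ together with compactness. Since $d_{\beta_{w,\alpha}}(1)=\zeta(\bfa)$ satisfies $\sigma^j\zeta(\bfa)<_\lex\zeta(\bfa)$ (Lemma \ref{lem:tuning}(iv)), every point of this orbit closure lies in $Y_{w,\alpha}$ by \eqref{eq:Ywalpha}. Here I would pass to limits using the fact that $\{z:\sigma^jz\le_\lex\zeta(\bfa)\}$ is closed.

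For (iv), minimality comes from minimality of $X_\alpha$. Any nonempty closed invariant $Z\subseteq X_{w,\alpha}$ has $Z\cap\{i=0\}$ nonempty, by (iii), and this set is $\sigma^p$-invariant. Its preimage under $\zeta$ is then closed and $\sigma$-invariant in $X_\alpha$, so by (iii) and Lemma \ref{lem:tuning}(ii) it is all of $X_\alpha$, giving $Z=X_{w,\alpha}$. The subshift is infinite because $X_\alpha$ is infinite and $\zeta$ is injective.

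For unique ergodicity, let $\nu$ be any invariant measure. The sets $\{i=j\}$ are clopen by (iii) and are permuted cyclically by $\sigma$, so $\nu(\{i=0\})=1/p$. The normalised restriction $p\,\nu|_{\{i=0\}}$ is $\sigma^p$-invariant, and pulled back by $\zeta$ it is $\sigma$-invariant on $X_\alpha$, so it equals $\mu_\alpha$. Hence $\nu$ is determined, and the formula $\nu=\mu_{w,\alpha}$ follows by pushing forward with $\sigma^i$. The measure $\mu_{w,\alpha}$ is not periodic since it has infinite support. Finally, by (ii)--(iii), $[w]\cap X_{w,\alpha}=\zeta([1]_{X_\alpha})$, so $\mu_{w,\alpha}([w])=\frac1p\mu_\alpha([1])=\alpha/p$.

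The main obstacle is the bookkeeping in (iii)/(iv): getting $\zeta^{-1}$ to intertwine $\sigma^p$ with $\sigma$ correctly, and making sure $\{i=0\}$ is clopen.
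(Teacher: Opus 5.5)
Your proof is correct. Parts (i)--(iii) and the minimality in (iv) are essentially the paper's argument. For minimality, the paper shows that the orbit closure of $x=\sigma^i\zeta(y)$ contains $\sigma^{p-i}x=\zeta(\sigma y)$, hence contains $\zeta(X_\alpha)$. That is the same reduction to minimality of $X_\alpha$ as your closed-invariant-set argument. One harmless slip: the occurrence of $w$ you locate is at position $pk-i\in[1,N_\alpha p]$, not $[0,N_\alpha p)$ (take $i=0$, $k=N_\alpha$). It still lies inside $x_{[0,(N_\alpha+1)p)}$, so nothing downstream is affected.

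The genuine difference is in unique ergodicity and in the value of $\mu_{w,\alpha}([w])$.
\begin{itemize}
\item Your route decomposes an arbitrary invariant $\nu$ along the cyclically permuted clopen sets $\{i=j\}$. You get $\nu(\{i=0\})=1/p$ and identify $p\,\nu|_{\{i=0\}}$ via $\zeta$ with a $\sigma$-invariant measure on $X_\alpha$, hence with $\mu_\alpha$. You then recover $\nu$ on $\{i=j\}$ as the image of $\nu|_{\{i=0\}}$ under $\sigma^j$. This uses only $\sigma^{-j}\{i=j\}=\{i=0\}$ and invariance; $\sigma^j$ need not be injective on $\zeta(X_\alpha)$, and indeed is not when $j>r$. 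This is the standard correspondence between invariant measures of a height-$p$ tower and of its base. It would identify $\m(X_{w,\alpha},\sigma)$ with $\m(X_\alpha,\sigma)$ even without unique ergodicity. The value $\alpha/p$ then follows directly from $[w]\cap X_{w,\alpha}=\zeta([1]_{X_\alpha})$.
\item The paper's route avoids the tower bookkeeping and needs only $x=\sigma^i\zeta(y)$ and $\sigma^p\circ\zeta=\zeta\circ\sigma$. It sets $\hat f:=S_pf\circ\zeta$ and uses uniform convergence of $\frac1nS_n\hat f$ on the uniquely ergodic $X_\alpha$. This gives $\frac1{pn}S_{pn}f\to\int f\,d\mu_{w,\alpha}$ uniformly on $X_{w,\alpha}$, the offset $i$ costing at most $2p\|f\|_{C(X_{w,\alpha})}$. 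Integrating against $\nu$ then gives $\int f\,d\nu=\int f\,d\mu_{w,\alpha}$. The value $\mu_{w,\alpha}([w])=\alpha/p$ falls out of the same identity $p\int f\,d\mu_{w,\alpha}=\int\hat f\,d\mu_\alpha$, with $\hat f=\Phi$ for $f=\ind_{[w]}$. That identity is reused in Theorem \ref{thm:tuned-rigidity}.
\end{itemize}

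As the paper does, you should say explicitly that $\m(X_{w,\alpha},\sigma)\neq\emptyset$. Only then does ``every invariant $\nu$ equals $\mu_{w,\alpha}$'' give unique ergodicity with invariant measure $\mu_{w,\alpha}$.
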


\begin{proof}
(i) For $k\ge0$ and $0\le i<p$, Lemma \ref{lem:tuning}(ii) gives
$\sigma^{pk+i}\zeta(\bfa)=\sigma^i\zeta(\sigma^k\bfa)$, which lies in
$X_{w,\alpha}$; and $X_{w,\alpha}$ is closed, since $\zeta$ is continuous and
$X_\alpha$ is compact.  Conversely, if $y\in X_\alpha$ and $0\le i<p$, then
$\sigma^{k_n}\bfa\to y$ for some sequence $(k_n)$, and
$\sigma^i\zeta(y)=\lim_n\sigma^{pk_n+i}\zeta(\bfa)$.
So $X_{w,\alpha}$ is
the orbit closure of $\zeta(\bfa)$.  Moreover $\zeta(\bfa)\in Y_{w,\alpha}$,
by Lemma \ref{lem:tuning}(iv) and \eqref{eq:Ywalpha}, and $Y_{w,\alpha}$ is
closed and satisfies $\sigma(Y_{w,\alpha})\subseteq Y_{w,\alpha}$, so
$X_{w,\alpha}\subseteq Y_{w,\alpha}$.

(ii) Let $x=\sigma^i\zeta(y)$ with $0\le i<p$ and $y\in X_\alpha$.  The
word $v:=y_{[1,N_\alpha+1)}$ has length $N_\alpha\ge1/\alpha$ and occurs in
$X_\alpha$, so contains the digit $1$, by Lemma \ref{lem:sturmian}(iv):
$y_h=1$ for some $h$ with $1\le h\le N_\alpha$.
As
$x_{[0,(N_\alpha+1)p)}=\zeta(y)_{[i,i+(N_\alpha+1)p)}$ contains
$\zeta(v)=\zeta(y)_{[p,(N_\alpha+1)p)}$, the word $w=\zeta(y_h)$ occurs in
$x_{[0,(N_\alpha+1)p)}$.  By Lemma \ref{lem:tuning}(iii), if
$x_{[j,j+p)}=\zeta(y)_{[i+j,i+j+p)}$ equals $w$ then $i+j\equiv0\pmod p$;
and if $i+j=pk$ then $x_{[j,j+p)}=\zeta(y_k)\in\{w,w^-\}$.  If also
$x\in\sigma^{i'}\zeta(X_\alpha)$ with $0\le i'<p$, then a position $j$ at
which $w$ occurs in $x$ satisfies $j\equiv-i$ and $j\equiv-i'\pmod p$, so
$i=i'$.  This proves (ii).

(iii) By (ii), $i(x)\equiv-j\pmod p$ for all $j$ with $x_{[j,j+p)}=w$,
and there is such a $j\le N_\alpha p$, so $i(x)$ depends only on
$x_{[0,(N_\alpha+1)p)}$.  If
$x=\sigma^i\zeta(y)$ with $0\le i<p$ and $y\in X_\alpha$, then
$\sigma x=\sigma^{i+1}\zeta(y)$ if $i<p-1$, and $\sigma
x=\sigma^p\zeta(y)=\zeta(\sigma y)$ if $i=p-1$, so $i(\sigma x)\equiv
i(x)+1\pmod p$.  By the uniqueness in (ii), $\{x\in
X_{w,\alpha}:i(x)=0\}=\zeta(X_\alpha)$, and $\zeta$ is continuous and
injective by Lemma \ref{lem:tuning}(ii), so restricts to a homeomorphism from
the compact set $X_\alpha$ onto $\zeta(X_\alpha)$.

(iv) Let $x=\sigma^i\zeta(y)$ with $0\le i<p$ and $y\in X_\alpha$.  The
orbit closure of $x$ contains $\sigma^{p-i}x=\zeta(\sigma y)$, therefore, since $\zeta$
is continuous and $\sigma^p\circ\zeta=\zeta\circ\sigma$, it contains the
image under $\zeta$ of the orbit closure of $\sigma y$, which is equal to
$\zeta(X_\alpha)$ by minimality of $X_\alpha$; hence it contains
$X_{w,\alpha}$.  Thus $X_{w,\alpha}$ is minimal, and it is infinite, since
$\zeta$ is injective and $X_\alpha$ is infinite.

Now let $\nu\in\m(X_{w,\alpha},\sigma)$ and $f\in C(X_{w,\alpha})$, and
define $\hat f\in C(X_\alpha)$ by $\hat f(y):=S_pf(\zeta(y))$, so that
$\int\hat f\,d\mu_\alpha=\sum_{i<p}\int f\circ\sigma^i\circ\zeta\,d\mu_\alpha
=p\int f\,d\mu_{w,\alpha}$.  For $y\in X_\alpha$, $0\le i<p$ and $n\in\N$,
the sum $S_{pn}f(\sigma^i\zeta(y))$ differs by at most
$2p\|f\|_{C(X_{w,\alpha})}$ from $S_{pn}f(\zeta(y))$, which equals
$S_n\hat f(y)$ since $\sigma^p\circ\zeta=\zeta\circ\sigma$.  
As $X_\alpha$ is
uniquely ergodic, $\frac{1}{n}S_n\hat f\to\int\hat f\,d\mu_\alpha$ uniformly on
$X_\alpha$ (cf.~\cite[Thm.~6.19]{Wal82}), so $\frac1{pn}S_{pn}f\to\int
f\,d\mu_{w,\alpha}$ uniformly on $X_{w,\alpha}$.  Since
$\int\frac1{pn}S_{pn}f\,d\nu=\int f\,d\nu$ for all $n\in\N$, by invariance
of $\nu$, it follows that $\int f\,d\nu=\int f\,d\mu_{w,\alpha}$.
As $f$ is arbitrary, $\nu=\mu_{w,\alpha}$; and as
$\m(X_{w,\alpha},\sigma)$ is non-empty, $X_{w,\alpha}$ is uniquely ergodic,
with invariant measure $\mu_{w,\alpha}$.  This is not a periodic orbit
measure, since the minimal infinite set $X_{w,\alpha}$ does not contain any periodic
orbit.  Finally, for $f=\ind_{[w]}$, Lemma \ref{lem:tuning}(iii) gives
$\hat f(y)=y_0=\Phi(y)$ for $y\in X_\alpha$, so
$p\mu_{w,\alpha}([w])=\int\Phi\,d\mu_\alpha=\alpha$, by Lemma
\ref{lem:sturmian}(iii).
\end{proof}

We now show that, for tuning words of length at least $2$, tuned Sturmian
subshifts are not topologically conjugate to Sturmian subshifts.

\begin{prop}\label{prop:tuned-not-sturmian}
Let $w$ be a tuning word of length $p\ge2$, let $\alpha\in(0,1)$ be
irrational, and let $X_{w,\alpha}$ be the tuned Sturmian subshift corresponding to $w$ and
$\alpha$.  Then $X_{w,\alpha}$ is not topologically conjugate to the
Sturmian subshift $X_{\alpha'}$ for any irrational $\alpha'\in(0,1)$.
Consequently no element of $X_{w,\alpha}$ is a Sturmian sequence over an
alphabet $\{\ell,m\}$, in the sense of Remark \ref{rem:alphabets}; in
particular, $d_{\beta_{w,\alpha}}(1)=\zeta_w(\bfa(\alpha))$ is not a Sturmian sequence over any $\{\ell,m\}$.
\end{prop}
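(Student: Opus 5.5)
The plan is to separate $X_{w,\alpha}$ from Sturmian subshifts using a conjugacy invariant, namely \emph{total minimality}: whether $\sigma^n$ is minimal on the space for every $n\in\N$. This property is preserved by topological conjugacy, since a conjugacy $\pi$ with $\pi\circ\sigma=\sigma\circ\pi$ also conjugates $\sigma^n$ to $\sigma^n$. I will show that $X_{w,\alpha}$ is not totally minimal when $p\ge2$, whereas every Sturmian subshift $X_{\alpha'}$ is.

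\emph{Step 1: $X_{w,\alpha}$ is not totally minimal.} By Lemma \ref{lem:tuned-subshift}(iii), the function $i(x)$ depends only on $x_{[0,(N_\alpha+1)p)}$, so it is locally constant. Hence $Z:=\{x\in X_{w,\alpha}:i(x)=0\}=\zeta_w(X_\alpha)$ is clopen. It is non-empty, and it is proper, because $\sigma Z$ consists of the points with $i=1\ne0$, using $p\ge2$. The congruence $i(\sigma x)\equiv i(x)+1\pmod p$ gives $\sigma^p(Z)\subseteq Z$. So $Z$ is a proper non-empty closed $\sigma^p$-invariant set, and $\sigma^p$ is not minimal on $X_{w,\alpha}$.

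\emph{Step 2: $X_{\alpha'}$ is totally minimal.} I will use the function $\tau$ of Proposition \ref{prop:rigidity}. By part (i), $\tau(\bfb^{\pm}(t))=H_0(t^\pm)$, and by part (ii), $\tau\circ\sigma\equiv\tau+\alpha'\pmod 1$. Therefore $\pi:=\tau\bmod1$ is a continuous map $X_{\alpha'}\to\R/\Z$ satisfying $\pi\circ\sigma=R_{\alpha'}\circ\pi$. It is surjective, since $\pi(\bfb(t))=\{t\}$.

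Let $n\in\N$ and let $Z'\subseteq X_{\alpha'}$ be non-empty, closed and $\sigma^n$-invariant. Then $\pi(Z')$ is a non-empty closed set invariant under the rotation $R_{n\alpha'}$. Since $n\alpha'$ is irrational, this rotation is minimal, so $\pi(Z')=\R/\Z$.

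Now take $t$ outside the countable set $\{-j\alpha'\}+\Z$, $j\in\Z$. I claim the fibre $\pi^{-1}(\{t\})$ is the single point $\bfb(t)=\bfb^-(t)$. Indeed, any point of $X_{\alpha'}$ is $\bfb^\pm(s)$ with $\pi$-value $\{s\}$ (Lemma \ref{lem:sturmian}(i) together with Proposition \ref{prop:rigidity}(i)). Moreover $\bfb^+(s)=\bfb^-(s)$ whenever $s$ is not a discontinuity, that is, not congruent to some $-j\alpha'$ with $j\ge0$.

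It follows that $Z'$ contains $\bfb(t)$ for a dense set of $t$. Every point $\bfb^\pm(s)$ is a limit of such points $\bfb(t)$, with $t\to s^\pm$. Since $Z'$ is closed, $Z'=X_{\alpha'}$, so $\sigma^n$ is minimal on $X_{\alpha'}$. Together with Step 1, this proves the first assertion of the proposition.

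\emph{Step 3: the consequences.} Suppose some $x\in X_{w,\alpha}$ lies in $\iota(X_{\alpha'})$ for some alphabet $\{\ell,m\}$. Both subshifts are minimal: $X_{w,\alpha}$ by Lemma \ref{lem:tuned-subshift}(iv), and $\iota(X_{\alpha'})$ because $\iota$ is continuous, injective and commutes with $\sigma$. So both equal the orbit closure of $x$. Since $\iota$ is an isometric conjugacy from $X_{\alpha'}$ onto $\iota(X_{\alpha'})$ (Remark \ref{rem:alphabets}), $X_{w,\alpha}$ would be conjugate to $X_{\alpha'}$, contradicting the first assertion. The final claim is the special case $x=\zeta_w(\bfa(\alpha))$, which lies in $X_{w,\alpha}$ by Lemma \ref{lem:tuned-subshift}(i).

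The main obstacle is Step 2, specifically the claim that the points with singleton fibre are dense in $X_{\alpha'}$. This requires care with the one-sided codings $\bfb^\pm$ at the countably many discontinuity points.
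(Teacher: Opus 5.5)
Your proof is correct, but it separates the two systems by a different invariant from the one the paper uses.

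The paper argues through unique ergodicity and the values the invariant measure takes on clopen sets. The clopen set $\zeta_w(X_\alpha)$ has $\mu_{w,\alpha}$-measure $1/p$. By Lemmas \ref{lem:cuts} and \ref{lem:sturmian}(ii), every clopen subset of $X_{\alpha'}$ has $\mu_{\alpha'}$-measure in $\Z+\alpha'\Z$. A conjugacy would carry $\mu_{w,\alpha}$ to $\mu_{\alpha'}$, which is impossible.

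You instead use total minimality, a purely topological invariant. On the tuned side you use the same clopen set $\zeta_w(X_\alpha)$: being proper and $\sigma^p$-invariant, it shows that $\sigma^p$ is not minimal. Equivalently, $x\mapsto e^{2\pi i\,i(x)/p}$ is a continuous eigenfunction with the rational eigenvalue $e^{2\pi i/p}$. On the Sturmian side you must show that $\tau \bmod 1$ makes $X_{\alpha'}$ an almost one-to-one extension of the irrational rotation. That requires the fibre analysis with $\bfb^{\pm}$, which you carry out correctly, including the point that $\tau(\bfb^{-}(s))=1\equiv0$ when $s\in\Z$.

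Comparing the two routes:
\begin{itemize}
\item The paper's route is shorter, given that the cylinder measures $|I_v|$ are already computed, and it needs no information about the fibres of a factor map.
\item Your route avoids invariant measures altogether and does not need the unique ergodicity of $X_{w,\alpha}$.
\item Both arguments actually give slightly more: $X_{w,\alpha}$ is not even a factor of $X_{\alpha'}$. Pull back the proper closed $\sigma^p$-invariant set, or the clopen set of measure $1/p$.
\end{itemize}

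Your Step 3 is the same as the paper's deduction of the remaining assertions.
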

\begin{proof}
By Lemma \ref{lem:tuned-subshift}(iii), the set
$\zeta_w(X_\alpha)=\{x\in X_{w,\alpha}:i(x)=0\}$ is clopen in
$X_{w,\alpha}$, as $i(x)$ depends only on $x_{[0,(N_\alpha+1)p)}$.  For
$0\le j<p$ and $y\in X_\alpha$ we have $i(\sigma^j\zeta_w(y))=j$, by Lemma
\ref{lem:tuned-subshift}(ii), so $\sigma^j\zeta_w(y)\in\zeta_w(X_\alpha)$ if
and only if $j=0$, and the definition of $\mu_{w,\alpha}$ gives
$\mu_{w,\alpha}(\zeta_w(X_\alpha))=1/p$.

Now let $\alpha'\in(0,1)$ be irrational.  By Lemmas \ref{lem:sturmian}(ii)
and \ref{lem:cuts}, with $\alpha'$ in place of $\alpha$, each cylinder set
$[v]_{X_{\alpha'}}$ has $\mu_{\alpha'}$-measure $|I_v|$, where $I_v$ is
either empty or an interval with endpoints in
$\{1\}\cup\{\{-j\alpha'\}:j\ge0\}$, which is contained in $\Z+\alpha'\Z$.
If $D$ is a clopen subset of $X_{\alpha'}$, then $D$ is a finite disjoint
union of cylinder sets, as $D$ is compact and open, so
$\mu_{\alpha'}(D)\in\Z+\alpha'\Z$; in particular $\mu_{\alpha'}(D)\ne1/p$,
as $p\ge2$ and $\alpha'$ is irrational.

If $h:X_{w,\alpha}\to X_{\alpha'}$ were a topological conjugacy, then
$\mu_{w,\alpha}\circ h^{-1}$ would be invariant, as $\mu_{w,\alpha}$ is
(Lemma \ref{lem:tuned-subshift}(iv)), and hence equal to $\mu_{\alpha'}$, by
unique ergodicity of $X_{\alpha'}$ (Lemma \ref{lem:sturmian}(i)); so the
clopen set $h(\zeta_w(X_\alpha))$ would have $\mu_{\alpha'}$-measure
$\mu_{w,\alpha}(\zeta_w(X_\alpha))=1/p$, contradicting the above,
so the first statement is proved.

Lastly, suppose that some $x\in X_{w,\alpha}$ is a Sturmian sequence over
an alphabet $\{\ell,m\}$,  i.e.~$x\in\iota(X_{\alpha'})$ for some
irrational $\alpha'\in(0,1)$, with $\iota$ as in Remark
\ref{rem:alphabets}.  As $X_{w,\alpha}$ and $\iota(X_{\alpha'})$ are
minimal (Lemma \ref{lem:tuned-subshift}(iv) and Remark
\ref{rem:alphabets}), both are equal to the orbit closure of $x$.  So
$X_{w,\alpha}=\iota(X_{\alpha'})$, and $\iota$ restricts to a topological
conjugacy from $X_{\alpha'}$ onto $X_{w,\alpha}$, which contradicts the
first statement.
\end{proof}

\subsection{Lipschitz cohomology of tuned Sturmian subshifts}

The proof of Theorem \ref{thm:tuned} has two main ingredients: a version of
Theorem \ref{thm:D} for tuned Sturmian subshifts, which we establish in this
subsection (Theorem \ref{thm:tuned-rigidity}), and a version of Proposition
\ref{prop:Phimax} for tuned Sturmian beta-shifts, which we establish in the
next subsection (see Proposition \ref{prop:tuned-max}).

By Lemma \ref{lem:tuned-subshift}(ii) and (iii), the tuned Sturmian
subshift $X_{w,\alpha}$ is the union of the $p$ pairwise disjoint clopen
sets $\sigma^i\zeta_w(X_\alpha)$, $0\le i<p$, which $\sigma$ permutes
cyclically, and the first return map of $\sigma$ to $\zeta_w(X_\alpha)$ is
$\sigma^p$, which $\zeta_w$ conjugates to $\sigma$ on $X_\alpha$.  Passing
from a function $f$ on $X_{w,\alpha}$ to the function $S_pf\circ\zeta_w$ on
$X_\alpha$, and applying Theorem \ref{thm:D} to $X_\alpha$ with the metric
$d_{\theta^p}$,
we obtain the following version of Theorem
\ref{thm:D}, in which the indicator function of the cylinder set $[w]$
plays the role of the digit function: by Lemma \ref{lem:tuning}(iii), the
word $w$ occurs in $\zeta_w(y)$ precisely at the positions $pk$ with
$y_k=1$, so that $S_p\ind_{[w]}(\zeta_w(y))=\Phi(y)$ for all $y\in\Sigma$.

\begin{thm}[\textbf{Lipschitz cohomological rigidity for tuned Sturmian
subshifts}]\label{thm:tuned-rigidity}
Let $w$ be a tuning word of length $p$, let $\alpha\in(0,1)$ be
irrational, with $N_\alpha:=\lceil1/\alpha\rceil$, let $X_{w,\alpha}$ and
$\mu_{w,\alpha}$ be the tuned Sturmian subshift and measure corresponding to $w$ and
$\alpha$, and let $\theta\in(0,1)$.  Then for
all $f\in\lip(X_{w,\alpha},d_\theta)$ there
is a unique constant $c_f\in\R$, and a function $\tilde
f\in\lip(X_{w,\alpha},d_\theta)$ which is unique up to an additive constant,
such that
\begin{equation}\label{eq:tuned-cohomology}
  f-\int f\,d\mu_{w,\alpha}=c_f\Bigl(\ind_{[w]}-\frac\alpha p\Bigr)+\tilde
  f\circ\sigma-\tilde f\qquad\text{on }X_{w,\alpha}.
\end{equation}
Moreover,
\begin{equation}\label{eq:tuned-bounds}
  |c_f|\le\frac{p\,\theta^{1-p}\,|f|_{\lip(X_{w,\alpha})}}{(1-\theta^p)^2}
  \qquad\text{and}\qquad
  |\tilde f|_{\lip(X_{w,\alpha})}\le\frac{5p^2\theta^{2-(N_\alpha+2)p}\,|f|_{\lip(X_{w,\alpha})}}{(1-\theta^p)^3} .
\end{equation}
Consequently the quotient of $\lip(X_{w,\alpha},d_\theta)$ by the subspace
spanned by the constants and the Lipschitz coboundaries is one-dimensional,
and is spanned by the equivalence class of $\ind_{[w]}$.  The functional
$f\mapsto c_f$ is the unique linear functional on $\lip(X_{w,\alpha},d_\theta)$
which vanishes on the constants and on the Lipschitz coboundaries and
satisfies $c_{\ind_{[w]}}=1$.
\end{thm}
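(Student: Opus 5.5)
The plan is to reduce to Theorem \ref{thm:D} via the induced system on the clopen set $Z:=\zeta_w(X_\alpha)=\{x:i(x)=0\}$, whose first return map is $\sigma^p$, conjugated by $\zeta:=\zeta_w$ to $\sigma$ on $X_\alpha$. Given $f\in\lip(X_{w,\alpha},d_\theta)$ with $L:=|f|_{\lip(X_{w,\alpha})}$, set $\hat f:=S_pf\circ\zeta$ on $X_\alpha$.

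\emph{Regularity of $\hat f$.} By Lemma \ref{lem:tuning}(ii), if $y,y'$ first differ at $k$ then $\zeta(y),\zeta(y')$ first differ at $pk+r\ge pk$, so $\sigma^h\zeta(y),\sigma^h\zeta(y')$ first differ at index at least $pk-h$. Hence $|\hat f(y)-\hat f(y')|\le L\sum_{h<p}\theta^{pk-h}\le pL\theta^{1-p}(\theta^p)^k$. So $\hat f$ is $d_{\theta^p}$-Lipschitz with $|\hat f|\le p\theta^{1-p}L$. Theorem \ref{thm:D} with $\theta^p$ then gives $\hat f-\int\hat f\,d\mu_\alpha=c(\Phi-\alpha)+u\circ\sigma-u$, where $|c|\le p\theta^{1-p}L/(1-\theta^p)^2$, which is the first bound of \eqref{eq:tuned-bounds}. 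Set $c_f:=c$ and note $\int\hat f\,d\mu_\alpha=p\int f\,d\mu_{w,\alpha}$.

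\emph{Lifting.} Let $g:=f-\int f\,d\mu_{w,\alpha}-c_f(\ind_{[w]}-\alpha/p)$. Since $S_p\ind_{[w]}\circ\zeta=\Phi$, we have $S_pg\circ\zeta=u\circ\sigma-u$ on $X_\alpha$. Define $\tilde f$ on $\sigma^i\zeta(X_\alpha)$, $0\le i<p$, by $\tilde f(\sigma^i\zeta(y)):=u(y)+S_ig(\zeta(y))$. This is well defined by the uniqueness of $i(x)$ and the injectivity of $\zeta$. One checks $\tilde f\circ\sigma-\tilde f=g$: for $i<p-1$ this is immediate, and for $i=p-1$ it follows from the identity above and $\sigma^p\zeta(y)=\zeta(\sigma y)$. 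This gives \eqref{eq:tuned-cohomology}.

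\emph{The Lipschitz bound for $\tilde f$ is the main obstacle.} Take $x,x'$ first differing at index $\ell$. If $\ell<(N_\alpha+1)p$, I will just bound $|\tilde f(x)-\tilde f(x')|\le 2\|\tilde f\|_\infty$ and divide by $\theta^{\ell}\ge\theta^{(N_\alpha+1)p}$. For this I need a sup bound on $\tilde f$, normalized so that $u$ vanishes somewhere. It comes from $\|u\|_\infty\le|u|$ (the diameter is at most $1$) and $\|S_ig\|\le p(2L+2|c_f|)$ after centring $f$, using that $f-\int f$ has oscillation at most $L$. If $\ell\ge(N_\alpha+1)p$, then $i(x)=i(x')=i$ by Lemma \ref{lem:tuned-subshift}(iii), and $x=\sigma^i\zeta(y)$, $x'=\sigma^i\zeta(y')$ with $y,y'$ agreeing on roughly $(\ell+i)/p-1$ symbols. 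I then estimate the two parts separately. The term $|u(y)-u(y')|$ is controlled by the $d_{\theta^p}$-Lipschitz bound of Theorem \ref{thm:D}, costing a factor $\theta^{-p}$-type. The term $|S_ig(\zeta y)-S_ig(\zeta y')|$ is controlled directly from the Lipschitz constants of $f$ and $\ind_{[w]}$, whose seminorm is at most $\theta^{1-p}$. Collecting the constants should give something of the form $5p^2\theta^{2-(N_\alpha+2)p}L/(1-\theta^p)^3$. I expect to need some slack in matching the exact constant, and I would tune the case split (for instance, use the threshold $(N_\alpha+2)p$) to land on it.

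\emph{Uniqueness.} If $\ind_{[w]}-\alpha/p=v\circ\sigma-v$ with $v$ Lipschitz, then $S_p(\ind_{[w]}-\alpha/p)\circ\zeta=\Phi-\alpha=(v\circ\zeta)\circ\sigma-v\circ\zeta$. Here $v\circ\zeta$ is $d_{\theta^p}$-Lipschitz, since $d_\theta(\zeta y,\zeta y')\le d_{\theta^p}(y,y')$, and this contradicts Proposition \ref{prop:rigidity}(iv). Uniqueness of $\tilde f$ up to constants follows from minimality (Lemma \ref{lem:tuned-subshift}(iv)), as in the proof of Proposition \ref{prop:rigidity}. The statements about the quotient and the functional then follow exactly as in Proposition \ref{prop:rigidity}.
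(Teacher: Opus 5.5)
The overall architecture matches the paper, and your $\tilde f$ coincides with the paper's transfer function. That architecture is: pass to $\hat f=S_pf\circ\zeta$ on $(X_\alpha,d_{\theta^p})$, apply Theorem \ref{thm:D}, lift, and prove uniqueness via Proposition \ref{prop:rigidity}(iv). But the Lipschitz estimate you flag as the main obstacle fails as planned.

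\textbf{Where it fails.} The backward parametrization $x=\sigma^i\zeta(y)$ is neither unique nor continuous in $x$. Suppose $w$ ends in zeros, i.e.\ $r<p-1$; for example $w=10$, which comes from the golden mean. Then for $r<i<p$ we have $w_{[i,p)}=w^-_{[i,p)}$, so $\sigma^i\zeta(y)$ does not see $y_0$.
\begin{itemize}
\item \emph{Well-definedness.} The map $\sigma^i\circ\zeta$ is not injective on $X_\alpha$: it identifies $\bfa=1\sigma\bfa$ and $0\sigma\bfa$. So injectivity of $\zeta$ does not make $\tilde f$ well defined. Your $\tilde f$ is nonetheless well defined, but only because of the cocycle identity.
\item \emph{The estimate.} Take $w=10$, $i=1$, and $z=\bfb(t)$, $z'=\bfb(t')$ with $t<\alpha<t'$ close to $\alpha$. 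Their only preimages in $X_\alpha$ are $y=\bfb(t-\alpha)$ and $y'=\bfb(t'-\alpha)$, which begin with $1$ and $0$ respectively (Lemma \ref{lem:coding}). Yet $x=\sigma\zeta(y)=0\zeta(z)$ and $x'=\sigma\zeta(y')=0\zeta(z')$ agree on an arbitrarily long prefix. So your claim that $y,y'$ agree on about $(\ell+i)/p-1$ symbols is false.
\item \emph{Concrete failure.} For $f=\ind_{[w]}\circ\sigma$ one finds $c_f=1$ and $u=\Phi$ up to a constant. Then $u(y)-u(y')=1$ and $g(\zeta y)-g(\zeta y')=-1$. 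The two terms you estimate separately are each of order $1$; only their sum is small.
\end{itemize}

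\textbf{The fix.} Use $S_pg\circ\zeta=u\circ\sigma-u$ to rewrite your transfer function along the forward orbit:
\[
\tilde f(x)=u\bigl(\zeta^{-1}(\sigma^nx)\bigr)-S_ng(x),\qquad n:=p-i(x).
\]
This is the paper's definition, and it is manifestly well defined.
\begin{itemize}
\item \emph{Case $\ell\ge(N_\alpha+1)p$.} Here $n(x)=n(x')$. With $z:=\zeta^{-1}(\sigma^nx)$ and $z':=\zeta^{-1}(\sigma^nx')$, Lemma \ref{lem:tuning}(ii) gives $d_{\theta^p}(z,z')=\theta^{-r}d_\theta(\sigma^nx,\sigma^nx')\le\theta^{1-2p}d_\theta(x,x')$. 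For $j<n$ the points $\sigma^jx,\sigma^jx'$ share at least $p$ symbols, so the $\ind_{[w]}$ terms cancel and $|S_ng(x)-S_ng(x')|\le p\theta^{1-p}L\,d_\theta(x,x')$.
\item \emph{Case $\ell<(N_\alpha+1)p$.} Bound $|\tilde f(x)-\tilde f(x')|$ by the oscillation of $u$ (at most $|u|_{\lip}$) plus $2p(L+|c_f|)$, using $|g|\le L+|c_f|$. Your $2\|\tilde f\|_\infty$ with $p(2L+2|c_f|)$ loses about a factor $2$. Raising the threshold to $(N_\alpha+2)p$ makes the factor $\theta^{-\ell}$ worse, not better.
\end{itemize}
With $Q:=p^2\theta^{1-p}(1-\theta^p)^{-3}$, each of $|u|_{\lip}$, $p\theta^{1-p}L$ and $p|c_f|$ is at most $QL$. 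The two cases then give $2\theta^{1-(N_\alpha+1)p}QL$ and $5\theta^{1-(N_\alpha+1)p}QL$ respectively, which is exactly the stated constant.
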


\begin{proof}
Write $\zeta:=\zeta_w$, $\mu:=\mu_{w,\alpha}$ and
$L:=|f|_{\lip(X_{w,\alpha})}$.  The proof is in four steps.

\emph{Step 1: reduction to Theorem \ref{thm:D}.}
Define $\hat f:X_\alpha\to\R$ by $\hat f(y):=S_pf(\zeta(y))$.  
By
\eqref{eq:Lip-shift} and Lemma \ref{lem:tuning}(ii),
$d_\theta(\sigma^j\zeta(y),\sigma^j\zeta(y'))\le\theta^{-j}d_\theta(\zeta(y),\zeta(y'))
\le\theta^{-j}d_{\theta^p}(y,y')$ for $y,y'\in\Sigma$ and $j\ge0$.
Hence $|\hat f|_{\lip(X_\alpha,d_{\theta^p})}\le
L\sum_{j<p}\theta^{-j}\le p\,\theta^{1-p}L$, and Theorem \ref{thm:D}, with
$\theta^p$ in place of $\theta$, gives $c\in\R$ and $\hat
g\in\lip(X_\alpha,d_{\theta^p})$ such that $\hat f-\int\hat
f\,d\mu_\alpha=c(\Phi-\alpha)+\hat g\circ\sigma-\hat g$ on $X_\alpha$, and
\begin{equation}\label{eq:hat-bounds}
  |c|\le\frac{p\,\theta^{1-p}L}{(1-\theta^p)^2}
  \qquad\text{and}\qquad
  |\hat g|_{\lip(X_\alpha,d_{\theta^p})}\le\frac{p\,\theta L}{(1-\theta^p)^3} .
\end{equation}
By the definition of $\mu$, $\int\hat f\,d\mu_\alpha=\sum_{i<p}\int
f\circ\sigma^i\circ\zeta\,d\mu_\alpha=p\int f\,d\mu$, and by Lemma
\ref{lem:tuning}(iii), $S_p\ind_{[w]}(\zeta(y))=y_0=\Phi(y)$ for $y\in
X_\alpha$.  So the function $R:=f-\int f\,d\mu-c(\ind_{[w]}-\alpha/p)$ on
$X_{w,\alpha}$ satisfies
\begin{equation}\label{eq:block-sum}
  S_pR(\zeta(y))=\hat f(y)-\int\hat f\,d\mu_\alpha-c\bigl(\Phi(y)-\alpha\bigr)
  =\hat g(\sigma y)-\hat g(y)\qquad(y\in X_\alpha) .
\end{equation}

\emph{Step 2: the transfer function.}
For $x\in X_{w,\alpha}$ let $n(x):=p-i(x)$, with $i(x)$ as in Lemma
\ref{lem:tuned-subshift}(ii), so that $n(x)$ is the least $n\ge1$ with
$\sigma^nx\in\zeta(X_\alpha)$, by Lemma \ref{lem:tuned-subshift}(iii).
Define
\[
  \tilde f(x):=\hat g\bigl(\zeta^{-1}(\sigma^{n(x)}x)\bigr)-S_{n(x)}R(x) .
\]
For $y\in X_\alpha$ we have $n(\zeta(y))=p$, so \eqref{eq:block-sum} gives
$\tilde f(\zeta(y))=\hat g(\sigma y)-S_pR(\zeta(y))=\hat g(y)$.  Thus
$\tilde f=\hat g\circ\zeta^{-1}$ on $\zeta(X_\alpha)$, and therefore $\tilde
f(x)=\tilde f(\sigma^{n(x)}x)-S_{n(x)}R(x)$ for all $x\in X_{w,\alpha}$.  Now
let $x\in X_{w,\alpha}$.  If $n(x)\ge2$ then $n(\sigma x)=n(x)-1$, so
$\tilde f(\sigma x)=\tilde f(\sigma^{n(x)}x)-S_{n(x)-1}R(\sigma x)$, and this identity
clearly also holds if $n(x)=1$.
Subtracting the expression for $\tilde f(x)$ gives $\tilde f(\sigma
x)-\tilde f(x)=S_{n(x)}R(x)-S_{n(x)-1}R(\sigma x)=R(x)$.  So $R=\tilde
f\circ\sigma-\tilde f$ on $X_{w,\alpha}$, which is
\eqref{eq:tuned-cohomology} with $c_f:=c$.

\emph{Step 3: the Lipschitz bound.}
Set $M:=(N_\alpha+1)p$ and $Q:=p^2\theta^{1-p}(1-\theta^p)^{-3}$, so that each of 
$|\hat g|_{\lip(X_\alpha,d_{\theta^p})}$, $p\,\theta^{1-p}L$ and $p|c|$ is
at most $QL$, by \eqref{eq:hat-bounds}.  Let $x,x'\in X_{w,\alpha}$ be
distinct, and let $\ell$ be the smallest index with $x_\ell\ne x'_\ell$, so
that $d_\theta(x,x')=\theta^\ell$.

Suppose first that $\ell<M$, so that $d_\theta(x,x')\ge\theta^{M-1}$.  As
$X_\alpha$ and $X_{w,\alpha}$ have diameter at most $1$, all values of
$\hat g$ lie in an interval of length $|\hat
g|_{\lip(X_\alpha,d_{\theta^p})}$, and all values of $f$ lie in an interval
of length $L$, which contains $\int f\,d\mu$.  So $|R|\le L+|c|$, and the
definition of $\tilde f$ gives
\[
  |\tilde f(x)-\tilde f(x')|\le|\hat g|_{\lip(X_\alpha,d_{\theta^p})}+2p(L+|c|)
  \le5QL\le5\theta^{1-M}QL\,d_\theta(x,x') .
\]

Suppose now that $\ell\ge M$.  Then $i(x)=i(x')$, by Lemma
\ref{lem:tuned-subshift}(iii), so $n(x)=n(x')=:n$, and $\tilde f(x)-\tilde
f(x')=\hat g(z)-\hat g(z')-\bigl(S_nR(x)-S_nR(x')\bigr)$, where
$z:=\zeta^{-1}(\sigma^nx)$ and $z':=\zeta^{-1}(\sigma^nx')$.  By Lemma
\ref{lem:tuning}(ii) and \eqref{eq:Lip-shift},
\[
  d_{\theta^p}(z,z')=\theta^{-r}d_\theta(\sigma^nx,\sigma^nx')
  \le\theta^{-r-n}d_\theta(x,x')\le\theta^{1-2p}d_\theta(x,x') ,
\]
with $r<p$ as in Definition \ref{def:tuning}.  For $0\le j<n$ the points
$\sigma^jx$ and $\sigma^jx'$ have the same first $\ell-j\ge p$ symbols, so
$\ind_{[w]}(\sigma^jx)=\ind_{[w]}(\sigma^jx')$, and therefore
$|R(\sigma^jx)-R(\sigma^jx')|=|f(\sigma^jx)-f(\sigma^jx')|\le
L\theta^{-j}d_\theta(x,x')$.  Hence, as $M\ge2p$,
\[
  |\tilde f(x)-\tilde f(x')|\le\bigl(\theta^{1-2p}|\hat
  g|_{\lip(X_\alpha,d_{\theta^p})}+p\,\theta^{1-p}L\bigr)\,d_\theta(x,x')
  \le2\theta^{1-M}QL\,d_\theta(x,x') .
\]

In both cases
\[
  |\tilde f(x)-\tilde f(x')|\le5\theta^{1-M}QL\,d_\theta(x,x')
  =\frac{5p^2\theta^{2-(N_\alpha+2)p}L}{(1-\theta^p)^3}\,d_\theta(x,x') ,
\]
which, together with \eqref{eq:hat-bounds}, proves the required \eqref{eq:tuned-bounds}.

\emph{Step 4: uniqueness, the functional $f\mapsto c_f$, and the quotient.}
Suppose that $\ind_{[w]}-\alpha/p=u\circ\sigma-u$ on $X_{w,\alpha}$ for
some $u\in\lip(X_{w,\alpha},d_\theta)$.  For $y\in X_\alpha$, summing this
identity over the points $\sigma^j\zeta(y)$ with $0\le j<p$, and using
$S_p\ind_{[w]}(\zeta(y))=\Phi(y)$ and $\sigma^p\circ\zeta=\zeta\circ\sigma$,
gives $\Phi(y)-\alpha=u(\zeta(\sigma y))-u(\zeta(y))$.  As $u\circ\zeta$ lies
in $\lip(X_\alpha,d_{\theta^p})$, by Lemma \ref{lem:tuning}(ii), it follows
that $\Phi-\alpha$ is a Lipschitz coboundary on $(X_\alpha,d_{\theta^p})$,
contradicting Proposition \ref{prop:rigidity}(iv), with $\theta^p$ in place
of $\theta$.  So $\ind_{[w]}-\alpha/p$ is not a Lipschitz coboundary on
$X_{w,\alpha}$.

Now suppose that \eqref{eq:tuned-cohomology} also holds with $c_f$ and
$\tilde f$ replaced by $c'\in\R$ and $\tilde f'\in\lip(X_{w,\alpha},d_\theta)$.
Then $h:=\tilde f'-\tilde f$ satisfies
$(c_f-c')(\ind_{[w]}-\alpha/p)=h\circ\sigma-h$, so $c'=c_f$ and
$h\circ\sigma=h$; hence $h$ is constant, as it is continuous and
$X_{w,\alpha}$ is minimal (Lemma \ref{lem:tuned-subshift}(iv)).  This
proves that $c_f$ is unique, and that $\tilde f$ is unique up to an additive
constant.

By the uniqueness of $c_f$, the functional $f\mapsto c_f$ is linear.  It
vanishes on the constants and on the Lipschitz coboundaries, as for these
functions \eqref{eq:tuned-cohomology} holds with $c_f=0$, since $\mu$
is invariant; and $c_{\ind_{[w]}}=1$, as \eqref{eq:tuned-cohomology}
holds for $f=\ind_{[w]}$ with $c_f=1$ and $\tilde f=0$, since
$\int\ind_{[w]}\,d\mu=\alpha/p$ by Lemma \ref{lem:tuned-subshift}(iv).  By
\eqref{eq:tuned-cohomology}, for all $f\in\lip(X_{w,\alpha},d_\theta)$ the
function $f-c_f\ind_{[w]}$ is the sum of a constant and a Lipschitz
coboundary.  Hence all linear functionals on $\lip(X_{w,\alpha},d_\theta)$
which vanish on the constants and on the Lipschitz coboundaries, and take
the value $1$ at $\ind_{[w]}$, take the value $c_f$ at $f$; and the
quotient of $\lip(X_{w,\alpha},d_\theta)$ by the subspace spanned by the
constants and the Lipschitz coboundaries is spanned by the equivalence class
of $\ind_{[w]}$, which is non-zero, as $c_{\ind_{[w]}}=1$, so this quotient
is one-dimensional.  This proves the last two statements of the theorem.
\end{proof}

\begin{rem}[Relation between Theorems \ref{thm:tuned-rigidity} and
\ref{thm:D}]\label{rem:tuned-rigidity}
For the tuning word $w=1$ we have $p=1$, $X_{w,\alpha}=X_\alpha$,
$\mu_{w,\alpha}=\mu_\alpha$ and $\ind_{[w]}=\Phi$, so in this case Theorem
\ref{thm:tuned-rigidity} reduces to Theorem \ref{thm:D}, except that its
bound on $|\tilde f|_{\lip(X_\alpha)}$ is weaker than \eqref{eq:u-bound}.
The one-dimensionality in Theorem \ref{thm:D} is invariant under topological
conjugacy: if $\pi:X\to X'$ is a topological conjugacy between subshifts,
then by uniform continuity there exists $k\in\N$ such that $\pi(x)_0$
depends only on $x_{[0,k)}$, so that $d_\theta(\pi x,\pi
y)\le\theta^{1-k}d_\theta(x,y)$ for all $x,y\in X$, and similarly for
$\pi^{-1}$; hence $f\mapsto f\circ\pi$ maps $\lip(X',d_\theta)$ onto
$\lip(X,d_\theta)$, and the constants and Lipschitz coboundaries on $X'$
onto those on $X$, so it induces an isomorphism between the corresponding
quotients.  For $p\ge2$ the tuned Sturmian subshift $X_{w,\alpha}$ is not
topologically conjugate to a Sturmian subshift, by Proposition
\ref{prop:tuned-not-sturmian}, so Theorem \ref{thm:tuned-rigidity} does not
follow from Theorem \ref{thm:D} by a change of coordinates.
\end{rem}

\subsection{The maximizing property}

The following version of Proposition \ref{prop:Phimax} is the second main
ingredient in the proof of Theorem \ref{thm:tuned} below.  Its proof
considers orbit segments in the tuned Sturmian beta-shift $Y_{w,\alpha}$, all
of whose words of length $(N_\alpha+1)p+1$ occur in $X_{w,\alpha}$, where $p$
is the length of the tuning word $w$.  Along such a segment, the blocks $w$
and $w^-$ follow each other in the same manner
as the digits $1$ and $0$ do in a point of the
Sturmian beta-shift $Y_\alpha$, so Proposition \ref{prop:Phimax} bounds the
number of blocks $w$.

\begin{prop}\label{prop:tuned-max}
Let $w$ be a tuning word of length $p$, let $\alpha\in(0,1)$ be
irrational, with $N_\alpha:=\lceil1/\alpha\rceil$, let $Y_{w,\alpha}$,
$X_{w,\alpha}$ and $\mu_{w,\alpha}$ be
respectively the tuned Sturmian beta-shift,
subshift and measure corresponding to $w$ and $\alpha$, and
let $\theta\in(0,1)$.  Define
$\Psi_w,\Phi_w:Y_{w,\alpha}\to\R$ by $\Psi_w(x):=d_\theta(x,X_{w,\alpha})$
and $\Phi_w:=\ind_{[w]}-3\theta^{-(N_\alpha+1)p}\Psi_w$.  Then
\begin{equation}\label{eq:tuned-uniform}
  S_n\Phi_w(x)\le\frac{n\alpha}p+2\qquad\text{for all }x\in Y_{w,\alpha}\text{
  and all }n\in\N .
\end{equation}
Consequently $\int\Phi_w\,d\nu\le\alpha/p$ for all
$\nu\in\m(Y_{w,\alpha},\sigma)$, with equality for $\nu=\mu_{w,\alpha}$, so
that $\mu_{w,\alpha}\in\Mmax(\Phi_w)$.
\end{prop}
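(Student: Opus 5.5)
The plan is to run the first-return-loop argument of Proposition \ref{prop:Phimax} blockwise. Positions where $x$ is locally far from $X_{w,\alpha}$ are charged to the penalty term $-3\theta^{-(N_\alpha+1)p}\Psi_w$.

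Set $M:=(N_\alpha+1)p$, fix $x\in Y_{w,\alpha}$ and $n\in\N$, and call $j\in\{0,\ldots,n-1\}$ \emph{good} if the window $x_{[j,j+M+1)}$ occurs in $X_{w,\alpha}$, and \emph{bad} otherwise.

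\emph{Bad positions.} If $j$ is bad, then $\sigma^jx$ differs from every point of $X_{w,\alpha}$ at some index $\le M$. Hence $\Psi_w(\sigma^jx)\ge\theta^M$, and so
\[
  \Phi_w(\sigma^jx)\le1-3\theta^{-1}<-2\le\alpha/p-2 .
\]
At good positions I simply use $\Phi_w\le\ind_{[w]}$.

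\emph{Good runs.} The interval $[0,n)$ splits into maximal runs $[s,t)$ of good positions, separated by bad positions. Within one run, each window has length $M+1$, so by Lemma \ref{lem:tuned-subshift}(ii),(iii) it contains an occurrence of $w$, and it determines a residue $i$ mod $p$. At that residue all $p$-blocks are $w$ or $w^-$, and $w$ occurs only at that residue. Consecutive windows overlap in $M\ge p$ symbols and so share an occurrence of $w$. Hence the residue class is constant along the run. Therefore all occurrences of $w$ starting in $[s,t)$ lie at positions $j_0,j_0+p,\ldots,j_0+(m-1)p$, and the corresponding blocks are $\zeta(u)$ for a word $u\in\{0,1\}^m$ with $m\le(t-s)/p+1$.

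\emph{The decoded word lies in $Y_\alpha$.} I then claim that $u$ labels a path from the vertex $0$ in $G_\alpha$, that is, every suffix of $u$ is $\le_\lex$ the prefix of $\bfa$ of the same length. Suppose a suffix $u_{[k,m)}$ first exceeds $\bfa$ at index $k+h$, with $u_{k+h}=1$ and $a_h=0$. Then $\sigma^{j_0+kp}x$ and $\zeta(\bfa)$ agree on the first $hp$ symbols, followed by $w$ versus $w^-$. By Lemma \ref{lem:tuning}(ii) this gives $\sigma^{j_0+kp}x>_\lex\zeta(\bfa)$, contradicting \eqref{eq:Ywalpha}. By Lemma \ref{lem:presentation}, $u0^\infty\in Y_\alpha$. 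Then \eqref{eq:uniform} gives $|u|_1\le m\alpha+1$, so the number of occurrences of $w$ starting in the run is at most $(t-s)\alpha/p+2$.

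\emph{Summing.} Each run except possibly the last is followed by a bad position, whose contribution is at most $\alpha/p-2$ and absorbs that run's excess $+2$. Summing over runs and bad positions yields \eqref{eq:tuned-uniform}, with the single surplus $2$ coming from the final run.

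\emph{Consequences.} Integrating \eqref{eq:tuned-uniform} against $\nu$ gives $\int\Phi_w\,d\nu\le\alpha/p+2/n$ for every $n$, hence $\int\Phi_w\,d\nu\le\alpha/p$. For $\nu=\mu_{w,\alpha}$ we have $\Psi_w=0$ on $X_{w,\alpha}$, and $\mu_{w,\alpha}([w])=\alpha/p$ by Lemma \ref{lem:tuned-subshift}(iv), so equality holds.

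I expect the main obstacle to be the run analysis. One must check that the residue class really propagates across overlapping windows. One must also check that the lexicographic comparison in the decoding step is legitimate when the differing block lies near the right end of the run. A remaining difficulty is that the block at a position $\ge t$ but within reach of the last window may only be partially controlled. In that case I would truncate $u$ to the blocks fully inside $x_{[s,t+M)}$, where the windows certify the block structure, and absorb the truncation into the constant.
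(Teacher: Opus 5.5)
Your proof is correct and follows the paper's argument essentially step for step: good and bad windows of length $M+1$, propagation of the residue class along a good run (you use a shared occurrence of $w$ in overlapping windows where the paper uses $i(\sigma z)\equiv i(z)+1$, a cosmetic difference), decoding into a word $u$ with $u0^\infty\in Y_\alpha$, then Proposition \ref{prop:Phimax}. The only slip is that $\Psi_w\ge\theta^M$ gives $\Phi_w(\sigma^jx)\le1-3=-2$ rather than $1-3\theta^{-1}$, which still suffices; and your end-of-run worries are unfounded, since every decoded block starts at a good position $j<t$ and is certified by its own window, so no truncation is needed.
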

\begin{proof}
Fix $x\in Y_{w,\alpha}$ and $n\in\N$, and set $M:=(N_\alpha+1)p$.  We call
$j\in\{0,1,\ldots,n-1\}$ \emph{good} if the word $x_{[j,j+M+1)}$ occurs in
$X_{w,\alpha}$, and \emph{bad} otherwise.  If $j$ is bad, then all $z\in
X_{w,\alpha}$ satisfy $z_{[0,M+1)}\ne x_{[j,j+M+1)}$, and hence
$d_\theta(\sigma^jx,z)\ge\theta^M$; so $\Psi_w(\sigma^jx)\ge\theta^M$, and
therefore $\Phi_w(\sigma^jx)\le1-3=-2$.  If $j$ is good, then
$\Phi_w(\sigma^jx)\le\ind_{[w]}(\sigma^jx)$, since $\Psi_w\ge0$.

We claim that if $0\le s<t\le n$ and all $j\in\{s,\ldots,t-1\}$ are good,
then
\begin{equation}\label{eq:good-run}
  S_{t-s}\ind_{[w]}(\sigma^sx)<\frac{(t-s)\alpha}p+2 ,
\end{equation}
that is, fewer than $(t-s)\alpha/p+2$ integers $j\in\{s,\ldots,t-1\}$
satisfy $x_{[j,j+p)}=w$.  Given the claim, let $B$ be the number of bad
$j<n$.  The good $j<n$ form at most $B+1$ maximal sets of consecutive
integers, and applying \eqref{eq:good-run} to each of them gives
\[
  S_n\Phi_w(x)\le\sum_{\substack{0\le j<n\\ j\text{ good}}}\ind_{[w]}(\sigma^jx)-2B
  \le\frac{(n-B)\alpha}p+2(B+1)-2B\le\frac{n\alpha}p+2 ,
\]
which is \eqref{eq:tuned-uniform}.

To prove the claim, let $0\le s<t\le n$ be such that all
$j\in\{s,\ldots,t-1\}$ are good.
Let $i(z)\in\{0,1,\ldots,p-1\}$ be as in Lemma \ref{lem:tuned-subshift}(ii),
defined for $z\in X_{w,\alpha}$, which depends only on $z_{[0,M)}$, by Lemma
\ref{lem:tuned-subshift}(iii).  For $s\le j<t$, choose $z\in X_{w,\alpha}$
with $z_{[0,M+1)}=x_{[j,j+M+1)}$, which is possible because $j$ is good and
$\sigma(X_{w,\alpha})=X_{w,\alpha}$, and set $i_j:=i(z)$, which depends only
on $x_{[j,j+M)}$, and not on the choice of $z$.  As $z_{[0,p)}=x_{[j,j+p)}$
and $0\le i_j<p$, Lemma \ref{lem:tuned-subshift}(ii) shows that if
$x_{[j,j+p)}=w$ then $i_j=0$, and that if $i_j=0$ then
$x_{[j,j+p)}\in\{w,w^-\}$.  If moreover $j+1<t$, then $\sigma z\in
X_{w,\alpha}$ and $(\sigma z)_{[0,M)}=x_{[j+1,j+1+M)}$, so
$i_{j+1}=i(\sigma z)\equiv i_j+1\pmod p$, by Lemma
\ref{lem:tuned-subshift}(iii).

Hence $i_j\equiv i_s+j-s\pmod p$ for $s\le j<t$, so the integers
$j\in\{s,\ldots,t-1\}$ with $i_j=0$ are those congruent to $s-i_s$ modulo
$p$.  If there are no such $j$, then $x_{[j,j+p)}\ne w$ for $s\le j<t$, so
the left-hand side of \eqref{eq:good-run} vanishes.  Otherwise, these
integers are $j_k:=j_0+pk$ for $0\le k\le K$, where $j_0\ge s$ and $K\ge0$
satisfy $j_0+pK\le t-1$, so that $K+1<(t-s)/p+1$.  As
$x_{[j_k,j_k+p)}\in\{w,w^-\}=\{\zeta_w(1),\zeta_w(0)\}$ for $0\le k\le K$,
there is a word $b=b_0b_1\cdots b_K\in\{0,1\}^{K+1}$ such that
$x_{[j_0,j_0+(K+1)p)}=\zeta_w(b)$; and as $x_{[j,j+p)}\ne w$ for all other
$j\in\{s,\ldots,t-1\}$, the left-hand side of \eqref{eq:good-run} is the
number $|b|_1$ of digits $1$ in $b$.

We show that $b0^\infty\in Y_\alpha$, writing $\bfa=\bfa(\alpha)=(a_i)_{i\ge0}$.
Let $0\le k\le K$, and suppose that $b_{[k,K+1)}>_\lex a_{[0,K+1-k)}$; let
$h$ be the smallest index with $b_{k+h}\ne a_h$, so that $b_{k+h}=1$ and
$a_h=0$.  Then $\sigma^{j_k}x$ begins with
$\zeta_w(b_{[k,k+h+1)})=\zeta_w(a_{[0,h)})\,w$, while the maximal tuned
Sturmian sequence $\zeta_w(\bfa)$ begins with
$\zeta_w(a_{[0,h+1)})=\zeta_w(a_{[0,h)})\,w^-$; since $w^-<_\lex w$, it
follows that $\sigma^{j_k}x>_\lex\zeta_w(\bfa)$, which contradicts $x\in
Y_{w,\alpha}$, by \eqref{eq:Ywalpha}.  Hence $b_{[k,K+1)}\le_\lex
a_{[0,K+1-k)}$ for $0\le k\le K$.  As $\sigma^k(b0^\infty)=b_{[k,K+1)}0^\infty$
for $0\le k\le K$, and $\sigma^k(b0^\infty)=0^\infty$ for $k>K$, it follows
that $\sigma^k(b0^\infty)\le_\lex\bfa$ for all $k\ge0$, that is,
$b0^\infty\in Y_\alpha$.  By Proposition \ref{prop:Phimax},
\[
  |b|_1=S_{K+1}\Phi(b0^\infty)\le(K+1)\alpha+1<\Bigl(\frac{t-s}p+1\Bigr)\alpha+1
  <\frac{(t-s)\alpha}p+2 ,
\]
which proves the claimed inequality
\eqref{eq:good-run}, and hence \eqref{eq:tuned-uniform}.

Finally, for $\nu\in\m(Y_{w,\alpha},\sigma)$,   \eqref{eq:tuned-uniform}
yields $\int\Phi_w\,d\nu\le\alpha/p+2/n$ for all $n\in\N$, so
$\int\Phi_w\,d\nu\le\alpha/p$.  By Lemma \ref{lem:tuned-subshift}(i), (iv),
the measure $\mu_{w,\alpha}$ lies in $\m(Y_{w,\alpha},\sigma)$ and satisfies
$\mu_{w,\alpha}([w])=\alpha/p$; as $\Psi_w$ vanishes on $X_{w,\alpha}$, it
follows that $\int\Phi_w\,d\mu_{w,\alpha}=\alpha/p$.
\end{proof}

\begin{rem}\label{rem:penalty}
Unlike the digit function on a Sturmian beta-shift (Proposition
\ref{prop:Phimax}), the function $\ind_{[w]}$ need not be maximized by the
tuned Sturmian measure, so the penalty term in the definition of $\Phi_w$
in Proposition~\ref{prop:tuned-max}
cannot be omitted.
For example, let $w$ be the tuning word $10$, so that $p=2$ and $w^-=00$,
and let $\alpha\in(1/3,2/5)$ be irrational.  Then $\bfa$ begins with $101$,
by \eqref{eq:prefixsum}, so $\zeta_w(\bfa)$ begins with $100010$, and every
shift of the periodic sequence $(10000)^\infty$ is lexicographically smaller
than $\zeta_w(\bfa)$.  So $(10000)^\infty\in Y_{w,\alpha}$, and its periodic
orbit measure gives the cylinder set $[w]$ measure
$1/5>\alpha/2=\mu_{w,\alpha}([w])$.
\end{rem}

\subsection{Stability of tuned Sturmian measures}

We now prove the version of Theorems \ref{thm:A} and \ref{thm:C} for
tuned Sturmian beta-shifts, and deduce Theorem \ref{thm:density}.

\begin{thm}[\textbf{Failure of TPO for tuned Sturmian
beta-shifts}]\label{thm:tuned}
Let $w$ be a tuning word of length $p$, let $\alpha\in(0,1)$ be
irrational, with $N_\alpha:=\lceil1/\alpha\rceil$, let $Y_{w,\alpha}$,
$X_{w,\alpha}$ and $\mu_{w,\alpha}$ be respectively the tuned Sturmian beta-shift,
subshift and measure corresponding to $w$ and $\alpha$, and
let $\theta\in(0,1)$.  Define
$F_w\in\lip(Y_{w,\alpha},d_\theta)$ by
\[
  F_w(x):=\ind_{[w]}(x)-\bigl(1+3\theta^{-(N_\alpha+1)p}\bigr)\,d_\theta(x,X_{w,\alpha}),
\]
and set $\delta:=\theta^{(N_\alpha+2)p}(1-\theta^p)^3/(15p^2)$.  Then
\begin{equation}\label{eq:wasserstein-tuned}
  \int F_w\,d\mu_{w,\alpha}-\int F_w\,d\nu\ \ge\ \delta\,W_\theta(\nu,\mu_{w,\alpha})
  \qquad\text{for all }\nu\in\m(Y_{w,\alpha},\sigma),
\end{equation}
so all $f\in\lip(Y_{w,\alpha},d_\theta)$ with
$|f-F_w|_{\lip(Y_{w,\alpha})}<\delta$ satisfy $\Mmax(f)=\{\mu_{w,\alpha}\}$,
where $\mu_{w,\alpha}$ is not a periodic orbit measure.  In particular,
$Y_{w,\alpha}$ does not have TPO with respect to
$\lip(Y_{w,\alpha},d_\theta)$.
\end{thm}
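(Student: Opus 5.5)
The plan is to apply Lemma \ref{lem:rigid-stability} with $Y:=Y_{w,\alpha}$, $X:=X_{w,\alpha}$, $\mu:=\mu_{w,\alpha}$ and $\phi:=\Phi_w=\ind_{[w]}-3\theta^{-(N_\alpha+1)p}\Psi_w$, the function of Proposition \ref{prop:tuned-max}. Then $\Psi_X=\Psi_w$ and $\phi-\Psi_X=\ind_{[w]}-(1+3\theta^{-(N_\alpha+1)p})\Psi_w=F_w$, exactly the function in the statement.

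\emph{Verifying the hypotheses.} First, $X_{w,\alpha}\subseteq Y_{w,\alpha}$ is closed, and $\sigma(X_{w,\alpha})=X_{w,\alpha}$ by minimality (Lemma \ref{lem:tuned-subshift}(i),(iv)). Second, the maximizing hypothesis $\int\phi\,d\nu\le\int\phi\,d\mu$ is Proposition \ref{prop:tuned-max}. Third, the decomposition \eqref{eq:abstract-decomp} on $X$ holds with $\phi$ in place of $\ind_{[w]}$, because $\Psi_w$ vanishes on $X_{w,\alpha}$. Consequently $\phi-\int\phi\,d\mu=\ind_{[w]}-\alpha/p$ on $X$, using $\mu([w])=\alpha/p$, and Theorem \ref{thm:tuned-rigidity} supplies the decomposition with
\[
A:=\frac{p\,\theta^{1-p}}{(1-\theta^p)^2},\qquad B:=\frac{5p^2\theta^{2-(N_\alpha+2)p}}{(1-\theta^p)^3}.
\]

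\emph{Bounding the constant.} We need $|\phi|_{\lip(Y)}\le\theta^{1-p}+3\theta^{-(N_\alpha+1)p}$. The term $\theta^{1-p}$ holds because $\ind_{[w]}$ changes only when the first difference occurs before index $p$. The term $3\theta^{-(N_\alpha+1)p}$ comes from $|\Psi_w|_{\lip}\le1$.

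With these, the constant is $C=\max\{A,\,1+A|\phi|_{\lip}+(1+\theta^{-1})B\}$, and the main task is to check $C\le\delta^{-1}=15p^2\theta^{-(N_\alpha+2)p}(1-\theta^p)^{-3}$. Each term should be compared with $p^2\theta^{-(N_\alpha+2)p}(1-\theta^p)^{-3}$, using $\theta^{k}\le1$ for $k\ge0$, $(1-\theta^p)^{-2}\le(1-\theta^p)^{-3}$, and $p\le p^2$:
\begin{itemize}
\item $1\le$ that quantity.
\item $A\cdot\theta^{1-p}\le p\theta^{2-2p}(1-\theta^p)^{-2}$, which is $\le$ it because $2p-2\le(N_\alpha+2)p$.
\item $A\cdot3\theta^{-(N_\alpha+1)p}\le3p\,\theta^{1-(N_\alpha+2)p}(1-\theta^p)^{-2}\le3$ times it.
\item $(1+\theta^{-1})B\le2\theta^{-1}B\le10$ times it.
\end{itemize}
The total coefficient is then at most $1+1+3+10=15$, giving \eqref{eq:wasserstein-tuned} via \eqref{eq:abstract-gap}. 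This bookkeeping is the only delicate step. If a coefficient overshoots slightly, I would recheck the sharper bound $\theta^{1-p}+3\theta^{-(N_\alpha+1)p}\le4\theta^{-(N_\alpha+1)p}$ together with $A\le p\theta^{1-p}(1-\theta^p)^{-3}$, which gives $A|\phi|_{\lip}\le4$ times the quantity.

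\emph{Conclusion.} Lemma \ref{lem:stability} with $f=F_w$ then shows that $\Mmax(f)=\{\mu_{w,\alpha}\}$ for all $f$ with $|f-F_w|_{\lip}<\delta$. By Lemma \ref{lem:tuned-subshift}(iv), $\mu_{w,\alpha}$ is not a periodic orbit measure. So this non-empty open set misses $\cP(Y_{w,\alpha})$, and TPO fails, exactly as in the proof of Theorem \ref{thm:A}.
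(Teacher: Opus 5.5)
Your proposal is correct and follows the paper's proof essentially verbatim. Like the paper, you apply Lemma \ref{lem:rigid-stability} with $\phi=\Phi_w$, use Proposition \ref{prop:tuned-max} for the maximizing hypothesis and Theorem \ref{thm:tuned-rigidity} for $A$ and $B$, and then conclude via Lemma \ref{lem:stability} and Lemma \ref{lem:tuned-subshift}(iv). The only difference is cosmetic bookkeeping: the paper uses the single bound $|\Phi_w|_{\lip}\le 4\theta^{-(N_\alpha+1)p}$, which also handles the $A$ entry of the maximum (that entry is trivially at most $\delta^{-1}$ and you left it unchecked) since $A\le 4\theta^{-(N_\alpha+1)p}A$, whereas you split $A|\phi|_{\lip}$ into two terms; both routes give $C\le\delta^{-1}$.
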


\begin{proof}
Write $\mu:=\mu_{w,\alpha}$ and $M:=(N_\alpha+1)p$, and let $\Psi_w$ and
$\Phi_w$ be as in Proposition \ref{prop:tuned-max}, so that
$F_w=\Phi_w-\Psi_w$.  As $\ind_{[w]}(x)$ depends only on $x_{[0,p)}$, we
have $|\ind_{[w]}|_{\lip(Y_{w,\alpha})}\le\theta^{1-p}$, and
$|\Psi_w|_{\lip(Y_{w,\alpha})}\le1$, so
$|\Phi_w|_{\lip(Y_{w,\alpha})}\le\theta^{1-p}+3\theta^{-M}\le4\theta^{-M}$
and $F_w\in\lip(Y_{w,\alpha},d_\theta)$.

We apply Lemma \ref{lem:rigid-stability} with $Y:=Y_{w,\alpha}$,
$X:=X_{w,\alpha}$ and $\phi:=\Phi_w$, for which $\Psi_X=\Psi_w$ and
$\phi-\Psi_X=F_w$.  By Proposition \ref{prop:tuned-max},
$\int\Phi_w\,d\nu\le\alpha/p=\int\Phi_w\,d\mu$ for all
$\nu\in\m(Y_{w,\alpha},\sigma)$.  As $\Phi_w=\ind_{[w]}$ on $X_{w,\alpha}$,
Theorem \ref{thm:tuned-rigidity} shows that all
$f\in\lip(X_{w,\alpha},d_\theta)$ satisfy \eqref{eq:abstract-decomp} with
$c:=c_f$ and $u:=\tilde f$, and by \eqref{eq:tuned-bounds} we may take
\[
  A:=\frac{p\,\theta^{1-p}}{(1-\theta^p)^2},\qquad
  B:=\frac{5p^2\theta^{2-(N_\alpha+2)p}}{(1-\theta^p)^3} .
\]
As $A\le4\theta^{-M}A$ and $1+\theta^{-1}\le2\theta^{-1}$, the constant $C$
of Lemma \ref{lem:rigid-stability} satisfies
\[
  C\le1+4\theta^{-M}A+2\theta^{-1}B
  =1+\frac{4p\,\theta^{1-(N_\alpha+2)p}}{(1-\theta^p)^2}
  +\frac{10p^2\theta^{1-(N_\alpha+2)p}}{(1-\theta^p)^3}\le\delta^{-1} ,
\]
so \eqref{eq:abstract-gap} gives \eqref{eq:wasserstein-tuned}.

By Lemma \ref{lem:stability}, it follows that $\Mmax(f)=\{\mu\}$ for all
$f\in\lip(Y_{w,\alpha},d_\theta)$ with
$|f-F_w|_{\lip(Y_{w,\alpha})}<\delta$.  These $f$ form a non-empty open
subset of $\lip(Y_{w,\alpha},d_\theta)$, as in the proof of Theorem
\ref{thm:A}, and $\mu$ is not a periodic orbit measure, by Lemma
\ref{lem:tuned-subshift}(iv); so this open set does not intersect
$\cP(Y_{w,\alpha})$, thus $Y_{w,\alpha}$ does not have TPO with respect to
$\lip(Y_{w,\alpha},d_\theta)$.
\end{proof}

It is instructive to compare Theorem \ref{thm:tuned} with Theorem \ref{thm:A}:

\begin{rem}[Tuned Sturmian and Sturmian beta-shifts]\label{rem:tuned-new}
If $p=1$, so that $w$ is a single digit $w_0$, then the tuned Sturmian
beta-shift $Y_{w,\alpha}$ is the Sturmian beta-shift $Y_\alpha$ if $w_0=1$,
and is the beta-shift $Y(\iota(\bfa))$ of Remark \ref{rem:alphabets}, with
$\ell=w_0-1$ and $m=w_0$, if $w_0\ge2$.  So in this case Theorem
\ref{thm:tuned} gives no new parameters $\beta$, but its open set of
functions differs from the one given by Theorem \ref{thm:A} if $w_0=1$, and
from the one given by Remark \ref{rem:alphabets} if $w_0\ge2$.
If $p\ge2$,
then $d_{\beta_{w,\alpha}}(1)$ is not a Sturmian sequence over any alphabet
$\{\ell,m\}$, by Proposition \ref{prop:tuned-not-sturmian}, so Theorem
\ref{thm:tuned} gives parameters $\beta$ for which $S_\beta$ does not have
TPO, and which are not covered by Theorem \ref{thm:A} or Remark
\ref{rem:alphabets}.  
Tuned Sturmian sequences are nevertheless \emph{quasi-Sturmian} in the
sense of Cassaigne \cite[\S 5.2]{Cas98}: for each tuned Sturmian sequence
$x$ there is an integer $c$ such that, for all sufficiently large $n$,
exactly $n+c$ words of length $n$ occur in $x$.  This follows from
\cite[Prop.~8]{Cas98}, as $x$ has the form $u\,\zeta_w(v)$, with $u$ a
finite word and $v$ a Sturmian sequence, and $\zeta_w(01)=w^-w\ne
ww^-=\zeta_w(10)$.
\end{rem}

We are now able to prove Theorem \ref{thm:density}, by combining Corollary
\ref{cor:tuned-dense}, Theorem \ref{thm:tuned} and Corollary \ref{cor:G}.

\begin{proof}[Proof of Theorem \ref{thm:density}]
Let $J\subseteq(1,\infty)$ be a non-empty open interval.  By Corollary
\ref{cor:tuned-dense}, there are a tuning word $w$ and $\alpha_0\in(0,1)$
such that the tuned Sturmian parameter $\beta_{w,\alpha}$ lies in $J$ for
all irrational $\alpha\in(\alpha_0,1)$, and there are uncountably many such
parameters.  Let $\alpha\in(\alpha_0,1)$ be irrational, and set
$\beta:=\beta_{w,\alpha}$.  By Theorem \ref{thm:tuned}, the tuned Sturmian
beta-shift $S_\beta=Y_{w,\alpha}$ does not have TPO with respect to
$\lip(S_\beta,d_\theta)$, for all $\theta\in(0,1)$.  As $\beta$ is not a
beta-number (Lemma \ref{lem:tuning}(iv)), Corollary \ref{cor:G}, applied
with $\theta:=\beta^{-\gamma}$, shows that for all $\gamma\in(0,1]$ neither
$U_\beta$ nor $T_\beta$ has TPO with respect to $C^{0,\gamma}(I)$.  So $\beta$
belongs to each of the three sets in Theorem \ref{thm:dimension}, which are
therefore dense in $(1,\infty)$.
\end{proof}

By contrast, the Sturmian parameters themselves are not dense in any interval:

\begin{rem}\label{rem:sturmian-closure}
The set of \emph{Sturmian parameters}, i.e.~those $\beta>1$ such that
$d_\beta(1)$ is a Sturmian sequence, has nowhere dense closure.  Indeed,
these parameters are the numbers $\Delta(\alpha)$ with $\alpha\in(0,1)$
irrational, where $\Delta:[0,\infty)\to[1,\infty)$ is the function of Kwon
\cite{Kwo08}, defined by prescribing $d_{\Delta(\alpha)}(1)$ in terms of the
Sturmian and Christoffel words of slope $\alpha$, and satisfying
$d_{\Delta(\alpha)}(1)=\bfa(\alpha)$ for irrational $\alpha\in(0,1)$; by
\cite[Thm.~2.7]{Kwo08}, $\Delta$ is strictly increasing, and continuous at
each irrational point.  So if $\beta$ is the limit of a sequence
$\Delta(\alpha_n)$ with $\alpha_n\in(0,1)$ irrational, then, passing to a
monotone subsequence of $(\alpha_n)$, with limit $\alpha\in[0,1]$, we see
that $\beta$ is a one-sided limit of $\Delta$ at $\alpha$, and that
$\beta=\Delta(\alpha)$ if $\alpha$ is irrational.  Hence the closure of the
set of Sturmian parameters is contained in the union of that set with the
countable set of one-sided limits of $\Delta$ at rational points of
$[0,1]$.  So this closure has zero Lebesgue measure, by Theorem
\ref{thm:dimension} or by \cite[Thm.~2.7]{Kwo08}, and being closed, it is
nowhere dense.
\end{rem}

\end{document}